\theoremstyle{definition}
\newtheorem{definition}{Definition}[section]
\newtheorem{pro}[definition]{Proposition}
\newtheorem{thm}[definition]{Theorem}
\newtheorem{corollary}[definition]{Corollary}
\newtheorem{lem}[definition]{Lemma}
\newtheorem{remark}[definition]{Remark}
\def\e{\varepsilon}
\def\<{\mathop{<}}
\def\>{\mathop{>}}
\newcommand{\dist}{\mathrm{dist}\,}
\newcommand{\diam}{\mathrm{diam}\,}
\newcommand{\spt}{\mathrm{spt}\,}
\numberwithin{equation}{section}
\title[Mean curvature flow with transport term]{Existence and regularity of mean curvature flow with transport term
in higher dimensions}
\author[K. Takasao and Y. Tonegawa]{Keisuke Takasao and Yoshihiro Tonegawa}
\address{Department of Mathematics\\ Hokkaido University\\ Sapporo 060-0810 Japan} 
\email{takasao@math.sci.hokudai.ac.jp}
\address{Department of Mathematics\\ Hokkaido University\\ Sapporo 060-0810 Japan} 
\email{tonegawa@math.sci.hokudai.ac.jp}
\keywords{mean curvature flow, varifold}
\thanks{K. Takasao is supported by the JSPS
Research Fellowships for Young Scientists and
the JSPS Grant-in-Aid for the JSPS fellows 12J06075 and 
Y. Tonegawa is partially supported by JSPS Grant-in-aid for scientific research 21340033, 21224001, 23654057. }
\date{}
\begin{document}
\maketitle
\begin{abstract}
Given an initial $C^1$ hypersurface and a time-dependent vector field in a Sobolev space, 
we prove a time-global existence
of a family of hypersurfaces which start from the given hypersurface and which
move by the velocity equal to the mean curvature plus the given vector field.
We show that the hypersurfaces are $C^1$ for a short time and, even after some
singularities occur, almost everywhere $C^1$ away from the higher multiplicity region. 
\end{abstract}
\section{Introduction}
A family $\{M_t\}_{t\geq 0}$ of hypersurfaces in ${\mathbb R}^n$ is called mean curvature flow (hereafter abbreviated MCF) if 
the velocity vector $v$ of $M_t$ is equal to its mean curvature vector $h$ at each point and time, that is, 
\begin{equation}
v=h\ \ \mbox{on }M_t.
\label{mcf0}
\end{equation}
 As one of the fundamental geometric evolution problems,
the MCF has been studied by numerous researchers in the past few decades. 
One of many facets of investigations is the time-global existence 
question of such a family when given an initial hypersurface $M_0$. In general dimensions, there
exists a unique smooth family of MCF for finite time until singularities such as vanishing and pinch-off occur. 
Though the classical MCF ceases to exist at this point, it is well-known that a unique time-global solution $\{M_t\}_{t\geq 0}$ exists
in a weak viscosity sense \cite{giga1991,evans-spruck1991} despite the occurrence of singularities. 

In this paper, we are interested in an aspect of time-global existence theory for a related problem, and the question we ask is the following. 
Given an initial hypersurface $M_0$ and a vector field $u$, 
is there a family $\{M_t\}_{t\geq 0}$ of hypersurfaces whose velocity vector $v$ is equal to its mean curvature $h$ plus $u$? 
What is the minimum regularity assumption on $u$ for the existence and regularity of such a family?
To be more precise, since we would be interested in the normal velocity to see the motion, the requirement is
\begin{equation}
v=h+(u\cdot\nu)\nu \ \ \mbox{on }M_t
\label{mcf}
\end{equation}
where $\nu$ is the unit normal vector field of $M_t$ and $\ \cdot \ $ is the inner product in ${\mathbb R}^n$. 
Motivation to investigate \eqref{mcf} is more than just to see what happens when an extra lower order term is added. 
While the MCF is of premier importance, one 
wonders what is the limit of applicability of various analytic techniques developed for the MCF if one puts a wild perturbation.
In a reverse context, if one understands the limit of generality of the MCF, then some of the analytic techniques developed for 
more general settings may be useful for the MCF. In fact, our investigation on \eqref{mcf} has already led us to the development of a 
local regularity theory \cite{kasaitonegawa,tonegawa2012} which gives new insight to the MCF. 
Physically, 
one may regard \eqref{mcf} as a surface tension driven phase boundary motion with a given 
background transport effect such as
fluid flow or external force field. One can also find such motion law in a coupled system with the 
Navier-Stokes equation modeling a flow of dry foam (see, for example, \cite{kim} for the numerical simulation and
references therein). 

Though far from complete, in this paper we obtain
satisfactory time-global existence and regularity theorems if we assume that $M_0$ is $C^1$ and
$u$ satisfies
\begin{equation}\label{lowreg}
\big(\int_0^T\big(\int_{{\mathbb R}^n} |u(x,t)|^p+|\nabla u(x,t)|^p\, dx\big)^{\frac{q}{p}}\, dt\big)^{\frac{1}{q}}<\infty
\end{equation}
for all $T<\infty$, with $2<q<\infty$ and $\frac{nq}{2(q-1)}<p<\infty$ ($\frac43\leq p$ in addition if $n=2$).
Here $\nabla u=(\partial_{x_1}u,\cdots,\partial_{x_n}u)$ is the weak partial derivatives and $u,\nabla u$ are measurable
with the stated integrability. We prove that the hypersurfaces remain $C^1$ at least for a short time,
and it is a.e$.$ $C^1$ away from a region where $M_t$ develops higher multiplicities. With more regularity assumption
on $u$ such as H\"{o}lder continuity, we have $C^2$ instead of $C^1$ and \eqref{mcf}
is satisfied classically. For the precise statement of the regularity, see Theorem \ref{regreg}.

Here we briefly discuss our approach. If $u$ is regular enough with respect to $x$, for example 
Lipschitz continuous, the level set method approach works well with a good order preserving property
(see, for example, \cite{GGIS} and \cite[Sec. 4.8]{Giga}). Also for regular enough $u$, there are
a number of short time existence results which are often stated for the MCF but which can be extended
to include regular $u$:  (1) solving an evolution equation for the height
function from the reference initial manifold \cite{XYChen}, (2) solving equations for signed distance function
\cite{ES2} (and elaborated further in \cite{GG}), and (3) constructing an approximate solution by time-discrete
minimal movement \cite{almgren}, just to name a few examples. On the other hand, 
with irregular $u$, one can not expect the order preserving property in general and even the short time
existence of solution can be a serious issue.
Hence to characterize \eqref{mcf}, we take an approach pioneered by Brakke \cite{brakke} using the
notion of varifold from geometric measure theory. To construct a sequence of approximate 
solutions, we use the Allen-Cahn equation \cite{allen}
with an extra transport term coming from $u$, \eqref{fac}. Much of the 
analysis of the present paper concerns various $\e$-independent estimates of quantities associated with $\varphi_{\e}$.
We obtain a desired solution by taking a limit $\e\to 0$. Thus the interest of the present paper can 
be also the analysis of \eqref{fac} itself. Once we verify that the limit satisfies \eqref{mcf} in a weak sense of
varifold as in Brakke's formulation, we apply a local regularity theory developed in \cite{kasaitonegawa,tonegawa2012}
which is tailor-made for the present problem. To our knowledge, under the assumption \eqref{lowreg} of $u$, 
even the short time existence of $C^1$ solution seems new.  

As for the MCF in general, there are a number of books and papers some of which include up-to-date
research results on the subject and we mention \cite{Ambrosio1,Bellettini,Colding,Ecker,Giga,Mante,White4}.
Concerning a time-global existence for the MCF and the related problems, 
we mention \cite{almgren,brakke, giga1991,evans-spruck1991,ilmanen1994,luckhaus} and references therein.
While there are numerous works with varying generalities 
establishing the connection between the Allen-Cahn equation and the MCF (for example, \cite{bron1,chen,mot,ESS,Fife,rub}), 
analysis of the Allen-Cahn equation using geometric measure theory 
was pioneered by Ilmanen \cite{ilmanen1993} in which he proved that the limit surface measures are
rectifiable and satisfy \eqref{mcf0} in the sense of Brakke's formulation. The second author proved that the limit surface measures
are integral \cite{tonegawa2003}. There are a number of closely related works even if we restrict the scope within some
measure theoretic approach to the Allen-Cahn equation, and we further mention 
\cite{mugnai,sato,soner1,soner2} and refereces therein. 
The existence result of the present paper has been proved by Liu et al. \cite{tonegawa2010}
for $n=2,3$ and with more restrictive assumptions on $p$ and $q$. The limitation of the dimensions was due to the 
use of results by R\"{o}ger and Sch\"{a}tzle \cite{roger-schazle}, which gives a characterization of limit measures 
under an assumption of uniform $L^2$ bound of mean curvature-like quantity. In the present paper, we avoid using \cite{roger-schazle}, 
and we follow the line of proofs of \cite{ilmanen1993,tonegawa2003} combined with various estimates from \cite{tonegawa2010}.
This frees us from any dimensional restriction. As a special case, the first author investigated the graph-like problem of \eqref{mcf} with 
a better regularity assumption on $u$ and showed a unique short time existence \cite{takasao}. 

The paper is organized as follows. In Section 2 we set our notations and explain the main results. In Section 3 we briefly
discuss some heuristic aspects of the Allen-Cahn equation. Section 4 deals with the uniform upper density ratio bound 
and monotonicity formula, and this is the key to control the transport term subsequently. In Section 5, we show that
there exists a limit surface measure for all $t\geq 0$. Section 6 proves that the limit measure is rectifiable and this part owes
much to Ilmanen's work \cite{ilmanen1993}. In Section 7, we prove that the limit measure has integer density modulo
surface energy constant. There, the idea of proof goes back to \cite{tonegawa2000} and the parabolic version \cite{tonegawa2003}.
In Section 8 we prove the main results by combining all the results from previous four sections. We record our final remarks in the 
last Section 9. We intended the paper to be as self-contained as possible, only exception being the proof for regularity. There we cite
the main local regularity theorem which has a set of assumptions we need to check. 

\section{Preliminaries and Main results}
\subsection{Basic notation}
Let ${\mathbb N}$ be the set of natural numbers and ${\mathbb R}^+:=\{x\geq 0\}$. For $0<r<\infty$ and 
$a\in {\mathbb R}^k$ define
\begin{equation*}
B_r^k(a):=\{x\in {\mathbb R}^k\,:\, |x-a|<r\}.
\end{equation*}
We write $B_r^k:=B_r^k(0)$. When $k=n$, we omit writing $n$. We often identify ${\mathbb R}^{n-1}$
with ${\mathbb R}^{n-1}\times\{0\}\subset {\mathbb R}^n$. 
 On ${\mathbb R}^n$ we denote the Lebesgue measure by ${\mathcal L}^n$ and
 for $0\leq k\leq n$, the $k$-dimensional Hausdorff measure by ${\mathcal H}^{k}$. 
 Define $\omega_n:={\mathcal L}^n(B_1)$.
 Given a set $A\subset {\mathbb R}^n$ and a measure $\mu$, the restriction of $\mu$ to $A$ is denoted by $\mu\lfloor_A$.
 The characteristic function of $A$ is denoted by $\chi_A$. Symbol $\nabla$ always refers to a differentiation with 
 respect to the space variables. For a set of finite perimeter (see \cite{Giusti} for the definition) $A$, we denote
 the total variation measure of the distributional derivative $\nabla\chi_A$ by $\|\nabla\chi_A\|$. 
 
 Throughout the paper, we set $\Omega$ to be either ${\mathbb T}^n$, the $n$-dimensional unit torus, or
 ${\mathbb R}^n$. 
 For $\Omega={\mathbb T}^n$ we often regard $\Omega$ as the
 unit square $[0,1)\times\cdots\times [0,1)\subset {\mathbb R}^n$ where all the relevant quantities are extended periodically to the entire
 ${\mathbb R}^n$. Objects such as functions and sets in $\Omega$ 
 are understood implicitly in this manner. For any Radon measure
 $\mu$ on ${\mathbb R}^n$ and $\phi\in C_c({\mathbb R}^n)$ we often write $\mu(\phi)$ for $\int \phi\, d\mu$. We
 write ${\rm spt}\,\mu$ for the support of $\mu$. Thus $x\in {\rm spt}\,\mu$ if $\forall r>0$, $\mu(B_r(x))>0$. For
 $1\leq p\leq \infty$, we write $f\in L^p(\mu)$ if $f$ is $\mu$ measurable and
 $(\int |f|^p\, d\mu)^{1/p}<\infty$. We use the standard notation for Sobolev spaces such as $W^{1,p}(\Omega)$
 and $W^{1,p}_{loc}(\Omega)$ from \cite{GT}.
 
For $A,B\in{\rm Hom}({\mathbb R}^n;{\mathbb R}^n)$ which we identify with $n\times n$ matrices, we define
\[ A\cdot B:=\sum_{i,j}A_{ij}B_{ij} \qquad \text{and} \qquad |A|:=\sqrt{A\cdot A}. \]
$\|A\|$ denotes the operator norm. 
The identity of ${\rm Hom}({\mathbb R}^n;{\mathbb R}^n)$ is denoted by $I$. For $k\in {\mathbb N}$ with $k<n$, 
let ${\bf G}(n,k)$ be the space of $k$-dimensional subspaces of ${\mathbb R}^n$.
The orthogonal complement of 
$S\in {\bf G}(n,k)$ is denoted by $S^{\perp}\in {\bf G}(n,n-k)$. 
For $a\in \mathbb{R}^n$, $a\otimes a \in {\rm Hom}({\mathbb R}^n;{\mathbb R}^n)$ is the matrix with the entries $a_ia_j$ ($1\leq i,j\leq n$). 
For $S\in {\bf G}(n,k)$, we identify $S$ with the corresponding orthogonal projection of ${\mathbb R}^n$ 
onto $S$. In the case of $k=n-1$, we also identify $S\in {\bf G}(n,n-1)$ with the unit vector $\pm \nu\in {\mathbb S}^{n-1}$
which is perpendicular to $S$. Note that we may express the relation by $S=I-\nu\otimes\nu$. The
correspondence is a homeomorphism with respect to the naturally endowed topologies on ${\bf G}(n,n-1)$ and 
${\mathbb S}^{n-1}/\{\pm 1\}$. 
For $x,y\in {\mathbb R}^n$ and $t<s$ define
\begin{equation}
 \rho _{(y,s)} (x,t) := \frac{1}{(4\pi (s-t))^{\frac{n-1}{2}}} e^{-\frac{|x-y|^2}{4(s-t)}}, 
 \label{bhk}
 \end{equation}
which is the backward heat kernel with pole at $(y,s)$. 
\subsection{Varifolds}
\label{pnotation}
We recall some definitions from geometric measure theory and refer to \cite{allard, brakke, ilmanen1993} for more details. 
For any open set $U\subset {\mathbb R}^n$ let $G_{k}(U):=U\times {\bf G}(n,k)$. 
A general $k$-varifold in $U$ is a Radon measure on 
$G_k(U)$. We denote the set of all general $k$-varifolds in $U$ by $\mathbf{V}_k(U)$. For $V\in \mathbf{V}_k(U)$, let
$\|V\|$ be the weight measure of $V$, namely,
\begin{equation*}
\|V\|(\phi):=\int_{G_k(U)} \phi(x)\, dV(x,S),\,\,\, \forall \phi\in C_c(U).
\end{equation*}
We say $V\in \mathbf{V}_k(U)$ is rectifiable if there exist a ${\mathcal H}^k$ measurable countably $k$-rectifiable set 
$M\subset U$ and a locally ${\mathcal H}^k$ integrable function $\theta$ defined on $M$ such that
\begin{equation}
V(\phi)=\int_{M} \phi(x,{\rm Tan}_x M)\theta(x)\, d{\mathcal H}^k
\label{recvar}
\end{equation}
for $\phi\in C_c (G_k(U))$. Here ${\rm Tan}_x M$ is the approximate tangent space of $M$ at $x$ which
exists ${\mathcal H}^k$ a.e$.$ on $M$. Rectifiable $k$-varifold is uniquely determined by its
weight measure $\|V\|=\theta\,{\mathcal H}^{n-1}\lfloor_{M}$ through the formula \eqref{recvar}. 
For this reason, we naturally say a Radon measure $\mu$ on $U$ is rectifiable when one can associate
a rectifiable varifold $V$ such that $\|V\|=\mu$. 
If $\theta\in {\mathbb N}$, ${\mathcal H}^k$ a.e$.$ on $M$, we
say $V$ is integral. The set of all integral $k$-varifolds in $U$ is denoted by ${\bf IV}_k(U)$. If $\theta=1$,  ${\mathcal H}^k$ a.e$.$ on $M$, we say $V$ is a unit density $k$-varifold. 

For $V\in \mathbf{V}_k(U)$ let $\delta V$ be the first variation of $V$, namely, 
\begin{equation}
\delta V(g):= \int _{G_k(U)} \nabla g(x) \cdot S \, dV(x,S)
\label{firstvariation}
\end{equation}
for $g\in C_c^1(U\,;\,{\mathbb R}^n)$. If the total variation $\|\delta V\|$ of $\delta V$ is locally bounded and absolutely
continuous with respect to $\|V\|$, by the Radon-Nikodym theorem, we have a $\|V\|$ measurable vector
field $h(V,\cdot)$ with
\begin{equation}
\delta V(g)=-\int_U g(x)\cdot h(V,x)\, d\|V\|(x).
\label{fvfa}
\end{equation}
The vector field $h(V,\cdot)$ is called the generalized mean curvature vector of $V$. For
any $V\in {\bf IV}_k(U)$ with an integrable $h(V,\cdot)$, Brakke's perpendicularity theorem
\cite[Chapter 5]{brakke} says that we have
\begin{equation}
\int_U ({\rm Tan}_x M)^{\perp}(g(x))\cdot h(V,x)\, d\|V\|(x)= \int_U g(x)\cdot h(V,x)\, d\|V\|(x)
\label{hperp1}
\end{equation}
for all $g\in C_c(U;{\mathbb R}^n)$. Here, $M$ is related to $V$ as in \eqref{recvar}. In the case of $k=n-1$, note that
$({\rm Tan}_x M)^{\perp}=\nu(x)\otimes\nu(x)$ for $\|V\|$ a.e$.$ in $U$, where $\nu(x)$ is the
unit normal vector to ${\rm Tan}_x M$. With this notation, \eqref{hperp1}
may be written as
\begin{equation}
\int_U (g(x)\cdot \nu(x))(h(V,x)\cdot \nu(x))\, d\|V\|(x)=\int_U g(x)\cdot h(V,x)\, d\|V\|(x)
\label{hperp2}
\end{equation}
for $g\in C_c(U;{\mathbb R}^n)$. If $h(V,\cdot)\in L^2(\|V\|)$, by approximation, 
\eqref{hperp2} holds even for $g\in L^2(\|V\|)$. 

\subsection{Weak formulation of velocity}
Let $\{M_t\}_{t\geq 0}$ be a family of smooth hypersurfaces in $\Omega$ whose normal velocity is denoted by $v$.
To formulate the velocity in a weak sense, observe the following characterization of $v$:
a smooth normal vector field $\tilde v$ on $M_t$ is equal to $v$ if and only if 
\begin{equation}
\frac{d}{dt}\int_{M_t} \phi\, d{\mathcal H}^{n-1}\leq \int_{M_t} (\nabla\phi-h\phi)\cdot \tilde v+\partial_t \phi
\, d{\mathcal H}^{n-1}
\label{hperp3}
\end{equation}
holds for all $\phi\in C_c^1(\Omega\times[0,\infty);{\mathbb R}^+)$ and for all $t\geq 0$. Here 
$h$ is the classical mean curvature vector of $M_t$. 
To check this claim, after some calculation, one first sees that $v$ satisfies 
\eqref{hperp3} with equality. Conversely, if $\tilde v$ satisfies \eqref{hperp3}, and
already knowing that $v$ satisfies \eqref{hperp3} with equality, we obtain
\begin{equation*}
0\leq \int_{M_t}(\nabla\phi-h\phi)\cdot(\tilde v -v)\, d{\mathcal H}^{n-1}
\label{hperp4}
\end{equation*}
for $\phi\in C_c^1(\Omega;{\mathbb R}^+)$. For any $\hat x\in M_t$ and $\lambda>0$, let $\phi_{\lambda}
(y):=\lambda^{2-n}\phi(\lambda^{-1}(y-\hat x))$. Substitute $\phi_{\lambda}$ and let $\lambda\downarrow 0$.
Since $\lambda^{-1}(M_t-\hat x)\to {\rm Tan}_{\hat x} M_t$, we obtain
\begin{equation*}
0\leq \int_{{\rm Tan}_{\hat x} M_t} \nabla\phi \, d{\mathcal H}^{n-1} \cdot (\tilde v (\hat x)-v (\hat x)).
\label{hperp5}
\end{equation*}
The integration by parts shows $\int_{{\rm Tan}_{\hat x} M_t} \nabla\phi\, d{\mathcal H}^{n-1}\perp {\rm Tan}_{\hat x}
M_t$. On the other hand, 
one may choose this vector to be $-(\tilde v (\hat x)-v(\hat x))$, for example. Thus we have $\tilde v (\hat x)
=v(\hat x)$ and we complete the proof of the claim. The characterization \eqref{hperp3} motivates the following
definition.
\begin{definition} A family of varifolds $\{V_t\}_{t\geq 0}\subset {\bf V}_{n-1}(\Omega)$ is a generalized solution of
\eqref{mcf} if the following four conditions are satisfied. 
\begin{itemize}
\item[(a)] $V_t\in {\bf IV}_{n-1}(\Omega)$ for a.e$.$ $t\geq 0$.
\item[(b)] For all $T>0$,
\begin{equation}
\sup_{t\in [0,T]} \|V_t\|(\Omega)<\infty\ \ \mbox{ and }\sup_{t\in [0,T],\, B_r(x)\subset\Omega}\frac{\|V_t\|(B_r(x))}{\omega_{n-1} r^{n-1}}<\infty.
\label{exden}
\end{equation}
\item[(c)] For all $T>0$, 
\begin{equation}
\int_0^T dt \int_{\Omega} |h|^2+|u|^2 \, d\|V_t\|<\infty.
\label{uhl}
\end{equation}
\item[(d)] For all $\phi\in C^1_c(\Omega\times[0,\infty) ; {\mathbb R}^+)$ and $0\leq t_1<t_2<\infty$,
\begin{equation}
\|V_{t}\|(\phi(\cdot,t))\Big|_{t=t_1}^{t_2}\leq \int_{t_1}^{t_2}dt\int_{\Omega}
(\nabla\phi-h\phi)\cdot\{h+ (u\cdot\nu)\nu\}+\partial_t\phi\, d\|V_t\|
\label{mcfweak}
\end{equation}
holds, where we abbreviated $h(V_t,x)$ by $h$. 
\end{itemize}
\label{gsdef}
\end{definition} 
%%%%%%%%%%%%
The condition (b) may appear out of place in the definition of velocity. In fact, if $u$ is $0$ or a bounded
function and if $\|V_0\|$ satisfies \eqref{exden}, one can derive \eqref{exden} as a 
consequence of \eqref{mcfweak} via Huisken's
monotonicity formula.
However, if $u$ is not bounded, it is not clear how
to obtain \eqref{exden} from \eqref{mcfweak}. The other important point is that, unless one
has \eqref{exden}, it is unclear how to make sense of \eqref{uhl} and \eqref{mcfweak}. 
The difficulty is, $u(\cdot,t)$ needs to be defined as a $\|V_t\|$ 
measurable function for a.e$.$ $t\geq 0$. In general, $u(\cdot,t)$ is assumed to be in some Sobolev
space on $\Omega$, and we need to define $\|V_t\|$ measurable $u(\cdot,t)$ as a trace function. 
If we have \eqref{exden}, we may define the trace using the following inequality.
\begin{thm}\label{thmmz}
For a Radon measure $\mu$ on $\mathbb{R}^n$ with $D:= \sup _{B_r(x)\subset{\mathbb R}^n}\frac{\mu (B_r(x))}{\omega _{n-1} r^{n-1}}$ and $1\leq p <n$,
\begin{equation}
\int_{{\mathbb R}^n}|\phi|^{\frac{p(n-1)}{n-p}}\, d\mu \leq c(n,p) D\Big( \int _{\mathbb{R}^n} |\nabla \phi |^p \, dx \Big)^{\frac{n-1}{n-p}}
\label{mzineq}
\end{equation}
holds for $\phi \in C^1 _c (\mathbb{R}^n)$. 
\end{thm}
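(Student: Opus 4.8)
The plan is to first prove the $p=1$, set-theoretic version of the inequality and then bootstrap to general $p$ by a truncation-and-H\"older argument; the whole statement is classical (a Maz'ya/Meyers--Ziemer type trace inequality), so the goal is really to assemble standard ingredients correctly.

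\emph{Step 1 (a perimeter bound for $\mu$).} I would first establish that there is $c=c(n)$ such that for every bounded open set $E\subset{\mathbb R}^n$ of finite perimeter,
\[
\mu(E)\le c(n)\,D\,\|\nabla\chi_E\|({\mathbb R}^n).
\]
For each $x\in E$, since $E$ is open and bounded, ${\mathcal L}^n(E\cap B_r(x))/{\mathcal L}^n(B_r(x))$ tends to $1$ as $r\to 0$ and vanishes for $r$ large, so one may pick $r_x>0$ with ${\mathcal L}^n(E\cap B_{r_x}(x))=\tfrac12\omega_n r_x^n$. The relative isoperimetric inequality on $B_{r_x}(x)$ then gives $c(n)r_x^{n-1}\le\|\nabla\chi_E\|(B_{r_x}(x))$. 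The radii $r_x$ are uniformly bounded because $E$ is bounded, so the Vitali $5r$-covering lemma produces a countable disjoint subfamily $\{B_{r_i}(x_i)\}$ with $E\subset\bigcup_i B_{5r_i}(x_i)$, and then
\[
\mu(E)\le\sum_i\mu(B_{5r_i}(x_i))\le D\,\omega_{n-1}5^{n-1}\sum_i r_i^{n-1}\le c(n)\,D\sum_i\|\nabla\chi_E\|(B_{r_i}(x_i))\le c(n)\,D\,\|\nabla\chi_E\|({\mathbb R}^n),
\]
where disjointness is used in the last step.

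\emph{Step 2 (reduction to a first-order integral).} Fix $\phi\in C^1_c({\mathbb R}^n)$, put $f:=|\phi|$ and $q:=\tfrac{p(n-1)}{n-p}$. By the layer-cake formula and Step 1 applied to the open bounded superlevel sets $\{f>t\}$ (which are of finite perimeter for a.e.\ $t$),
\[
\int_{{\mathbb R}^n}|\phi|^{q}\,d\mu=q\int_0^\infty t^{q-1}\mu(\{f>t\})\,dt\le c(n)\,D\,q\int_0^\infty t^{q-1}\|\nabla\chi_{\{f>t\}}\|({\mathbb R}^n)\,dt=c(n)\,D\,q\int_{{\mathbb R}^n}|\phi|^{q-1}|\nabla\phi|\,dx,
\]
the last equality being the coarea formula for $f$ with the weight $t^{q-1}$ (using $|\nabla f|=|\nabla\phi|$ a.e.). When $p=1$ we have $q=1$ and this is already the assertion.

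\emph{Step 3 (H\"older and Sobolev).} For $p>1$, apply H\"older with exponents $p'=\tfrac{p}{p-1}$ and $p$:
\[
\int_{{\mathbb R}^n}|\phi|^{q-1}|\nabla\phi|\,dx\le\Big(\int_{{\mathbb R}^n}|\phi|^{(q-1)p'}\,dx\Big)^{1/p'}\Big(\int_{{\mathbb R}^n}|\nabla\phi|^p\,dx\Big)^{1/p}.
\]
A direct computation gives $(q-1)p'=\tfrac{np}{n-p}=:p^*$ and $p^*/p'=q-1$, so the Gagliardo--Nirenberg--Sobolev inequality $\|\phi\|_{L^{p^*}({\mathbb R}^n)}\le c(n,p)\|\nabla\phi\|_{L^p({\mathbb R}^n)}$ yields $\big(\int|\phi|^{p^*}\,dx\big)^{1/p'}\le c(n,p)\big(\int|\nabla\phi|^p\,dx\big)^{(q-1)/p}$. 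Substituting into Step 2 gives $\int|\phi|^{q}\,d\mu\le c(n,p)\,D\,\big(\int|\nabla\phi|^p\,dx\big)^{q/p}$, and $q/p=\tfrac{n-1}{n-p}$, which is \eqref{mzineq}. The only step needing genuine care is Step 1 (combining the relative isoperimetric inequality with a Vitali covering, and checking that $\{f>t\}$ is of finite perimeter for a.e.\ $t$); the rest is exponent bookkeeping, and the Riesz-potential route, by the way, genuinely fails here because the kernel $|x-y|^{1-n}$ is not integrable against an $(n-1)$-dimensional-density measure.
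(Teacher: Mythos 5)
Your proof is correct and takes essentially the route the paper intends: establish (or cite) the $p=1$ Meyers--Ziemer inequality $\int|\phi|\,d\mu\le c(n)\,D\int|\nabla\phi|\,dx$, then obtain $1<p<n$ by applying it to $|\phi|^{p(n-1)/(n-p)}$ and finishing with H\"older and the Gagliardo--Nirenberg--Sobolev inequality. The paper simply cites Meyers--Ziemer and Ziemer for the $p=1$ case and leaves the H\"older/Sobolev step unwritten; your Steps 1--2 supply the standard proof of the cited $p=1$ fact (relative isoperimetric inequality, Vitali $5r$-covering, coarea formula), and your Step 3 is exactly the H\"older/Sobolev bootstrap the paper has in mind, with the exponent arithmetic $(q-1)p'=p^*$ and $q/p=\tfrac{n-1}{n-p}$ checking out.
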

See \cite{meyers1977} and \cite{ziemer1989} for the proof in the case of $p=1$. The above
inequality for $1< p<n$ may be derived by the H\"{o}lder and Sobolev inequalities. 

Suppose that we have \eqref{exden}. We only need to define $u$ as a function in $L_{loc}^2(\|V_t\|\times dt)$ to 
make sense of \eqref{uhl} and \eqref{mcfweak}. Since $W^{1,p'}_{loc}\subset W^{1,p}_{loc}$ if $p'>p$, we need
to consider only $1\leq p<n$. Using the H\"{o}lder inequality and \eqref{mzineq},
we obtain (with $D:= \sup _{B_r(x)\subset\Omega}\frac{\|V_t\| (B_r(x))}{\omega _{n-1} r^{n-1}}$)
\begin{equation}
\begin{split}
\int _{\Omega} |\phi| ^2 \, d\|V_t\| & \leq \big(\int_{\Omega} |\phi|^{\frac{p(n-1)}{n-p}}\, 
d\|V_t\|
\big)^{\frac{2(n-p)}{p(n-1)}} (\|V_t\|({\rm spt}\,\phi))^{\frac{pn+p-2n}{p(n-1)}} \\
&\leq 
 (c(n,p) D)^{\frac{2(n-p)}{p(n-1)}} \big(\int_{\Omega} |\nabla\phi|^p\, dx\big)^{\frac{2}{p}}(\|V_t\|({\rm spt}\,\phi))^{\frac{pn+p-2n}{p(n-1)}}.
\end{split}
\label{mz}
\end{equation}
for $\phi\in C^1_c(\Omega)$. Here, we also need to assume that 
\begin{equation}
p\geq \frac{2n}{n+1}
\label{newpcon}
\end{equation}
so that $\frac{p(n-1)}{n-p}\geq 2$.
Since we will assume \eqref{pqncon} in the next subsection, which implies $p>\frac{n}{2}$ in particular, 
\eqref{newpcon} will be relevant only for $n=2$ and we will assume $p\geq \frac43$ when $n=2$. 
With this restriction, we may define $u$ as an $L^2_{loc}(\|V_t\|\times dt)$ function on $\Omega\times[0,T]$ uniquely
as long as $u\in L^2_{loc}([0,\infty);( W_{loc}^{1,p} (\Omega))^n)$ 
by the standard density argument. 
The function $u$ in \eqref{uhl} and \eqref{mcfweak} is defined in this sense. 

\subsection{Main results}
First we present some existence result for \eqref{mcf} when given a vector field $u$ and an initial hypersurface
$M_0$. 
\begin{thm}\label{existence}
Suppose $n\geq 2$, 
\begin{equation}
2<q<\infty,\ \  \frac{nq}{2(q-1)}<p<\infty\ \ (\ \frac43\leq p\mbox{ in addition if $n=2$})
\label{pqncon}
\end{equation}
and $\Omega ={\mathbb R}^n\mbox{ or }\mathbb{T}^n$. Given any
\begin{equation}
u\in L^q_{loc}([0,\infty ) ;(W^{1,p} (\Omega))^n )
\label{intu}
\end{equation}
and a non-empty bounded domain $\Omega _0 \subset \Omega $ with
$C^1$ boundary $M_0=\partial \Omega_0$, there exist
\begin{enumerate}
\item a family of varifolds $\{V_t\}_{t\geq 0}\subset{\bf V}_{n-1}(\Omega)$ which is a generalized solution of \eqref{mcf} as in Definition \ref{gsdef}
with $\|V_0\|={\mathcal H}^{n-1}\lfloor_{M_0}$ and 
\item a function $\varphi \in BV _{loc} (\Omega \times [0,\infty)) \cap C^{\frac{1}{2}} _{loc} ([0,\infty);L^1 (\Omega))$ 
with the following properties. 
\begin{enumerate}
\item[(2a)] $\varphi (\cdot ,t)$ is a characteristic function for all $t\in [0,\infty)$,
\item[(2b)] $\|\nabla \varphi(\cdot,t)\|(\phi)\leq \|V_t\|(\phi)$ for all $t\in [0,\infty)$
and $\phi\in C_c(\Omega;{\mathbb R}^+)$, 
\item[(2c)] $\varphi (\cdot,0) = \chi _{\Omega _0}$ a.e. on $\Omega$,
\item[(2d)] writing $\|V_t\|=\theta {\mathcal H}^{n-1}\lfloor_{M_t}$
and $\|\nabla \varphi(\cdot,t)\|={\mathcal H}^{n-1}\lfloor_{\tilde M_t}$ for a.e$.$ $t>0$, we have
\begin{equation}
{\mathcal H}^{n-1}(\tilde M_t\setminus M_t)=0
\label{parity0}
\end{equation}
and
\begin{equation}
\theta(x,t)=\left\{ \begin{array}{ll} \mbox{even integer $\geq 2$} & \mbox{if } x\in M_t\setminus \tilde M_t, \\
\mbox{odd integer $\geq 1$} & \mbox{if } x\in \tilde M_t
\end{array}
\right.
\label{parity}
\end{equation}
for ${\mathcal H}^{n-1}$ a.e$.$ $x\in M_t$. 
\end{enumerate}
\item If $p<n$, then for any $T>0$, setting $s:=\frac{p(n-1)}{n-p}$, we have
\begin{equation}
\Big(\int_0^T \Big(\int_{\Omega} |u|^{s}\, d\|V_t\|\Big)^{\frac{q}{s}}\, dt\Big)^{\frac{1}{q}}<\infty.
\label{intu2}
\end{equation}
If $p=n$, then we have \eqref{intu2} locally for $U\subset\subset\Omega$ for any $2\leq s<\infty$
and if $p>$n, then we have \eqref{intu2} with $L^s$ norm replaced by $C^{1-\frac{n}{p}}$ norm on $\Omega$.
%If $p=n$ and $\Omega={\mathbb R}^n$, then for any $2\leq s<\infty$ we have \eqref{intu2}
%with $\Omega$ replaced by any $U\subset\subset \Omega$. 
\item There exists $T_1>0$ such that $V_t$ has unit density for a.e. $t\in [0,T_1)$. In addition $\|\nabla \varphi(\cdot ,t) \| = \|V_t\| $ for a.e. $t\in[0,T_1)$.
\end{enumerate}
\end{thm}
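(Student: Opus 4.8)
The plan is to realize $\{V_t\}_{t\ge 0}$ as a subsequential limit, as $\e\downarrow 0$, of diffuse surface varifolds attached to solutions of the transported Allen--Cahn equation \eqref{fac}, and then to read off the conclusions (1)--(4) from a collection of $\e$-uniform estimates. Concretely, for each $\e>0$ I would solve \eqref{fac} with an initial datum $\varphi_{\e,0}$ obtained by composing the optimal one-dimensional transition profile with the signed distance to $M_0$; since $M_0=\partial\Omega_0$ is compact and $C^1$, this can be arranged so that the diffuse surface energies $\int_\Omega(\frac{\e}{2}|\nabla\varphi_{\e,0}|^2+\frac{W(\varphi_{\e,0})}{\e})\,dx$ (with $W$ a standard double-well potential) are bounded uniformly in $\e$ and the associated measures converge to $\mathcal H^{n-1}\lfloor_{M_0}$. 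Parabolic theory gives a smooth global $\varphi_\e$; one then sets $\mu_t^\e:=(\frac{\e}{2}|\nabla\varphi_\e|^2+\frac{W(\varphi_\e)}{\e})\,\mathcal L^n$ and defines $V_t^\e$ using the measured direction $\nabla\varphi_\e/|\nabla\varphi_\e|$, as in \cite{ilmanen1993,tonegawa2003,tonegawa2010}.

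The core of the work is a family of $\e$-independent estimates. The energy identity for \eqref{fac} gives uniform bounds on $\sup_{t\le T}\mu_t^\e(\Omega)$ and on the space--time $L^2$ norm of the diffuse mean-curvature term. The decisive ingredient --- and the step I expect to be the main obstacle --- is a localized Huisken-type monotonicity formula for $\mu_t^\e$ tested against the backward heat kernel \eqref{bhk}, in which the contribution of the transport term is bounded in terms of $\int|u|^2\,d\mu_t^\e$. Applying Theorem \ref{thmmz} in the form \eqref{mz}, this quantity is dominated by powers of the local density ratio times $\|u(\cdot,t)\|_{W^{1,p}}^2$, and the exponent conditions \eqref{pqncon} on $(p,q)$ are precisely what make a Gronwall-type estimate close and produce the $\e$-uniform \emph{upper density-ratio bound}. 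This bound is what everything downstream rests on, and it is also what lets the proof avoid \cite{roger-schazle}, hence the dimensional restriction of \cite{tonegawa2010}.

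Passing to the limit, a compactness/diagonal argument together with an $\e$-uniform $C^{1/2}$-in-time estimate produces a subsequence with $\varphi_\e\to\varphi$ in $L^1_{loc}(\Omega\times[0,\infty))$ and $\mu_t^\e\to\|V_t\|$ as Radon measures for \emph{every} $t\ge 0$; the double-well structure forces $\varphi(\cdot,t)$ to be $\{0,1\}$-valued, and the uniform $BV$ estimate yields $\varphi\in BV_{loc}\cap C^{1/2}_{loc}([0,\infty);L^1(\Omega))$ together with (2a)--(2c). Condition (b) of Definition \ref{gsdef} is the limit of the density-ratio bound; (2b) and the trace integrability \eqref{intu2} (in the three regimes $p<n$, $p=n$, $p>n$) follow from \eqref{mzineq}--\eqref{mz} and Sobolev embedding; condition (c) follows from lower semicontinuity of $\int|h|^2$ under varifold convergence together with \eqref{mz} for the $|u|^2$ term. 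Rectifiability of $\|V_t\|$ for a.e.\ $t$ is obtained as in Ilmanen \cite{ilmanen1993} (vanishing of the diffuse discrepancy in the limit, plus a clearing-out lemma), and integrality modulo the surface-energy constant by the method of \cite{tonegawa2000,tonegawa2003}; rescaling $\|V_t\|$ by that constant gives $V_t\in{\bf IV}_{n-1}(\Omega)$, i.e.\ (a). The weak formulation \eqref{mcfweak}, condition (d), is proved by passing to the limit in the diffuse analogue of \eqref{mcfweak} valid for $V_t^\e$, using the $L^2$ mean-curvature bound, lower semicontinuity, and the convergence of $u$ against $\mu_t^\e$. The parity assertions \eqref{parity0}--\eqref{parity} come from comparing $\|V_t\|$ with $\|\nabla\varphi(\cdot,t)\|$: where a single transition layer survives the limit one has $\theta=1$ and the two measures coincide, whereas colliding layers contribute in pairs, so the multiplicity of $\|V_t\|$ is an even integer off $\tilde M_t$ and an odd integer on $\tilde M_t$.

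For the short-time unit-density statement (part (4)): since $\|V_0\|=\mathcal H^{n-1}\lfloor_{M_0}$ is unit density and $M_0$ is $C^1$ and compact, its parabolic density ratios tend to $1$ uniformly as the scale goes to $0$, and the monotonicity formula --- whose transport error is $o(1)$ as the time horizon shrinks --- propagates this, keeping all relevant density ratios below $1+\delta$ on a space--time neighborhood of $\{t=0\}$. The local regularity theorem \ref{regreg} of \cite{kasaitonegawa,tonegawa2012}, whose hypotheses are met by virtue of (b), (c) and \eqref{intu2}, then shows that $M_t$ is locally a $C^1$ graph for $t\in[0,T_1)$ for some $T_1>0$; hence $V_t$ has unit density there, $\tilde M_t=M_t$ up to an $\mathcal H^{n-1}$-null set, and $\|\nabla\varphi(\cdot,t)\|=\|V_t\|$ for a.e.\ $t\in[0,T_1)$.
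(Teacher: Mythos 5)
Your outline tracks the paper's proof very closely: construction of $\varphi_\e$ by composing a one-dimensional transition profile with the signed distance to $M_0$, the $\e$-uniform density-ratio bound via a localized Huisken-type monotonicity formula with the transport term absorbed via \eqref{mz} and a Gronwall-type argument, compactness and the vanishing of the discrepancy measure to obtain rectifiability \`a la Ilmanen, integrality via \cite{tonegawa2000,tonegawa2003}, and passage to the limit in the diffuse Brakke inequality. These are all exactly the ingredients and in the same order as Sections 4--8.

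The one place where your argument has a genuine logical gap is the final paragraph, on the short-time unit-density statement (4). You propose to deduce unit density by first using the monotonicity formula to keep density ratios below $1+\delta$ near $t=0$ and then invoking the local regularity theorem to conclude $M_t$ is a $C^1$ graph, hence unit density. But Theorem \ref{regreg} takes unit density of $V_t$ as a \emph{hypothesis} (see part (1): ``Suppose that\ldots $V_t$ is unit density in $U$\ldots''), so this runs in a circle; indeed, in the paper it is precisely Theorem \ref{existence}(4) that is proved first and then fed into the proof of Theorem \ref{regreg}(2). The fix is short and you essentially already have it: by the integrality result (Theorem \ref{integralitymu}), $\theta$ is $\mathcal H^{n-1}$-a.e.\ a positive integer, so it suffices to show $\theta<2$. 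Using \eqref{longeq} (with $\e\to 0$), the $C^1$ regularity of $M_0$ forces $\int\tilde\rho\,d\|V_0\|\le 3/2$ for small $s$, and if $\theta(\hat x,\hat t)\ge 2$ at some $\hat t<T_1$, the Gaussian density at $(\hat x,\hat t)$ is $\ge 2$; the monotonicity error is $o(1)$ as $T_1\downarrow 0$, giving a contradiction once $T_1$ is small. No appeal to regularity is needed, and this also cleanly yields $\tilde M_t=M_t$ modulo a null set via the parity statement \eqref{parity}, since $\theta=1$ is odd.

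One further remark: your description of the parity assertion \eqref{parity0}--\eqref{parity} is a correct heuristic but does not indicate how it is proven; in the paper this requires a nontrivial modification of the integrality proof (an Egoroff--Fubini selection of good slices combined with the coarea formula and the monotone-covering argument from Proposition \ref{L}), not merely ``comparing'' $\|V_t\|$ with $\|\nabla\varphi(\cdot,t)\|$. This is a place where the heuristic is true but the details carry real content.
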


The condition \eqref{pqncon} on $u$ is a dimensionally sharp condition in the following sense. 
Consider a natural parabolic change of
variables $\tilde x:=\frac{x}{\lambda}$ and $\tilde t:=\frac{t}{\lambda^2}$ with $\lambda>0$. Since $u$ is a velocity field,
it should behave just like $x/t$, thus it is natural to consider $\tilde u:=\lambda u$. Then we have
\begin{equation*}
\Big(\int_{0}^{\infty}\Big(\int_{{\mathbb R}^n}|\nabla u|^p\, dx\Big)^{\frac{q}{p}}\, dt\Big)^{\frac{1}{q}}=\lambda^{
\frac{n}{p}+\frac{2}{q}-2} \Big(\int_{0}^{\infty}
\Big(\int_{{\mathbb R}^n}|\nabla \tilde u|^p\, d\tilde x\Big)^{\frac{q}{p}}\, d\tilde t\Big)^{\frac{1}{q}}
\end{equation*}
and $\frac{n}{p}+\frac{2}{q}-2<0$ is equivalent to the second inequality in \eqref{pqncon}. 
This guarantees that $u$ 
locally behaves more like a perturbative term. In (3), if $p>n$, then the result follows from the standard Sobolev inequality on ${\mathbb R}^n$.

To understand what $V_t$ and $\varphi$ are, assume for a moment that
no singular behaviors occur and we have a smooth family $\{M_t\}_{t\geq 0}$ with the velocity given by \eqref{mcf}. 
Then we should have ${\rm spt}\, \|V_t\|=\partial \{\varphi(\cdot,t)=1\}=M_t$. Since \eqref{mcf} is stated in terms of
$V_t$, it may first appear that $\varphi$ is redundant. However, beside the fact that $\varphi$ is obtained naturally
from the approach of the present paper, it has a few important roles. First, $\varphi$ helps to
guarantee that $V_t$ is non-trivial. Since $\varphi(\cdot,t)$ is continuous in $L^1(\Omega)$ by (2), 
$\|\varphi(\cdot,t)\|_{L^1(\Omega)}$ cannot vanish instantaneously at some arbitrary time.  
As long as $\varphi(\cdot,t)$ is not identically zero or identically 1, $\|V_t\|$ is non-zero measure.
Note that, given arbitrary $t_0>0$, by re-defining
$V_t:= 0$ for all $t>t_0$, we obtain another generalized 
solution of \eqref{mcf} due to the inequality in \eqref{mcfweak}. Obviously, this is not a solution
we would like to obtain in the end. 
The second role of $\varphi$ is that it gives some restriction on the possible singularities of ${\rm spt}\,\|V_t\|$. For example,
consider in the $n=2$ case. One can see that a unit density $V_t$ cannot form a triple junction since 
$\partial \{\varphi(\cdot,t)=1\}$ cannot be a triple junction. Thus, having $\varphi$ as an auxiliary object may
be a useful tool to obtain some better regularity results. As for the actual occurrence of the 
higher multiplicities, Bronsard and Stoth \cite{bron2} showed that one can have solution with
$\theta\geq 2$ for a limit of the Allen-Cahn equation, thus we may indeed have such solution in general.

We next state the regularity property of ${\rm spt}\,\|V_t\|$, which is obtained as an application of 
\cite{kasaitonegawa,tonegawa2012}. To state the result, we recall some definitions from there.
\begin{definition} A point $x\in {\rm spt}\,\|V_t\|$ is said to be a $C^{1,\zeta}$ regular point if there exists
some open neighborhood $O$ in ${\mathbb R}^{n+1}$ containing $(x,t)$ such that $O\cap \cup_{s>0}
({\rm spt}\,\|V_s\|\times\{s\})$ is an embedded $n$-dimensional manifold with $C^{1,\zeta}$ regularity
in space and $C^{(1+\zeta)/2}$ regularity in time. Similarly, we define a $C^{2,\alpha}$ regular point
by replacing the respective regularities by $C^{2,\alpha}$ in space and $C^{1,\alpha/2}$ in time. 
\label{defregpt}
\end{definition}
\begin{thm}
\label{regreg}
Let $\{V_t\}_{t\geq 0}$ be as in Theorem \ref{existence}. 
\begin{itemize}
\item[(1)]
Suppose that there exist an open set $U\subset \Omega$ and an interval $(t_1,t_2)$ such that $V_t$ 
is unit density in $U$ for a.e$.$ $t\in (t_1,t_2)$. Then for a.e$.$ $t\in (t_1,t_2)$, there exists a closed set $G_t\subset U$ 
with ${\mathcal H}^{n-1}(G_t)=0$ such that $(U\cap {\rm spt}\, \|V_t\|)\setminus G_t$ is a set of $C^{1,\zeta}$ regular points
where $\zeta:= 2-\frac{n}{p}-\frac{2}{q}$ if $p<n$. If $p\geq n$,
one may take any $\zeta$ with $0<\zeta<1-\frac{2}{q}$. 
\item[(2)] There exists $T_2>0$ such that every point of ${\rm spt}\, \|V_t\|$ is a $C^{1,\zeta}$ regular point 
for all $t\in (0,T_2)$ (that is, $G_t=\emptyset$), where $\zeta$ is as in (1). 
\item[(3)] If $u$ is H\"{o}lder continuous with exponent $\alpha$ in the parabolic sense, i.e.,
\begin{equation*}
\sup_{\Omega\times[0,T]}|u|+\sup_{x,y\in \Omega, 0\leq t_1<t_2\leq T}\frac{|u(x,t_1)-u(y,t_2)|}{\max\{ |x-y|^{\alpha}, |t_1-t_2|^{\alpha/2}\}}<\infty
\ \ \ \mbox{ for all $0<T<\infty$},
\end{equation*}
then the same results for (1) and 
(2) hold true with $C^{1,\zeta}$ there replaced by 
$C^{2,\alpha}$ and \eqref{mcf} is satisfied pointwise. 
\item[(4)] We have $\lim_{t\downarrow 0} t^{-\frac12} {\rm dist}\, (M_0, {\rm spt}\, \|V_t\|)=0$ and
${\rm spt}\, \|V_t\|$ converges to $M_0$ in $C^1$ topology as $t\downarrow 0$. Namely, 
given $\e>0$ there exists a finite number of sets $\{U_i=x_i+O_i(B_r ^{n-1}\times (-r,r))\}_{i=1}^N$, where
$O_i$ is an orthogonal rotation and $x_i\in M_0$, such that $M_0\subset \cup_{i=1}^N U_i$, and $C^1$ norms of difference of graphs
representing $M_0$ and ${\rm spt}\, \|V_t\|$ 
over $x_i+O_i(B_r^{n-1})$ in $U_i$ are less than $\e$ for all sufficiently small $t>0$. 
\end{itemize} 
\end{thm}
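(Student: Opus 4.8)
The plan is to deduce Theorem~\ref{regreg} from the local regularity theorems of \cite{kasaitonegawa,tonegawa2012}, so the real content is to verify their hypotheses. Those theorems take as input, on a space-time cylinder $B_R(x_0)\times(t_1,t_2)$, a family of integral $(n-1)$-varifolds which (i) has locally bounded density ratio, (ii) has generalized mean curvature in $L^2(\|V_t\|\,dt)$ and generalized velocity $v=h+(u\cdot\nu)\nu$ with $u$ in the integrability class of \eqref{intu2}, (iii) satisfies the Brakke-type inequality \eqref{mcfweak}, (iv) is of unit density, and (v) satisfies a smallness condition: the density ratio stays below $1+\e_0$ (equivalently the Gaussian density stays close to $1$) throughout the cylinder, $\e_0$ a dimensional constant. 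The conclusion is that near $(x_0,t_1')$ the set $\cup_s(\spt\|V_s\|\times\{s\})$ is a graph which is $C^{1,\zeta}$ in space and $C^{(1+\zeta)/2}$ in time, with $\zeta$ read off from the integrability of $u$ exactly as in the statement (and $C^{2,\alpha}$, with \eqref{mcf} holding classically, when $u$ is parabolic-H\"older). Items (i)--(iv) are supplied directly by Theorem~\ref{existence}: (i) is the density ratio bound in \eqref{exden}, (ii) is \eqref{uhl} together with item~(3) of Theorem~\ref{existence} (which gives $u\in L^q_t L^s(\|V_t\|)$ with $s=\tfrac{p(n-1)}{n-p}$, the exponent producing $\zeta=2-\tfrac np-\tfrac2q$ in the parabolic Campanato estimate), (iii) is \eqref{mcfweak}, and (iv) is imposed in the hypotheses of Theorem~\ref{regreg}(1),(2). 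Hence the only genuine work is to locate the points at which hypothesis (v) holds.

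For part~(1), fix a time $t$ in the full-measure set on which $V_t$ is unit density in $U$, so $\|V_t\|=\mathcal H^{n-1}\lfloor_{M_t}$. By the monotonicity formula with transport term established in Section~4 --- whose error terms are controlled precisely because of \eqref{exden} and Theorem~\ref{thmmz} --- the parabolic Gaussian density $\Theta(x,t):=\lim_{r\downarrow0}\int\rho_{(x,t)}(y,t-r^2)\,d\|V_{t-r^2}\|(y)$ is well defined and upper semicontinuous in $(x,t)$. Set $G_t:=\{x\in U\cap\spt\|V_t\|:\Theta(x,t)\geq 1+\e_0\}$. Since $V_t$ is unit density and integral, $\Theta(\|V_t\|,x)=1$ for $\mathcal H^{n-1}$-a.e$.$ $x\in M_t$; combining this with the monotonicity formula (which, for a.e$.$ $t$, forces $\Theta(x,t)$ to equal the spatial density $\mathcal H^{n-1}$-a.e$.$ on $M_t$, as in \cite{brakke,ilmanen1994}) gives $\mathcal H^{n-1}(G_t)=0$. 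At any $x\in(U\cap\spt\|V_t\|)\setminus G_t$, upper semicontinuity of $\Theta$ and the monotonicity formula produce a parabolic cylinder about $(x,t)$ in which the density ratio is below $1+\e_0$, so hypothesis~(v) holds and the local regularity theorem makes $x$ a $C^{1,\zeta}$ regular point; this proves~(1).

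For part~(4), and then~(2): cover the compact $C^1$ manifold $M_0$ by finitely many cylinders $U_i=x_i+O_i(B_r^{n-1}\times(-r,r))$ in which $M_0$ is a $C^1$ graph with small gradient; by uniform continuity of the normal of $M_0$ the density ratio of $\mathcal H^{n-1}\lfloor_{M_0}$ tends to $1$ uniformly as the scale shrinks. By item~(4) of Theorem~\ref{existence}, $\|\nabla\varphi(\cdot,t)\|=\|V_t\|$ and $V_t$ is unit density for a.e$.$ $t\in[0,T_1)$, while $\varphi(\cdot,t)\to\chi_{\Omega_0}$ in $L^1(\Omega)$ by item~(2); together with \eqref{exden} and the monotonicity formula and clearing-out lemma of Section~4 this forces $\spt\|V_t\|\subset\cup_iU_i$ and the density ratio in each $U_i$ to be below $1+\e_0$ for all small $t>0$. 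The local regularity theorem then represents $\spt\|V_t\|\cap U_i$ as a $C^{1,\zeta}$ graph converging in $C^1$ to the graph of $M_0$ as $t\downarrow0$; the $o(\sqrt t)$ decay of $\dist(M_0,\spt\|V_t\|)$ follows by comparing, via monotonicity, the Gaussian density at $(x_0,t)$ for $x_0\in M_0$ with that of the $C^1$ initial datum. This gives~(4); and since every point of $\spt\|V_t\|$ is now a $C^{1,\zeta}$ regular point for $t$ small, $G_t=\emptyset$ on some interval $(0,T_2)$, which is~(2). For part~(3), once $\spt\|V_t\|$ is locally a $C^{1,\zeta}$ graph the graph function solves a quasilinear parabolic equation whose coefficients and inhomogeneity are $\alpha$-H\"older in the parabolic metric, so parabolic Schauder theory upgrades it to $C^{2,\alpha}$ in space and $C^{1,\alpha/2}$ in time and identifies the classical velocity, giving \eqref{mcf} pointwise; this is the $C^{2,\alpha}$ branch of the cited local regularity theorem.

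The main obstacle is hypothesis~(v): showing that the bad set $G_t$ is $\mathcal H^{n-1}$-null for a.e$.$ $t$ in part~(1), and controlling the density ratio uniformly down to $t=0$ near the $C^1$ datum in part~(4). Both hinge on a careful use of the monotonicity formula in the presence of the transport term, whose error is tamed only through the density-ratio bound \eqref{exden} and the Meyers--Ziemer inequality Theorem~\ref{thmmz}; this is precisely where the dimensionally sharp condition \eqref{pqncon} is genuinely used.
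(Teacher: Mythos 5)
Your high-level strategy is the same as the paper's: reduce everything to the local regularity theorems of \cite{kasaitonegawa,tonegawa2012}. But the verification of those theorems' hypotheses is where almost all the actual work lies, and your proposal handles it too loosely in two places.

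For part (1) you describe constructing $G_t$ as the set where the parabolic Gaussian density exceeds $1+\e_0$, and you argue $\mathcal H^{n-1}(G_t)=0$ from ``$\Theta(x,t)$ equals the spatial density $\mathcal H^{n-1}$-a.e.\ on $M_t$ for a.e.\ $t$.'' That intermediate fact is itself a nontrivial claim, proved (for this class of flows) in \cite{kasaitonegawa} essentially as part of the partial regularity statement; invoking it to then rederive the partial regularity statement is circular. The paper avoids this by simply verifying the input hypotheses (A1)--(A4) of \cite[Sec.~3.1]{kasaitonegawa} --- unit density a.e., the density ratio bound \eqref{exden}, the integrability \eqref{intu2} of $u$, and the flow inequality \eqref{mcfweak} --- and citing the partial regularity theorem, which already delivers the closed $\mathcal H^{n-1}$-null bad set $G_t$ together with the exponent $\zeta$. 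If you prefer the route through an $\e$-regularity theorem plus a covering/density argument, you would need to supply the proof that the ``almost-everywhere Gaussian density $\to 1$'' fact holds, which is exactly the substance of the cited theorem.

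For parts (2) and (4) the gap is more serious. You assert that unit density, $L^1$-continuity of $\varphi$, \eqref{exden}, monotonicity and a clearing-out lemma together ``force'' the density ratio in each cylinder $U_i$ to drop below $1+\e_0$ for all small $t>0$, and then cite the $\e$-regularity theorem. But the actual hypothesis required by \cite[Thm.~8.7]{kasaitonegawa} is not a single upper density-ratio bound; it is a conjunction of a small spatial $L^2$-height bound, a smallness condition on $\|u\|$, and --- most delicately --- a two-sided mass condition (their (8.85)--(8.86)) ensuring neither too little nor too much mass in the relevant parabolic slab, with the ``not too little'' direction requiring a lower bound. Establishing these for all small $t$ near a merely $C^1$ initial surface requires: a $\mathrm{dist}(\mathrm{spt}\,\|V_t\|,M_0)\leq C\sqrt{t}$ estimate proved through a weighted monotonicity comparison against $\|V_0\|$ (\cite[Prop.~6.2]{kasaitonegawa}); a spatial $L^2$-height estimate for $\|V_t\|$ relative to the tangent plane of $M_0$ (\cite[Prop.~6.5]{kasaitonegawa}); a sup-height (clearing-out) estimate (\cite[Prop.~6.4]{kasaitonegawa}); and then the lower mass bound is extracted from the generalized divergence theorem applied to $\{\varphi(\cdot,t)=1\}$ together with the topological information from \eqref{indis4.6} that $\varphi$ equals $1$ above and $0$ below the cylinder. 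None of this is ``forced'' by the abstract structural facts you list; in particular, unit density a.e.\ does not imply a density ratio near $1$ at any particular scale, and the lower bound on mass is precisely what could fail without the characteristic-function information coming from $\varphi$. Your claimed $o(\sqrt t)$ decay in (4) likewise needs the graph representation with $C^1$ norm going to zero, not just a Gaussian-density comparison (which alone gives $O(\sqrt{t})$, not $o(\sqrt t)$). The paper also has to handle the technical point that $V_t$ need only be unit density for a.e.\ $t$, which is why the conclusion is stated first on a full-measure set and then upgraded to \emph{every} $t\in(0,T_2)$ through the graph continuity in time. Part (3) is handled correctly at the level of strategy.

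In short: correct strategy, but the proof of the smallness/two-sided mass hypotheses in (2) and (4), and the $\mathcal H^{n-1}$-null argument in (1), are substantive components that your proposal defers to an appeal that does not actually close the gap.
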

The claim (1) says that wherever $V_t$ is unit density in some space-time neighborhood, 
${\rm spt}\,\|V_t\|$ is locally a hypersurface with regularity of $C^{1,\zeta}$ in space and
$C^{(1+\zeta)/2}$ in time, almost everywhere in space and time. We can guarantee by (2) that there is some
time interval $[0,T_2)$ such that ${\rm spt}\,\|V_t\|$ is a $C^{1,\zeta}$ hypersurface. We obtain a lower bound
on $T_2$ in terms of $M_0$ and the norm of $u$. On the other hand, $T_2$ may be much larger than
the lower bound and it is the time when a non-$C^{1,\zeta}$ regular point
occurs for the first time. In general, $T_2\leq T_1$ and it is plausible that some non-$C^{1,\zeta}$ regular point
first appears at $T_2$ but $V_t$ may remain unit density for some more time. The claim (4) shows that ${\rm spt}\,\|V_t\|$
has $C^1$ uniform regularity and convergence as $t\downarrow 0$. 
As for (3),
we first note that we can show the same existence results for H\"{o}lder continuous $u$ (and not in 
$L^q_{loc}([0,\infty);(W^{1,p}(\Omega))^n)$) as in 
Theorem \ref{existence}. In fact the proof is simpler if $u$ is bounded.
$C^{2,\alpha}$ regularity allows one to have pointwise mean curvature vector and velocity vector of ${\rm spt}\,
\|V_t\|$ and \eqref{mcf} is satisfied pointwise. At this point, we reach a well-defined PDE setting, and ${\rm spt}\,\|V_t\|$ 
is as regular as what the standard parabolic regularity theory shows depending on any additional regularity 
assumption imposed on $u$. 

\section{Allen-Cahn equation with transport term}
As stated in the introduction, the method of proof for the existence is to approximate \eqref{mcf} by the Allen-Cahn
equation with an extra transport term coming from $u$. 
Throughout the paper, we assume that a function
$W$ satisfies the following:
\begin{equation}
\mbox{$W:\mathbb{R}\to[0,\infty)$ is $C^3$ and $W(\pm 1)=W'(\pm 1)=0$. } 
\label{Was1}
\end{equation}
\begin{equation}
\mbox{ For some $\gamma\in(-1,1)$, $W'<0$ on $(\gamma,1)$ and $W'>0$ on $(-1,\gamma)$.} 
\label{Was2}
\end{equation}
\begin{equation}
\mbox{ For some $\alpha\in (0,1)$ and $\kappa >0$, $W''(x)\geq \kappa$ for all $1\geq |x|\geq \alpha$. }
\label{Was3}
\end{equation}
We also define a constant 
\begin{equation}
\sigma:=\int_{-1}^1 \sqrt{2W(s)}\, ds.
\label{Was4}
\end{equation}
Basically, above assumptions require $W$ to be W-shaped with non-degenerate two minima 
at $\pm 1$. Requiring \eqref{Was2} may appear non-essential, but it is used essentially in deriving 
an upper bound for $\xi_{\e}$ in Lemma \ref{betalem}. Any such $W$ satisfying above can be used.
The reader can take a concrete example such as $W(s)=(1-s^2)^2$ in the following. 

Given $u$ and $M_0$ as in Theorem \ref{existence}, the whole scheme of the present paper is to
approximate the motion law \eqref{mcf} by 
\begin{equation}
\partial_t \varphi_{\e} + u_{\e}\cdot \nabla\varphi_{\e}=\Delta \varphi_{\e}-\frac{W'(\varphi_{\e})}{\e^2},
\label{fac}
\end{equation}
where $\e>0$ is a small parameter tending to 0 and $u_{\e}$ is a smooth approximation of $u$. For 
readers who are not familiar with the Allen-Cahn equation, we give a quick heuristic argument. Assume
that $u$ is smooth and that we have a family of domains $\Omega_t$ with smooth boundaries $M_t=\partial \Omega_t$. 
Let $d(\cdot,t)$ be the signed distance function to $M_t$ so that $d(\cdot,t)>0$ inside
of $\Omega_t$. We let $\Psi\,:\, {\mathbb R}\to (-1,1)$ be an ODE solution of $\Psi''=W'(\Psi)$ with 
$\lim_{x\to\pm \infty}\Psi(x)=\pm 1$. Such solution exists and we may assume $\Psi(0)=0$.
If we postulate that $\varphi_{\e}(x,t)\approx \Psi(d(x,t)/\e)$ and $\varphi_{\e}$ satisfies \eqref{fac}, then we expect
that 
\begin{equation}
\Psi' \partial_t d+u_{\e}\cdot \Psi' \nabla d\approx \Psi' \Delta d+\e^{-1}(\Psi''|\nabla d|^2-W'(\Psi)).
\label{fac1}
\end{equation}
Since $d$ is a distance function, $|\nabla d|=1$, and the last two terms cancel each other. This leaves
\begin{equation}
\partial_t d+u_{\e}\cdot \nabla d\approx \Delta d.
\label{fac2}
\end{equation}
Due to the nature of distance function, evaluated on $M_t$, 
$\partial_t d$ is the outward velocity of $M_t$, $u_{\e}\cdot \nabla d$ is the inward normal component of $u_{\e}$ and
$\Delta d$ is the mean curvature of $M_t$. As $\e\to 0$, this approximation may be expected to get better, and
the relation \eqref{fac2} motivates that $\{\varphi_{\e}(\cdot,t)=0\}$ should
converge to $M_t$ which moves by \eqref{mcf}. This heuristic argument may be justified if we know in advance 
that there exists a smooth $M_t$ moving by \eqref{mcf}. Here, however, $u$ is not smooth and we aim to obtain 
a time-global existence result which necessitates a framework inclusive of singularities. This is the reason to 
use the language of varifold in this paper as was done first by Ilmanen \cite{ilmanen1993}. The basic approach is
to prove that $\varphi_{\e}$ satisfying \eqref{fac} has the property that
\begin{equation}
\mu^{\e}:= \Big(\frac{\e|\nabla\varphi_{\e}|^2}{2}+\frac{W(\varphi_{\e})}{\e}\Big)\, dx\approx \sigma N(x,t) {\mathcal H}^{n-1}\lfloor_{M_t} 
\label{fac3}
\end{equation}
when $\e$ is small and where $N(x,t)$ is some integer. 
At the same time we prove that the limiting measure of $\mu^{\e}$ satisfies \eqref{mcfweak}. 
The first key estimate to be established is the analogue of \eqref{exden} for $\varphi_{\e}$ which will be discussed in the next section.
%%%%%%%%%%%%%%%%%%%%%%%%%%%%
\section{Density ratio upper bound and energy monotonicity formula}
\label{drub}
%%%%%%%%%%%%%%%%%%%%%%%%%%%
In this section, we prove the upper density ratio bound for diffused interface energy and energy monotonicity
formula which are crucial in the limiting process. 
Estimates in this section are similar to \cite[Section 3]{tonegawa2010} with some modifications.

\subsection{The upper density ratio bound}
We state the main theorem concerning the uniform density ratio upper bound independent of $\e$
of the Allen-Cahn equation with extra transport term. The proof takes the entire Section \ref{drub}. We establish the monotonicity
formula which is a perturbed version of Ilmanen's monotonicity formula for the Allen-Cahn equation 
(and Huisken's monotonicity formula for the
MCF \cite{huisken1990}) along the way. 
\begin{thm}\label{density}
Suppose $n\geq 2$, $\Omega={\mathbb T}^n$ or ${\mathbb R}^n$, $p,q$ satisfy \eqref{pqncon},
\begin{equation}
0<\beta<\frac12,
\label{betacon}
\end{equation}
$0<\e<1$ and $\varphi$ satisfies 
\begin{equation}
\partial_t\varphi+u\cdot\nabla\varphi=\Delta\varphi-\frac{W'(\varphi)}{\e^2}\hspace{1cm}
\mbox{on $\Omega\times [0,T]$},
\label{beq1}
\end{equation}
\begin{equation}
\varphi(x,0)=\varphi_0(x)\hspace{1cm}\mbox{on $\Omega$}.
\label{beq2}
\end{equation}
Assume $u\in C^{\infty}_c(\Omega\times[0,T])$, $\nabla^j\varphi,\, \partial_t\nabla^k\varphi\in C(\Omega\times[0,T])$ for $k\in \{0,1\}$
and $j\in \{0,1,2,3\}$.
Let $\mu_t^{\e}$ be a Radon measure on $\Omega$ defined by
\begin{equation}
\int_{\Omega}\phi(x)\, d\mu^{\e}_t(x):=\int_{\Omega}\phi(x)\left(\frac{\e|\nabla\varphi(x,t)|^2}{2}
+\frac{W(\varphi(x,t))}{\e}\right)\, dx
\label{defrad}
\end{equation}
for $\phi\in C_c(\Omega)$ and 
define 
\begin{equation}
D(t) :=\max\left\{1, \mu_t^{\e}(\Omega), \sup _{B_r(x)\subset\Omega} \frac{\mu^{\e}_t (B_r (x))}{\omega _{n-1} r^{n-1}}\right\}, \qquad t\in [0,T]. 
\label{defdens}
\end{equation}
Assume
\begin{equation}
\sup_{\Omega\times[0,T]}|\varphi|\leq 1,
\label{u1sup}
\end{equation}
\begin{equation}
\hspace{1cm}\sup_{\Omega}\e^i |\nabla^i\varphi_0|\leq \Cl[c]{c-0}\hspace{1cm}
\mbox{for $i\in \{1,2,3\}$},
\label{u1}
\end{equation}
\begin{equation}
\lim_{R\rightarrow\infty} R^k \|\varphi+1\|_{C^2(({\mathbb R}^n\setminus B_R)\times [0,T])}=0 
\mbox{ for any }k\in {\mathbb N}\mbox{ in case $\Omega={\mathbb R}^n$},
\label{extu1}
\end{equation}
\begin{equation}
\sup _{\Omega} \Big( \frac{\e |\nabla \varphi _0 |^2}{2} -\frac{W(\varphi _0 )}{\e}\Big)\leq \e ^{-\beta},
\label{ic}
\end{equation}
\begin{equation}
\sup_{\Omega \times [0,T]}|u|\leq  \e ^{-\beta}, \sup_{\Omega \times [0,T]}|\nabla u|\leq  \e ^{-(\beta+1)},
\label{u2}
\end{equation}
\begin{equation}
\|u \|_{L^q ([0,T];(W^{1,p}(\Omega))^n) }\leq \Cl[c]{c-1}
\label{u3}
\end{equation}
and
\begin{equation}
D(0)\leq D_0.
\label{dzerocon}
\end{equation}
Then there exist $D_1=D_1(\Cr{c-1},n,p,q,D_0,T)>0$ and $\Cl[eps]{eps-1}=\Cr{eps-1}(\Cr{c-1},
n,p,q,D_0,T,\Cr{c-0},\beta,W)>0$ such that 
\begin{equation}
\sup_{t\in [0,T]} D(t)\leq D_1
\label{denconcl}
\end{equation}
as long as $\e<\Cr{eps-1}$.
\end{thm}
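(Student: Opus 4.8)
The plan is to establish, for every space-time pole $(y,s)$, a perturbed Ilmanen--Huisken monotonicity formula for the Gaussian-weighted energy $\int_\Omega\rho_{(y,s)}(\cdot,t)\,d\mu_t^\e$, to control its error terms by a discrepancy bound and the Meyers--Ziemer inequality \eqref{mzineq}, and then to run a continuation argument for $D(t)$. The first ingredient is a \emph{discrepancy bound}: writing $\xi_\e:=\frac{\e|\nabla\varphi|^2}{2}-\frac{W(\varphi)}{\e}$ and differentiating \eqref{beq1}, one sees that $\xi_\e$ obeys a parabolic inequality $\partial_t\xi_\e+u\cdot\nabla\xi_\e\le\Delta\xi_\e+(\text{lower order})$ on $\Omega\times[0,T]$, in which the strict convexity of $W$ near $\pm1$ from \eqref{Was2}, \eqref{Was3} dominates the bad part of the nonlinearity while \eqref{u2} controls the terms carrying $u$ and $\nabla u$; combined with $|\varphi|\le1$ (\eqref{u1sup}), the initial bounds \eqref{ic}, \eqref{u1}, and (on $\R^n$) the decay \eqref{extu1}, the maximum principle yields $\xi_\e\le\e^{-\beta}$ on $\Omega\times[0,T]$ provided $\e<\Cr{eps-1}$, with $\Cr{eps-1}$ depending on $n,p,q,D_0,T,\Cr{c-0},\beta,W$; this is the statement used below as Lemma~\ref{betalem}.

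Next I would differentiate $\int_\Omega\rho_{(y,s)}(\cdot,t)\,d\mu_t^\e$ in $t$ using \eqref{beq1}, integration by parts, and the heat-kernel identities, following Ilmanen's Allen--Cahn computation, the diffuse analogue of Huisken's monotonicity formula. This produces a nonpositive principal square term; a discrepancy error which, since $\xi_\e\le\e^{-\beta}$ with $\beta<\tfrac12$, can be absorbed into a modified, almost monotone quantity up to a remainder that is bounded uniformly in $\e$ (and vanishes as $\e\to0$); and a transport error from the $-u\cdot\nabla\varphi$ term in \eqref{beq1}, which after completing the square is $\le C\int_\Omega\rho_{(y,s)}|u|^2\,d\mu_t^\e$ plus pieces absorbed into the principal square. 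Integrating the resulting differential inequality over $[\tau_1,\tau_2]\subset[0,T]$, one gets that $\int\rho_{(y,s)}(\cdot,\tau_2)\,d\mu_{\tau_2}^\e$ is at most $\int\rho_{(y,s)}(\cdot,\tau_1)\,d\mu_{\tau_1}^\e$, plus the accumulated discrepancy remainder, plus $C\int_{\tau_1}^{\tau_2}\!\int_\Omega\rho_{(y,s)}|u|^2\,d\mu_\tau^\e\,d\tau$.

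The crux is to bound this last term uniformly in $\e$. Splitting $\Omega$ into $B_{\sqrt{s-\tau}}(y)$ and dyadic annuli about $y$ to exploit the Gaussian decay of $\rho_{(y,s)}$, applying \eqref{mzineq}/\eqref{mz} to $u$ times a cutoff on each piece, and using the Sobolev inequality to absorb the cutoff gradient, one obtains $\int_\Omega\rho_{(y,s)}|u|^2\,d\mu_\tau^\e\le c(n,p)\,D(\tau)\,\|u(\cdot,\tau)\|_{W^{1,p}(\Omega)}^2\,(s-\tau)^{-\frac{(n-1)\theta}{2}}$ with $\theta:=\frac{2(n-p)}{p(n-1)}$, where $\theta\le1$ exactly by \eqref{newpcon} (for $p\ge n$ the Morrey estimate does this with no singular weight). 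H\"older in time with exponent $q/2>1$, \eqref{u3}, and the elementary identity $\int_{\tau_1}^{\tau_2}(s-\tau)^{-a}\,d\tau\le(1-a)^{-1}(\tau_2-\tau_1)^{1-a}$ --- valid precisely when $a:=\frac{(n-1)\theta q}{2(q-2)}<1$, and $a<1$ unwinds to $\frac np+\frac2q<2$, i.e. to the second inequality in \eqref{pqncon} --- then give, uniformly in $(y,s)$ with $s-\tau_2$ below a fixed scale,
\[
\int_{\tau_1}^{\tau_2}\!\int_\Omega\rho_{(y,s)}|u|^2\,d\mu_\tau^\e\,d\tau\ \le\ c(n,p,q)\,\|u\|_{L^q([0,T];(W^{1,p}(\Omega))^n)}^2\,\Big(\sup_{[\tau_1,\tau_2]}D\Big)\,(\tau_2-\tau_1)^{\gamma},\qquad \gamma:=\tfrac{(1-a)(q-2)}{q}>0;
\]
the parallel computation with $\rho\equiv1$ gives the analogous bound for $\mu_t^\e(\Omega)$. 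To close, observe that only scales $r$ below a fixed $\rho_0$ matter (larger balls being controlled by $\mu_t^\e(\Omega)$), and for $r\le\rho_0$ the pole $(y,s)=(x,t+r^2)$ gives $\frac{\mu_t^\e(B_r(x))}{\omega_{n-1}r^{n-1}}\le C(n)\int\rho_{(y,s)}(\cdot,t)\,d\mu_t^\e$; fixing $\delta>0$ small, depending only on $n,p,q,\Cr{c-1}$, and applying the second paragraph on $[\max(0,t-\delta),t]$ --- bounding the time-$\tau_1$ term by $C(n)D(\tau_1)$ via a dyadic decomposition (and \eqref{dzerocon} when $\tau_1=0$), and the discrepancy remainder by a constant independent of $\e$ --- one gets $\sup_{[\tau_1,\tau_2]}D\le 2C(n)^2 D(\tau_1)+C(n,p,q,\Cr{c-1})$ on each window of length $\delta$; iterating over $\lceil T/\delta\rceil$ windows from $D(0)\le D_0$ yields $\sup_{[0,T]}D\le D_1(\Cr{c-1},n,p,q,D_0,T)$, i.e. \eqref{denconcl} (the bound grows geometrically in the number of windows, hence exponentially in $T$, as is expected for a transported flow).

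The main obstacle is the transport estimate of the third paragraph: one needs simultaneously $\theta\le1$ (so that the dyadic sum delivers the power $D^1$, rather than $D^\theta$ with $\theta>1$, which a continuation argument cannot absorb) and $a<1$ (so that the time integral converges and leaves a positive power of the elapsed time, uniformly in the scale $r$), and these two demands are exactly what force the dimensionally sharp hypotheses \eqref{newpcon} and \eqref{pqncon}. A secondary, more technical point is organizing the discrepancy contribution to the monotonicity formula into a genuinely almost-monotone quantity with a remainder bounded uniformly in $\e$; this is where $\beta<\tfrac12$ and the $\e$-dependent bounds \eqref{u2}, \eqref{ic} enter, and where the threshold $\Cr{eps-1}$ is pinned down.
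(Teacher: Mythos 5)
Your overall architecture — a perturbed Huisken--Ilmanen monotonicity formula, a discrepancy bound $\xi_\e\le\e^{-\beta}$ by the maximum principle, control of the transport error $\int\tilde\rho\,|u|^2\,d\mu_t^\e$ via Meyers--Ziemer and H\"older (with the exponents $\tfrac{2(n-p)}{p(n-1)}\le 1$ and the strict inequality in \eqref{pqncon} entering exactly as you say), followed by a short-time continuation argument on windows of fixed length depending only on $n,p,q,\Cr{c-1}$ — matches the paper's proof closely. The transport estimate of Lemma~\ref{lem4-12} and the iteration around Proposition~\ref{time} and \eqref{defcb2} are essentially your third and fourth paragraphs, organized as a contradiction argument rather than a direct absorption, but with the same dependences and the same final $D_1$.

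There is, however, a genuine gap in your treatment of the discrepancy contribution, which you flag as ``secondary'' but which is actually one of the hardest parts of the proof. The term entering the monotonicity formula \eqref{monoton} is $\frac{1}{2(s-t)}\int_\Omega(\xi_\e)_+\,\tilde\rho\,dx$. With only the pointwise bound $\xi_\e\le 10\e^{-\beta}$ (Lemma~\ref{betalem}) and $\int\tilde\rho\,dx\le\sqrt{4\pi(s-t)}$, this integrates to roughly $\e^{-\beta}\sqrt{t_1-t_0}$ over a window $[t_0,t_1]$. Since your windows have length $\delta$ depending only on $n,p,q,\Cr{c-1}$ and \emph{not} on $\e$, this remainder is of order $\e^{-\beta}$ and blows up as $\e\to0$; it cannot be ``bounded uniformly in $\e$'' as you claim, let alone vanish. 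The fact that $\beta<\tfrac12$ does not by itself rescue this.

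What the paper does to close this gap — and what is missing from your sketch — is to upgrade the pointwise discrepancy bound to a ball-wise smallness estimate: Lemma~\ref{disnic} shows
\[
\int_{B_r(y)}(\xi_\e)_+\,dx\ \le\ \Cr{c-8}\,\e^{\beta'-\beta}\,r^{n-1}\qquad\text{for }\e^{\beta'}\le r<\tfrac12,\ \ \beta':=\tfrac{1+\beta}{2},
\]
so the time-integrated discrepancy (Lemma~\ref{discre1}) is $\lesssim\e^{\beta'-\beta}|\log\e|\to 0$. This localization is not a soft consequence of the maximum principle. It rests on two further ingredients that your proposal does not mention: (i) a forward density \emph{lower} bound at points where $|\varphi|\le\alpha$ (Lemma~\ref{lem-low}, which itself uses the monotonicity formula over $\e$-dependent short time scales, hence requires the crude $\e^{-\beta}$ bound only for small elapsed times), combined with a Vitali covering argument to show that the set where $|\varphi|$ is not near $\pm1$ has $\mathcal L^n$-measure $O(\e^{\beta'}r^{n-1})$ in $B_r$; and (ii) a clearing-out/decay estimate for $|\nabla\varphi|^2$ \emph{outside} that transition region, obtained by testing the linearized equation against a cutoff and exploiting $W''\ge\kappa$ on $\{|\varphi|\ge\alpha\}$ from \eqref{Was3}. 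Both steps, in turn, rely on the interior parabolic regularity $\e|\nabla\varphi|+\e^{3/2}[\nabla\varphi]_{1/2}\le c$ of Lemma~\ref{est1}, which your proposal also does not invoke. Without Lemma~\ref{disnic} (or an equivalent), the continuation argument does not close, because the discrepancy remainder overwhelms the constants you are trying to iterate.

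In short: your plan is correct in its large-scale outline, and the transport-term and continuation steps are genuinely the same as the paper's, but the claim that $\xi_\e\le\e^{-\beta}$ alone controls the discrepancy error uniformly in $\e$ is false, and filling that hole requires the forward lower density bound and the ball-wise discrepancy localization that form the technical core of Section~\ref{drub}.
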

\begin{remark}
If $u=0$, $\mu^{\e}_t(\Omega)$ is monotone
decreasing, thus it is straightforward to conclude that $\mu^{\e}_t(\Omega)$ is bounded uniformly independent of $\e$
if $\mu^{\e}_0(\Omega)$ is.
The uniform density ratio bound may be also obtained from Ilmanen's monotonicity formula.  
When $u\neq 0$, however, it is non-trivial even to conclude that the total energy $\mu^{\e}_t(\Omega)$ 
up to time $T$ has a uniform bound independent of $\e$. We will see that we need the density ratio bound to
estimate $\mu^{\e}_t(\Omega)$. 
\end{remark}
\subsection{Monotonicity formula}
In this subsection as a first step we obtain a modified monotonicity formula analogous to that of 
Ilmanen \cite{ilmanen1993}. 
It is still not a very useful formula due to the possible negative contribution coming from $\xi_\e$ defined below.
We will show that the negative contribution is small when $\e$ is small. 

To localize the computations, fix a radially symmetric cut-off function 
\begin{equation}
\eta(x) \in C_c ^\infty (B_{\frac{1}{2}} )\ \ \mbox{ with } \ \ \eta=1 \ \ \mbox{ on } \ \ B_{\frac{1}{4}}, \ \ \ 0\leq \eta \leq 1.
\label{etacut}
\end{equation}
Define
\begin{equation}
 \tilde \rho _{(y,s)}(x,t):=\rho _{(y,s)}(x,t) \eta (x-y)=\frac{1}{(4\pi (s-t))^{\frac{n-1}{2}}} e^{-\frac{|x-y|^2}{4(s-t)}}\eta (x-y), \qquad t<s, \ x,y\in \Omega 
 \label{defti}
 \end{equation}
and define 
\begin{equation}
 e_\e :=\frac{\e |\nabla \varphi|^2}{2}+ \frac{W(\varphi)}{\e} ,\qquad \xi_\e: =\frac{\e |\nabla \varphi|^2}{2}- \frac{W(\varphi)}{\e}. 
 \label{defti2}
 \end{equation}

\begin{pro}
Suppose that $\varphi$ satisfies \eqref{beq1}. With the notation of \eqref{defrad}, \eqref{defti}, \eqref{defti2}
and writing ${\tilde\rho}={\tilde\rho}_{(y,s)}(x,t)$, we have 
$\Cl[c]{c-2}$ depending only on $n$ such that 
\begin{equation}
\begin{split}
\frac{d}{dt} \int _{\Omega} \tilde \rho \, d\mu^{\e} _t (x) \leq & 
\frac12\int _\Omega \tilde \rho |u| ^2 \, d\mu^{\e} _t (x) 
+ \frac{1}{2(s-t)} \int _{\Omega} \xi _\e \tilde \rho \, dx  +\Cr{c-2} e ^{-\frac{1}{128(s-t)}}\mu^{\e} _t (B_{\frac12}(y))
\end{split}
\label{monoton}
\end{equation}
for $y\in \Omega$, $0<t<s<\infty$ and $t<T$.
\end{pro}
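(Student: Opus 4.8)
The plan is to differentiate $\int_\Omega \tilde\rho\, d\mu^\e_t$ in time, substitute the evolution equation \eqref{beq1}, integrate by parts to move derivatives off $\varphi$, and then exploit the special structure of the backward heat kernel. First I would write
\[
\frac{d}{dt}\int_\Omega \tilde\rho\, e_\e\, dx
= \int_\Omega \partial_t\tilde\rho\, e_\e + \tilde\rho\,\partial_t e_\e\, dx.
\]
For the second term, using $\partial_t e_\e = \e\,\nabla\varphi\cdot\nabla\partial_t\varphi + \e^{-1}W'(\varphi)\partial_t\varphi$ and then \eqref{beq1} to replace $\partial_t\varphi = \Delta\varphi - \e^{-1}W'(\varphi) - u\cdot\nabla\varphi$, one integrates by parts in $x$ (the cut-off $\eta$ makes all boundary terms vanish, or in the $\Omega = \R^n$ case the decay hypothesis \eqref{extu1} does). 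The standard Allen--Cahn computation produces the ``good'' negative term $-\int \tilde\rho\, \e\,(\partial_t\varphi)^2 - u\cdot\nabla\varphi\,\cdot(\cdots)$ together with terms involving $\nabla\tilde\rho$. The transport term is handled by Young's inequality: $-\e\,(u\cdot\nabla\varphi)\partial_t\varphi \le \tfrac{\e}{2}(\partial_t\varphi)^2 + \tfrac{\e}{2}|u|^2|\nabla\varphi|^2 \le \tfrac{\e}{2}(\partial_t\varphi)^2 + |u|^2 e_\e$, which after multiplication by $\tilde\rho$ and integration yields the $\tfrac12\int\tilde\rho|u|^2\, d\mu^\e_t$ on the right-hand side of \eqref{monoton}, while the leftover $-\tfrac{\e}{2}\int\tilde\rho(\partial_t\varphi)^2$ is discarded for being negative.

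The heart of the matter is the treatment of the kernel terms. Recall the classical identity that the backward heat kernel $\rho_{(y,s)}$ satisfies $\partial_t\rho + \Delta\rho = -\big(\tfrac{|x-y|^2}{4(s-t)^2} - \tfrac{n-1}{2(s-t)}\big)\rho$ and, more importantly, the vector identity $\nabla\rho = -\tfrac{x-y}{2(s-t)}\rho$, so that $|\nabla\rho|^2/\rho = \tfrac{|x-y|^2}{4(s-t)^2}\rho$. After the integration by parts one collects, schematically, a term of the form $\int e_\e\big(\partial_t\tilde\rho + \Delta\tilde\rho - \tfrac{|\nabla\tilde\rho|^2}{\tilde\rho}\big)\, dx$ plus a term $\int \xi_\e\, \tfrac{|\nabla\tilde\rho|^2}{\tilde\rho}\,dx$-type contribution; the point of introducing $\xi_\e = e_\e - 2W(\varphi)/\e$ in \eqref{defti2} is precisely that the difference $\e|\nabla\varphi|^2 - e_\e$ equals $\xi_\e$, and this is what survives in the ``bad'' term. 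Using the kernel identities one checks that $\partial_t\tilde\rho + \Delta\tilde\rho - \tfrac{|\nabla\tilde\rho|^2}{\tilde\rho} = \tfrac{1}{2(s-t)}\tilde\rho + (\text{terms supported where }\nabla\eta\neq 0)$, and the $\nabla\eta$-supported terms are all bounded by $\Cr{c-2}\,e^{-1/(128(s-t))}$ times the local energy $\mu^\e_t(B_{1/2}(y))$, because $\nabla\eta$ is supported in $B_{1/2}\setminus B_{1/4}$ where $e^{-|x-y|^2/4(s-t)} \le e^{-1/(64(s-t))}$ and its $t$-derivative and Laplacian contribute at worst polynomial-in-$(s-t)^{-1}$ factors that are absorbed by changing $64$ to $128$. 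This yields exactly the three terms on the right of \eqref{monoton}.

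The main obstacle is purely organizational: keeping careful track of the many terms generated by integrating by parts against the \emph{cut-off} kernel $\tilde\rho = \rho\,\eta$ rather than $\rho$ itself, since every derivative hitting $\eta$ spawns an error term, and verifying that each such error is genuinely of the claimed exponentially-small form uniformly for $t < s$. There is no conceptual difficulty — this is the Ilmanen/Huisken monotonicity computation with an added transport term absorbed by Young's inequality — but one must be meticulous that the constant $\Cr{c-2}$ depends only on $n$ (through $\eta$, which was fixed once and for all in \eqref{etacut}), and that no factor of $\e$ or of the $u$-norms sneaks into it; all $u$-dependence is confined to the first term $\tfrac12\int\tilde\rho|u|^2\, d\mu^\e_t$, whose later control is the subject of the rest of Section \ref{drub}.
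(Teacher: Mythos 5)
Your overall strategy is sound and matches the paper's in spirit (differentiate $\int \tilde\rho\, d\mu_t^\e$, substitute \eqref{beq1}, integrate by parts, absorb the transport term by Young's inequality, and use the algebraic structure of the backward heat kernel), but the key algebraic step is wrong. You claim that the kernel identities give $\partial_t\tilde\rho + \Delta\tilde\rho - |\nabla\tilde\rho|^2/\tilde\rho = \tfrac{1}{2(s-t)}\tilde\rho$ up to cutoff errors. Even for the uncut kernel this fails: since $\rho$ has the $(n-1)$-dimensional normalization, $\partial_t\rho + \Delta\rho = -\tfrac{\rho}{2(s-t)}$ and $|\nabla\rho|^2/\rho = \tfrac{|x-y|^2}{4(s-t)^2}\rho$, so $\partial_t\rho + \Delta\rho - |\nabla\rho|^2/\rho = -\tfrac{\rho}{2(s-t)} - \tfrac{|x-y|^2}{4(s-t)^2}\rho$, which has the wrong sign and an additional uncontrollable term. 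The quantity $|\nabla\tilde\rho|^2/\tilde\rho$ simply does not appear in this computation; what appears, after a \emph{second} integration by parts converting $\int \e\, L\,(\nabla\tilde\rho\cdot\nabla\varphi)\,dx$ (with $L=\Delta\varphi - W'/\e^2$) into $\int\bigl(-\e(\nabla\varphi\otimes\nabla\varphi)\cdot\nabla^2\tilde\rho + e_\e\Delta\tilde\rho\bigr)\,dx$, is the \emph{directional} Hessian term $\tfrac{\nabla\varphi\otimes\nabla\varphi}{|\nabla\varphi|^2}\cdot\nabla^2\tilde\rho$.

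The computation closes because of the Huisken/Ilmanen identity
\begin{equation*}
\partial_t\rho + \Delta\rho - \frac{\nabla\varphi\otimes\nabla\varphi}{|\nabla\varphi|^2}\cdot\nabla^2\rho + \frac{(\nabla\rho\cdot\nabla\varphi)^2}{\rho\,|\nabla\varphi|^2} = 0,
\end{equation*}
valid for \emph{any} unit direction (not only $\nabla\rho/|\nabla\rho|$, which is the only case in which it could possibly collapse to something expressible via $|\nabla\rho|^2/\rho$ alone). Writing $e_\e = -\xi_\e + \e|\nabla\varphi|^2$, the $\e|\nabla\varphi|^2$-weighted block is exactly the left side of this identity and therefore vanishes modulo cutoff errors, while the $(-\xi_\e)$-weighted block multiplies $\partial_t\tilde\rho + \Delta\tilde\rho = -\tilde\rho/(2(s-t)) + (\text{cutoff errors})$, producing precisely the middle term $\tfrac{1}{2(s-t)}\int\xi_\e\tilde\rho$. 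Your proposed decomposition into $\int e_\e(\partial_t\tilde\rho + \Delta\tilde\rho - |\nabla\tilde\rho|^2/\tilde\rho)$ plus $\int\xi_\e\,|\nabla\tilde\rho|^2/\tilde\rho$ matches neither the sign of the $(s-t)^{-1}$ factor nor the functional form of the $\xi_\e$-coefficient, and it omits the second integration by parts that makes the Hessian appear; as a result the $e_\e$-weighted block would not vanish. The transport-absorption step is essentially right, though to get the clean coefficient $\tfrac12$ one should complete the square against the full combination $L + (\nabla\tilde\rho\cdot\nabla\varphi)/\tilde\rho$ rather than against $\partial_t\varphi$ alone.
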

\begin{proof}
We define $L$ as follows and by \eqref{beq1}, 
\[ L:=\partial_t\varphi  + u\cdot \nabla \varphi = \Delta \varphi -\frac{W'(\varphi)}{\e ^2} .\] 
By integration by parts we have
\begin{equation}
\begin{split}
\frac{d}{dt} \int _{\Omega} e_{\e} \tilde\rho \, dx &
= \int _{\Omega}\{ e_\e \partial_t \tilde\rho -\e (L-u\cdot \nabla \varphi)(\nabla \tilde\rho \cdot \nabla \varphi +\tilde\rho L) \} \, dx \\
&= \int _{\Omega} \Big\{ e_{\e} \partial_t\tilde\rho  -\e \tilde\rho \Big(L+\frac{\nabla\tilde\rho \cdot \nabla \varphi  }{\tilde\rho } \Big)^2 +\e \Big( L\nabla \tilde\rho \cdot \nabla \varphi +\frac{(\nabla \tilde\rho \cdot \nabla \varphi)^2}{\tilde\rho} \Big)\\
&\qquad +\e \tilde\rho u \cdot \nabla \varphi \Big(L+\frac{\nabla\tilde\rho \cdot \nabla \varphi  }{\tilde\rho } \Big) \Big\} \, dx \\
&\leq  \int _{\Omega} \Big\{ e_\e \partial_t\tilde\rho +\e \Big( L\nabla \tilde\rho \cdot \nabla \varphi +\frac{(\nabla \tilde\rho \cdot \nabla \varphi)^2}{\tilde\rho} \Big)
+\frac{1}{4} \e \tilde\rho (u\cdot \nabla \varphi )^2 \Big\} \, dx .
\end{split}
\label{mono1}
\end{equation}
Moreover by integration by parts we obtain
\begin{equation}
\begin{split}
\int_{\Omega} \e L \nabla \tilde\rho \cdot\nabla \varphi \, dx
= \int  _{\Omega} -\e (\nabla \varphi\otimes \nabla \varphi) \cdot\nabla ^2 \tilde\rho + e_{\e} \Delta \tilde\rho \, dx.
\end{split}
\label{mono2}
\end{equation}
Substitution of \eqref{mono2} into \eqref{mono1} gives
\begin{equation}
\begin{split}
\frac{d}{dt}\int_{\Omega}e_{\e}\tilde\rho\, dx&\leq \int_{\Omega} (-\xi_{\e})(\partial_t\tilde \rho+\Delta\tilde\rho)
+\e |\nabla\varphi|^2\Big(\partial_t \tilde\rho +\Delta \tilde\rho \\ & -\frac{\nabla \varphi\otimes \nabla \varphi}{|\nabla \varphi |^2} 
\cdot\nabla ^2 \tilde\rho + \frac{(\nabla\tilde \rho \cdot \nabla \varphi )^2}{\tilde\rho |\nabla \varphi |^2} \Big)+\frac14 \e\tilde\rho
(u\cdot\nabla\varphi)^2\, dx.
\end{split}
\label{mono2.5}
\end{equation}
We remark that $\rho $ (without multiplication by $\eta$) satisfies the following:
\begin{equation}
\begin{split}
\partial_t \rho +\Delta \rho = -\frac{\rho}{2(s-t)} ,\qquad \partial_t \rho +\Delta \rho -\frac{\nabla \varphi\otimes \nabla \varphi}{|\nabla \varphi |^2} 
\cdot\nabla ^2 \rho + \frac{(\nabla \rho \cdot \nabla \varphi )^2}{\rho |\nabla \varphi |^2} =0.
\end{split}
\label{mono3}
\end{equation}
When one computes \eqref{mono3} with $\tilde \rho $ instead of $\rho$, we have additional terms
coming from 
differentiation of $\eta$. The integration of these terms can be bounded by 
$c \mu^{\e} _t (B_{1/2}(y)) e^{-\frac{1}{128(s-t)}}$ for $c=c(n)$ since $|\nabla^j\rho| \leq c(j,n) e^{-\frac{1}{128(s-t)}}$
for any $x,y\in\Omega$ with $|x-y|>\frac{1}{4}$ and $j=0,1$.
Thus, with an appropriate choice of $\Cr{c-2}$ depending only on $n$, 
we obtain \eqref{monoton}.
\end{proof}
\subsection{Some estimates on $\Omega\times [0,T]$}
\begin{lem}\label{est1}
Suppose that $\varphi$ satisfies \eqref{beq1}, \eqref{beq2}, \eqref{u1sup}, \eqref{u1} and \eqref{u2}. Then there exists $\Cl[c]{c-3}>0$ depending only on $n, \Cr{c-0}, W$ such that
\begin{equation}
\sup_{\Omega \times [0,T]} \e |\nabla \varphi|+\sup_{x,y\in \Omega,\ t\in [0,T]} \e^{\frac32} \frac{|\nabla\varphi(x,t)
-\nabla\varphi(y,t)|}{|x-y|^{\frac12}}\leq \Cr{c-3}. 
\label{estnablaphi}
\end{equation}
\end{lem}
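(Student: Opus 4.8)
The plan is to remove the singular scaling in \eqref{beq1} by the parabolic change of variables $y:=x/\e$, $\tau:=t/\e^2$, and then invoke standard parabolic regularity theory with constants that do not depend on $\e$ or $T$. Set $\tilde\varphi(y,\tau):=\varphi(\e y,\e^2\tau)$, defined for $y$ in the rescaled domain (all of $\R^n$, or a torus of side length $\e^{-1}>2$) and $\tau\in[0,\e^{-2}T]$. A direct computation gives
\begin{equation*}
\partial_\tau\tilde\varphi-\Delta\tilde\varphi+b\cdot\nabla\tilde\varphi=-W'(\tilde\varphi),\qquad b(y,\tau):=\e\,u(\e y,\e^2\tau).
\end{equation*}
By \eqref{u2} and $0<\e<1$ we have $\|b\|_{L^\infty}\le\e^{1-\beta}\le1$ and $\|\nabla b\|_{L^\infty}=\e^2\|\nabla u\|_{L^\infty}\le\e^{1-\beta}\le1$, so $b$ is a Lipschitz drift with bounds independent of $\e$; by \eqref{u1sup}, $|\tilde\varphi|\le1$, so $W'(\tilde\varphi)$ is bounded by $\sup_{|s|\le1}|W'(s)|$; and by \eqref{u1}, $\tilde\varphi(\cdot,0)=\varphi_0(\e\,\cdot)$ has $C^3$ norm bounded by $\Cr{c-0}$. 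Since $\nabla\tilde\varphi(y,\tau)=\e\,\nabla\varphi(\e y,\e^2\tau)$ and $\e^{3/2}$ is precisely the scaling of the $C^{1/2}$ seminorm in the space variable, the estimate \eqref{estnablaphi} is equivalent to a bound, uniform in $\e$ and $T$, on $\|\nabla\tilde\varphi\|_{L^\infty}+\sup_\tau[\nabla\tilde\varphi(\cdot,\tau)]_{C^{1/2}(B_1)}$.

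The first step is a finite Calder\'on--Zygmund bootstrap for $\nabla\tilde\varphi$. A Caccioppoli inequality on unit parabolic cylinders bounds $\nabla\tilde\varphi$ in $L^2_{\rm loc}$ in terms of $\|\tilde\varphi\|_{L^\infty}$, $\|b\|_{L^\infty}$ and $\|W'(\tilde\varphi)\|_{L^\infty}$. Writing the equation as $\partial_\tau\tilde\varphi-\Delta\tilde\varphi=h$ with $h:=-b\cdot\nabla\tilde\varphi-W'(\tilde\varphi)\in L^2_{\rm loc}$, parabolic $L^p$ theory gives $\tilde\varphi\in W^{2,1}_{2,{\rm loc}}$, hence by the parabolic Sobolev embedding $\nabla\tilde\varphi\in L^{q_1}_{\rm loc}$ with $q_1>2$; iterating ($\nabla\tilde\varphi\in L^q\Rightarrow h\in L^q\Rightarrow\tilde\varphi\in W^{2,1}_{q,{\rm loc}}\Rightarrow\nabla\tilde\varphi\in L^{q^\ast}_{\rm loc}$ with $q^\ast$ larger) a finite number of times yields $\nabla\tilde\varphi\in L^q_{\rm loc}$ with $q>n+2$, whence $\tilde\varphi\in W^{2,1}_{q,{\rm loc}}\hookrightarrow C^{1,\gamma}$ in the parabolic metric for $\gamma=1-(n+2)/q$, all constants depending only on $n$, $W$, and the order-one bounds on $b$. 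Near the initial time the same chain is run up to the slice $\{\tau=0\}$, using the $C^3$ — hence more than $C^{2,\gamma}$ — bound on $\tilde\varphi(\cdot,0)$ from \eqref{u1}; on $\T^n$ all relevant cylinders have comparable size since $\e^{-1}>2$, and on $\R^n$ the estimates are purely interior. This proves the first term of \eqref{estnablaphi} together with a uniform bound on $[\nabla\tilde\varphi]_{C^\gamma}$ in the parabolic metric.

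If $\gamma\ge1/2$ we are done; otherwise one upgrades once more by Schauder. With $\tilde\varphi\in C^{1,\gamma}$, using $b\in C^{0,1}$ and $W'\in C^2$, the right-hand side $h=-b\cdot\nabla\tilde\varphi-W'(\tilde\varphi)$ lies in $C^\gamma$ in the parabolic metric with a uniform bound, so parabolic Schauder estimates (interior, and up to $\{\tau=0\}$ using the $C^{2,\gamma}$ initial bound) give $\tilde\varphi\in C^{2,\gamma}$ with a constant depending only on $n$, $W$, $\Cr{c-0}$. In particular $\nabla\tilde\varphi$ is Lipschitz in $y$ uniformly, hence $C^{1/2}$ in $y$ with controlled seminorm; undoing the scaling gives \eqref{estnablaphi} with $\Cr{c-3}=\Cr{c-3}(n,\Cr{c-0},W)$. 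The point to watch — and the reason for the rescaling — is uniformity in $\e$ and $T$: the substitution turns the singular term $\e^{-2}W'(\varphi)$ into an order-one nonlinearity and, crucially via \eqref{u2}, the transport field into a drift bounded in $C^{0,1}$ uniformly in $\e$, so that none of the parabolic constants above see $\e$; uniformity up to $\tau=0$ — and hence no $T$-dependence, since for $\tau\gtrsim1$ the estimates are interior — rests on the $\e$-uniform $C^3$ bound \eqref{u1} on the rescaled initial data.
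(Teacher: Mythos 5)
Your proposal is correct and follows essentially the same route as the paper: rescale parabolically by $\e$ to turn the singular term $\e^{-2}W'$ into an order-one nonlinearity and the transport term into a drift with $O(1)$ Lipschitz bound, then bootstrap via Caccioppoli, parabolic $L^p$ estimates (Ladyzhenskaja et al.), and Sobolev embeddings, treating the initial slice with the $C^3$ bound from \eqref{u1}. The paper runs the iteration locally on cylinders $B_{3\e}(x_0)\times[t_0,t_0+2\e^2]$ and phrases the bootstrap in terms of $W^{1,r}$ estimates for $\nabla\tilde\varphi$ (differentiating the equation), stopping as soon as $W^{1,r}\subset C^{1/2}$, whereas you rescale globally and work with $W^{2,1}_q$ of $\tilde\varphi$, adding an optional Schauder step at the end; these are cosmetic variations of the same argument.
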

\begin{proof}
Take any domain $B_{3\e}(x_0)\times[t_0,t_0+2\e^2]\subset \Omega\times [0,T]$. 
Define $\tilde \varphi  (x,t):=\varphi  (\e x+x_0,\e ^2 t+t_0)$ and $\tilde u  (x,t):=u  (\e x+x_0,\e ^2 t+t_0)$ for $(x,t)\in B_3\times[0,2]$. By \eqref{beq1} we have
\begin{equation}
\partial _t \tilde \varphi  + \e \tilde u \cdot \nabla \tilde \varphi  = \Delta \tilde \varphi  - W'(\tilde \varphi ).
\label{acepsilon}
\end{equation}
Using the estimate of \cite[p.342, Theorem 9.1]{ladyzhenskaja}, if $\partial_t v-\Delta v = f$ on $B_2\times [0,2]$ then we have
\begin{equation}
\|\partial_t v, \nabla ^2 v \|_{L^r(B_1\times [j,2])}\leq c(n,r) (\|f,\nabla v,v\|_{L^r (B_2 \times [0,2])}) +(1-j) \| v(\cdot,0) \|_{W^{2,r}(B_2)})
\label{lpestimate}
\end{equation}
for $j=0$ (up to $t=0$) or $j=1$ (interior estimate) and for $r\in (1,\infty)$. Let $\phi \in C_c ^1 (B_3)$ be a cut-off function and multiply $\phi ^2 \tilde \varphi$ to \eqref{acepsilon}, then by integration by parts, \eqref{u1sup}, \eqref{u1} and \eqref{u2}, 
we have
\begin{equation}
\int _0 ^2 \int _{B_2} |\nabla \tilde \varphi |^2 \,dxdt \leq c(W).
\label{wc2}
\end{equation}
Hence by \eqref{u1sup}, \eqref{u1}, \eqref{u2}, \eqref{lpestimate} ($r=2$) and \eqref{wc2} we obtain
\begin{equation*}
\int _0 ^2 \int _{B_1} |\tilde \varphi _t|^2 + |\nabla ^2 \tilde \varphi |^2 \,dxdt \leq c(n,\Cr{c-0},W).
\end{equation*}
By applying \eqref{lpestimate} to the equation
\[ \partial _t (\tilde \varphi _{x_i}) -\Delta \tilde \varphi_{x_i} =-\e \tilde u _{x_i} \cdot \nabla \tilde \varphi -\e \tilde u  \cdot \nabla \tilde \varphi _{x_i} - W''(\tilde \varphi )\tilde \varphi _{x_i}, \]
and using \eqref{u1sup}, \eqref{u1} and \eqref{u2} again, we obtain
\[ \int _0 ^2 \int _{B_1} |\nabla \tilde \varphi _t|^2 + |\nabla ^3 \tilde \varphi |^2 \,dxdt \leq c(n,\Cr{c-0},W). \]
Therefore we obtain the $W^{1,2}$ estimates of $\nabla \tilde \varphi $ on $B_1 \times [0,2]$, and by the Sobolev inequality we have
\[ \|\nabla \tilde \varphi\|_{L^{\frac{2(n+1)}{n-1}}(B_1 \times [0,2])}\leq c(n,\Cr{c-0},W). \]
We can use this estimate to \eqref{acepsilon} and \eqref{lpestimate} with $r=\frac{2(n+1)}{n-1}$. We repeat this argument until $r$
is large enough so that $ W^{1,r}\subset C^{\frac12}$ with appropriate modifications of the domain. Then we obtain the desired estimate
\[ \|\nabla \tilde \varphi \|_{C^{\frac12}(B_1\times [0,2])}\leq c(n,\Cr{c-0},W). \]
Since the domain was arbitrary, after returning to the original coordinate system, we obtain \eqref{estnablaphi}.
\end{proof}
\begin{lem}
There exists $\Cl[eps]{eps-2} =\Cr{eps-2} (n,W,\beta)>0$ such that, if $\e<\Cr{eps-2}$ and under the assumptions
of \eqref{betacon}-\eqref{beq2}, \eqref{u1sup}, \eqref{u1}, \eqref{ic} and \eqref{u2}, we have
\begin{equation}
\frac{\e |\nabla \varphi |^2 }{2}-\frac{W(\varphi )}{\e } \leq 10\e ^{-\beta} \qquad \text{on} \ \Omega \times [0,T].
\label{-beta}
\end{equation}
\label{betalem}
\end{lem}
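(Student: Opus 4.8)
The plan is to derive a differential inequality for the quantity $\xi_\e = \frac{\e|\nabla\varphi|^2}{2} - \frac{W(\varphi)}{\e}$ and then apply a maximum-principle / comparison argument. First I would compute $(\partial_t - \Delta)\xi_\e$ directly from the equation \eqref{beq1}. Writing $L = \partial_t\varphi + u\cdot\nabla\varphi = \Delta\varphi - W'(\varphi)/\e^2$, a routine calculation (differentiating $\e|\nabla\varphi|^2/2$ gives a Bochner-type term $-\e|\nabla^2\varphi|^2$ plus $\e\nabla\varphi\cdot\nabla L$, and differentiating $W(\varphi)/\e$ gives $W'(\varphi)L/\e - u\cdot\nabla(W(\varphi)/\e)$) should yield something of the schematic form
\begin{equation*}
(\partial_t - \Delta)\xi_\e + u\cdot\nabla\xi_\e = -\e|\nabla^2\varphi|^2 - \frac{W''(\varphi)}{\e}|\nabla\varphi|^2 - \e(\nabla u\,\nabla\varphi)\cdot\nabla\varphi + (\text{lower order}).
\end{equation*}
The two genuinely good terms are the first two on the right, which are nonpositive (the second because $W''(\varphi)\geq 0$ on the relevant range once $\varphi$ is close to $\pm 1$ — this is exactly where \eqref{Was2}, \eqref{Was3} enter). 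The dangerous term is $-\e(\nabla u\,\nabla\varphi)\cdot\nabla\varphi$, which is bounded by $\e|\nabla u|\,|\nabla\varphi|^2 \leq \e^{-(\beta+1)}|\nabla\varphi|^2$ using \eqref{u2}; but $\e|\nabla\varphi|^2$ itself is comparable to $\xi_\e + 2W(\varphi)/\e$, so this term is controlled by something like $\e^{-(\beta+1)} \cdot (\text{bounded})$ — too large to absorb naively.

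The resolution, and the main obstacle, is that one cannot work with $\xi_\e$ alone where $|\nabla\varphi|$ is large; instead I expect one must track $\xi_\e$ together with the Lipschitz bound $\e|\nabla\varphi| \leq \Cr{c-3}$ from Lemma \ref{est1} (which already gives $\e|\nabla\varphi|^2 \leq \Cr{c-3}^2/\e$, hence $\xi_\e \leq \Cr{c-3}^2/(2\e)$ — but that is $\e^{-1}$, not $\e^{-\beta}$, so it is not enough by itself). The sharper route is a \emph{region-splitting} argument: where $|\varphi| \geq \alpha$ (with $\alpha$ from \eqref{Was3}), the term $-W''(\varphi)|\nabla\varphi|^2/\e \leq -\kappa|\nabla\varphi|^2/\e$ provides strong damping that beats the $\e^{-(\beta+1)}|\nabla\varphi|^2$ term once $\e$ is small enough that $\kappa/\e \gg \e^{-(\beta+1)}$ — but $\beta+1 > 1$, so this fails too. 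Hence the correct mechanism must instead be: the bad term is $\e|\nabla u||\nabla\varphi|^2$; rewrite $\e|\nabla\varphi|^2 = 2\xi_\e + 2W(\varphi)/\e$, so the contribution is $2|\nabla u|\xi_\e + 2|\nabla u|W(\varphi)/\e$. The first piece $2|\nabla u|\xi_\e \leq 2\e^{-(\beta+1)}\xi_\e$ is a linear-in-$\xi_\e$ coefficient, which by Gronwall over a time interval of length $T$ would blow up like $e^{2\e^{-(\beta+1)}T}$ — so this also cannot be right, confirming that a pointwise parabolic comparison on the full domain is the wrong tool and one genuinely needs the interface localization.

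Given the reference to \eqref{tonegawa2010} and the structure of such arguments, the realistic plan is: (i) establish the PDE for $\xi_\e$ as above; (ii) introduce the auxiliary function $\zeta := \xi_\e - 10\e^{-\beta}$ and show that at any interior or boundary maximum point of $\zeta$ over $\Omega\times[0,T]$ where $\zeta > 0$, one has $|\nabla\varphi|$ and $W(\varphi)$ \emph{both} large in a quantitative sense (since $\xi_\e$ large forces $\e|\nabla\varphi|^2$ large, which by the Lipschitz bound $\e|\nabla\varphi|\leq\Cr{c-3}$ forces $|\nabla\varphi| \gtrsim \e^{-\beta}/\Cr{c-3}$); (iii) at such a point, by \eqref{u1sup} and the ODE structure, $\varphi$ is bounded away from $\pm 1$ only on a thin set, so in fact $|\varphi|\geq\alpha$ there, whence the $-\kappa|\nabla\varphi|^2/\e$ damping is active and, crucially, is of size $\kappa\e^{-2\beta}/(\e\Cr{c-3}^2) = \kappa\e^{-(1+2\beta)}/\Cr{c-3}^2$, which \emph{does} beat $\e^{-(\beta+1)}|\nabla\varphi|^2 \lesssim \e^{-(\beta+1)}\cdot\e^{-1}\Cr{c-3}^2\cdot(\text{wait})$ — one must check the powers carefully, and the condition $\beta < 1/2$ in \eqref{betacon} is presumably precisely what makes $1+2\beta < \beta + 2$, i.e., makes the damping dominate for small $\e$; (iv) conclude that $\zeta$ cannot have a positive maximum for $\e < \Cr{eps-2}(n,W,\beta)$, using \eqref{ic} to rule out the initial time and \eqref{extu1} to rule out spatial infinity when $\Omega = \R^n$. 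The delicate bookkeeping of exponents in step (iii) — reconciling the Lipschitz bound, the nondegeneracy \eqref{Was3}, and the constraint $\beta < 1/2$ — is where I expect essentially all the work to lie; the rest is standard maximum-principle machinery.
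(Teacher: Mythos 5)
Your instinct to use a maximum principle and your identification of the scaling constraint $\beta < \tfrac12$ are both on target, but the mechanism you propose does not close, and your own exponent bookkeeping shows it (you correctly note that the $-\kappa|\nabla\varphi|^2/\e$ damping is of the wrong order relative to $\e^{-(\beta+1)}|\nabla\varphi|^2$). The missing ingredient is an \emph{auxiliary correction function} $G(\varphi)$, which is the heart of the paper's argument: after rescaling $x\mapsto x/\e$, $t\mapsto t/\e^2$, one works with $\xi := \tfrac12|\nabla\varphi|^2 - W(\varphi) - G(\varphi)$ rather than the bare discrepancy, where $G(\varphi) = \e^{1/2}\bigl(1-\tfrac18(\varphi-\gamma)^2\bigr)$ (first pass) and later $G = 8\e^{1-\beta}\bigl(1-\tfrac18(\varphi-\gamma)^2\bigr)$ (second pass). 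The parabolic operator applied to this $\xi$ produces, after a Cauchy--Schwarz step that uses the Bochner term $-|\nabla^2\varphi|^2$ to cancel the dangerous $W'^2$ growth (not to provide damping), the key right-hand side $-(G')^2 - W'G' + 2G''(\xi+W+G) - \e\nabla u\cdot(\nabla\varphi\otimes\nabla\varphi)$. Here the damping against the $\nabla u$ term comes from $G''<0$ with $|G''|\sim\e^{1/2}$, and the relevant comparison is $\e^{1-\beta}\ll\e^{1/2}$, which is exactly $\beta<\tfrac12$. The hypothesis \eqref{Was2} then guarantees $W'G'\geq 0$, and the case split $|\varphi|\lessgtr\alpha$ makes either $G''W$ or $(G')^2+W'G'$ the dominant negative contribution. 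Without $G$ in hand you have no $\e^{1/2}$-scale damping, which is why you found the naive pointwise argument hopeless; your conclusion that ``a pointwise parabolic comparison on the full domain is the wrong tool'' is therefore incorrect --- the paper's proof is precisely such a comparison, just for the corrected quantity. Finally, the paper runs the argument twice (a bootstrap: first to get $\xi\lesssim\e^{1/2}$, then with the tighter $G$ to get $\xi\lesssim\e^{1-\beta}$, hence $\xi_\e\leq 10\e^{-\beta}$ after undoing the rescaling), which your sketch also omits.
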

\begin{proof}
Rescale the domain by $x\mapsto \frac{x}{\e}$ and $t\mapsto\frac{t}{\e^2}$. Under
the change of variables, we continue to use the same notations for $\varphi$ and $u$. Define 
\begin{equation}
\xi:=\frac{|\nabla \varphi |^2}{2} -W(\varphi)-G(\varphi),
\label{xi0}
\end{equation}
where $G$ will be chosen later. We compute $\partial_t \xi +\e u\cdot \nabla \xi -\Delta \xi $ and obtain 
\begin{equation}
\begin{split}
\partial_t \xi +\e u\cdot \nabla \xi -\Delta \xi
=& \nabla \varphi \cdot \nabla\partial_t \varphi  -(W'+G') \partial_t\varphi  +\e (u\otimes \nabla \varphi )\cdot \nabla ^2 \varphi -\e (W'+G')u\cdot \nabla \varphi \\
&-|\nabla ^2 \varphi | ^2 -\nabla \varphi \cdot \nabla (\Delta \varphi )
+(W' + G') \Delta \varphi +(W'' +G'') |\nabla \varphi |^2 .
\end{split}
\label{xi1}
\end{equation}
Here, we denoted and will denote $W'(\varphi)$ as $W'$,  $G(\varphi)$ as $G$ and so forth for simplicity. 
Differentiate \eqref{acepsilon} with respect to $x_j$, multiply $\varphi _{x_j}$ and sum over $j$ to obtain
\begin{equation}
\nabla \varphi \cdot \nabla \partial_t\varphi +\e \nabla u \cdot(\nabla \varphi \otimes \nabla \varphi )+\e (u\otimes \nabla \varphi) \cdot\nabla ^2 \varphi 
=\nabla \varphi \cdot \nabla (\Delta \varphi) -W'' |\nabla \varphi | ^2.
\label{acepsilon2}
\end{equation}
By \eqref{acepsilon}, \eqref{xi1} and \eqref{acepsilon2} we have
\begin{equation}
\partial_t \xi +\e u\cdot \nabla \xi -\Delta \xi
= W' (W'+G') -|\nabla ^2 \varphi |^2 -\e \nabla u \cdot( \nabla \varphi \otimes \nabla \varphi) +G'' |\nabla \varphi |^2.
\label{xi2}
\end{equation}
Differentiating \eqref{xi0} with respect to $x_j$ and by using the Cauchy-Schwarz inequality we have
\begin{equation}
\begin{split}
&\sum _{j=1} ^n \Big( \sum _{i=1} ^n \varphi _{x_i} \varphi _{x_i x_j} \Big) ^2
= \sum _{j=1} ^n (\xi _{x_j} +(W'+G')\varphi _{x_j} )^2 \\
&=|\nabla \xi | ^2  +2(W'+G') \nabla \xi \cdot \nabla \varphi + (W' +G')^2 |\nabla \varphi| ^2 \leq |\nabla \varphi |^2 |\nabla ^2 \varphi |^2. 
\end{split}
\label{xi3}
\end{equation}
On $\{|\nabla\varphi|>0\}$, divide \eqref{xi3} by $|\nabla \varphi |^2$ and substitute into \eqref{xi2} to obtain
\begin{equation}
\begin{split}
&\partial_t \xi +\e u \cdot \nabla \xi -\Delta \xi \\
\leq & W' (W'+G' ) -\frac{1}{|\nabla \varphi |^2} ( |\nabla \xi |^2 +2(W'+G') \nabla \xi \cdot \nabla \varphi + (W'+G')^2 |\nabla \varphi |^2 ) \\
 & -\e \nabla u \cdot( \nabla \varphi \otimes \nabla \varphi )+ G'' |\nabla \varphi|^2 \\
\leq & -(G') ^2 -W'G' -\frac{2(W'+G')}{|\nabla \varphi |^2} \nabla \xi \cdot \nabla \varphi -\e \nabla u \cdot( \nabla \varphi \otimes \nabla \varphi )+G''|\nabla \varphi|^2 . 
\end{split}
\label{xi4}
\end{equation}
By $|\nabla \varphi |^2 = 2(\xi+W+G)$ and \eqref{xi4} we have on $\{|\nabla\varphi|>0\}$
\begin{equation}
\begin{split}
\partial_t \xi +\e u\cdot \nabla \xi -\Delta \xi & \leq -(G')^2 -W'G' +2G'' (\xi+W+G) \\
&-\frac{2(W'+G')}{|\nabla \varphi |^2} \nabla \xi \cdot \nabla \varphi -\e \nabla u \cdot( \nabla \varphi \otimes \nabla \varphi).
\end{split}
\label{ineq-xi}
\end{equation}
Let $\phi(x,t)=\phi (x) \in C^\infty (B_{3\e ^{-1}})$ be such that
\begin{equation*}
\phi=\left\{
\begin{split}
&M:=\sup_{\mathbb{R}^n \times [0,\e ^{-2} T]} \Big( \frac{|\nabla \varphi |^2}{2} -W(\varphi) \Big) \qquad \text{on} \ B_{3\e ^{-1}}\setminus B_{2\e ^{-1}}, \\
&0 \qquad \text{on} \ B_{\e ^{-1}},
\end{split}
\right.
\end{equation*}
and
\[ 0\leq \phi \leq M, \ |\nabla \phi |\leq 2\e M , \ |\Delta \phi| \leq 2n\e ^2 M.  \]
Note that $M$ may be bounded depending only on $n, \Cr{c-0}, W$ by Lemma $\ref{est1}$. 
Note also that we may assume $M>0$ since $M\leq 0$ implies our conclusion \eqref{-beta} immediately. 
Let
\[ \tilde \xi :=\xi -\phi \ \text{and} \ G(\varphi):=\e ^{\frac{1}{2}} \Big(  1-\frac{1}{8} (\varphi-\gamma) ^2 \Big), \]
where $\gamma$ is as in \eqref{Was2}. 
To derive a contradiction, suppose that
\[ \sup _{B_{\e ^{-1}}\times [0, \e ^{-2} T]} \xi \geq \e ^{\frac{1}{2}} .\]
Since $\tilde \xi \leq 0$ on $ (B_{3\e ^{-1}} \setminus B_{2\e ^{-1}})\times [0, \e ^{-2} T]$, $\tilde \xi \leq \e ^{1-\beta} $ on $B_{3\e ^{-1}} \times \{ 0 \}$ by \eqref{ic} and 
\newline
$\sup_{B_{\e^{-1}} \times [0, \e ^{-2} T]} \tilde \xi \geq \e^{\frac{1}{2}}$, there exists some interior maximum point $(x_0,t_0)$ of $\tilde \xi $ where
\[  \partial_t \tilde \xi \geq 0, \ \nabla\tilde \xi =0, \ \Delta \tilde \xi \leq 0 \ \text{and} \ \tilde \xi \geq \e ^{\frac{1}{2}}\]
hold. By the definition of $\phi$ we have at the point $(x_0,t_0)$
\begin{equation}
\partial_t \xi \geq 0, \ |\nabla \xi |\leq 2\e M, \ \Delta \xi \leq 2n\e ^2 M \ \text{and} \ |\nabla \varphi |^2 \geq 2\e ^{\frac{1}{2}}.
\label{maxpoint}
\end{equation}
Substitute \eqref{maxpoint} into \eqref{ineq-xi}. Using $\e \nabla u \cdot( \nabla \varphi \otimes \nabla \varphi )\leq 2\e |\nabla u|(\xi +W+G)$ and \eqref{u2}, we have 
\begin{equation}
\begin{split}
0\leq 2n\e ^2 M -(G')^2 -W' G' + 2G''(\xi+W+G) +\frac{4(|W'| + |G'| )\e M}{(2\e ^{\frac{1}{2}})^{\frac{1}{2}}} \\
+ 2\e ^{1- \beta } (\xi +W+G)+2 \e ^{2-\beta}M.
\end{split}
\label{ineq-xi2}
\end{equation}
Since $\beta<\frac{1}{2}$ and $G''=-\e^{\frac12}/4$, for sufficient small $\e$ depending only on $\beta$
and $W$,
\begin{equation}
2G'' (\xi+W+G)
+ 2 \e ^{1-\beta} (\xi+W+G) \leq G'' (W+G).
\label{2G}
\end{equation}
If $|\varphi (x_0,t_0)|\leq \alpha$, then
\[ G''(\varphi(x_0,t_0)) W(\varphi(x_0,t_0))\leq -\frac{\e ^{\frac{1}{2}}}{4} \min_{|z| \leq \alpha} W(z), \]
which is a `big' negative number compared to the rest, and one can check that this and \eqref{2G} (as well as $W'G'\geq 0$
and $G>0$) 
lead to a contradiction in \eqref{ineq-xi2}. If $|\varphi (x_0,t_0)|\geq  \alpha$, then we would have `big' negative contributions coming from (all evaluated at $(x_0,t_0)$)
\[ (G') ^2 \geq \frac{\e(\alpha-|\gamma|)^2}{64} \ \text{and} \ -W'G'\leq -\frac{\e ^{\frac{1}{2}}(\alpha-|\gamma|)}{4} |W'|, \]
which again lead to a contradiction in \eqref{ineq-xi2} for sufficiently small $\e$. This shows that
\[ \sup _{B_{\e ^{-1}}\times [0, \e ^{-2} T]} \Big( \frac{|\nabla \varphi|^2}{2} -W(\varphi) \Big) \leq 2\e ^{\frac{1}{2}} ,\]
where $G\leq \e ^{\frac{1}{2}}$ is used. Now repeat the same argument, this time with $M$ replaced by $2\e ^{\frac{1}{2}}$ and $G$ replaced by $8\e ^{1-\beta} (1-\frac{1}{8} (\varphi-\gamma) ^2)$. If we assume
\[ \sup _{B_{\e ^{-1}}\times [0,\e ^{-2} T]} \xi \geq 2\e ^{1-\beta},\]
$\tilde \xi = \xi -\phi $ would attain some interior maximum in $B_{3\e ^{-1}}\times [0,\e ^{-2} T]$ by \eqref{ic} and by the subtraction of $\phi$. This time we would have $\partial_t \xi \geq  0, \ |\nabla \xi |\leq 4\e ^{\frac{3}{2}}, \ \Delta \xi \leq 4n\e ^{\frac{5}{2}}$ and $|\nabla \varphi| ^2 \geq 4\e ^{1-\beta}$. With this \eqref{ineq-xi2} is
\begin{equation*}
\begin{split}
0\leq 4n\e ^{\frac{5}{2}} -(G')^2 -W' G' + 2G''(\xi+W+G) +\frac{8(|W'| + |G'| )\e^{\frac{3}{2}} }{(4\e ^{1-\beta})^{\frac{1}{2}}} \\
+ 2\e ^{1- \beta } (\xi +W+G)+4 \e ^{\frac{5}{2}-\beta}.
\end{split}
\end{equation*}
Exactly the same type of argument as before shows that we have a contradiction, and since $G\leq 8  \e ^{1- \beta} $ and $\xi -G\leq 2\e ^{1-\beta}$, we have \eqref{-beta}.
\end{proof}

\begin{lem}
Let $\mu_s^{\varepsilon}$, $D(t)$ and ${\tilde\rho}_{(y,s)}$ be defined as in \eqref{defrad}, \eqref{defdens} and \eqref{defti}. 
Let $s,R,r$ be positive with $0\leq s-(\frac{R}{r})^2\leq T$ and $R\in (0,\frac{1}{2})$. Set $\tilde s= s-(\frac{R}{r})^2$. Then there exists $\Cl[c]{c-4}=\Cr{c-4}(n)\geq 1$ such that, for any $y\in \Omega$, we have
\begin{equation*}
\begin{split}
\int _{\Omega} \tilde \rho _{(y,s)}(x,\tilde s) \, d\mu^{\e} _{\tilde s} (x) \leq &
\Big( \frac{r}{\sqrt{4\pi}R}  \Big) ^{n-1} \Big\{ \mu^{\e} _{\tilde s}(B_R (y))+\mu^{\e} _{\tilde s}(B_{\frac12}(y)) \exp \Big( - \frac{r^2}{16 R^2 } \Big) \Big\} \\&+ \Cr{c-4} D(\tilde s)\exp \Big( - \frac{r^2}{8} \Big).
\end{split}
\end{equation*}
\label{sRr}
\end{lem}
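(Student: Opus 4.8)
The plan is to split the weight $\tilde\rho_{(y,s)}(\cdot,\tilde s)$ into its contribution over $B_R(y)$, where it is essentially harmless, and over the annulus $B_{\frac12}(y)\setminus B_R(y)$, where one pays an exponentially small factor that can be absorbed by the density ratio bound $D(\tilde s)$. Put $\lambda:=s-\tilde s=(R/r)^2$; then the Gaussian normalization is exactly $(4\pi\lambda)^{-\frac{n-1}{2}}=\big(\frac{r}{\sqrt{4\pi}R}\big)^{n-1}$, which is the constant standing in front of the braces in the claimed inequality. Since $\spt\eta\subset B_{\frac12}$ the integral is over $B_{\frac12}(y)$ only. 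On $B_R(y)$ I use $0\le\eta\le1$ and $e^{-|x-y|^2/(4\lambda)}\le1$, so this portion of $\int_\Omega\tilde\rho_{(y,s)}(x,\tilde s)\,d\mu^{\e}_{\tilde s}(x)$ is at most $\big(\frac{r}{\sqrt{4\pi}R}\big)^{n-1}\mu^{\e}_{\tilde s}(B_R(y))$, the first term inside the braces.

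For the annular part I drop $\eta$ and use the layer-cake identity $e^{-|x-y|^2/(4\lambda)}=\int_{|x-y|}^\infty\frac{\rho}{2\lambda}e^{-\rho^2/(4\lambda)}\,d\rho$; Fubini's theorem then rewrites $\int_{B_{\frac12}(y)\setminus B_R(y)}e^{-|x-y|^2/(4\lambda)}\,d\mu^{\e}_{\tilde s}(x)$ as $\int_R^\infty\frac{\rho}{2\lambda}e^{-\rho^2/(4\lambda)}\,\mu^{\e}_{\tilde s}\big(\{x\in B_{\frac12}(y)\setminus B_R(y):|x-y|<\rho\}\big)\,d\rho$. I bound the measure term by $\mu^{\e}_{\tilde s}(B_\rho(y))$ when $\rho<\frac12$ and by $\mu^{\e}_{\tilde s}(B_{\frac12}(y))$ when $\rho\ge\frac12$, and split the $\rho$-integral at $\rho=\frac12$. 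The tail $\int_{1/2}^\infty\frac{\rho}{2\lambda}e^{-\rho^2/(4\lambda)}\,d\rho=e^{-1/(16\lambda)}=e^{-r^2/(16R^2)}$, multiplied by the prefactor and by $\mu^{\e}_{\tilde s}(B_{\frac12}(y))$, is precisely the second term inside the braces.

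It remains to estimate $\int_R^{1/2}\frac{\rho}{2\lambda}e^{-\rho^2/(4\lambda)}\mu^{\e}_{\tilde s}(B_\rho(y))\,d\rho$, and here the density ratio enters: for $\rho<\frac12$ the ball $B_\rho(y)$ is admissible in the definition of $D$, so $\mu^{\e}_{\tilde s}(B_\rho(y))\le D(\tilde s)\,\omega_{n-1}\rho^{n-1}$ and this term is at most $D(\tilde s)\,\omega_{n-1}\int_R^\infty\frac{\rho^n}{2\lambda}e^{-\rho^2/(4\lambda)}\,d\rho$. The substitution $\rho=2\sqrt\lambda\,t$, for which the lower limit becomes $t=r/2$, turns this into $D(\tilde s)\,\omega_{n-1}\,2^n\lambda^{\frac{n-1}{2}}\int_{r/2}^\infty t^n e^{-t^2}\,dt$, and the elementary bound $e^{-t^2}\le e^{-r^2/8}e^{-t^2/2}$ valid for $t\ge r/2$ extracts a factor $e^{-r^2/8}$ while leaving only $\int_0^\infty t^n e^{-t^2/2}\,dt<\infty$. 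Multiplying back by the prefactor $(4\pi\lambda)^{-\frac{n-1}{2}}$ cancels all powers of $\lambda$ and leaves a constant depending only on $n$ times $D(\tilde s)e^{-r^2/8}$; taking $\Cr{c-4}$ to be the maximum of that constant and $1$ completes the proof. There is no serious obstacle here; the only mildly delicate points are the Fubini bookkeeping and, when $\Omega=\T^n$, checking that the balls of radius below $\frac12$ occurring above are genuine embedded balls so that the definition of $D$ applies.
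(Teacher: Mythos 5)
Your proposal is correct and follows essentially the same strategy as the paper: split the integral into $B_R(y)$ and the annulus $B_{1/2}(y)\setminus B_R(y)$, bound the first part trivially, and estimate the annular part by a layer-cake decomposition split at radius $1/2$, using the density ratio $D(\tilde s)$ on the intermediate range and the crude bound $\mu^{\e}_{\tilde s}(B_{1/2}(y))$ on the far tail. The paper parametrizes the layer-cake by the level $\lambda$ of the Gaussian and substitutes $l=\log\lambda^{-1}$, whereas you parametrize by the radial distance $\rho$; these are related by $l=\rho^2/(4\lambda)$ and produce identical estimates after the corresponding change of variables, so the two arguments are the same up to bookkeeping.
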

\begin{proof}
First, on $B_R (y)$ we compute
\begin{equation*}
\begin{split}
\int _{B_R(y)} \tilde \rho _{(y,s)}(x,\tilde s) \, d\mu^{\e} _{\tilde s} 
\leq \Big( \frac{r}{\sqrt{4\pi}R}  \Big) ^{n-1}\int _{B_R(y)} e^{-\frac{r^2 |x-y|^2}{4R^2}} \, d\mu^{\e} _{\tilde s} \leq \Big( \frac{r}{\sqrt{4\pi}R}  \Big) ^{n-1}\mu^{\e}_{\tilde s}(B_R (y)).
\end{split}
\end{equation*}
On $\Omega \setminus B_R(y)$ we have
\begin{equation}
\begin{split}
&\Big( \frac{\sqrt{4\pi}R}{r}  \Big) ^{n-1} \int _{\Omega \setminus B_R(y)} \tilde \rho _{(y,s)}(x,\tilde s) \, d\mu^{\e} _{\tilde s} 
\leq \int_{B_{\frac{1}{2}} (y) \setminus B_R(y)} e^{-\frac{r^2 |x-y|^2}{4R^2}} \, d\mu^{\e} _{\tilde s} \\
\leq & \int _0 ^1 \mu^{\e} _{\tilde s} \Big(  \big( B_{\frac{1}{2}} (y) \setminus B_R(y)\big) \cap \{ x \ | \ e^{-\frac{r^2 |x-y|^2}{4R^2}}\geq \lambda \} \Big) \, d\lambda \\
\leq & \int _0 ^{\exp ( -\frac{r^2}{16R^2}) } \mu^{\e} _{\tilde s} ( B_{\frac{1}{2}} (y) \setminus B_R(y) ) \, d\lambda + \int _{\exp ( -\frac{r^2}{16R^2}) } ^{\exp ( -\frac{r^2}{4}) } \mu^{\e} _{\tilde s} ( B_{\frac{2R}{r} \sqrt {\log \lambda ^{-1}}} (y) ) \, d\lambda  \\
\leq & \mu^{\e} _{\tilde s}(B_{\frac12}(y)) e^{-\frac{r^2}{16R^2} } + D(\tilde s) \omega_{n-1} \Big( \frac{2R}{r} \Big)^{n-1} \int _{\frac{r^2}{4}} ^{\frac{r^2}{16R^2}}l^{\frac{n-1}{2}} e^{-l} \, dl \\
\leq & \mu^{\e} _{\tilde s}(B_{\frac12}(y)) e^{-\frac{r^2}{16R^2} }+ c(n) D(\tilde s)\Big( \frac{2R}{r} \Big)^{n-1} e^{-\frac{r^2}{8}}.
\end{split}
\label{estsRr}
\end{equation}
Here we used the fact that there exists $c=c(n)>0$ such that $l^{\frac{n-1}{2}} e^{-l}\leq c e^{-\frac{l}{2}}$ for any $l>0$.
\end{proof}
%%%%%%%%%%%%%%%%%%%%%%%%%%%%%%%%%%%%%%%%%%%%%%%%%%%%%%%%%%%%%%%%%%%%%%%%%%%%%%%%%%
\subsection{Proof of Theorem $\ref{density}$}
In this subsection, we always work under the assumptions of Theorem \ref{density}. In particular, results from two preceding
subsections are available. Furthermore, from now on until Proposition \ref{time}, we assume 
\begin{equation}
D(t)\leq D_1
\label{cb2}
\end{equation}
holds for $t\in [0,T_1]$ and $T_1\leq T$. Here, $D_1\geq 2D_0$ is a constant depending only on $\Cr{c-1},n,p,q,T,D_0$,
and not on $\e$, and which will be determined after Proposition \ref{time}. We need to be careful about the
dependence of constants so that we do not end up a circular argument. Any constant depending on $D_1$ will be again
a constant depending on $\Cr{c-1},n,p,q,T,D_0$. 
Note that such $T_1>0$ exists because $D_1>D_0$ and by the continuity of $D(t)$ in time. Such continuity 
follows from that of $\varphi$ in the case of $\Omega={\mathbb T}^n$, and additionally from \eqref{extu1} in the case of
$\Omega={\mathbb R}^n$.  
$T_1$ may depend on $\e$ in general, but in the end, we prove that $T_1=T$ 
as long as $\e$ is sufficiently small.
First, under this assumption we have the following a-priori estimate:
\begin{lem}\label{erlem3}
There exists $\Cl[c]{c-n}$ depending only on $n,p,q$ such that for any $0\leq t_0<t_1$ we have
\begin{equation}
\begin{split}
\sup _{t\in [t_0,t_1]} \mu^{\e} _t (\Omega ) &+\frac12\int _{t_0} ^{t_1} \int _{\Omega} \e \Big( \Delta \varphi -\frac{W'(\varphi)}{\e ^2} \Big)^2 \,dxdt\\
&\leq \mu^{\e}_{t_0}(\Omega)+\Cr{c-n} (t_1-t_0)^{1-\frac{2}{q}} \|u\|^2_{L^q([t_0,t_1];(W^{1,p}(\Omega))^n)}\sup_{t\in [t_0,t_1]}
D(t).
\end{split}
\label{e0}
\end{equation}
In particular, there exists $E_0$ depending only on $\Cr{c-1},n,p,q,T,D_0$ such that
\begin{equation}
\sup_{t\in [0,T_1]} \mu_t^{\e}(\Omega)+\frac12\int_0^{T_1}\int_{\Omega}\e \Big( \Delta \varphi -\frac{W'(\varphi)}{\e ^2} \Big)^2 \,dxdt\leq E_0.
\label{e0sup}
\end{equation}
\end{lem}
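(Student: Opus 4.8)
The plan is to obtain \eqref{e0} from an integration-by-parts energy identity, to control the resulting transport contribution by the density-ratio Sobolev inequality \eqref{mz}, and then to integrate in time. Write $L:=\Delta\varphi-\e^{-2}W'(\varphi)$, so that \eqref{beq1} reads $\partial_t\varphi=L-u\cdot\nabla\varphi$ and the dissipation term in \eqref{e0} is $\int_\Omega\e L^2\,dx$. Using the regularity of $\varphi$ (and, when $\Omega=\R^n$, the decay \eqref{extu1}) to discard boundary terms, I would first compute $\frac{d}{dt}\mu^{\e}_t(\Omega)=\int_\Omega(\e\nabla\varphi\cdot\nabla\partial_t\varphi+\e^{-1}W'(\varphi)\,\partial_t\varphi)\,dx=-\int_\Omega\e L\,\partial_t\varphi\,dx=-\int_\Omega\e L^2\,dx+\int_\Omega\e L\,(u\cdot\nabla\varphi)\,dx$, and then absorb the cross term by Young's inequality, $\e L\,(u\cdot\nabla\varphi)\le\tfrac12\e L^2+\tfrac12\e|u|^2|\nabla\varphi|^2\le\tfrac12\e L^2+|u|^2 e_\e$, to get $\frac{d}{dt}\mu^{\e}_t(\Omega)\le-\tfrac12\int_\Omega\e L^2\,dx+\int_\Omega|u|^2\,d\mu^{\e}_t$. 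Integrating from $t_0$ to $t\in[t_0,t_1]$ gives $\mu^{\e}_t(\Omega)+\tfrac12\int_{t_0}^{t}\int_\Omega\e L^2\le\mu^{\e}_{t_0}(\Omega)+\int_{t_0}^{t}\int_\Omega|u|^2\,d\mu^{\e}_s\,ds$; taking $t=t_1$ controls the dissipation integral and taking the supremum over $t$ controls $\sup_{t\in[t_0,t_1]}\mu^{\e}_t(\Omega)$, so \eqref{e0} follows once the time integral of $\int_\Omega|u|^2\,d\mu^{\e}_t$ is estimated.

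That estimate is the crux. For fixed $t$, since $\sup_{B_r(x)\subset\Omega}\mu^{\e}_t(B_r(x))/(\omega_{n-1}r^{n-1})\le D(t)$ by \eqref{defdens}, I apply \eqref{mz} — valid verbatim with $\|V_t\|$ and $D$ replaced by $\mu^{\e}_t$ and $D(t)$ — to the scalar $\phi=|u(\cdot,t)|\in W^{1,p}(\Omega)$, which is compactly supported since $u\in C_c^\infty$; the pointwise bound $|\nabla|u||\le|\nabla u|$ and mollification of $\sqrt{|u|^2+\delta}$ make this legitimate. When $p<n$, with $s=\tfrac{p(n-1)}{n-p}$, the facts that $\mu^{\e}_t(\spt u(\cdot,t))\le\mu^{\e}_t(\Omega)\le D(t)$ and that $s\ge2$ — which holds because \eqref{pqncon} forces $p>n/2\ge\tfrac{2n}{n+1}$ when $n\ge3$, and $p\ge\tfrac43=\tfrac{2n}{n+1}$ is assumed when $n=2$, see \eqref{newpcon} — turn \eqref{mz} into $\int_\Omega|u(\cdot,t)|^2\,d\mu^{\e}_t\le c(n,p)\,D(t)\,\|\nabla u(\cdot,t)\|_{L^p(\Omega)}^2$; for $p\ge n$ the same bound follows more directly from the Morrey embedding $W^{1,p}\hookrightarrow L^\infty$ (when $p>n$) or a Gagliardo--Nirenberg estimate (when $p=n$) together with $\mu^{\e}_t(\Omega)\le D(t)$. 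H\"older's inequality in time with exponents $q/2$ and $q/(q-2)$ (here $q>2$ is used) then gives $\int_{t_0}^{t_1}\int_\Omega|u|^2\,d\mu^{\e}_t\,dt\le c(n,p)\big(\sup_{t\in[t_0,t_1]}D(t)\big)(t_1-t_0)^{1-\frac2q}\|\nabla u\|_{L^q([t_0,t_1];L^p)}^2\le c(n,p)\big(\sup_{t\in[t_0,t_1]}D(t)\big)(t_1-t_0)^{1-\frac2q}\|u\|_{L^q([t_0,t_1];(W^{1,p})^n)}^2$, which combined with the first paragraph proves \eqref{e0}.

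Finally, \eqref{e0sup} is obtained by specializing \eqref{e0} to $t_0=0$, $t_1=T_1$: by the standing assumption \eqref{cb2}, $\sup_{t\in[0,T_1]}D(t)\le D_1$; by \eqref{dzerocon} and \eqref{defdens}, $\mu^{\e}_0(\Omega)\le D(0)\le D_0$; by \eqref{u3}, $\|u\|_{L^q([0,T_1];(W^{1,p})^n)}\le\Cr{c-1}$; and $T_1\le T$, so one may take $E_0:=D_0+\Cr{c-n}\,T^{1-\frac2q}\Cr{c-1}^2D_1$, which depends only on $\Cr{c-1},n,p,q,T,D_0$ since $D_1$ does. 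The only genuinely delicate point is the estimate of $\int_\Omega|u|^2\,d\mu^{\e}_t$: $u(\cdot,t)$ is a priori only Sobolev with respect to Lebesgue measure and has to be controlled against the diffuse-interface measure $\mu^{\e}_t$, which is exactly what \eqref{mz} accomplishes provided the density ratio $D(t)$ is under control — this is also why the restriction \eqref{newpcon} ($p\ge\tfrac43$ when $n=2$) is needed, without which the interpolation step fails, and why one must check that $\Cr{c-n}$ inherits no dependence on $\spt u$.
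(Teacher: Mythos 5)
Your proposal follows the same basic strategy as the paper: derive the energy identity, absorb the transport cross term by Young's inequality into dissipation plus $\int|u|^2\,d\mu^\e_t$, control that term via the Meyers--Ziemer inequality with the density ratio bound, and integrate in time. The final step and the choice $E_0 := D_0 + \Cr{c-n}T^{1-2/q}\Cr{c-1}^2 D_1$ match the paper exactly.

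There is one genuine soft spot: your case split is $p<n$, $p=n$, $p>n$, and your treatment of $p=n$ by a vague appeal to a ``Gagliardo--Nirenberg estimate together with $\mu^\e_t(\Omega)\le D(t)$'' is not adequate as stated, because $W^{1,n}\not\hookrightarrow L^\infty$ and you would still need to integrate against the singular-looking measure $\mu^\e_t$, which is the very difficulty \eqref{mzineq} is there to address. The paper instead splits at $p=2$ and, for $p\ge 2$ (which covers $p=n$), applies \eqref{mzineq} with exponent $1$ to the scalar $\phi=|\psi_\alpha u|^p$ rather than with exponent $p$ to $\phi=|u|$; this yields $\int|u|^p\,d\mu^\e_t\le c(n,p)D(t)\int|u|^{p-1}|\nabla u|\,dx\le c(n,p)D(t)\|u\|^p_{W^{1,p}}$ with no restriction $p<n$ and no hidden dependence on $\spt u$, and in particular it handles $p=n$ cleanly. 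You could repair your argument by adopting that device for $p\ge 2$, or by writing out the Hölder interpolation you have in mind and then controlling the resulting $\mu^\e_t$-integral via \eqref{mzineq} with exponent $1$. A smaller point: for $\Omega=\mathbb{T}^n$ the paper inserts a partition of unity to reduce to \eqref{mzineq} on $\mathbb{R}^n$; you cite \eqref{mz}, which the paper itself states for $\Omega$ without repeating this reduction, so I won't fault you for it, but be aware that your compactly-supported mollification argument for $|u|$ implicitly assumes $\Omega=\mathbb{R}^n$.
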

\begin{proof}
By \eqref{beq1} we can compute
\begin{equation}
 \frac{d}{dt} \mu^{\e} _t (\Omega) \leq - \frac{1}{2} \int _\Omega \e \Big( \Delta \varphi -\frac{W'(\varphi)}{\e ^2} \Big)^2 \, dx +\e \int _\Omega (u\cdot \nabla \varphi )^2 \,dx. 
 \label{e01}
 \end{equation}
To estimate the last term of \eqref{e01}, we consider two cases $p<2$ and $p\geq 2$ separately. 
In addition we consider $\Omega={\mathbb T}^n,\,{\mathbb R}^n$ separately, and let us consider
${\mathbb T}^n$ first. Let $\{\psi_{\alpha}\}_{\alpha}$ be a partition of unity on 
$\Omega$ such that $\psi_{\alpha}\in C^{\infty}_c(\Omega)$,
${\rm diam}\,({\rm spt}\,\psi_{\alpha})\leq 1/2$
and $\|\psi_{\alpha}\|_{C^2}\leq c(n)$.  Consider $p<2$ case first. Just as in \eqref{mz}, by setting $s:=
\frac{p(n-1)}{n-p}\geq 2$, we have
\begin{equation}
\begin{split}
\e\int_{\Omega}(u\cdot \nabla\varphi)^2\, dx& \leq \big(\int_{\Omega} |u|^s\e|\nabla\varphi|^2\, dx\big)^{\frac{2}{s}}
(2\mu_t^{\e}(\Omega))^{1-\frac{2}{s}} \\
&\leq \big(\sum_{\alpha}c(n,p)\int_{\Omega} |\psi_{\alpha} u|^s\e |\nabla\varphi|^2\, dx\big)^{\frac{2}{s}}
(2D(t))^{1-\frac{2}{s}} \\
&\leq \big(\sum_{\alpha} c(n,p)D(t)\big( \int_{{\rm spt}\, \psi_{\alpha}}|u|^p+|\nabla u|^p\, dx\big)^{\frac{s}{p}}\big)^{\frac{2}{s}}
(2D(t))^{1-\frac{2}{s}}\\
&\leq c(n,p) D(t) \|u(\cdot,t)\|_{W^{1,p}(\Omega)}^2 
\end{split}
\label{e02}
\end{equation}
where each constants are different. We used the local finiteness of $\{\psi_{\alpha}\}_{\alpha}$ and 
$\sum_{\alpha} A_{\alpha}^{\frac{s}{p}}\leq (\sum_{\alpha} A_{\alpha})^{\frac{s}{p}}$ 
since $\frac{s}{p}\geq 1$. For $p\geq 2$, we have 
\begin{equation}
\begin{split}
\e\int_{\Omega}(u\cdot\nabla\varphi)^2\, dx&\leq \big(\int_{\Omega} |u|^p\e|\nabla\varphi|^2\, dx\big)^{\frac{2}{p}}
(2\mu_t^{\e}(\Omega))^{1-\frac{2}{p}}\\
&\leq \big(\sum_{\alpha} c(n,p)\int_{\Omega} |\psi_{\alpha} u|^p\, \e|\nabla\varphi|^2\, dx\big)^{\frac{2}{p}}
(2D(t))^{1-\frac{2}{p}} \\
&\leq \big( \sum_{\alpha} c(n,p) D(t) \int_{{\rm spt}\,\psi_{\alpha}} |u|^p+|u|^{p-1}|\nabla u|\, dx\big)^{\frac{2}{p}}
(2D(t))^{1-\frac{2}{p}} \\
&\leq  c(n,p)D(t)\|u(\cdot,t)\|_{W^{1,p}(\Omega)}^2.
\end{split}
\label{e03}
\end{equation} 
Here we used \eqref{mzineq} with $p=1$ there and $\phi=|\psi_{\alpha} u|^p$. Integration of \eqref{e0}
over $[t_0,t_1]$ using \eqref{e02} or \eqref{e03} gives \eqref{e0}. We define $E_0$ to 
be $D_0+\Cr{c-n} T^{1-\frac{2}{q}} \Cr{c-1}^2 D_1$. In case of $\Omega={\mathbb R}^n$, we do not need 
to take the partition of unity and the proof proceeds similarly.
\end{proof}
In the following we define $\beta'$ by
\begin{equation*}
\beta':=\frac{1+\beta}{2}.
\end{equation*}
In fact, any number $\beta'\in (\beta,1)$ can be used. To fix the idea, we specify such $\beta'$, and suppose that
$\beta'$ depends on $\beta$ for simplicity. 
%Moreover assume $W''(s)\geq \kappa$ for $\alpha \leq |s| \leq 1$ where $\alpha=\frac{3}{4}$ and $\kappa =\frac{11}{6}$.
\begin{lem}\label{lem-low}
There exist $\Cl[c]{c-5}>1$, $1>\Cl[c]{c-6}>0$ and $\Cl[eps]{eps-3}>0$ with $0<\Cr{eps-3} \leq \Cr{eps-2}$ depending only on $n,\Cr{c-0}, \Cr{c-1}, p, q, T, W, \beta$ and $D_0$ with the following property. Assume $\e \in (0,\Cr{eps-3})$ and $|\varphi (y,s)|\leq \alpha <1$ with $s\in (0,T_1]$. 
Here $\alpha$ is from \eqref{Was3}. Then for any $t\in [0,T_1 ]$ with $\max \{ 0,s-2\e ^{2\beta'} \} \leq t \leq s $ we have
\begin{equation}
\Cr{c-6} \leq \frac{1}{R^{n-1}} \mu^{\e} _t (B_R (y)),
\label{low}
\end{equation} 
where $R=\Cr{c-5} (s+\e ^2 -t)^{\frac{1}{2}}$.
\end{lem}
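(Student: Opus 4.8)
The plan is to fix a point $(y,s)$ with $|\varphi(y,s)|\le\alpha$, put $s':=s+\e^2$, and track the localized Gaussian density
\[
g(t):=\int_\Omega \tilde\rho_{(y,s')}(x,t)\,d\mu^\e_t(x),\qquad t\le s .
\]
First I would obtain a lower bound $g(s)\ge c_1>0$. By Lemma \ref{est1} we have $|\nabla\varphi|\le\Cr{c-3}/\e$ on $\Omega\times[0,T]$, so with $c_0:=(1-\alpha)/(2\Cr{c-3})$ one gets $|\varphi(x,s)|\le(1+\alpha)/2<1$ for $x\in B_{c_0\e}(y)$. Since $W>0$ on $(-1,1)$ by \eqref{Was2}, the number $c_W:=\min_{|z|\le(1+\alpha)/2}W(z)$ is positive, hence $W(\varphi(\cdot,s))/\e\ge c_W/\e$ on that ball. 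As $\tilde\rho_{(y,s')}(x,s)=(4\pi\e^2)^{-(n-1)/2}e^{-|x-y|^2/4\e^2}\eta(x-y)$ with $\eta\equiv1$ and $e^{-|x-y|^2/4\e^2}\ge e^{-c_0^2/4}$ on $B_{c_0\e}(y)\subset B_{1/4}(y)$ (once $\e$ is small), this yields
\[
g(s)\ \ge\ \frac{e^{-c_0^2/4}}{(4\pi\e^2)^{(n-1)/2}}\cdot\frac{c_W}{\e}\cdot\omega_n (c_0\e)^n\ =:\ c_1\ >\ 0 ,
\]
a constant depending only on $n$ and $W$ (note $\alpha$ is fixed by $W$ and $\Cr{c-3}$ depends only on $n,\Cr{c-0},W$).

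Next I would propagate this lower bound down to $t=\max\{0,s-2\e^{2\beta'}\}$ using the monotonicity formula. Integrating \eqref{monoton} with pole $(y,s')$ over $[t,s]$ gives $g(t)\ge g(s)-I_1-I_2-I_3$, where $I_1=\tfrac12\int_t^s\!\int_\Omega\tilde\rho|u|^2\,d\mu^\e_\tau\,d\tau$, $I_2=\int_t^s\frac{1}{2(s'-\tau)}\int_\Omega\xi_\e\tilde\rho\,dx\,d\tau$ and $I_3=\Cr{c-2}\int_t^s e^{-1/(128(s'-\tau))}\mu^\e_\tau(B_{1/2}(y))\,d\tau$. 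On $[t,s]$ one has $\e^2\le s'-\tau\le 3\e^{2\beta'}$ for small $\e$, $\int_\Omega\tilde\rho\,dx\le(4\pi(s'-\tau))^{1/2}$, $\xi_\e\le10\e^{-\beta}$ by \eqref{-beta}, $e_\e\le C(n,\Cr{c-0},W)/\e$ by \eqref{estnablaphi} and \eqref{u1sup}, $|u|\le\e^{-\beta}$ by \eqref{u2}, and $\mu^\e_\tau(\Omega)\le E_0$ by \eqref{e0sup}; combined with $\beta'=(1+\beta)/2$ these bounds give $I_1\le C\e^{(1-\beta)/2}$, $I_2\le C\e^{(1-\beta)/2}$, and $I_3\le C\,\e^{2\beta'}E_0\,e^{-c/\e^{2\beta'}}$, all tending to $0$ as $\e\to0$. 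Hence there is $\Cr{eps-3}\in(0,\Cr{eps-2}]$, depending only on the quantities listed in the statement, such that $g(t)\ge c_1/2$ for all $t\in[\max\{0,s-2\e^{2\beta'}\},s]$ whenever $\e<\Cr{eps-3}$.

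Finally I would apply Lemma \ref{sRr} with pole time $s'$, $\tilde s=t$, $r=\Cr{c-5}$ and $R=\Cr{c-5}(s'-t)^{1/2}=\Cr{c-5}(s+\e^2-t)^{1/2}$ (its hypotheses hold since $0\le t\le T$ and $R<\tfrac12$ for $\e$ small); using $R^{n-1}=\Cr{c-5}^{n-1}(s'-t)^{(n-1)/2}$, $\mu^\e_t(B_{1/2}(y))\le E_0$, and $D(t)\le D_1$ from \eqref{cb2}, the lemma gives
\[
\frac{c_1}{2}\ \le\ g(t)\ \le\ \frac{\Cr{c-5}^{n-1}}{(4\pi)^{(n-1)/2}}\,\frac{\mu^\e_t(B_R(y))}{R^{n-1}}\ +\ \frac{E_0\,e^{-1/(16(s'-t))}}{(4\pi(s'-t))^{(n-1)/2}}\ +\ \Cr{c-4}D_1\,e^{-\Cr{c-5}^2/8}.
\]
Choosing $\Cr{c-5}$ large (depending on $\Cr{c-4},D_1,c_1$, hence on $n,\Cr{c-0},\Cr{c-1},p,q,T,W,\beta,D_0$) so that $\Cr{c-4}D_1e^{-\Cr{c-5}^2/8}\le c_1/8$, and then shrinking $\Cr{eps-3}$ so that the middle term (which tends to $0$ as $\e\to0$ since $s'-t\le 3\e^{2\beta'}$) is at most $c_1/8$, one obtains $\mu^\e_t(B_R(y))/R^{n-1}\ge(4\pi)^{(n-1)/2}c_1/(4\Cr{c-5}^{n-1})=:\Cr{c-6}$, which can be taken in $(0,1)$ after a further decrease; this is \eqref{low}.

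I expect the main obstacle to be the control of the transport error $I_1$ in the monotonicity formula: only the crude bound \eqref{u2} on $u$ is available at this stage, and it suffices precisely because the backward time interval has length at most $2\e^{2\beta'}$ and the choice $\beta'=(1+\beta)/2$ makes the net exponent $3\beta'-2\beta-1=(1-\beta)/2$ of $\e$ strictly positive; a coarser treatment of the interval length, or $\beta'$ chosen too close to $\beta$, would destroy this step. The remaining estimates ($I_2$, $I_3$, and the conversion via Lemma \ref{sRr}) are routine once the bookkeeping of which constants may depend on $\e$ is respected.
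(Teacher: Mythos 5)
Your proof is correct and follows the same three-step architecture as the paper: (i) a pointwise lower bound on the localized Gaussian density at time $s$ via Lemma \ref{est1} and the non-degeneracy of $W$ near $|\varphi(y,s)|\le\alpha$; (ii) backward propagation of that lower bound over the short interval $[\max\{0,s-2\e^{2\beta'}\},s]$ via the monotonicity formula \eqref{monoton}; (iii) conversion of the Gaussian lower bound into a ball density bound via Lemma \ref{sRr} with $r=\Cr{c-5}$, $R=\Cr{c-5}(s+\e^2-t)^{1/2}$.

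The one genuine (though minor) departure from the paper is in step (ii). The paper treats the transport term $\frac12\int\tilde\rho|u|^2\,d\mu^\e_\lambda\le\e^{-2\beta}g(\lambda)$ via a Gr\"onwall iteration: it multiplies \eqref{monotonlambda} by the integrating factor $e^{\e^{-2\beta}(s-\lambda)}$ and integrates, exploiting that $e^{\e^{-2\beta}(s-t)}\le e^{2\e^{2(\beta'-\beta)}}\to1$. You instead replace $g(\lambda)$ by the \emph{a priori} (and divergent-as-$\e\to0$) bound $g(\lambda)\le C\e^{-1}\sqrt{4\pi(s'-\lambda)}$ obtained from $e_\e\le C(n,\Cr{c-0},W)/\e$ (Lemma \ref{est1} plus $|\varphi|\le1$), and then integrate over the time window of length $\le2\e^{2\beta'}$ to get $I_1\le C\e^{3\beta'-2\beta-1}=C\e^{(1-\beta)/2}\to0$. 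This direct bound is slightly cruder (the Gr\"onwall route effectively gives the sharper exponent $1-\beta$), but the smallness of the time interval makes both vanish as $\e\to0$, so either works. Your bookkeeping of constant dependences, the application of Lemma \ref{sRr}, and the absorption of the error terms by choosing $\Cr{c-5}$ large and then $\e$ small are all in order.
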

\begin{proof}
We will choose $\Cr{eps-3}<\Cr{eps-2}$ and assume for the moment that $\e<\Cr{eps-2}$. 
Set $\tilde \rho = \tilde \rho_{(y,s+\e ^2)} (x,t)$ in this proof. Assume $|\varphi (y,s)| \leq \alpha <1 $. We have
\[ \int_{\Omega} \tilde \rho \, d\mu^{\e} _s (x) = \int _{\e^{-1}\Omega} 
\frac{e^{-\frac{|\tilde x|^2}{4}}}{(\sqrt{4\pi} )^{n-1}} \eta (\e \tilde x) \Big(\frac{|\nabla \tilde \varphi |^2 }{2} +W(\tilde \varphi ) \Big) \, d\tilde x, \]
where $\tilde \varphi (\tilde x,s ) = \varphi (\e \tilde x +y ,s)$. 
By $|\tilde \varphi (0,s)| \leq \alpha <1$ and Lemma $\ref{est1}$ there exists $0<\Cl[c]{c-7}=\Cr{c-7}(n,\Cr{c-0},W)<1$ such that
\begin{equation}
5\Cr{c-7} \leq \int _{\Omega} \tilde \rho \, d\mu^{\e} _s (x).
\label{5c10}
\end{equation}
From \eqref{u2}, \eqref{monoton}, \eqref{-beta}, \eqref{e0sup} and $\e<\Cr{eps-2}$ we have for $\lambda \in [t,s )$
\begin{equation}
\frac{d}{d\lambda} \int _{\Omega} \tilde \rho \,d\mu^{\e} _\lambda
\leq \e ^{-2\beta} \int _\Omega \tilde \rho \, d\mu^{\e} _\lambda 
+ \frac{10\sqrt{\pi} \e ^{-\beta}}{\sqrt{s-\lambda}}
+ \Cr{c-2} e^{\frac{-1}{128(s+\e^2-\lambda)}}D_1 .
\label{monotonlambda}
\end{equation}
Here $\int_{\Omega} \tilde \rho \, dx\leq \sqrt{4\pi(s-t)}$ is used.
Multiply \eqref{monotonlambda} by $e^{ \e ^{-2 \beta}(s-\lambda)}$ and integrate over $[t,s]$. By $t\geq \max\{0,s-2\e ^{2\beta '}\}$ we have
\begin{equation}
e^{\e ^{-2\beta } (s-\lambda)} \int _\Omega \tilde \rho d\mu^{\e} _\lambda (x) \Big|_{\lambda=t}^s
\leq \e ^{\beta' -\beta} e^{2\e ^{2(\beta' -\beta)}}20\sqrt{2\pi} +2\Cr{c-2} D_1 e^{2\e ^{2(\beta' -\beta)}}e^{\frac{-1}{128(\e ^2+ 2\e ^{2\beta '})}}\e ^{2\beta'}.
\label{monotonlambda2}
\end{equation}
By \eqref{5c10} and \eqref{monotonlambda2} for sufficiently small $\e$ depending only
on $D_1,\beta,n$ and $\Cr{c-2}$ we have
\begin{equation}
2\Cr{c-7}\leq \int _\Omega \tilde \rho \, d\mu^{\e} _t (x).
\label{c10}
\end{equation}
Next we use Lemma \ref{sRr} with $ r:=\sqrt{ 8 \log (2\Cr{c-4} D_1 \Cr{c-7}^{-1})}$, where we may assume that $\Cr{c-4},D_1> 1$ and $\Cr{c-7}<1$. 
We chose this $r$ so that
\begin{equation}
\Cr{c-4} D_1 e^{-\frac{r^2}{8}} = \frac{\Cr{c-7}}{2}.
\label{c7}
\end{equation}
In Lemma $\ref{sRr}$, we replace $s$ and $s-(\frac{R}{r})^2$ by $s+\e ^2$ and $t$ respectively. Remark that $R:= r (s+\e ^2 -t )^{\frac{1}{2}}\leq r(\e ^2 +2 \e ^{2\beta '})^{\frac{1}{2}}$ since $s-t\leq 2\e ^{2\beta '}$. Hence we have $R< \frac{1}{2}$ by restricting $\e$ depending only on $\Cr{c-4},D_1$ and $\Cr{c-7}$. From \eqref{cb2}, \eqref{e0sup} and Lemma $\ref{sRr}$ we have
\begin{equation}
\int _\Omega \tilde \rho \,d\mu^{\e} _t (x) \leq (r/(\sqrt{4\pi} R))^{n-1} \{ \mu^{\e}_t (B_R (y)) + E_0 e^{-r^2 /(16R^2)} \} 
+\Cr{c-4} D_1 e^{-r^2/8}. 
\label{ad1}
\end{equation}
Note that $r/R\geq \e^{-\beta'}/\sqrt{3}$. By \eqref{ad1}, \eqref{c10} and \eqref{c7} for sufficiently small $\e$ we obtain
\[ \Cr{c-7} \leq (r/(\sqrt{4\pi} R))^{n-1} \mu^{\e} _t (B_R(y)).\]
Set $\Cr{c-5}:=r=\sqrt{ 8\log (2\Cr{c-4} D_1 \Cr{c-7}^{-1})}$ and $\Cr{c-6}=r^{1-n}(\sqrt{4\pi})^{n-1}\Cr{c-7}$ 
and we have the desired estimate \eqref{low}.
Note that the restriction on $\e$ depends on $\Cr{c-2}$, $\Cr{c-4}$, $D_1$, $\Cr{c-7}$. Examining the dependence, 
we may conclude the proof. 
\end{proof}

\begin{lem}
\label{disnic}
There exists $0<\Cl[eps]{eps-4}\leq \Cr{eps-3}$ and $\Cl[c]{c-8}$ depending only on $n,\Cr{c-0},\Cr{c-1},  p,  q, T, W, \beta$ and $D_0$ with the following property. For any $r\in (\e ^{\beta '} , \frac{1}{2})$ and $t\in [2\e ^{2\beta ' } ,T]\cap [0,T_1]$, we have
\begin{equation}
\int _{B_r (y)}\Big( \frac{\e |\nabla \varphi |^2}{2} -\frac{W(\varphi)}{\e}\Big) _{+} (x,t) \,
dx\leq \Cr{c-8} \e ^{\beta ' -\beta} r^{n-1} 
\label{beta'-beta}
\end{equation}
provided $\e \leq \Cr{eps-4}$.
\end{lem}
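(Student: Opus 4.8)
The plan is to split $B_r(y)$ according to the value of $\varphi(\cdot,t)$ and estimate $(\xi_\e)_+$ differently on the two pieces. Two facts drive this. \emph{(i)} Differentiating \eqref{beq1} in $x$, the gradient part of the energy, $\psi:=\frac{\e|\nabla\varphi|^2}{2}$, satisfies
$$ \partial_t\psi+u\cdot\nabla\psi-\Delta\psi=-\e|\nabla^2\varphi|^2-\frac{2W''(\varphi)}{\e^2}\,\psi-\e\,\nabla u\cdot(\nabla\varphi\otimes\nabla\varphi), $$
so that, wherever $|\varphi|\ge\alpha$ (with $\alpha,\kappa$ as in \eqref{Was3}), using $W''(\varphi)\ge\kappa$, $|\nabla u|\le\e^{-(\beta+1)}$ from \eqref{u2} and $\e$ small, one has $\partial_t\psi+u\cdot\nabla\psi-\Delta\psi+\tfrac{\kappa}{\e^2}\psi\le 0$. \emph{(ii)} Since $c_\alpha:=\min_{|z|\le\alpha}W(z)>0$ by \eqref{Was1}--\eqref{Was2}, on $\{|\varphi(\cdot,t)|<\alpha\}$ one has $e_\e\ge c_\alpha/\e$; together with the density-ratio bound \eqref{cb2} this forces $\{|\varphi(\cdot,t)|<\alpha\}$ to be thin. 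Throughout I also use the pointwise bound $0\le(\xi_\e)_+\le 10\e^{-\beta}$ of \eqref{-beta}, the trivial $(\xi_\e)_+\le\psi$, and $\e|\nabla\varphi|\le\Cr{c-3}$, $\psi\le\Cr{c-3}^2/(2\e)$ from Lemma \ref{est1}.

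To make (ii) quantitative: if $|\varphi(z,t)|<\alpha$ then, since $\e|\nabla\varphi|\le\Cr{c-3}$, $W(\varphi(\cdot,t))\ge c_\alpha/2$ on $B_{c\e}(z)$ for a constant $c=c(c_\alpha,\Cr{c-3},W)$, so $\mu^\e_t(B_{c\e}(z))\ge c'\e^{n-1}$; a Vitali covering of $\{|\varphi(\cdot,t)|<\alpha\}\cap B_{2r}(y)$ by such balls, together with \eqref{cb2} on $B_{3r}(y)$, shows that this set is covered by at most $C(r/\e)^{n-1}$ balls of radius $\sim\e$. Hence, with $\rho_0:=\e^{1-\delta}$ and $\delta:=\frac{1-\beta}{2n}$ (chosen so that $1-n\delta=\beta'$), the $\rho_0$-neighbourhood of $\{|\varphi(\cdot,t)|<\alpha\}$ meets $B_r(y)$ in a set of Lebesgue measure at most $C\e^{1-n\delta}r^{n-1}=C\e^{\beta'}r^{n-1}$, on which $(\xi_\e)_+\le 10\e^{-\beta}$ contributes at most $C\e^{\beta'-\beta}r^{n-1}$ to the integral; recall $\beta'-\beta=\frac{1-\beta}{2}>0$.

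On the complement --- points $x\in B_r(y)$ with $\dist(x,\{|\varphi(\cdot,t)|<\alpha\})>\rho_0$ --- I would use the barrier. A speed estimate for the transition set ($|\partial_t\varphi|\lesssim\e^{-(1+\beta)}$ by \eqref{beq1}, \eqref{u2} and Lemma \ref{est1}, against $|\nabla\varphi|\sim\e^{-1}$ on $\{|\varphi|=\alpha\}$) shows that $\{|\varphi(\cdot,\tau)|<\alpha\}$ moves by $\lesssim\e^{-\beta}\rho_0^2\ll\rho_0$ over $(t-\rho_0^2,t)$, an interval contained in $[0,T]$ since $t\ge 2\e^{2\beta'}>\rho_0^2$ for this $\delta$; thus, after slightly shrinking $\rho_0$, the cylinder $B_{\rho_0/2}(x)\times(t-\rho_0^2,t)$ lies in $\{|\varphi|\ge\alpha\}$, where the reaction--diffusion inequality of (i) holds. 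Comparing $\psi$ there with a standard supersolution of $\partial_t-\Delta+u\cdot\nabla+\tfrac{\kappa}{\e^2}$ that equals $\Cr{c-3}^2/(2\e)$ on the parabolic boundary (a combination of $\cosh(\mu|z-x|)/\cosh(\mu\rho_0/2)$ with $\mu\sim\e^{-1}$ and a time factor), one gets $\psi(x,t)\le C\e^{-1}e^{-c\e^{-\delta}}$, so $(\xi_\e)_+(x,t)$ is super-exponentially small there and its integral over $B_r(y)$ is negligible against $\e^{\beta'-\beta}r^{n-1}$. Adding the two contributions and tracking the dependence of constants on $n,\Cr{c-0},\Cr{c-1},p,q,T,W,\beta,D_0$ gives \eqref{beta'-beta} with $\Cr{c-8}$ and $\Cr{eps-4}$.

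The difficulties are technical rather than conceptual. The exponent bookkeeping --- that $1-n\delta=\beta'$ and $\rho_0^2\le t$ hold simultaneously for the chosen $\delta$ --- is what confines the argument to the stated ranges of $r$ and $t$ and must be checked in every dimension; one must also control the speed of $\{|\varphi(\cdot,\tau)|=\alpha\}$ over the short interval $\rho_0^2$ and build the comparison function so that both its boundary size $\sim\e^{-1}$ and its decay at the centre are under control despite the drift $u$ and the time-dependence of the region $\{|\varphi|\ge\alpha\}$. Note that a more direct route via the equation for $\xi_\e$ itself would have to contend with the singular first-order term $-\tfrac{2W'(\varphi)}{\e^2|\nabla\varphi|^2}\nabla\varphi\cdot\nabla\xi_\e$; the split above avoids it entirely by using the clean inequality for $\psi$ in the bulk $\{|\varphi|\ge\alpha\}$ and only the crude pointwise bound \eqref{-beta} in the thin region $\{|\varphi|<\alpha\}$ and its neighbourhood.
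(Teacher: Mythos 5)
Your high-level split (thin bad set near $\{|\varphi|<\alpha\}$ handled by the crude bound \eqref{-beta}; exponential decay of $\psi=\tfrac{\e|\nabla\varphi|^2}{2}$ in the bulk $\{|\varphi|\ge\alpha\}$ using $W''\ge\kappa$) matches the paper's strategy, and your PDE for $\psi$ and the observation $\partial_t\psi+u\cdot\nabla\psi-\Delta\psi+\tfrac{\kappa}{\e^2}\psi\le 0$ on $\{|\varphi|\ge\alpha\}$ are correct. The near-region measure estimate via Vitali covering is also fine. However, the ``speed estimate'' that is supposed to make the bulk comparison legitimate has a genuine gap. You need to know that for $x$ with $\dist(x,\{|\varphi(\cdot,t)|<\alpha\})>\rho_0$, the whole cylinder $B_{\rho_0/2}(x)\times(t-\rho_0^2,t)$ lies in $\{|\varphi|\ge\alpha\}$, and you argue this by bounding the normal velocity of the level set by $|\partial_t\varphi|/|\nabla\varphi|$. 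This requires \emph{both} a uniform lower bound $|\nabla\varphi|\gtrsim\e^{-1}$ on $\{|\varphi|=\alpha\}$ and a pointwise bound $|\partial_t\varphi|\lesssim\e^{-(1+\beta)}$, and neither is available. Nothing prevents $\nabla\varphi$ from degenerating on a level set, so the velocity $|\partial_t\varphi|/|\nabla\varphi|$ is not controlled; and Lemma \ref{est1} only gives $\nabla\tilde\varphi\in C^{1/2}$, which does not imply $\Delta\tilde\varphi$ (equivalently $\partial_t\tilde\varphi$) in $L^\infty$, so even the numerator bound you cite is not established. In particular, $\varphi$ could dip below $\alpha$ at a point where it was previously near $1$, without the ``level set moving'' in any useful sense, and your argument does not exclude that.

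The paper avoids this issue by construction rather than by an a priori estimate: it defines $\tilde A$ as the set of $x$ such that $|\varphi(x,\tilde t)|\le\alpha$ for \emph{some} $\tilde t$ in the whole time window $[t_*-\e^{2\beta'},t_*]$, so the complement is automatically in $\{|\varphi|\ge\alpha\}$ throughout the window. The price is that the measure estimate for the neighbourhood $A$ of $\tilde A$ must be done at a common time $\hat t$, which is why Lemma \ref{lem-low} (monotonicity formula, Huisken-type density lower bound) is invoked rather than the elementary $e_\e\ge c_\alpha/\e$ bound you use: the monotonicity formula transports the lower bound from each $\tilde t_i$ back to the single slice $\hat t$, at which point disjointness plus \eqref{cb2} gives the count. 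This is also what fixes the scale to $\e^{\beta'}$; your smaller scale $\rho_0=\e^{1-\delta}$ is incompatible with this, since the density lower bound one gets from the monotonicity over a time interval of length $\sim\e^{2\beta'}$ is only at scale $\e^{\beta'}$. Your barrier comparison in the bulk is a legitimate substitute for the paper's Gronwall argument, but only after the bad set is defined over the full time window.
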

\begin{proof}
We only need to prove the claim when $T_1\geq 2\e ^{2\beta '}$ since the claim is vacuously true otherwise.
Let $y\in\Omega$, $r\in (\e^{\beta'},\frac12)$ and $t_\ast \in [2\e ^{2\beta'},T]\cap [0,T_1 ]$ be arbitrary and fixed. 
We define
\[ \tilde A:= \{ x\in B_{2r} (y) \, : \, \text{for some} \ \tilde t \ \text{with} \ t_\ast -\e ^{2\beta '} \leq \tilde t \leq t_\ast , \ |\varphi (x,\tilde t) |\leq \alpha \}, \]
\[ A:= \{ x \in B_{2r+2\Cr{c-5} \e ^{\beta '}}(y) \, : \, \dist (\tilde A,x)<2\Cr{c-5}\e ^{\beta'} \}. \]
By Vitali's covering theorem applied to $\mathcal{F} = \{ \bar B _{2\Cr{c-5} \e ^{\beta '}}(x) \, : \, x\in \tilde A \} \ (\text{note} \ A \subset \cup _{B\in \mathcal{F}} B )$, there exists a set of pairwise disjoint balls $\{ B_{2\Cr{c-5}\e ^{\beta '}} (x_i) \} _{i=1} ^{N}$ such that
\begin{equation}
x_i \in \tilde A \ \text{for each} \ i=1,\dots , N \ \ \text{and} \ \ A\subset \cup_{i=1} ^N \bar B _{10\Cr{c-5}\e ^{\beta '}} (x_i).
\label{tildeA}
\end{equation}
By the definition of $\tilde A$, for each $x_i$ there exists $ \tilde t_i$ such that
\begin{equation}
t_\ast -\e ^{2\beta'} \leq \tilde t_i \leq t_\ast , \ \ | \varphi (x_i,\tilde t_i)| \leq \alpha.
\label{tast}
\end{equation}
Define $\hat t:=t_\ast -2\e ^{2\beta '}$. Since $t_\ast\geq 2\e^{2\beta'}$, we have $\hat t\geq 0$. By \eqref{tast}, 
\begin{equation}
\e ^{2 \beta'} \leq \tilde t_i -\hat t\leq 2\e ^{2\beta '}
\label{bad1}
\end{equation}
and the assumption of Lemma $\ref{lem-low}$ is satisfied for $s=\tilde t_i , \ y= x_i , \ t=\hat t$ and $R_i:=\Cr{c-5} (\tilde t_i +\e ^2-\hat t)^{\frac{1}{2}}$ if $\e<\Cr{eps-3}$. Hence we may conclude that
\begin{equation}
\Cr{c-6}R_i^{n-1} \leq \mu^{\e} _{\hat t} (B_{R_i} (x_i))
\qquad \text{for} \ i=1,\dots,N.
\label{lowbdd}
\end{equation}
By \eqref{bad1}, we have 
$\Cr{c-5}(\e ^{2\beta'} +\e ^2)^{\frac{1}{2}} \leq R_i\leq   \Cr{c-5}(2\e ^{2\beta'} +\e ^2)^{\frac{1}{2}} \leq 2\Cr{c-5}\e^{\beta'}$, which shows 
\begin{equation}
\Cl[c]{c-9} \e^{\beta'(n-1)}\leq \mu^{\e}_{\hat t}(B_{2\Cr{c-5} \e^{\beta'}}(x_i))
\label{bad2}
\end{equation}
from \eqref{lowbdd} with $\Cr{c-9}:=\Cr{c-6} \Cr{c-5}^{n-1}$. 
 Since $\{B_{2\Cr{c-5} \e ^{\beta '}} (x_i ) \}_{i=1} ^N$ are pairwise disjoint and 
 $B_{2\Cr{c-5} \e ^{\beta '} } (x_i) \subset B_{2r+2\Cr{c-5} \e ^{\beta '}} (y)$, \eqref{bad2} gives 
\begin{equation}
N\Cr{c-9} \e ^{\beta ' (n-1)} \leq \mu^{\e} _{\hat t} (B_{2r+ 2\Cr{c-5} \e ^{\beta '}}(y)).
\label{lowbdd2}
\end{equation}
Hence the $n$-dimensional volume of $A$ is estimated by \eqref{tildeA} and \eqref{lowbdd2}
\begin{equation*}
\mathcal{L}^n (A) \leq N\omega _n (10\Cr{c-5}\e^{\beta '})^n \leq 
\frac{\omega _n (10\Cr{c-5})^n \e ^{\beta '}}{\Cr{c-9}} \mu^{\e} _{\hat t}( B_{2r+2\Cr{c-5}\e ^{\beta '}} (y)).
\end{equation*}
By \eqref{cb2} and $r\geq \e ^{\beta '}$,
\begin{equation}
\mathcal{L}^n (A) \leq \frac{\omega _n (10\Cr{c-5})^n \e ^{\beta '}}{\Cr{c-9}} D_1 \omega _{n-1} (2r +2\Cr{c-5}\e ^{\beta '}) ^{n-1} \leq \Cl[c]{c-10}\e ^{\beta '} r^{n-1} ,
\label{measA}
\end{equation}
where $\Cr{c-10}:= \omega _n \omega _{n-1} (10\Cr{c-5})^n(2+2\Cr{c-5})^{n-1} D_1 \Cr{c-9}^{-1}$. 
Hence by \eqref{-beta} and \eqref{measA}
\begin{equation}
\int _{A\cap B_r (y)} \Big( \frac{\e |\nabla \varphi |^2}{2}-\frac{W(\varphi)}{ \e} \Big)_{+} (x,t_\ast ) \, dx
\leq \mathcal{L}^n (A) 10 \e ^{-\beta}\leq 10 \Cr{c-10} \e ^{\beta ' -\beta}r^{n-1}.
\label{measA2}
\end{equation}
Next we estimate the surface energy on the complement of $A$ which decays very quickly. 
Define $\phi  \in \text{Lip} (B_{2r} (y)) $ such that
\begin{equation*}
\phi(x):=\left\{
\begin{split}
&1 \qquad \text{if} \ x\in B_r (y)\setminus A, \\
&0 \qquad \text{if} \ \dist (x,B_r (y)\setminus A) \geq \e ^{\beta '},
\end{split}
\right. \\
\end{equation*}
\[ |\nabla \phi |\leq 2\e ^{ -\beta '} \qquad \text{and} \qquad 0\leq \phi \leq 1.\]
By $r\geq \e ^{\beta '}$, $2\Cr{c-5}\e^{\beta'} >\e^{\beta'}$ and the definitions of $\tilde A$ and $\phi$,
we have $\spt \phi \cap \tilde A =\emptyset$, hence
\begin{equation}
|\varphi(x,s)|\geq \alpha , \qquad \text{for} \ x\in \spt \phi, \ s\in [t_\ast -\e ^{2\beta'},t_\ast ].
\label{phi>alpha}
\end{equation}
For each $j$ differentiate the equation \eqref{beq1} with respect to $x_j$, multiply $\phi^2 \frac{\partial \varphi }{\partial x_j}$, sum over $j$ and integrate to obtain
\begin{equation}
\label{name1}
\begin{split}
\frac{d}{dt} \int _{\Omega} \frac{1}{2} |\nabla \varphi |^2 \phi^2 \, dx
+ \int _{\Omega} (u\otimes \nabla \varphi \cdot \nabla ^2 \varphi + \nabla \varphi \otimes \nabla \varphi \cdot
\nabla u) \phi ^2 \, dx \\
=\int _\Omega \Big( \nabla \varphi \cdot \Delta \nabla \varphi -\frac{W''(\varphi)}{\e ^2}  |\nabla \varphi |^2 \Big)\phi ^2 \, dx.
\end{split}
\end{equation}
By integration by parts and the Cauchy-Schwarz inequality, \eqref{name1} gives
\begin{equation}
\label{name2}
\begin{split}
\frac{d}{dt} \int _{\Omega} \frac{1}{2} |\nabla \varphi |^2 \phi ^2 \, dx
 \leq & \frac{1}{2} \int _{\Omega} |u| ^2 |\nabla \varphi |^2 \phi  ^2 \, dx
 + \int _\Omega |\nabla \varphi |^2 |\nabla u| \phi ^2 \, dx \\
 &+ 4\int _\Omega |\nabla \phi| ^2 |\nabla \varphi |^2 \, dx 
 -\int _\Omega \frac{W'' (\varphi)}{\e ^2} |\nabla \varphi |^2 \phi  ^2 \, dx.
\end{split}
\end{equation}
By \eqref{phi>alpha}, $W''(\varphi)\geq \kappa$ on $\spt \phi $ for $t\in [t_\ast -\e ^{2\beta '} ,t_\ast]$. By \eqref{u2} and the definition of $\phi$, 
\eqref{name2} gives
\begin{equation}
\label{name3}
\begin{split}
&\frac{d}{dt} \int _{\Omega} \frac{1}{2} |\nabla \varphi |^2 \phi  ^2 \, dx \\
 \leq & \int _{\Omega} \Big( \frac{\e^{-2\beta}}{2} + \e ^{-1-\beta }\Big) |\nabla \varphi |^2 \phi  ^2 \, dx
 + 16  \e ^{-2\beta '} \int _{\spt \phi }|\nabla \varphi |^2 \, dx  -\frac{\kappa}{\e ^2}\int _\Omega  |\nabla \varphi |^2 \phi ^2 \, dx \\
 \leq & -\frac{\kappa}{2\e ^2} \int _\Omega |\nabla \varphi |^2 \phi  ^2 \, dx +16  \e ^{-2\beta '} \int _{\spt \phi } |\nabla \varphi| ^2 \,dx
\end{split}
\end{equation}
for small $\e$. By integrating \eqref{name3} over $[t_\ast-\e^{2\beta'},t_\ast]$, we obtain
\begin{equation}
\begin{split}
\int _{\Omega} \frac{1}{2} |\nabla \varphi |^2 \phi ^2 (x,t_\ast )\, dx
\leq & e^{-\kappa\e ^{2(\beta'-1)} } \int _{\Omega } \frac{1}{2} |\nabla \varphi |^2 \phi ^2 (x,t_\ast-
\e^{2\beta'}) \, dx \\
+ & \int _{t_\ast-\e^{2\beta'}} ^{t_\ast} e^{-\frac{\kappa}{\e ^{2}}(t_\ast-\lambda)} 16
 \e ^{-2 \beta '} \Big( \int _{\spt \phi } |\nabla \varphi |^2(x,\lambda) \, dx  \Big) \, d\lambda.  
\end{split}
\label{estcompA}
\end{equation}
Define
\[ M: = \sup _{\lambda \in [t_\ast -\e^{2\beta'},t_\ast ]} \int _{\spt \phi } \frac{1}{2} |\nabla \varphi |^2 (x,\lambda ) \, dx. \]
By \eqref{estcompA} we have
\begin{equation}
\int _\Omega \frac{1}{2} |\nabla \varphi |^2 \phi  ^2 (x,t_\ast) \, dx \leq 
(e^{-\kappa\e ^{2(\beta'-1)}} +32\kappa ^{-1} \e ^{2-2\beta '}  ) M.
\label{estcompA2}
\end{equation}
By $\spt \phi \subset B_{2r}(y)$ and \eqref{cb2}
\begin{equation}
\e M\leq  \omega_{n-1} D_1 (2r)^{n-1} .
\label{mj4}
\end{equation}
Since $B_r (y) \setminus A \subset \{ \phi =1 \} $, we have
\begin{equation}
\int _{B_r(y) \setminus A} \frac{\e}{2} |\nabla \varphi |^2 (x,t_\ast ) \, dx \leq \int _{\Omega} \frac{\e}{2} |\nabla \varphi |^2 (x,t_\ast ) \phi ^2 \, dx.
\label{mm1}
\end{equation}
Recall that $\beta'<1$. By \eqref{estcompA2}-\eqref{mm1}, we obtain for sufficiently small $\e$ (depending only on $\kappa$)
\begin{equation}
\int _{B_r(y) \setminus A} \frac{\e}{2} |\nabla \varphi |^2 (x,t_\ast ) \, dx \leq 33\kappa ^{-1}  \e^{2-2\beta'} D_1 \omega _{n-1} (2r)^{n-1}.
\label{estcompA3}
\end{equation}
By \eqref{measA2} and \eqref{estcompA3}, and since $\beta'-\beta=\frac{1-\beta}{2} < 2-2\beta'=1-\beta$, we obtain \eqref{beta'-beta}
with an appropriate choice of $\Cr{c-8}$.
\end{proof}
Later in Section \ref{muint}, we use the following estimate which follows from Lemma \ref{disnic}.
\begin{corollary}
For any $0<r<\frac12$, $\e\leq \Cr{eps-4}$ and $t\in [2\e^{2\beta'},T]\cap [0,T_1]$, we have
\begin{equation}
\int_0^r\frac{d\tau}{\tau^n} \int_{B_{\tau}(y)} \Big(\frac{\e|\nabla\varphi|^2}{2}
-\frac{W(\varphi)}{\e}\Big)_+(x,t)\, dx\leq \Cr{c-8}\e^{\beta'-\beta}|\log\e|
+10\omega_n \e^{\beta'-\beta}.
\label{atode1}
\end{equation}
\label{atode}
\end{corollary}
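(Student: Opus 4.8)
The plan is to split the radial integral at the scale $\tau=\e^{\beta'}$: for radii below this scale I would use only the crude pointwise bound \eqref{-beta}, while for radii above it I would invoke the sharp estimate of Lemma \ref{disnic}.

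First, for $\tau\in(0,\e^{\beta'})$, Lemma \ref{betalem} gives $\Big(\frac{\e|\nabla\varphi|^2}{2}-\frac{W(\varphi)}{\e}\Big)_{+}\leq 10\e^{-\beta}$ on $\Omega\times[0,T]$, whence $\int_{B_\tau(y)}\Big(\frac{\e|\nabla\varphi|^2}{2}-\frac{W(\varphi)}{\e}\Big)_{+}\, dx\leq 10\omega_n\e^{-\beta}\tau^n$. In particular $\tau^{-n}\int_{B_\tau(y)}(\cdots)_+\, dx$ is bounded, so the integral over $\tau$ converges near $0$, and integrating over $\tau\in(0,\min\{r,\e^{\beta'}\})$ contributes at most $10\omega_n\e^{-\beta}\e^{\beta'}=10\omega_n\e^{\beta'-\beta}$, the second term on the right of \eqref{atode1}.

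Second, for $\tau\in(\e^{\beta'},r)$ --- a range which is non-empty only when $r>\e^{\beta'}$ --- each such $\tau$ lies in $(\e^{\beta'},\frac12)$, so Lemma \ref{disnic} applies with radius $\tau$ (here one uses $t\in[2\e^{2\beta'},T]\cap[0,T_1]$ and $\e\leq\Cr{eps-4}$) and gives $\int_{B_\tau(y)}\Big(\frac{\e|\nabla\varphi|^2}{2}-\frac{W(\varphi)}{\e}\Big)_{+}\, dx\leq \Cr{c-8}\e^{\beta'-\beta}\tau^{n-1}$. Dividing by $\tau^n$ and integrating,
\[
\int_{\e^{\beta'}}^{r}\frac{d\tau}{\tau^n}\,\Cr{c-8}\e^{\beta'-\beta}\tau^{n-1}
=\Cr{c-8}\e^{\beta'-\beta}\log\frac{r}{\e^{\beta'}}
\leq \Cr{c-8}\e^{\beta'-\beta}|\log\e|,
\]
where the last step uses $\log r<0$ (since $r<\frac12$) together with $0<\beta'<1$, so that $\log\frac{r}{\e^{\beta'}}=\log r+\beta'|\log\e|<|\log\e|$; this is the first term on the right of \eqref{atode1}. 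Adding the two contributions yields \eqref{atode1}, and when $r\leq\e^{\beta'}$ only the first estimate is needed and the bound holds a fortiori.

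I do not expect any genuine difficulty: the estimate is a routine consequence of Lemma \ref{disnic}. The only point deserving attention is that Lemma \ref{disnic} degenerates at scales $\tau\leq\e^{\beta'}$ (its hypothesis requires $r\in(\e^{\beta'},\frac12)$), which is what forces the separate, cruder treatment of the innermost region through the $L^\infty$ bound \eqref{-beta}, and this is precisely why the bound in \eqref{atode1} carries the extra logarithmic factor rather than a clean power of $\e$.
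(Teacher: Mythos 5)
Your proof is correct and follows exactly the same two-scale decomposition the paper uses: the crude pointwise bound \eqref{-beta} on $\tau\in(0,\e^{\beta'})$ and Lemma \ref{disnic} on $\tau\in(\e^{\beta'},r)$. The paper states this in one line and leaves the arithmetic to the reader; your write-up simply fills in that arithmetic, and the final observation that $\log(r/\e^{\beta'})\le|\log\e|$ since $r<\tfrac12$ and $0<\beta'<1$ is exactly what is needed.
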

\begin{proof}
For the integration over the range $\tau\in (0,\e^{\beta'})$,
we simply use the estimate \eqref{-beta}. For the range $\tau\in (\e^{\beta'},r)$,
we use \eqref{beta'-beta}. 
\end{proof}
\begin{lem}
There exists a constant $\Cl[c]{c-11}$ depending only on $n, \Cr{c-0},\Cr{c-1}, p, q, T, D_0, W, \beta$ such that for $\e < 
\Cr{eps-4}$, $t\in [0,T_1 ]$ and $t<s$, we have
\begin{equation}
\int _0 ^t \Big\{ \frac{1}{2(s-\lambda )} \int _\Omega \Big( \frac{\e |\nabla \varphi| ^2}{2}-\frac{W(\varphi)}{\e} \Big)_{+} \tilde \rho _{(y,s)} (x,\lambda) \, dx \Big\} \, d\lambda \leq \Cr{c-11} \e ^{\beta' -\beta}|\log \e|.
\label{100}
\end{equation}
\label{discre1}
\end{lem}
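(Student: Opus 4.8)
The plan is to split the $\lambda$-integral in \eqref{100} at $\lambda=2\e^{2\beta'}$: on $[0,\min\{t,2\e^{2\beta'}\}]$ I would use the crude pointwise bound \eqref{-beta}, and on $[2\e^{2\beta'},t]$ the fast spatial decay of Lemma \ref{disnic} together with elementary Gaussian integrals. Throughout I use that, since $\tilde\rho_{(y,s)}=\rho_{(y,s)}\,\eta(\cdot-y)$ with $0\le\eta\le1$ supported in $B_{1/2}(y)$, every upper bound may be taken with $\tilde\rho_{(y,s)}$ replaced by $\rho_{(y,s)}\chi_{B_{1/2}(y)}$; write $\tau:=s-\lambda$. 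On $[0,\min\{t,2\e^{2\beta'}\}]$ one has $\big(\tfrac{\e|\nabla\varphi|^2}{2}-\tfrac{W(\varphi)}{\e}\big)_+\le 10\e^{-\beta}$ by \eqref{-beta} and $\int_\Omega\tilde\rho_{(y,s)}(x,\lambda)\,dx\le\sqrt{4\pi(s-\lambda)}$ (as already used in the proof of Lemma \ref{lem-low}), so this part is at most $10\sqrt\pi\,\e^{-\beta}\int_0^{\min\{t,2\e^{2\beta'}\}}(s-\lambda)^{-1/2}\,d\lambda\le 20\sqrt{2\pi}\,\e^{\beta'-\beta}$, which is within the claimed bound; if $t\le2\e^{2\beta'}$ the proof is already complete.

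For $\lambda\in[2\e^{2\beta'},t]\subset[2\e^{2\beta'},T]\cap[0,T_1]$ Lemma \ref{disnic} applies. Setting $D_\lambda(r):=\int_{B_r(y)}\big(\tfrac{\e|\nabla\varphi|^2}{2}-\tfrac{W(\varphi)}{\e}\big)_+(x,\lambda)\,dx$, one has $D_\lambda(r)\le 10\,\omega_n\e^{-\beta}r^{n}$ for all $r>0$ by \eqref{-beta}, and $D_\lambda(r)\le\Cr{c-8}\e^{\beta'-\beta}r^{n-1}$ for $\e^{\beta'}<r\le\tfrac12$ by Lemma \ref{disnic} (the endpoint $r=\tfrac12$ by monotone convergence). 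Since $\rho_{(y,s)}(\cdot,\lambda)$ is radial and decreasing with $-\partial_r\big((4\pi\tau)^{-(n-1)/2}e^{-r^2/(4\tau)}\big)=\tfrac{r}{2\tau}(4\pi\tau)^{-(n-1)/2}e^{-r^2/(4\tau)}$, the layer-cake formula yields
\[
\int_{B_{1/2}(y)}\Big(\tfrac{\e|\nabla\varphi|^2}{2}-\tfrac{W(\varphi)}{\e}\Big)_+\rho_{(y,s)}\,dx
=\frac{e^{-1/(16\tau)}}{(4\pi\tau)^{\frac{n-1}{2}}}\,D_\lambda(\tfrac12)+\int_0^{1/2}\frac{r\,e^{-r^2/(4\tau)}}{2\tau\,(4\pi\tau)^{\frac{n-1}{2}}}\,D_\lambda(r)\,dr .
\]

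I would then multiply by $\tfrac1{2(s-\lambda)}=\tfrac1{2\tau}$ and integrate over $\lambda\in[2\e^{2\beta'},t]$. The boundary term is handled using $\sup_{\tau>0}\tau^{-(n+1)/2}e^{-1/(16\tau)}<\infty$ and $D_\lambda(\tfrac12)\le\Cr{c-8}\e^{\beta'-\beta}2^{1-n}$, and is $\le c(n)\Cr{c-8}T\,\e^{\beta'-\beta}$. For the remaining term I would bound $D_\lambda(r)$ by the ($\lambda$-independent) $10\,\omega_n\e^{-\beta}r^{n}$ when $r<\e^{\beta'}$ and by $\Cr{c-8}\e^{\beta'-\beta}r^{n-1}$ when $\e^{\beta'}<r<\tfrac12$, interchange the nonnegative integrations, and use the identity $\int_0^\infty\tau^{-(n+3)/2}e^{-r^2/(4\tau)}\,d\tau=c(n)\,r^{-(n+1)}$ (finite since $n\ge2$) to carry out the $\lambda$-integral; what remains is $\le c(n)\int_0^{1/2}r^{-n}\,\overline D(r)\,dr$ with $\overline D(r)=10\,\omega_n\e^{-\beta}r^{n}$ on $(0,\e^{\beta'})$ and $\overline D(r)=\Cr{c-8}\e^{\beta'-\beta}r^{n-1}$ on $(\e^{\beta'},\tfrac12)$. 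Evaluating the two pieces exactly as in the proof of Corollary \ref{atode} (compare \eqref{atode1}) gives $c(n)\big(10\,\omega_n\e^{\beta'-\beta}+\Cr{c-8}\e^{\beta'-\beta}\log\tfrac1{2\e^{\beta'}}\big)\le c(n)\Cr{c-8}\,\e^{\beta'-\beta}|\log\e|$ for small $\e$. Adding the early-time contribution, the boundary term and this last bound yields \eqref{100} with $\Cr{c-11}$ depending only on the listed quantities.

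The hard part is the joint singularity of the weight $\tfrac1{2(s-\lambda)}$ and the kernel $\rho_{(y,s)}$ as $\lambda\uparrow s$ on the small spatial scales $r<\e^{\beta'}$, where Lemma \ref{disnic} is unavailable and only the lossy bound $\big(\tfrac{\e|\nabla\varphi|^2}2-\tfrac{W(\varphi)}\e\big)_+\le10\e^{-\beta}$ can be used. What makes it work is that integrating $\tfrac1{(s-\lambda)^2}\rho_{(y,s)}$ in $\lambda$ produces precisely the factor $r^{-(n+1)}$, which absorbs the volume factor $r^{n+1}$ (one $r^n$ from $D_\lambda(r)\lesssim\e^{-\beta}r^n$ and one further $r$ from $-\partial_r\rho_{(y,s)}$), so the inner-scale contribution is the convergent $\int_0^{\e^{\beta'}}dr=\e^{\beta'}$; on the outer scales the same mechanism leaves $\int_{\e^{\beta'}}^{1/2}r^{-1}\,dr$, which is the source of the logarithmic factor in \eqref{100}. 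One should also note that no control on $s$ beyond $s>t$ is needed, since all these Gaussian time integrals are estimated over the full range $\tau\in(0,\infty)$.
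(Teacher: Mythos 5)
Your proof is correct. The opening move---split at $\lambda=2\e^{2\beta'}$, crude bound \eqref{-beta} on the early piece, Lemma \ref{disnic} afterward---matches the paper, but the main estimate is organized differently. The paper fixes $\lambda$, splits space at radius $\e^{\beta'}$, and uses a Cavalieri (level-set) decomposition of the Gaussian on the outer region; it also introduces a third case $s-2\e^{2\beta'}\le t<s$ so as to reduce to $t\le s-2\e^{2\beta'}$, which lets it evaluate $\int(s-\lambda)^{-(n+1)/2}\,d\lambda$ via $(s-t)^{-(n-1)/2}\le(2\e^{2\beta'})^{-(n-1)/2}$. You instead integrate by parts in the radial variable at fixed $\lambda$, keeping the boundary term at $r=1/2$ and the bulk term $D_\lambda(r)(-\partial_r\rho)$, then swap the $r$- and $\lambda$-integrations and enlarge the $\tau$-integral to $(0,\infty)$, which gives the closed form $c(n)r^{-(n+1)}$. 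The two spatial decompositions (Cavalieri versus radial integration by parts) are dual and give identical orders; the genuine gain in your version is that the third case disappears, since no lower bound on $s-\lambda$ is ever needed: the extended $\tau$-integral is finite for every $r>0$, and the would-be singularity at $r\downarrow0$ is killed by the inner-scale bound $\overline D(r)\,r^{-n}\le 10\omega_n\e^{-\beta}$, so the final $r$-integral is precisely the one in Corollary \ref{atode}. The boundary term is harmless because $\tau^{-(n+1)/2}e^{-1/(16\tau)}$ is uniformly bounded, and your monotone-convergence extension of \eqref{beta'-beta} to $r=1/2$ is legitimate.
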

\begin{proof}
If $t\leq 2\e ^{2\beta '}$ then by using \eqref{-beta} and $\int \rho \, dx =\sqrt{4\pi (s-\lambda)}$ we have
\begin{equation}
\begin{split}
\int _0 ^t \Big\{ \frac{1}{2(s-\lambda )} \int _\Omega \Big( \frac{\e |\nabla \varphi| ^2}{2}-\frac{W(\varphi)}{\e} \Big)_{+} \tilde \rho _{(y,s)} (x,\lambda) \, dx \Big\} \, d\lambda \leq \int _0 ^t \frac{10 \e ^{-\beta} \sqrt{\pi}}{\sqrt{s-\lambda} } \, d\lambda \\
\leq 20\sqrt{2\pi} \e ^{\beta '- \beta}.
\end{split}
\label{near0}
\end{equation}
By the similar argument, if $s>t\geq s-2\e ^{2\beta '}$ then we have
\begin{equation}
\begin{split}
\int _{s-2\e^{2\beta'}} ^t \Big\{ \frac{1}{2(s-\lambda )} \int _\Omega \Big( \frac{\e |\nabla \varphi| ^2}{2}-\frac{W(\varphi)}{\e} \Big)_{+} \tilde \rho _{(y,s)} (x,\lambda) \, dx \Big\} \, d\lambda \leq 20 \sqrt{2\pi} \e ^{\beta '-\beta}.
\end{split}
\label{nears}
\end{equation}
Hence we only need to estimate integral over $[2\e ^{2\beta'},t]$ with $t\leq s-2\e ^{2\beta'}$. First we estimate on $B_{\e ^{\beta '}} (y)$. We compute using \eqref{-beta} and 
$s-t\geq 2\e^{2\beta'}$ that
\begin{equation}
\begin{split}
\int _{2\e ^{2\beta '}} ^t \frac{1}{2(s-\lambda )} \int _{B_{\e ^{\beta '}}} \Big( \frac{\e |\nabla\varphi| ^2}{2}-\frac{W(\varphi)}{\e} \Big)_{+} \tilde \rho  \, dxd\lambda 
&\leq \int _{2\e ^{2\beta '}} ^t \frac{10 \e ^{-\beta}\e ^{n\beta '} \omega _n }{2(s-\lambda)^{\frac{n+1}{2}} (\sqrt{4\pi})^{n-1}} \,d\lambda \\
&\leq \frac{10 \e ^{\beta ' -\beta } \omega _n}{(\sqrt{8\pi})^{n-1}(n-1)}.
\end{split}
\label{middle1}
\end{equation}
On $\Omega \setminus B_{\e ^{\beta '} }(y)  $, by \eqref{beta'-beta}, $s-t \geq 2\e ^{2\beta '}$ and computations similar to \eqref{estsRr},
we have
\begin{equation}
\begin{split}
& \int _{2\e ^{2\beta '}} ^t \frac{1}{2(s-\lambda )} \int _{\Omega \setminus B_{\e ^{\beta '}}} \Big( \frac{\e |\nabla\varphi| ^2}{2}-\frac{W(\varphi)}{\e} \Big)_{+} \tilde \rho  \, dxd\lambda \\
& \leq \int _{2\e ^{2\beta '}} ^t \frac{d\lambda }{2(s-\lambda )^{\frac{n+1}{2} } (\sqrt{4\pi })^{n-1}} \int _0 ^1 \Big\{ \int _{B_{\frac{1}{2}} \cap \{ x \, :
 \, e^{-\frac{|x-y|^2 }{ 4(s-\lambda )}} \geq l \} \setminus B_{\e ^{\beta ' } } (y) } \Big( \frac{\e |\nabla\varphi| ^2}{2}-\frac{W(\varphi)}{\e} \Big)_{+} \Big\} \, dl \\
& \leq \Cr{c-8} c(n) \e^{\beta'-\beta} \int _{2\e ^{2\beta '}} ^t \frac{e^{-\frac{1}{16(s-\lambda )}} + (s-\lambda )^{\frac{n-1}{2}}}{(s-\lambda ) ^{\frac{n+1}{2}}} \, d\lambda \\
&\leq \Cr{c-8} c(n) \e ^{\beta'-\beta} (1+ \beta ' \log (1/\e)).
\end{split}
\label{middle2}
\end{equation}
By \eqref{near0}-\eqref{middle2} we obtain the desired estimate. 
\end{proof}
To utilize the formula \eqref{monoton}, we 
next obtain the estimate for $u$. 
\begin{lem}
\label{lem4-12}
There exists $\Cl[c]{c-12}$ depending only on $n,p$ and $q$ such that for any $t_0, t_1$ with $s>t_1>t_0 \geq 0$ we have
\begin{equation}
\int _{t_0} ^{t_1} \int _\Omega \tilde \rho _{(y,s)} |u|^2 \, d\mu^{\e} _t dt
\leq \Cr{c-12} (t_1 -t_0) ^{\hat p} \|u\|_{L^{q}([t_0,t_1];(W^{1,p}(B_{\frac12}(y)))^n)}^2\sup _{t\in [t_0,t_1]} D(t),
\label{c15}
\end{equation}
where (1) $0<\hat p=\frac{2pq -2p-nq}{pq} $ when $p<n$, (2) $\hat p<\frac{q-2}{q}$ 
may be taken arbitrarily close to $\frac{q-2}{q}$ when $p=n$
(and $\Cr{c-12}$ depends on $\hat p$), and (3) $\hat p=\frac{q-2}{q}$ when $p>n$. 
\end{lem}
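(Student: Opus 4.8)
The plan is to estimate the integrand at each fixed time $t<s$ and then integrate in $t$ with H\"older's inequality; the only real difficulty is to extract the \emph{sharp} power of the heat kernel weight. Since $\eta\in C_c^\infty(B_{1/2})$ by \eqref{etacut}, $\tilde\rho_{(y,s)}(\cdot,t)$ is supported in $B_{1/2}(y)$, so $\int_\Omega\tilde\rho_{(y,s)}|u|^2\,d\mu^{\e}_t=\int_{B_{1/2}(y)}\tilde\rho_{(y,s)}|u|^2\,d\mu^{\e}_t$. Writing $g(t):=\|u(\cdot,t)\|_{W^{1,p}(B_{1/2}(y))}$ and $\ell:=\sqrt{s-t}$, the core step would be the fixed-time bound: for $p<n$,
\[ \int_\Omega\tilde\rho_{(y,s)}(x,t)\,|u(x,t)|^2\,d\mu^{\e}_t(x)\ \le\ c(n,p)\,D(t)\,g(t)^2\,(s-t)^{1-\frac np}\qquad(t<s), \]
for $p>n$ the same with $(s-t)^{1-n/p}$ replaced by a constant, and for $p=n$ the same with $(s-t)^{1-n/\tilde p}$ for any fixed $\tilde p\in(\tfrac{nq}{2(q-1)},n)$.

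To prove this I would dyadically split $B_{1/2}(y)$ into $B_\ell(y)$ and the annuli $A_j:=\{2^j\ell\le|x-y|<2^{j+1}\ell\}$, $j\ge0$ (only finitely many meet $B_{1/2}(y)$). On $B_\ell(y)$ one has $\tilde\rho_{(y,s)}(\cdot,t)\le c(n)\ell^{-(n-1)}$, and on $A_j$ the Gaussian bound gives $\tilde\rho_{(y,s)}(x,t)\le(4\pi\ell^2)^{-(n-1)/2}e^{-4^j/4}=c(n)(2^j\ell)^{-(n-1)}\theta_j$ with $\theta_j:=2^{j(n-1)}e^{-4^j/4}$ summable. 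For the mass of $|u|^2$ over a ball $B_R(y)$ with $R\le\tfrac12$, I would use H\"older with exponents $\tfrac s2,\tfrac s{s-2}$, where $s=\tfrac{p(n-1)}{n-p}\ge2$ by \eqref{newpcon}, then apply Theorem \ref{thmmz}/\eqref{mz} to $u\psi$ with a \emph{fixed} cut-off $\psi\equiv1$ on $\spt\eta$, $\psi\in C_c^1(B_{1/2}(y))$ (so $|\nabla\psi|\le c(n)$; no small-scale loss), together with the density bound $\mu^{\e}_t(B_R(y))\le\omega_{n-1}R^{n-1}D(t)$ from \eqref{defdens}, obtaining
\[ \int_{B_R(y)}|u|^2\,d\mu^{\e}_t\le\Big(\int_{B_{1/2}(y)}|u|^s\,d\mu^{\e}_t\Big)^{\frac2s}\big(\mu^{\e}_t(B_R(y))\big)^{1-\frac2s}\le c(n,p)\,D(t)\,g(t)^2\,R^{\,n+1-\frac{2n}p}. \]
Summing $\|\tilde\rho_{(y,s)}(\cdot,t)\|_{L^\infty(A_j)}$ times this estimate with $R=2^{j+1}\ell$ collapses the dyadic series, since $\sum_j\theta_j\,2^{j(2-2n/p)}<\infty$, to $c(n,p)D(t)g(t)^2\ell^{\,2-2n/p}=c(n,p)D(t)g(t)^2(s-t)^{1-n/p}$. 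When $\ell>\tfrac12$ I would instead use $\tilde\rho_{(y,s)}(\cdot,t)\le(4\pi(s-t))^{-(n-1)/2}$ and observe $(s-t)^{-(n-1)/2}\le c(n,p)(s-t)^{1-n/p}$ for $s-t\ge\tfrac14$ (which uses $p\ge\tfrac{2n}{n+1}$), so the bound holds for all $t<s$. The case $p=n$ runs verbatim with any admissible $\tilde p<n$ and $\|u\|_{W^{1,\tilde p}(B_{1/2}(y))}\le c(n,\tilde p)\|u\|_{W^{1,p}(B_{1/2}(y))}$; the case $p>n$ uses instead the Morrey embedding $\|u(\cdot,t)\|_{L^\infty(B_{1/2}(y))}\le c(n,p)\,g(t)$ and the same dyadic/layer-cake computation without the $|u|$ factor (cf. Lemma \ref{sRr}), which gives $\int_\Omega\tilde\rho_{(y,s)}(\cdot,t)\,d\mu^{\e}_t\le c(n)D(t)$.

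It remains to integrate over $[t_0,t_1]$, pull $\sup_{[t_0,t_1]}D(t)$ out of the time integral, and apply H\"older in time with exponents $\tfrac q2,\tfrac q{q-2}$ to $g(t)^2(s-t)^{1-n/p}$. The key point is the time factor $\int_{t_0}^{t_1}(s-t)^{\beta_0}\,dt$ with $\beta_0:=(1-\tfrac np)\tfrac q{q-2}$: the second inequality of \eqref{pqncon}, $p>\tfrac{nq}{2(q-1)}$, is exactly equivalent to $\beta_0>-1$ (hence $0<\beta_0+1<1$, since $p<n$), so $\int_{t_0}^{t_1}(s-t)^{\beta_0}\,dt=\tfrac{(s-t_0)^{\beta_0+1}-(s-t_1)^{\beta_0+1}}{\beta_0+1}\le\tfrac{(t_1-t_0)^{\beta_0+1}}{\beta_0+1}$ by the elementary inequality $a^\alpha-b^\alpha\le(a-b)^\alpha$ ($0<\alpha<1$) — in particular with no dependence on $s$ nor on the size of $t_1-t_0$. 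Collecting exponents yields the factor $(t_1-t_0)^{(\beta_0+1)(q-2)/q}$, and $(\beta_0+1)\tfrac{q-2}q=(1-\tfrac np)+\tfrac{q-2}q=2-\tfrac np-\tfrac2q=\tfrac{2pq-2p-nq}{pq}=\hat p$, which is positive by the same condition. For $p>n$ there is no $(s-t)$-power, so H\"older in time directly gives $(t_1-t_0)^{(q-2)/q}$; for $p=n$ one gets $(t_1-t_0)^{2-n/\tilde p-2/q}$, which approaches $\tfrac{q-2}q$ as $\tilde p\uparrow n$, with $\Cr{c-12}$ then depending on $\tilde p$ (hence on $\hat p$). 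The main obstacle is the fixed-time estimate: the dyadic bookkeeping must be arranged so that the doubly-exponential Gaussian tails $\theta_j$ absorb the dyadic factors and the power of $\ell=\sqrt{s-t}$ emerges exactly as $(s-t)^{1-n/p}$ — the one exponent for which the subsequent H\"older in time produces a clean power of $t_1-t_0$ and the stated $\hat p$, precisely by virtue of \eqref{pqncon}.
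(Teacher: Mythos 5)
Your plan is correct and ends up in the same place as the paper; the only real difference is in how the fixed-time kernel estimate is obtained. The paper does it in two lines by a H\"older split of the kernel: it writes $\tilde\rho |u|^2 = (\eta^{1/2}|u|)^2\rho$, applies H\"older with exponent $l=\tfrac{p(n-1)}{2(n-p)}$ to split off $\rho^{(l-1)/l}$, uses $\int_{B_{1/2}(y)}\rho\,d\mu^{\e}_t\leq c\,D(t)$ on that factor, pulls out $\|\rho\|_{L^\infty}^{1/l}=(4\pi(s-t))^{-(n-p)/p}$ from the remaining factor, and then applies the Meyers--Ziemer inequality \eqref{mzineq} directly to $\eta^{1/2}u$ (so $\eta$ itself is the cut-off, automatically giving the $W^{1,p}(B_{1/2}(y))$ norm). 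You instead reach the same bound
\[
\int_\Omega \tilde\rho\,|u|^2\,d\mu^{\e}_t\leq c(n,p)\,D(t)\,\|u(\cdot,t)\|^2_{W^{1,p}(B_{1/2}(y))}\,(s-t)^{-(n-p)/p}
\]
by a dyadic annulus decomposition, using the doubly-exponential decay of the Gaussian to sum the series. Both approaches produce exactly the same power of $s-t$; both then apply H\"older in time with exponents $q/2$, $q/(q-2)$ and the elementary inequality $(s-t_0)^\iota-(s-t_1)^\iota\leq(t_1-t_0)^\iota$ (which the paper also records). Your bookkeeping of the exponents, including the equivalence between \eqref{pqncon} and integrability of $(s-t)^{-(n-p)q/(p(q-2))}$ and the computation $(\beta_0+1)\tfrac{q-2}{q}=\hat p$, is correct, and your handling of the $p=n$ (lower $\tilde p<n$) and $p>n$ (Morrey) cases matches the paper. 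One cosmetic inconsistency: you assert $\int_{B_R(y)}|u|^2\,d\mu^{\e}_t\leq(\int_{B_{1/2}(y)}|u|^s\,d\mu^{\e}_t)^{2/s}(\mu^{\e}_t(B_R(y)))^{1-2/s}\leq cD(t)g(t)^2R^{\,n+1-2n/p}$ while also taking $\psi\equiv1$ only on $\spt\eta$ with $\psi\in C_c^1(B_{1/2}(y))$; the middle factor should then be $\int_{\spt\eta}|u|^s\,d\mu^{\e}_t$ and the left side $\int_{B_R(y)\cap\spt\eta}|u|^2\,d\mu^{\e}_t$ (which is all you use, since $\tilde\rho$ vanishes off $\spt\eta$), otherwise you would need $\psi=1$ on $B_{1/2}(y)$ and hence a larger support. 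This does not affect the conclusion. Overall, the paper's H\"older-split argument is shorter; yours is more explicit about where the exponent $(s-t)^{1-n/p}$ comes from, which is a reasonable trade-off.
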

\begin{proof}
Consider first $p<n$ case. By the H\"{o}lder inequality, for $l:=\frac{p(n-1)}{2(n-p)}$ (which is $\geq 1$
due to \eqref{newpcon}) we have 
\begin{equation}
\begin{split}
\int_{\Omega} \tilde \rho |u|^2 \, d\mu^{\e} _t 
&\leq \Big( \int _{\Omega} |\eta^{\frac12}u|^{2l} \rho \, d\mu^{\e} _t  \Big)^{\frac{1}{l}}
\Big( \int _{B_{\frac12}(y)} \rho \, d\mu^{\e} _t \Big)^{\frac{l-1}{l}} \leq (D(t))^{\frac{l-1}{l}} \Big( \int _{\Omega} |u\eta^{\frac12}|^{2l} \rho \, d\mu^{\e} _t \Big) ^{\frac{1}{l}}\\
&\leq (D(t))^{\frac{l-1}{l}} \left(\frac{1}{(4\pi (s-t))^{\frac{n-1}{2}}} \int _{\Omega} |u\eta^{\frac12}|^{2l} \, d\mu^{\e} _t \right) ^{\frac{1}{l}}.
\end{split}
\label{l}
\end{equation}
By \eqref{l} and \eqref{mzineq} we have
\begin{equation}
\begin{split}
\int_{\Omega} \tilde \rho |u|^2 \, d\mu^{\e} _t 
&\leq (D(t))^{\frac{l-1}{l}} \frac{1}{(4\pi (s-t)) ^{\frac{n-1}{2l}}}
\left( c(n,p) D(t) \Big( \int _{B_{\frac12}(y)} |u|^p+|\nabla u|^p \, dx \Big)^{\frac{n-1}{n-p}} \right)^{\frac{1}{l}} \\
& \leq \frac{\Cl[c]{c-13} D(t) }{(4\pi (s-t)) ^{\frac{n-p}{p}}} \| u \|^2 _{W^{1,p} (B_{\frac12}(y))} ,
\end{split}
\label{l2}
\end{equation}
where $\Cr{c-13}=\Cr{c-13}(n,p)$. Hence by the H\"{o}lder inequality and \eqref{l2} we obtain (with $\|u\|:= \|u\|_{L^{q}([t_0,t_1];(W^{1,p}(B_{\frac12}(y))^n)}$)
\begin{equation*}
\begin{split}
\int _{t_0} ^{t_1} \int_{\Omega} \tilde \rho |u|^2 \, d\mu^{\e} _t dt 
&\leq \Cr{c-13} \|u\|^2\sup _{t\in[t_0,t_1]}D(t) \Big( \int _{t_0}^{t_1} \frac{1}{(s-t)^{\frac{(n-p)q}{p(q-2)}}} \, dt \Big)^{\frac{q-2}{q}} \\
 &\leq \Cr{c-13}\|u\|^2
  \sup _{t\in[t_0,t_1]}D(t)  c(n,p,q)( (t_1-t_0)^{\frac{-(n-p)q}{p(q-2)} +1} )^{\frac{q-2}{q}} \\
&\leq c(n,p,q) \Cr{c-13} (t_1-t_0)^{\frac{2pq-2p-nq }{pq}}  \|u\|^2\sup _{t\in[t_0,t_1]}D(t).
\end{split}
\end{equation*}
We remark that $(s-t_0)^\iota - (s-t_1)^\iota\leq (t_1-t_0)^\iota$ for $\iota \in (0,1)$ and $ \frac{-(n-p)q}{p(q-2)} +1 \in (0,1)$.
By setting $\Cr{c-12}:=c(n,p,q)\Cr{c-13}$, we obtain the desired estimate when $p<n$. For $p=n$, since $W^{1,n}_{loc}
\subset W^{1,p'}_{loc}$ for $p'<n$, we repeat the same argument as above for $p$ close to $n$. Note that $\frac{2pq-2p-nq}{pq}
\uparrow\frac{q-2}{q}$ as $p\uparrow n$. This gives the estimate for $p=n$ case. For $p>n$, 
$\sup_{B_{\frac12}(y)}|\eta^{\frac12}u|\leq c(n,p)\|u\|_{W^{1,p}(B_{\frac12}(y))}$. Thus $\int \tilde\rho |u|^2\, d\mu_t^{\e}
\leq c(n,p)D(t) \|u\|_{W^{1,p}(B_{\frac12}(y))}^2$. This gives the desired estimate for $p>n$. 
\end{proof}
\begin{pro}\label{time}
There exist $\Cl[c]{c-15}>1$ depending only on $n$, $\Cl[c]{c-16}>0$ depending only on $n,\,p,\,q$ and 
$\Cl[eps]{eps-5}>0$ depending only on $n,p,q,\Cr{c-0},\Cr{c-1},D_0, T, W,\beta$ with the following property. For $t_0, t_1$ with $T_1 \geq t_1 >t_0 \geq 0$ and $t_1-t_0\leq 1$, suppose $D(t_1) =\Cr{c-15} D(t_0)$ and $\sup _{t\in [t_0 , t_1]} D(t) \leq \Cr{c-15}D(t_0)$. Then, if $\e <\Cr{eps-5}$, we have
\begin{equation}
(t_1 - t_0) ^{\hat p} \|u\|_{L^q([t_0,t_1];(W^{1,p}(\Omega))^n)}^2\geq \Cr{c-16},
\label{c15c17}
\end{equation}
where $\hat p$ is as in Lemma \ref{lem4-12}.
\end{pro}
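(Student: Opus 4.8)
The plan is to argue by contradiction: assume $(t_1-t_0)^{\hat p}\|u\|^2_{L^q([t_0,t_1];(W^{1,p}(\Omega))^n)}<\Cr{c-16}$ with $\Cr{c-16}=\Cr{c-16}(n,p,q)$ small, to be fixed below, and deduce $D(t_1)<\Cr{c-15}D(t_0)$, contradicting $D(t_1)=\Cr{c-15}D(t_0)$. Fix a small dimensional $\delta_0=\delta_0(n)\in(0,\tfrac14]$ (chosen at the end) and introduce the auxiliary quantity
\[
F(t):=\sup_{y\in\Omega,\ 0<\sigma\leq\delta_0}\int_{\Omega}\tilde\rho_{(y,t+\sigma^2)}(x,t)\,d\mu^{\e}_t(x),\qquad t\in[0,T_1].
\]
Using $y=x$, $\sigma=r$ in this supremum together with $\eta\equiv1$ on $B_{\delta_0}$ gives $\frac{\mu^{\e}_t(B_r(x))}{\omega_{n-1}r^{n-1}}\leq c(n)F(t)$ for $r\leq\delta_0$; for $r\geq\delta_0$ one has $\frac{\mu^{\e}_t(B_r(x))}{\omega_{n-1}r^{n-1}}\leq c(n)\mu^{\e}_t(\Omega)$; and a dyadic decomposition of the Gaussian against the density‑ratio bound built into $D(t)$ gives $F(t)\leq c(n)D(t)$. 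Hence $D(t)\leq\max\{1,\,c(n)\mu^{\e}_t(\Omega),\,c(n)F(t)\}$, and it remains to bound $\mu^{\e}_{t_1}(\Omega)$ and $F(t_1)$. The first is immediate: since $\hat p\leq1-\tfrac2q$ and $t_1-t_0\leq1$, \eqref{e0} of Lemma \ref{erlem3} together with \eqref{cb2} gives $\mu^{\e}_{a}(\Omega)\leq D(t_0)\bigl(1+\Cr{c-n}\Cr{c-16}\Cr{c-15}\bigr)$ for every $a\in[t_0,t_1]$.

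The core step propagates $F(t_0)\leq c(n)D(t_0)$ forward via an almost‑monotonicity of $F$ over short time steps. Partition $[t_0,t_1]$ into $K\leq3\delta_0^{-2}$ consecutive intervals $[a_i,a_{i+1}]$ of equal length $\leq\tfrac12\delta_0^2$. On step $i$, pick $y_i$ and $\sigma_i\leq\delta_0$ nearly realizing $F(a_{i+1})$, put $s_i:=a_{i+1}+\sigma_i^2$, and integrate \eqref{monoton} for $\tilde\rho=\tilde\rho_{(y_i,s_i)}$ over $[a_i,a_{i+1}]$ (using $\xi_\e\leq(\xi_\e)_+$). Since $s_i-\lambda\leq\tfrac32\delta_0^2$ throughout, the localization term integrates to at most $\Cr{c-2}e^{-1/(192\delta_0^2)}\omega_{n-1}2^{1-n}(a_{i+1}-a_i)\sup_{[t_0,t_1]}D$; the $\xi_\e$‑term is $\leq\Cr{c-11}\e^{\beta'-\beta}|\log\e|$ by Lemma \ref{discre1}; and the $|u|^2$‑term is $\leq\tfrac12\Cr{c-12}(a_{i+1}-a_i)^{\hat p}\|u\|^2_{L^q([a_i,a_{i+1}];(W^{1,p}(\Omega))^n)}\sup_{[t_0,t_1]}D$ by Lemma \ref{lem4-12}. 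The starting term $\int\tilde\rho_{(y_i,s_i)}(x,a_i)\,d\mu^{\e}_{a_i}$ has pole at distance $(a_{i+1}-a_i)+\sigma_i^2$ from $a_i$; if this is $\leq\delta_0^2$ it is $\leq F(a_i)$, otherwise (it being still $\leq\tfrac32\delta_0^2$) it is $\leq c(n)\mu^{\e}_{a_i}(\Omega)$, hence in all cases $\leq\max\{F(a_i),c(n)\mu^{\e}_{a_i}(\Omega)\}$. Letting the near‑optimal parameters tend to the supremum yields $F(a_{i+1})\leq\max\{F(a_i),c(n)\mu^{\e}_{a_i}(\Omega)\}+e_i$ with $e_i\geq0$ the sum of the three error bounds; telescoping additively and using the bound on $\mu^\e_{a_i}(\Omega)$ gives
\[
F(t_1)\leq c(n)D(t_0)+c(n)\Cr{c-n}\Cr{c-16}\Cr{c-15}D(t_0)+\sum_{i=0}^{K-1}e_i .
\]
Here the localization contributions telescope in the step length (so $K$ drops out) to $\Cr{c-2}e^{-1/(192\delta_0^2)}\omega_{n-1}2^{1-n}(t_1-t_0)\sup D$; the $\xi_\e$ contributions give $\leq3\delta_0^{-2}\Cr{c-11}\e^{\beta'-\beta}|\log\e|$; and, by Hölder ($\sum_i\|u\|^2_{L^q([a_i,a_{i+1}])}\leq K^{1-2/q}\|u\|^2_{L^q([t_0,t_1])}$ since $q>2$) together with $\hat p\leq1-\tfrac2q$ and $t_1-t_0\leq1$, the $|u|^2$ contributions give $\leq c(n,p,q)(t_1-t_0)^{\hat p}\|u\|^2_{L^q([t_0,t_1];(W^{1,p}(\Omega))^n)}\sup D\leq c(n,p,q)\Cr{c-16}\Cr{c-15}D(t_0)$.

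Finally, the constants are fixed in order. Choose $\delta_0=\delta_0(n)$ small enough that $c(n)\Cr{c-2}e^{-1/(192\delta_0^2)}\omega_{n-1}2^{1-n}<\tfrac1{100}$; then $\Cr{c-16}=\Cr{c-16}(n,p,q)$ small enough that all the $\Cr{c-16}$‑proportional contributions together are $<\tfrac1{100}\Cr{c-15}D(t_0)$; then $\Cr{eps-5}\leq\Cr{eps-4}$, depending only on $n,p,q,\Cr{c-0},\Cr{c-1},D_0,T,W,\beta$, small enough that $c(n)\cdot3\delta_0^{-2}\Cr{c-11}\e^{\beta'-\beta}|\log\e|<\tfrac1{100}$. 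Assembling the pieces into $D(t_1)\leq\max\{1,c(n)\mu^{\e}_{t_1}(\Omega),c(n)F(t_1)\}$ gives $D(t_1)\leq c(n)D(t_0)+\tfrac3{100}\Cr{c-15}D(t_0)$ with a dimensional $c(n)$ independent of $\Cr{c-15}$, so taking $\Cr{c-15}:=2c(n)$ (depending only on $n$) forces $D(t_1)<\Cr{c-15}D(t_0)$, the desired contradiction. The hard part — and the reason for the subdivision and for working with $F$ rather than $D$ directly — is the localization term $\Cr{c-2}e^{-1/(128(s-t))}\mu^{\e}_t(B_{1/2}(y))$ in \eqref{monoton}: over a single long interval it is only $O(t_1-t_0)\cdot\sup D$ with a dimensional constant that need not be small, whereas on a step of length $\lesssim\delta_0^2$ it costs $O(e^{-1/(192\delta_0^2)})\cdot\sup D$ per unit time, and since $F$ obeys an additively telescoping (not multiplicatively compounding) estimate, the $\sim\delta_0^{-2}$ steps assemble without loss.
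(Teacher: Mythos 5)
Your argument is correct, but it is genuinely different from the paper's proof, and in particular it is considerably more elaborate than necessary. You subdivide $[t_0,t_1]$ into $O(\delta_0^{-2})$ short intervals, introduce a small-scale auxiliary quantity $F$, and propagate it forward; the whole point of that machinery, as you say yourself, is to make the localization contribution from \eqref{monoton} small by keeping $s-t\lesssim\delta_0^2$ per step. The paper dispenses with all of that by a one-shot argument: it looks at which of the three quantities in the definition \eqref{defdens} realizes $D(t_1)$. If it is the total mass or a density ratio at scale $r\geq 1/4$, then \eqref{e0} alone already forces $(t_1-t_0)^{1-2/q}\|u\|^2\geq c(n,p,q)$ and hence the conclusion. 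If it is a density ratio at a small ball $B_r(y)$ with $r<1/4$, the paper applies the monotonicity formula \eqref{monoton} (in the form \eqref{longeq}) \emph{once} over the entire interval $[t_0,t_1]$ with $s=t_1+r^2$; the lower bound \eqref{3D} gives $\int\tilde\rho\,d\mu^{\e}_{t_1}\geq(2+\Cr{c-2})D(t_0)$ precisely because $\Cr{c-15}$ in \eqref{defc15} is built to include the factor $(2+\Cr{c-2})$, so the dimensional localization contribution $\Cr{c-2}(t_1-t_0)D(t_0)\leq\Cr{c-2}D(t_0)$ is simply absorbed into the gap between the lower bound $(2+\Cr{c-2})D(t_0)$ and the starting term $\int\tilde\rho\,d\mu^{\e}_{t_0}\leq D(t_0)$. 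In other words, the observation you missed is that the localization term does \emph{not} need to be small: since it is bounded by a purely dimensional multiple of $D(t_0)$ over the whole interval, one can pre-inflate $\Cr{c-15}$ by that same dimensional amount and never subdivide. Your route is sound and has the minor virtue of making the localization loss arbitrarily small rather than merely dimensional, but this buys nothing here because $\Cr{c-15}$ is allowed to depend on $n$ anyway; the paper's argument is shorter, avoids the auxiliary $F$, avoids the H\"older juggling over $K$ subintervals, and avoids the mild circularity in simultaneously choosing $\delta_0(n)$ and the $\delta_0$-dependent constant $c(n)$ in your reduction $D(t)\leq\max\{1,c(n)\mu_t^{\e}(\Omega),c(n)F(t)\}$.
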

\begin{proof} First, for any $s>t_0$, by direct computation and by the definition of $D(t_0)$, we have
\begin{equation}
\int_{\Omega}\tilde\rho_{(y,s)}\, d\mu_{t_0}^{\e}\leq D(t_0). 
\label{dext}
\end{equation}
Let $\Cr{c-15}>1$ be a constant defined by
\begin{equation}
\Cr{c-15}:=\max\{\frac{2\cdot4^{n-1}}{\omega_{n-1}},\frac{(2+\Cr{c-2}) (4\pi)^{\frac{n-1}{2}}}{\omega_{n-1}e^{-\frac14}}\}.
\label{defc15}
\end{equation}
By definition, $\Cr{c-15}$ depends only on $n$.
Suppose that $t_1$ satisfies the assumptions. Recalling the definition of $D(t_1)$, we 
have the following three possibilities, (a) $D(t_1)=\mu_{t_1}^{\e}(\Omega)$, (b) there exists 
$B_r(y)\subset\Omega$ such that $D(t_1)=\frac{1}{\omega _{n-1} r^{n-1}} \mu^{\e} _{t_1} (B_r (y))$ and 
$r\geq \frac14$, and (c) the same as (b) except that $r<\frac14$. 
For (b), we have the following 
\begin{equation*}
\frac{\omega_{n-1}}{4^{n-1}} D(t_1)\leq \omega_{n-1} r^{n-1}D(t_1)=\mu_{t_1}^{\e}(B_r(y))\leq \mu_{t_1}^{\e}(\Omega).
\end{equation*}
Since $\omega_{n-1}/4^{n-1}\leq 1$, either (a) or (b), we have
\begin{equation}
\frac{\omega_{n-1}}{4^{n-1}} D(t_1)\leq \mu_{t_1}^{\e}(\Omega).
\label{rsca}
\end{equation}
Then, by \eqref{e0}, we obtain with \eqref{rsca} (and writing $\|u\|:=\|u\|_{L^q([t_0,t_1];(W^{1,p}(\Omega))^n)}$)
\begin{equation}
\label{er2su}
\Cr{c-15}D(t_0)=D(t_1)\leq \frac{4^{n-1}}{\omega_{n-1}} \mu_{t_1}^{\e}(\Omega)\leq \frac{4^{n-1}}{\omega_{n-1}}\big(
D(t_0)+\Cr{c-n}(t_1-t_0)^{\frac{q-2}{q}} \|u\|^2
\Cr{c-15} D(t_0)\big).
\end{equation}
By \eqref{defc15}, $\frac{4^{n-1}}{\omega_{n-1}}\leq \frac{\Cr{c-15}}{2}$, thus \eqref{er2su} shows
\begin{equation}
\frac{1}{2\Cr{c-n}}\leq  (t_1-t_0)^{\frac{q-2}{q}} \|u\|^2.
\label{er2su2}
\end{equation}
This is the conclusion deduced from (a) and (b). 
Next consider the case (c). 
Let $s=t_1 +r^2$. By \eqref{monoton}, \eqref{100}, \eqref{c15} and \eqref{e0}, we have
\begin{equation}
\begin{split}
\int _{ \Omega } \tilde \rho _{(y,s)} \, d\mu^{\e} _{t_1} 
\leq \int _{\Omega} \tilde \rho _{(y,s)} \, d\mu^{\e} _{t_0}+\Cr{c-11} \e ^{\beta '-\beta}|\log\e|
 +\Cr{c-12}\Cr{c-15}  D(t_0) ( t_1 -t_0 )^{ \hat p} 
 \| u \|^2\\
+\Cr{c-2}(t_1-t_0)(D(t_0)+\Cr{c-n}\Cr{c-15}D(t_0)(t_1-t_0)^{\frac{q-2}{q}}
\|u\|^2).
\end{split}
\label{monotone2}
\end{equation}
We compute using $\eta=1$ on $B_{\frac14}(y)$ and $r\leq \frac14$ that
\begin{equation}
\begin{split}
\int _{\Omega} \tilde \rho _{(y,s)} \, d\mu^{\e} _{t_1}  & \geq \int_{B_{r}(y)}\rho_{(y,s)}\, d\mu_{t_1}^{\e} \geq
 \frac{e^{-\frac14}}{(4\pi )^{\frac{n-1}{2}} r^{n-1} } \mu^{\e} _{t_1} (B_r(y)) =\frac{\Cr{c-15}D(t_0)
 \omega_{n-1}e^{-\frac{1}{4}}}{(4\pi )^{\frac{n-1}{2}} } \\
&\geq (2+\Cr{c-2}) D(t_0),
\end{split}
\label{3D} 
\end{equation}
where $s=t_1+r^2$, the properties of $t_1$ and $\Cr{c-15}$ are used. By \eqref{dext}, \eqref{monotone2} and \eqref{3D} give (using also $t_1-t_0\leq 1$)
\begin{equation}
\begin{split}
D(t_0) &\leq \Cr{c-11} \e ^{\beta '-\beta}|\log\e|
 +\Cr{c-12}\Cr{c-15} D(t_0) ( t_1 -t_0 )^{\hat p} 
 \| u \|^2\\
&+\Cr{c-2} \Cr{c-n}\Cr{c-15}D(t_0)(t_1-t_0)^{2-\frac{2}{q}}
\|u\|^2.
\end{split}
\label{2D}
\end{equation}
Since $D(t_0)\geq 1$ by definition, we may restrict $\e$ depending on $\Cr{c-11}$ (see Lemma \ref{discre1})
so that $\Cr{c-11}\e^{\beta'-\beta}|\log \e|<1/2$, for example. 
Now, examining the dependence of constants, we obtain \eqref{c15c17} from \eqref{er2su2} and \eqref{2D} by choosing 
an appropriate $\Cr{c-16}>0$. Here we also use $\hat p< 2-\frac{2}{q}$ and $t_1-t_0\leq 1$. 
\end{proof}
{\it Proof of Theorem \ref{density}}.
We first choose $0<T_b\leq 1$ so that 
\begin{equation}
T_b^{\hat p}\Cr{c-1}^2\leq \Cr{c-16}
\label{deftb}
\end{equation}
holds. Due to the dependence of $\Cr{c-16}$, $T_b$ depends only on $n,p,q,\Cr{c-1}$. Then set 
\begin{equation}
D_1:=D_0 \Cr{c-15}^{[T/T_b]+1}(\geq 2D_0\mbox{ by \eqref{defc15}}),
\label{defcb}
\end{equation}
so that $D_1$ depends only on $n,p,q,\Cr{c-1},T,D_0$. Finally restrict $\e<
\Cr{eps-5}$ as in Proposition \ref{time}. Now we claim that 
\begin{equation}
D(t)\leq D_0 \Cr{c-15}^{[t/T_b]+1}
\label{defcb2}
\end{equation}
holds for all $t\in [0,T]$, thus proving $D(t)\leq D_1$ for all $t\in [0,T]$ and $T_1=T$. 
Suppose there exists $0<t\leq T$ such that \eqref{defcb2} fails. Then there must 
exist some $0<T_1<T$ such that $D(t)\leq D_0 \Cr{c-15}^{[t/T_b]+1}$ for all $t\in [0,T_1]$
and $D(T_1)=D_0 \Cr{c-15}^{[T_1/T_b]+1}$. Note that $D(t)\leq D_1$ for $t\in [0,T_1]$,
satisfying \eqref{cb2}. If $T_1<T_b$, we apply Proposition \ref{time} with $t_0=0$ and
$t_1=T_1$. We have $D(T_1)=\Cr{c-15}D_0$ and $\sup_{t\in [0,T_1]}D(t)\leq \Cr{c-15}D_0$.
Thus \eqref{c15c17} shows 
\begin{equation*}
T_1^{\hat p}\Cr{c-1}^2\geq \Cr{c-16},
\end{equation*}
but this contradicts $T_1<T_b$ and \eqref{deftb}. Thus, we have $T_1\geq T_b$. If
$T_1\in [T_b,2T_b)$, then $D(T_1)=D_0 \Cr{c-15}^2$. Thus there must exist $t_0\in [T_b,T_1)$
such that $D(t_0)=\Cr{c-15}D_0$ and $T_1-t_0<T_b$ (note that $D(t)\leq D_0 \Cr{c-15}$ for 
all $t\in [0,T_b)$). By Proposition \ref{time} with $t_1=T_1$, we have $(T_1-t_0)^{\hat p} \Cr{c-1}^2\geq\Cr{c-16}$, again contradicting $T_1-t_0<T_b$ and \eqref{deftb}.
Continuing this manner, we conclude that $T_1=T$, which is a contradiction. Thus we 
proved that \eqref{defcb2} holds for all $t\in [0,T]$. Also this concludes the proof of
Theorem \ref{density}.
\hfill{$\Box$}

Since we proved $T=T_1$, i.e., the assumption \eqref{cb2} is true for all $[0,T]$, 
all the estimates in this section hold with $T_1$ replaced by $T$. In particular, we have the
following monotonicity formula which follows from \eqref{monoton}, \eqref{c15} and \eqref{100}.
\begin{thm}
Under the same assumptions of Theorem \ref{density}, if $\e<\Cr{eps-1}$ and for
$s>t_1>t_0$, $t_0, t_1\in [0,T]$, and $y\in \Omega$ we have
\begin{equation}
\begin{split}
 \left.\int_{\Omega}\tilde\rho \, d\mu_t^{\e}\right|_{t=t_0}^{t_1} &+
\int_{t_0}^{t_1}\frac{dt}{2(s-t)}\int_{\Omega}|\xi_{\e}|\tilde\rho\, dx\leq 
\Cr{c-12}\Cr{c-1}^2 (t_1-t_0)^{\hat p} D_1 \\
&+\Cr{c-11}\e^{\beta'-\beta}|\log\e|+
\Cr{c-2}e^{-\frac{1}{128(s-t_0)}} (t_1-t_0)D_1,
\end{split}
\label{longeq}
\end{equation}
where $\tilde\rho=\tilde\rho_{(y,s)}(x,t)$ and $\xi_{\e}$ are defined as in \eqref{defti} and \eqref{defti2},
and $\hat p$ is as in Lemma \ref{lem4-12}.
\label{remono}
\end{thm}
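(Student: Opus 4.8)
The plan is to obtain \eqref{longeq} simply by integrating the pointwise-in-time monotonicity inequality \eqref{monoton} over $[t_0,t_1]$. Since Theorem \ref{density} has now been established with $T_1=T$, all the estimates of Section \ref{drub} — in particular \eqref{c15}, \eqref{100} and the bound $D(t)\le D_1$ — are available throughout $[0,T]$. The only algebraic step is the splitting $\xi_{\e}=2(\xi_{\e})_+-|\xi_{\e}|$, which rewrites the term $\frac{1}{2(s-t)}\int_{\Omega}\xi_{\e}\tilde\rho\, dx$ of \eqref{monoton} as $\frac{1}{s-t}\int_{\Omega}(\xi_{\e})_+\tilde\rho\, dx-\frac{1}{2(s-t)}\int_{\Omega}|\xi_{\e}|\tilde\rho\, dx$; transferring the last, favorably signed, piece to the left-hand side, \eqref{monoton} becomes
\[
\frac{d}{dt}\int_{\Omega}\tilde\rho\, d\mu^{\e}_t+\frac{1}{2(s-t)}\int_{\Omega}|\xi_{\e}|\,\tilde\rho\, dx\le \frac12\int_{\Omega}\tilde\rho\,|u|^2\, d\mu^{\e}_t+\frac{1}{s-t}\int_{\Omega}(\xi_{\e})_+\tilde\rho\, dx+\Cr{c-2}\,e^{-\frac{1}{128(s-t)}}\mu^{\e}_t(B_{\frac12}(y)).
\]
Integrating this over $t\in[t_0,t_1]$ produces exactly the left-hand side of \eqref{longeq}, together with three error integrals to be estimated one at a time.

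I would bound the three error terms as follows. The transport term $\int_{t_0}^{t_1}\int_{\Omega}\tilde\rho\,|u|^2\, d\mu^{\e}_t\, dt$ is handled by Lemma \ref{lem4-12} (estimate \eqref{c15}) after using $\sup_{t\in[t_0,t_1]}D(t)\le D_1$ and $\|u\|_{L^q([t_0,t_1];(W^{1,p}(B_{\frac12}(y)))^n)}\le\|u\|_{L^q([0,T];(W^{1,p}(\Omega))^n)}\le\Cr{c-1}$ from \eqref{u3}, giving a bound of order $(t_1-t_0)^{\hat p}\Cr{c-1}^2 D_1$. The discrepancy term $\int_{t_0}^{t_1}\frac{1}{s-t}\int_{\Omega}(\xi_{\e})_+\tilde\rho\, dx\, dt$ equals twice a restriction to $[t_0,t_1]\subset[0,t_1]$ of the nonnegative quantity bounded in Lemma \ref{discre1} (estimate \eqref{100}), hence is at most of order $\e^{\beta'-\beta}|\log\e|$. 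The final term is controlled by the elementary observation that $e^{-1/(128(s-t))}$ is nondecreasing in $s-t$, so it is $\le e^{-1/(128(s-t_0))}$ on $[t_0,t_1]$, while $\mu^{\e}_t(B_{\frac12}(y))\le\mu^{\e}_t(\Omega)\le D(t)\le D_1$; thus it is at most $\Cr{c-2}\,e^{-1/(128(s-t_0))}(t_1-t_0)D_1$. Collecting the three estimates and absorbing the numerical factors $\tfrac12$ and $2$ into the constants $\Cr{c-12}$ and $\Cr{c-11}$ yields \eqref{longeq}.

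There is no serious obstacle here: the theorem is a bookkeeping corollary of the estimates assembled in Section \ref{drub}. The one point worth flagging — and the reason \eqref{longeq} is phrased with $|\xi_{\e}|$ on the left but only a smallness term $\Cr{c-11}\e^{\beta'-\beta}|\log\e|$ on the right — is that the monotonicity formula controls only the \emph{positive} part $(\xi_{\e})_+$ of the discrepancy (this is the content of \eqref{100}, which itself rests on the discrepancy bound \eqref{-beta} and Lemma \ref{disnic}). The coercive term $\int_{t_0}^{t_1}\frac{1}{2(s-t)}\int_{\Omega}|\xi_{\e}|\tilde\rho\, dx$ on the left only emerges after moving the $-|\xi_{\e}|$ piece across, and the mismatch between having $(\xi_{\e})_+$ available and needing $|\xi_{\e}|$ is exactly filled by the $O(\e^{\beta'-\beta}|\log\e|)$ error from \eqref{100}.
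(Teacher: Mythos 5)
Your proof is correct and follows essentially the same route the paper intends, combining the differential monotonicity inequality \eqref{monoton} with the two integrated bounds \eqref{c15} and \eqref{100}, together with the split $\xi_{\e}=2(\xi_{\e})_+-|\xi_{\e}|$. The only discrepancy is the factor of $2$ your argument puts in front of $\Cr{c-11}$, which (as you observe) is harmless since $\Cr{c-11}$ is not used in a way that depends on its precise numerical value; every application of \eqref{longeq} in the paper sends this term to zero as $\e\to 0$ or treats its coefficient as a generic constant.
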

The point of the right-hand side is that it is bounded independent of $\e$, 
and it can be made arbitrarily small when $\e\rightarrow 0$ and $t_0 \rightarrow t_1$.
%%%%%%%%%%%%%%%%%%%%%%%%%%%%%%%%%%%%%%%%%%%%%%%%%%%%%%%%%%%%%%%%%%%%%%%%%%%%%%%%%%
\section{Existence of limit measures}
\label{secexm}
In this section we construct a sequence of approximate diffused interface solution 
for \eqref{mcf}, given any bounded hypersurface $M_0=\partial \Omega_0$ which is $C^1$,
and any vector field $u$ satisfying \eqref{intu}.
We then prove that
we may extract a subsequence which converges to a family of Radon measures $\{\mu_t\}_{t\geq 0}$.

We first construct a
convergent sequence of domains $\Omega_0^i$ with $C^{\infty}$ boundary $M_0^i$ which converges in
$C^1$ topology. This can be carried out by locally representing $M_0$ by a $C^1$ graph and by some
suitable mollification. Let $d_i$ be the signed distance function to $M_0^i$ 
which is positive inside of $\Omega_0^i$,
and which is smooth in some $r_i$-neighborhood of $M_0^i$. 
Let $h_i\in C^{\infty}({\mathbb R})$ be a monotone increasing function such that 
$h_i(s)=s$ for $0\leq s\leq r_i/3$, $h_i(s)=r_i/2$ for $s>2r_i/3$, $h_i'(s)\leq 1$  for $s>0$ and $h_i(s)=-h_i(-s)$ for
$s<0$. Then define $\tilde d_i(x):=h_i(d_i(x))$ for $x\in \Omega$. We next choose a 
sequence of $\e_i>0$ so that 
\begin{equation}
\lim_{i\rightarrow\infty} \sqrt{\e_i}/r_i=0.
\label{epfast}
\end{equation}
We define the initial data $(\varphi_{\e_i})$ differently depending on $\Omega={\mathbb T}^n$ or ${\mathbb R}^n$
as follows. 

For $\Omega={\mathbb T}^n$, we define
\begin{equation}
(\varphi_{\e_i})_0:=\Psi\big(\frac{\tilde d_i(x)}{\e_i}\big).
\label{vapini}
\end{equation}
Here and in the following, $\Psi$ is the solution for $\Psi''=W'(\Psi)$ (and $\Psi'=\sqrt{2W(\Psi)}$) with $\Psi(0)=0$. 
For $\Omega={\mathbb R}^n$, we will truncate the function to be $-1$ outside of a compact set as follows. 
Due to the definition, note
that for $x\in {\mathbb R}^n$ with ${\rm dist}(x,\Omega_0^i)\geq 2r_i/3$, we have ${\tilde d}_i(x)=-r_i/2$. Choose a sufficiently large $R>0$
such that 
\begin{equation}
\{x : {\rm dist}(x,\Omega_0^i)\leq 2r_i/3\}\subset B_R
\label{defR}
\end{equation}
for all $i$. Then we have ${\tilde d}_i(x)=-r_i/2$ 
on ${\mathbb R}^n\setminus B_R$. Let $g:{\mathbb R}^+\rightarrow [0,1]$ be a smooth decreasing function such that 
$g(r)=1$ for $0\leq r\leq R$, $g(r)=0$ for $R+1\leq r<\infty$ and $|g'|\leq 2$. Define
\begin{equation}
(\varphi_{\e_i})_0 (x):=g(|x|)\Psi\big(\frac{{\tilde d}_i (x)}{\e_i}\big)+ g(|x|)-1.
\label{vapini2}
\end{equation}
Then $(\varphi_{\e_i})_0(x)=\Psi\big(\frac{{\tilde d}_i(x)}{\e_i}\big)$ on $B_R$, and it smoothly changes from $\Psi(-r_i/2\e_i)$ 
to $-1$ as $|x|$ increases from $R$ to $R+1$. We may show from $\Psi'=\sqrt{2W(\Psi)}$ that $0<\Psi(-r_i/2\e_i)+1\leq 
c \exp(-c'r_i/\e_i)$ for some positive constants $c,c'$ depending only on $W$. Thus the difference between $(\varphi_{\e_i})_0$
and $-1$ is exponentially small on $B_{R+1}\setminus B_R$ by \eqref{epfast}, and $(\varphi_{\e_i})_0(x)=-1$ on ${\mathbb R}^n
\setminus B_{R+1}$. 

For both cases, one can check that \eqref{u1} is satisfied for $(\varphi_{\e_i})_0$ with 
some $i$-independent $\Cr{c-0}$, where we may need to take a smaller $\e_i$ 
depending on the growth of $C^3$ norm of the graph functions representing $M_0^i$. 
We fix $\beta$
\begin{equation}
\beta=\frac14,
\label{cb14}
\end{equation}
though any $0<\beta<1/2$ can be chosen. 
Using the
fact that $\Psi$ solves $\Psi'=\sqrt{2W(\Psi)}$ and $|\nabla \tilde d_i|\leq 1$, one can check that
\eqref{ic} is satisfied for all $i$. We may also assume that
\begin{equation}
\begin{split}
&\lim_{i\rightarrow\infty}\int_{\Omega} \Big| \frac{(\varphi_{\e_i})_0+1}{2}-\chi_{\Omega_0}\Big|\, dx=0,\\
&\lim_{i\rightarrow\infty}\big(\frac{\e_i |\nabla(\varphi_{\e_i})_0|^2}{2}+
\frac{W((\varphi_{\e_i})_0)}{\e_i}\big)\, dx=\sigma \|\nabla\chi_{\Omega_0}\|=\sigma {\mathcal H}^{n-1}
\lfloor_{M_0}
\end{split}
\label{coninit}
\end{equation}
where the second identity is in the sense of measure convergence. 
We may also assume, due to the assumption that $M_0$ is $C^1$, that we have some $D_0$
depending on $M_0 $ such that $D(0)$ as in \eqref{defdens} corresponding to $(\varphi_{\e_i})_0$ is
uniformly bounded by $D_0$ independent of $i$. 

We next let $T_i=i$ so that $\lim_{i\rightarrow\infty}T_i=\infty$, 
and let $\{u_{i}\}_{i=1}^{\infty}$ be a sequence of $C^{\infty}$ vector fields with compact support such that $
\|u_{i}-u\|_{L^q([0,T_i];(W^{1,p}(\Omega))^n)}\rightarrow 0$ as $i\rightarrow \infty$, 
which can be constructed by the standard
density argument.
Then for each $i$ we associate $j(i)$ so that \eqref{u2} is satisfied, i.e., 
\begin{equation}
\sup_{\Omega\times [0,T_i]}\{|u_{i}|,\, \e_{j(i)}|\nabla u_{i} |\} \leq \e_{j(i)}^{-\beta}
\label{supucon}
\end{equation}
for all $i$, and at the same time, $\e_{j(i)}<\Cr{eps-1}$ where $\Cr{eps-1}$
is determined by Theorem \ref{density} corresponding to $D_0$, 
$T=T_i$ and $\Cr{c-1}=\|u_{i}\|_{L^q([0,T_i];(W^{1,p}(\Omega))^n)}$.
We relabel $\e_{j(i)}$ as $\e_i$ and $u_{i}$ as $u_{\e_i}$. 

With these choices, for each $i\in {\mathbb N}$, 
we solve \eqref{beq1} and \eqref{beq2} on $\Omega\times[0,T_i]$ with initial data $(\varphi_{\e_i})_0$
and $u$ replaced by $u_{\e_i}$. For $\Omega={\mathbb T}^n$, the standard parabolic PDE
theory shows the existence of classical solution which we denote $\varphi_{\e_i}$. The maximum 
principle shows \eqref{u1sup}. Due to the 
choice of $\e_i$, for each fixed $T>0$, we have all the assumptions of Theorem \ref{density} 
satisfied on $[0,T]$ for all sufficiently large $i$, thus we have \eqref{denconcl}. 
The same can be said about Theorem \ref{remono}. For $\Omega={\mathbb R}^n$ and for each
fixed $i$, we construct the solution by domain approximation. Namely, for each $k\in {\mathbb N}$ 
with $k>3R$ (where $R$ is defined in \eqref{defR}), solve
\begin{equation}
\label{domainapp}
\left\{\begin{array}{ll} 
\partial_t\varphi+u_{\e_i}\cdot\nabla\varphi=\Delta\varphi-\frac{W'(\varphi)}{\e_i^2} & \mbox{on }B_k\times[0,T_i], \\
\varphi=(\varphi_{\e_i})_0 & \mbox{on }B_k\times\{0\}, \\
\varphi=-1 & \mbox{on }\partial B_k\times  [0,T_i].
\end{array}\right.
\end{equation}
By the standard parabolic existence theory, there exists a classical solution which we denote by $\varphi_{\e_i,k}$. 
By the maximum principle, we have $-1\leq \varphi_{\e_i,k}<1$. We claim that 
\begin{equation}
\label{domainapp2}
\varphi_{\e_i,k}(x,t)<\Psi\big(\frac{3R+t\|u_{\e_i}\|_{L^{\infty}}-|x|}{\e_i}\big)=:\psi_{\e_i}(x,t)
\end{equation}
for all $k$ by the maximum principle. To see this, on $\partial B_k\times [0,T_i]$, 
we have $\varphi_{\e_i,k}(x,t)=-1<\psi_{\e_i}(x,t)$ by \eqref{domainapp} and \eqref{domainapp2}. 
On $B_k\times \{0\}$ where $\varphi_{\e_i,k}=(\varphi_{\e_i})_0$, we may check $\psi_{\e_i}>(\varphi_{\e_i})_0$ as follows.
When $|x|\geq R+1$, $\psi_{\e_i}(x,0)>-1=(\varphi_{\e_i})_0(x)$,
and when $R\leq |x|\leq R+1$, $(\varphi_{\e_i})_0(x)\approx -1<\Psi(0)<\psi_{\e_i}(x,0)$. When $|x|< R$,  
\begin{equation*}
(\varphi_{\e_i})_0(x)\leq \Psi\big(\frac{{\tilde d}_i(x)}{\e_i}\big)< \Psi\big(\frac{2R}{\e_i}\big)
\leq \Psi\big(\frac{3R-|x|}{\e_i}\big)= \psi_{\e_i}(x,0)
\end{equation*}
since $|{\tilde d}_i(x)|\leq |d_i(x)|< 2R$ from $M_0^i\subset B_R$. 
$\psi_{\e_i}$ is a super-solution since, for $|x|\neq 0$, 
\begin{equation*}
\partial_t\psi_{\e_i}+u_{\e_i}\cdot\nabla\psi_{\e_i}-\Delta\psi_{\e_i}+\frac{W'(\psi_{\e_i})}{\e_i^2}
=\frac{\Psi'(\psi_{\e_i})}{\e_i}\big(\|u_{\e_i}\|_{L^{\infty}} +\frac{n-1}{|x|}-\frac{x}{|x|}\cdot u_{\e_i}\big)>0.
\end{equation*}
We note that $\varphi_{\e_i,k}$ cannot touch $\psi_{\e_i}$ 
from below at $|x|=0$. Thus we may prove \eqref{domainapp2} by the standard argument of the maximum 
principle. Now let $k\rightarrow\infty$ and we may prove that $\varphi_{\e_i,k}$ converge to a solution $\varphi_{\e_i}$
of \eqref{beq1} on ${\mathbb R}^n\times[0,T_i]$ satisfying $-1\leq \varphi_{\e_i}\leq \psi_{\e_i}$. Hence, we have
\eqref{u1sup}. Due to \eqref{domainapp2},
for each fixed $i$, we have the exponential approach of $\varphi_{\e_i}$ to $-1$ as $|x|\rightarrow \infty$, which is 
\eqref{extu1}. Thus, in the case of $\Omega={\mathbb R}^n$, we have all the assumptions of Theorem \ref{density} 
satisfied and we may obtain the desired conclusion. 

We next prove that there exists a family of Radon measures $\{\mu _t \}_{t\geq 0}$ such that,
after choosing a subsequence, $\mu _t ^{\e_{i_j}} \to \mu _t $ as $j\rightarrow\infty$ for all $t\geq 0$.
\begin{pro}\label{propmonotone}
Corresponding to $T>0$ and $\phi \in C_c^2 (\Omega ; \mathbb{R}^+)$, 
there exists $\Cl[c]{c-17} >0$ depending only on $n,p,q,T,D_0,\Cr{c-1}$ and $\|\phi\|_{C^2(\Omega)}$ 
such that, for all $i$ with $i>T$ and $\mu^{\e_i}_t$ 
constructed as above, the function
\begin{equation}
\mu_t ^{\e_i } (\phi) -\Cr{c-17} \Big( \int _0 ^t \|u_{\e_i} (\cdot , s) \|^2 _{W^{1,p} (\Omega)} \, ds +t\Big)
\label{monotonefunc}
\end{equation}
of $t$ is monotone decreasing on $[0,T]$.
\end{pro}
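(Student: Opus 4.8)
\emph{Proof proposal.} The plan is to localize the global energy estimate of Lemma~\ref{erlem3}: differentiate $t\mapsto\mu^{\e_i}_t(\phi)$ in time, integrate by parts to obtain a differential inequality, discard the negative dissipation term, and estimate the remaining $u$-terms by the already-established bound on $\e\int(u\cdot\nabla\varphi)^2\,dx$ together with the $\e$-uniform density ratio bound $\sup_{t\in[0,T]}D(t)\le D_1$ from Theorem~\ref{density}. Note first that for $i>T$ we have $T_i=i>T$, so $\varphi_{\e_i}$ solves \eqref{beq1} on $\Omega\times[0,T]$ and, by the choice of $\e_i$ in Section~\ref{secexm}, all hypotheses of Theorem~\ref{density} hold on $[0,T]$ with $D_0$ and $\Cr{c-1}$ bounded uniformly in $i$; hence $\mu^{\e_i}_t(\Omega)\le D(t)\le D_1$ on $[0,T]$ with $D_1$ independent of $i$ (see \eqref{defdens}, \eqref{denconcl}).

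Writing $\varphi=\varphi_{\e_i}$, $u=u_{\e_i}$ and $e_\e=\frac{\e|\nabla\varphi|^2}{2}+\frac{W(\varphi)}{\e}$, I would first compute, using \eqref{beq1} and $\partial_t\phi=0$,
\begin{equation*}
\frac{d}{dt}\int_\Omega\phi\,e_\e\,dx=\int_\Omega\Big\{-\e\,\partial_t\varphi\,(\nabla\phi\cdot\nabla\varphi)-\e\phi\,(\partial_t\varphi)^2-\e\phi\,(\partial_t\varphi)(u\cdot\nabla\varphi)\Big\}\,dx,
\end{equation*}
the $W'(\varphi)$-contributions cancelling. Applying Young's inequality to the last term, discarding the resulting $-\tfrac12\e\phi(\partial_t\varphi)^2\le0$, and then integrating by parts in $-\e(\partial_t\varphi)(\nabla\phi\cdot\nabla\varphi)=-\e\big(\Delta\varphi-\tfrac{W'(\varphi)}{\e^2}\big)(\nabla\phi\cdot\nabla\varphi)+\e(u\cdot\nabla\varphi)(\nabla\phi\cdot\nabla\varphi)$ exactly as in \eqref{mono2} (with $\tilde\rho$ replaced by $\phi$), I obtain
\begin{equation*}
\frac{d}{dt}\mu^\e_t(\phi)\le\int_\Omega\Big\{\e(\nabla\varphi\otimes\nabla\varphi)\cdot\nabla^2\phi-e_\e\Delta\phi+\e(u\cdot\nabla\varphi)(\nabla\phi\cdot\nabla\varphi)+\tfrac12\e\phi(u\cdot\nabla\varphi)^2\Big\}\,dx.
\end{equation*}
No division by $\phi$ occurs here, so there is no difficulty at the zero set of $\phi$; this is why I prefer not to mimic the heat-kernel completion of squares of the preceding Proposition.

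It remains to bound the right-hand side by $\Cr{c-17}(\|u(\cdot,t)\|_{W^{1,p}(\Omega)}^2+1)$. Since $\e|\nabla\varphi|^2\le2e_\e$, the first two terms are bounded pointwise by $c(n)\|\phi\|_{C^2(\Omega)}e_\e$, hence contribute at most $c(n)\|\phi\|_{C^2(\Omega)}\mu^\e_t(\Omega)\le c(n)\|\phi\|_{C^2(\Omega)}D_1$. For the last two, Young's inequality gives $\e(u\cdot\nabla\varphi)(\nabla\phi\cdot\nabla\varphi)+\tfrac12\e\phi(u\cdot\nabla\varphi)^2\le c(\|\phi\|_{C^2})\big(\e(u\cdot\nabla\varphi)^2+\e|\nabla\varphi|^2\big)$ pointwise; integrating and using $\e\int|\nabla\varphi|^2\,dx\le2\mu^\e_t(\Omega)\le2D_1$ together with the estimate $\e\int_\Omega(u\cdot\nabla\varphi)^2\,dx\le c(n,p)D(t)\|u(\cdot,t)\|_{W^{1,p}(\Omega)}^2\le c(n,p)D_1\|u(\cdot,t)\|_{W^{1,p}(\Omega)}^2$ established in \eqref{e02}--\eqref{e03} (which is where the density ratio bound, the Meyers--Ziemer inequality \eqref{mzineq}, and the restriction $p\ge\frac{2n}{n+1}$ in \eqref{newpcon} enter) yields the desired bound with $\Cr{c-17}$ depending only on $n,p,q,T,D_0,\Cr{c-1}$ and $\|\phi\|_{C^2(\Omega)}$. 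Finally, since $\varphi_{\e_i}$ and $u_{\e_i}$ are smooth, both $t\mapsto\mu^{\e_i}_t(\phi)$ and $t\mapsto\int_0^t\|u_{\e_i}(\cdot,s)\|_{W^{1,p}(\Omega)}^2\,ds$ are $C^1$ on $[0,T]$, so the function in \eqref{monotonefunc} is $C^1$ with nonpositive derivative there, hence monotone decreasing. The only genuinely essential ingredient---and the reason this cannot be carried out directly at the level of \eqref{mcf}---is the $\e$-uniform density ratio bound of Theorem~\ref{density}, which makes $\e\int(u\cdot\nabla\varphi)^2\,dx$ controllable uniformly in $\e$; the remainder is routine and parallels Lemma~\ref{erlem3}.
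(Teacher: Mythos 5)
Your proof is correct, and it is a genuine (if modest) variant of the paper's argument. Both proofs start from the same identity for $\frac{d}{dt}\mu_t^{\e_i}(\phi)$, discard a nonpositive dissipation term, and bound the surviving transport contribution by $c(n,p)D_1\|u_{\e_i}(\cdot,t)\|_{W^{1,p}(\Omega)}^2$ via \eqref{e02}--\eqref{e03}, with the $i$-uniform density ratio bound of Theorem~\ref{density} doing the essential work. The difference is in how the dissipation is organized. The paper keeps $L=\Delta\varphi_{\e_i}-W'(\varphi_{\e_i})/\e_i^2$ as the squared quantity, so \eqref{derivativemu} has $-\e_i\phi L^2$ up front, and after completing the square it is left with $\e_i|\nabla\varphi_{\e_i}|^2\,|\nabla\phi|^2/\phi$; this requires the (standard but unremarked) elementary inequality that for $\phi\in C^2_c(\Omega;\mathbb{R}^+)$ one has $|\nabla\phi|^2\le c\,\phi\sup\|\nabla^2\phi\|$, so that the quotient is well-defined on $\{\phi>0\}$ and bounded. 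You instead keep $\partial_t\varphi_{\e_i}$ as the squared quantity, apply Young, and then integrate the mixed term by parts exactly as in \eqref{mono2} (with $\tilde\rho$ replaced by $\phi$), which places the derivatives on $\phi$ and produces $\nabla^2\phi$ and $e_{\e}\Delta\phi$ directly. This avoids any division by $\phi$, at the cost of carrying the extra (but harmless) term $e_{\e}\Delta\phi$, which you bound by $c(n)\|\phi\|_{C^2}D_1$. Both routes arrive at $\frac{d}{dt}\mu_t^{\e_i}(\phi)\le \Cr{c-17}(\|u_{\e_i}(\cdot,t)\|_{W^{1,p}(\Omega)}^2+1)$ with the stated dependence of $\Cr{c-17}$; yours is marginally longer algebraically but dispenses with the $|\nabla\phi|^2/\phi$ inequality, which the paper uses silently.
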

\begin{proof}
By \eqref{beq1} and integration by parts we have
\begin{equation}
\begin{split}
\frac{d}{dt}\mu _t ^{\e_i} (\phi)
= \int_\Omega &-\e_i \phi \Big( \Delta \varphi_{\e_i} 
-\frac{W'(\varphi_{\e_i} )}{\e_i ^2} \Big) ^2 -\e_i \nabla 
\phi \cdot \nabla \varphi_{\e_i} \Big( \Delta \varphi_{\e_i}
 -\frac{W'(\varphi_{\e_i} )}{\e_i ^2} \Big) \\
& +\e_i \phi \Big( \Delta \varphi_{\e_i} -\frac{W'(\varphi_{\e_i} )}{\e_i ^2} \Big)
u_{\e_i}\cdot \nabla \varphi_{\e_i} + \e_i (\nabla \varphi_{\e_i} \cdot \nabla \phi )
 (u_{\e_i}\cdot \nabla \varphi_{\e_i} ) \, dx. 
\end{split}
\label{derivativemu}
\end{equation}
By the Cauchy-Schwarz inequality and estimating as in the proof of Lemma \ref{erlem3}, we have
\begin{equation}
\begin{split}
\frac{d}{dt} \mu_t^{\e_i}(\phi)
& \leq \int_\Omega \e_i|\nabla \varphi_{\e_i}|^2 \frac{|\nabla\phi|^2}{\phi}
+\e_i \phi|u_{\e_i}|^2|\nabla\varphi_{\e_i}|^2\, dx \\
& \leq 4(\sup \|\nabla^2 \phi\| )D(t)
 + \sup|\phi|c(n,p)D(t) \| u_{\e_i} (\cdot , t) \|^2 _{W^{1,p} (\Omega)}.
\end{split}
\label{monotoneineq}
\end{equation}
Thus with a suitable constant independent of $i$ and Theorem \ref{density}, we have \eqref{monotonefunc}.
\end{proof}
\begin{pro}[See \cite{ilmanen1993,tonegawa2010}]
There exist a family of Radon measures $\{\mu _t\}_{t\geq 0}$ and a subsequence (denoted by the
same index)
such that for all $t\geq 0$,
\[ \lim_{i\rightarrow\infty}\mu _t ^{\e _{i}}=\mu _t \qquad \text{as Radon measures.} \]
\label{seqconv}
\end{pro}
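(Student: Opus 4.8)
The plan is to combine the uniform mass bound of Theorem \ref{density}, weak-$\ast$ compactness of Radon measures, and the monotone structure supplied by Proposition \ref{propmonotone}, via a diagonal selection. First I would fix $T>0$ and record that, by Theorem \ref{density} and \eqref{defdens}, there is $D_1$ independent of $i$ with $\sup_{t\in[0,T]}\mu^{\e_i}_t(\Omega)\leq D_1$ for all large $i$, so that the $\mu^{\e_i}_t$ are uniformly bounded Radon measures on $\Omega$ for $t\in[0,T]$. I would also set $g_i(t):=\int_0^t\|u_{\e_i}(\cdot,s)\|^2_{W^{1,p}(\Omega)}\,ds+t$ and note that, since $u_{\e_i}\to u$ in $L^q([0,T];(W^{1,p}(\Omega))^n)$ with $q>2$, the maps $s\mapsto\|u_{\e_i}(\cdot,s)\|_{W^{1,p}(\Omega)}$ converge in $L^2([0,T])$; hence $g_i\to g$ uniformly on $[0,T]$, where $g(t):=\int_0^t\|u(\cdot,s)\|^2_{W^{1,p}(\Omega)}\,ds+t$ is continuous.

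Next I would fix a countable family $\{\phi_k\}_{k\in\N}\subset C^2_c(\Omega;\R^+)$ that is dense in $C_c(\Omega)$ in the sense appropriate for weak-$\ast$ convergence of locally uniformly bounded measures (e.g.\ smooth bumps associated to balls with rational centres and radii, so that every $\phi\in C_c(\Omega)$ is a uniform limit, with support control, of finite linear combinations). For each fixed $k$, Proposition \ref{propmonotone} gives that $t\mapsto h^k_i(t):=\mu^{\e_i}_t(\phi_k)-\Cr{c-17}g_i(t)$ is monotone decreasing on $[0,T]$, and it is uniformly bounded there since $|\mu^{\e_i}_t(\phi_k)|\leq\|\phi_k\|_{C^0}D_1$. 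I would then apply Helly's selection theorem together with a diagonal argument over $k\in\N$ to pass to one subsequence along which $h^k_i(t)$ converges for every $k$ and every $t\in[0,T]$; since $g_i(t)\to g(t)$ this forces $\mu^{\e_i}_t(\phi_k)$ to converge for every $k$ and every $t\in[0,T]$. A final diagonalisation over $T\in\N$ produces a single subsequence (denoted by the same index) with this property for all $t\geq0$.

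To conclude, I would fix $t\geq0$ and observe that the functionals $\phi\mapsto\mu^{\e_i}_t(\phi)$ on $C_c(\Omega)$ are uniformly bounded on compact subsets by the density ratio bound in \eqref{defdens} and converge on the dense family $\{\phi_k\}$, hence converge for every $\phi\in C_c(\Omega)$ to a non-negative bounded linear functional, i.e.\ a Radon measure $\mu_t$ on $\Omega$; this gives $\mu^{\e_i}_t\to\mu_t$ as Radon measures for every $t\geq0$. The step I expect to be the crux is the passage from convergence at a dense set of times to convergence at \emph{every} time along \emph{one} subsequence: plain weak-$\ast$ compactness would only yield a $t$-dependent subsequence, or convergence for a.e.\ $t$, and it is precisely the monotonicity of the $h^k_i$ from Proposition \ref{propmonotone} that, through Helly's theorem, handles the (at most countably many) jump times of the limiting monotone functions and upgrades this to convergence for all $t\geq0$ simultaneously.
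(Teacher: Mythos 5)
Your proof is correct and takes essentially the same route as the paper's, with two cosmetic differences. Where the paper uses a uniform H\"older bound on $g_i(t)=\int_0^t\|u_{\e_i}(\cdot,s)\|^2_{W^{1,p}}\,ds+t$ together with Ascoli--Arzel\`a to extract a uniformly convergent subsequence of $g_i$, you observe directly that the full sequence $g_i\to g$ uniformly (which is fine, since $u_{\e_i}\to u$ in $L^q([0,T];W^{1,p})$ with $q>2$ gives $\|u_{\e_i}\|_{W^{1,p}}\to\|u\|_{W^{1,p}}$ in $L^2([0,T])$ and hence $\|u_{\e_i}\|^2_{W^{1,p}}\to\|u\|^2_{W^{1,p}}$ in $L^1([0,T])$); and where the paper re-derives Helly's selection principle inline (first getting convergence on a co-countable set $B=\cap_k B(\phi_k)$, then diagonalising once more over the countably many exceptional times $[0,T]\setminus B$), you invoke Helly's theorem as a black box. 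Both choices are legitimate, and your identification of the crux --- upgrading from a.e.-$t$ convergence to all-$t$ convergence along a single subsequence via the monotone structure of $\mu^{\e_i}_t(\phi_k)-\Cr{c-17}g_i(t)$ --- matches the paper exactly.
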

\begin{proof}
Fix $T>0$ and $\phi \in C_c^2 (\Omega ; \mathbb{R}^+)$. 
By the Cauchy-Schwarz inequality and $q>2$,
\begin{equation*}
\int _{t_1} ^{t_2} \| u_{\e_i}(\cdot,s)\|^2_{W^{1,p}(\Omega)} \, ds
\leq (t_2 -t_1 )^{\frac{q-2}{q}} \|u_{\e_i} \|_{L^q ([0,T]; (W^{1,p} (\Omega) ) ^n ) } ^2 
\end{equation*}
for $0\leq t_1<t_2\leq T$.
Hence the last term of \eqref{monotonefunc} is uniformly bounded in H\"{o}lder continuous norm with exponent 
$\frac{q-2}{q}$. Thus by the Ascoli-Arzel\`{a} compactness theorem, there exists a subsequence which 
converges uniformly on $[0,T]$. By the monotone decreasing property due to Proposition $\ref{propmonotone}$,
we can choose a subsequence such that $\mu ^{\e _i} _{t} (\phi)$ converges on a co-countable set 
$B(\phi)\subset [0,T]$. Choose a countable set $\{\phi _k \}_{k=1} ^\infty \subset C_c^2 (\Omega ; \mathbb{R} ^+)$ 
which is dense in $C_c(\Omega ;\mathbb{R}^+)$. By the similar argument we can choose a subsequence such that 
$\mu _t ^{\e _i} (\phi _k )$ converges on a co-countable set $B=\cap _{k=1} ^\infty B(\phi _k)$. For any 
$k\geq 1$ we define $\mu _t (\phi _k)= \lim _{i\to \infty} \mu _t ^{\e _i} (\phi _k)$ for $t\in B$. 
Then we may define $\mu _t (\phi)= \lim _{i\to \infty} \mu _t ^{\e _i} (\phi)$ for any 
$\phi \in C_c(\Omega;\mathbb{R}^+)$ and for any $t\in B$ since $\{\phi _k \}_{k=1} ^\infty$ is dense in 
$C_c(\Omega;\mathbb{R}^+)$ and the measures
are uniformly bounded. Since $[0,T]\setminus B$ is countable, we can choose a subsequence so that 
$\mu _t ^{\e _i} (\phi _k)$ converges on $[0,T]\setminus B$ for any $k$. Thus we have the limit 
$\mu _t (\phi)$ for all $\phi \in C_c(\Omega; \mathbb{R}^+) $ and for all $t\in [0,T]$. Now by letting 
$T\rightarrow \infty$ and by diagonal argument,
we may choose a subsequence so that $\mu_t^{\e_i}(\phi)$ converges for all $t\geq 0$ and $\phi\in C_c(\Omega;
\mathbb{R}^+)$. 
\end{proof}
We also denote, after choosing a further subsequence, 
\begin{definition}
Let $\mu$ be a measure on $\Omega\times[0,\infty)$ such that $d\mu=\lim_{j\rightarrow\infty}d\mu_{t}^{\e_j}dt$
locally as measures.
\end{definition}
Since $\sup_{t\in [0,T]}\mu_t^{\e_j}(\Omega)$ is bounded uniformly in $j$ for all $T$, the dominated 
convergence theorem shows $d\mu=d\mu_t\, dt$. On the other hand, note that $\spt \mu$ may not be the same
as $\cup_{t\geq 0}\spt\mu_t\times\{t\}$. In the following section we also use the following notation. 
\begin{definition}
Define $(\spt\mu)_t\subset\Omega$ as 
$(\spt\mu)_t:=\{x\in \Omega\,:\, (x,t)\in \spt\mu\}$.
\end{definition}
\label{seqex}
We have the following inclusion.
\begin{lem}
For all $t>0$, 
\begin{equation}
{\rm spt}\, \mu_t\subset ({\rm spt}\, \mu)_t.
\label{seqex2}
\end{equation}
\label{seqex3}
\end{lem}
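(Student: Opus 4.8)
The plan is to prove the inclusion by showing that every space-time neighborhood of a point $(x,t)$ with $x\in\spt\mu_t$ carries positive $\mu$-mass; the engine is the time-almost-monotonicity of Proposition~\ref{propmonotone}, which I will use to propagate positive mass \emph{backward} in time from level $t$ to nearby levels $s<t$. This is exactly where the hypothesis $t>0$ enters: only then does a neighborhood of $(x,t)$ in $\Omega\times[0,\infty)$ contain a genuine two-sided time interval around $t$, and only backward continuity of $t\mapsto\mu_t$ is available (energy can drop going forward).

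First I would fix $x\in\spt\mu_t$ and $r>0$, choose $\rho\in(0,r/2)$ with $B_\rho(x)\subset\Omega$, and pick $\phi\in C^2_c(B_\rho(x);[0,1])$ with $\phi\equiv1$ on $B_{\rho/2}(x)$, so that $\delta:=\mu_t(\phi)\geq\mu_t(B_{\rho/2}(x))>0$. Fixing any $T>t$, Proposition~\ref{propmonotone} applies on $[0,T]$ for all large $i$, and the monotone decreasing property of the function \eqref{monotonefunc} gives, for $s\in[0,t]$,
\begin{equation*}
\mu_s^{\e_i}(\phi)\ \geq\ \mu_t^{\e_i}(\phi)-\Cr{c-17}\Big(\int_s^t\|u_{\e_i}(\cdot,\tau)\|^2_{W^{1,p}(\Omega)}\,d\tau+(t-s)\Big).
\end{equation*}
Then I would let $i\to\infty$: by Proposition~\ref{seqconv}, $\mu_s^{\e_i}(\phi)\to\mu_s(\phi)$ and $\mu_t^{\e_i}(\phi)\to\mu_t(\phi)$, while $u_{\e_i}\to u$ in $L^q([0,T];(W^{1,p}(\Omega))^n)$ with $q>2$ forces $\|u_{\e_i}(\cdot,\cdot)\|^2_{W^{1,p}(\Omega)}\to\|u(\cdot,\cdot)\|^2_{W^{1,p}(\Omega)}$ in $L^1([0,T])$, so
\begin{equation*}
\mu_s(\phi)\ \geq\ \delta-\Cr{c-17}\Big(\int_s^t\|u(\cdot,\tau)\|^2_{W^{1,p}(\Omega)}\,d\tau+(t-s)\Big)\qquad\text{for all }s\in[0,t].
\end{equation*}
Since $\|u(\cdot,\cdot)\|^2_{W^{1,p}(\Omega)}\in L^{q/2}_{loc}([0,\infty))\subset L^1_{loc}([0,\infty))$, the correction term vanishes as $s\uparrow t$ by absolute continuity of the integral, so I can choose $s_0$ with $\max\{0,t-r/2\}<s_0<t$ (possible because $t>0$) such that $\mu_s(\phi)\geq\delta/2>0$ for all $s\in[s_0,t]$.

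To conclude, I would pick $s_0<s_1<s_2<t$ and $\chi\in C_c((s_0,t);[0,1])$ with $\chi\equiv1$ on $[s_1,s_2]$, set $\psi:=\phi\otimes\chi\in C_c(\Omega\times(0,\infty);[0,1])$, and use $d\mu=d\mu_\tau\,d\tau$ to get
\begin{equation*}
\mu(\psi)=\int_0^\infty\Big(\int_\Omega\psi(\cdot,\tau)\,d\mu_\tau\Big)\,d\tau\ \geq\ \int_{s_1}^{s_2}\mu_\tau(\phi)\,d\tau\ \geq\ (s_2-s_1)\frac{\delta}{2}\ >\ 0.
\end{equation*}
Because $\spt\psi\subset B_\rho(x)\times(s_0,t)$ and $\rho<r/2$, $t-s_0<r/2$, this support lies inside the $r$-ball about $(x,t)$ in $\Omega\times[0,\infty)$; hence $\mu$ is positive on that ball. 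As $r>0$ was arbitrary, $(x,t)\in\spt\mu$, i.e.\ $x\in(\spt\mu)_t$, and since $x\in\spt\mu_t$ was arbitrary, \eqref{seqex2} follows.

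I expect no genuine obstacle: the content is really backward-in-time lower semicontinuity of $t\mapsto\mu_t(\phi)$ with an error controlled by Proposition~\ref{propmonotone}. The step needing the most care is the $i\to\infty$ passage in the correction term (which uses the improvement $q>2$ to upgrade $L^q$-convergence of $u_{\e_i}$ to $L^1$-convergence of $\|u_{\e_i}\|^2_{W^{1,p}}$), together with the elementary bookkeeping to fit $\spt\psi$ inside the prescribed space-time ball. An alternative route would use the localized monotonicity of Theorem~\ref{remono} with $\tilde\rho_{(y,s)}$ in place of the test function $\phi$, but the argument above via Proposition~\ref{propmonotone} is the shortest.
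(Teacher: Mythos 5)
Your proof is correct and follows essentially the same route as the paper: both rely on Propositions~\ref{propmonotone} and~\ref{seqconv} to obtain the almost-monotone quantity $\mu_t(\phi)-\Cr{c-17}\bigl(\int_0^t\|u\|^2_{W^{1,p}}\,ds+t\bigr)$ and then propagate positive mass backward in time, concluding via $d\mu=d\mu_t\,dt$. The only cosmetic difference is that the paper phrases it as a contradiction argument while you argue directly, and you spell out the $i\to\infty$ passage in the correction term a bit more explicitly.
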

\begin{proof}
Suppose $x\in {\rm spt}\, \mu_{t_0}$ and assume for a contradiction that $(x,t_0)\notin {\rm spt}\, \mu$. Then there exists $r>0$ 
such that $\mu(B_r(x)\times(t_0-r^2,t_0+r^2))=0$. Take $\phi\in C^2_c(B_r(x);{\mathbb R}^+)$ with $\phi=1$ on $B_{r/2}(x)$. 
Since $x\in {\rm spt}\, \mu_{t_0}$, we have $\mu_{t_0}(\phi)>0$. By Proposition \ref{propmonotone} and \ref{seqconv}, 
$\mu_t(\phi)-\Cr{c-17}(\int_0^t \|u(\cdot,s)\|_{W^{1,p}}^2\, ds+t)$ is monotone decreasing. Thus one sees that for all sufficiently
small $h>0$, we have $\mu_{t_0-h}(\phi)\geq \mu_{t_0}(\phi)-o(1)\geq \mu_{t_0}(\phi)/2$ where $o(1)\rightarrow 0$ as $h\rightarrow 0$. 
Since $d\mu=d\mu_t dt$, this contradicts $(x,t_0)\notin {\rm spt}\, \mu$. 
\end{proof}
%%%%%%%%%%%%%%%%%%%%%%%%%%%%%%%%%%%%%%%%%%%%%%%%%%%%%%%%%%%%%%%%%%%%%%%%%%%%%%%%%%
\section{Rectifiability of limit measures}
Throughout this section, let $\varphi_{\e_i}$, $\mu_t^{\e_i}$, $u_{\e_i}$, $\mu_t$ and
$\mu$ be as in 
Section \ref{seqex} and let ${\tilde\rho}_{(y,s)}$, $e_{\e_i}$ and $\xi_{\e_i}$ be
as in \eqref{defti} and \eqref{defti2}. We fix arbitrary $T>0$ and let $\Cr{c-1}$
be as in \eqref{u3} with this $T$. Note that all the estimates in the previous
two sections hold in $[0,T]$ for all sufficiently large $i$ (such that $T_i>T$). For simplicity we often drop $i$ from these quantities. 
In this section we prove that for a.e$.$ $t\geq 0$, there exists a countably $(n-1)$-rectifiable
set $M_t$ such that $\mu_t=\theta(x,t) {\mathcal H}^{n-1}\lfloor_{M_t}$, where $\theta$ is
a non-negative ${\mathcal H}^{n-1}$ measurable function. The important ingredient for
the proof is the vanishing of the discrepancy measure defined below. As stated in the introduction, the content of
this section is based on \cite{ilmanen1993} with some modifications coming from the transport term.
First we note
\begin{lem}
Let $\varphi_{\e_i}$ and $\mu_t^{\e_i}$ be the sequences constructed 
in Section \ref{seqex}. Then there exist a subsequence
(denoted by the same index) and a Radom measure $|\xi|$ such that
\begin{equation}
\lim_{i\rightarrow\infty}\int_{t_0}^{t_1}\int_{\Omega}|\xi_{\e_i}|\phi\, dxdt=\int_{t_0}^{t_1}
\int_{\Omega}|\xi|\phi\, dxdt
\label{xiconv}
\end{equation}
for all $0\leq t_0<t_1<\infty$ and $\phi\in C_c(\Omega\times [0,\infty))$. 
\end{lem}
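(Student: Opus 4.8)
The plan is to regard each $|\xi_{\e_i}|$ as the density of a space--time Radon measure, extract a weak-$*$ limit by the usual compactness argument, and then upgrade that limit to the sharp time-interval form \eqref{xiconv}.

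First I would, for each $i$, introduce the Radon measure $\nu_i:=|\xi_{\e_i}|\,\mathcal L^{n+1}$ on $\Omega\times[0,T_i)$. Since $|\xi_{\e_i}|\le e_{\e_i}$ by \eqref{defti2}, one has, for every $T<T_i$, $\nu_i(\Omega\times[0,T])\le\int_0^T\mu_t^{\e_i}(\Omega)\,dt$. Because Theorem \ref{density} lets us take $T_1=T$ in Lemma \ref{erlem3}, the bound $\sup_{t\in[0,T]}\mu_t^{\e_i}(\Omega)\le E_0$ holds, so $\nu_i(\Omega\times[0,T])\le E_0 T$ uniformly in the (large) $i$. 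Hence $\{\nu_i\}$ has locally uniformly bounded mass on the locally compact, separable metric space $\Omega\times[0,\infty)$. Applying the standard weak-$*$ sequential compactness theorem for Radon measures on $\Omega\times[0,m]$ for each $m\in\mathbb N$ (using only $i$ with $T_i>m$) and extracting diagonally in $m$, I obtain a subsequence---chosen inside the one fixed in Section \ref{seqex}, not relabelled---and a Radon measure $|\xi|$ on $\Omega\times[0,\infty)$ with $\int\psi\,d\nu_i\to\int\psi\,d|\xi|$ for all $\psi\in C_c(\Omega\times[0,\infty))$. Keeping the same subsequence, we still have $\mu_t^{\e_i}\to\mu_t$ for every $t$ and $\mu^{\e_i}:=\mu_t^{\e_i}\,dt\to\mu=\mu_t\,dt$.

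Next I would pass from this weak-$*$ convergence to \eqref{xiconv}. Since $\nu_i\le\mu^{\e_i}$ and both sides converge, in the limit $|\xi|\le\mu$; hence for every single $t$, $|\xi|(\Omega\times\{t\})\le\mu(\Omega\times\{t\})=\int_{\{t\}}\mu_s(\Omega)\,ds=0$, so $|\xi|$ charges no time-slice. By linearity, splitting $\phi=\phi^+-\phi^-$, it suffices to prove \eqref{xiconv} for $\phi\in C_c(\Omega\times[0,\infty))$ with $\phi\ge0$. Fix $0\le t_0<t_1<\infty$ and choose continuous $g_\delta^\pm:[0,\infty)\to[0,1]$ with $g_\delta^-\le\chi_{[t_0,t_1]}\le g_\delta^+$, $g_\delta^-\uparrow\chi_{(t_0,t_1)}$ and $g_\delta^+\downarrow\chi_{[t_0,t_1]}$ as $\delta\downarrow0$. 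Then $g_\delta^\pm(t)\phi\in C_c(\Omega\times[0,\infty))$ and $g_\delta^-(t)\phi\le\chi_{[t_0,t_1]}(t)\phi\le g_\delta^+(t)\phi$, so with $a_i:=\int_{t_0}^{t_1}\int_\Omega|\xi_{\e_i}|\phi\,dxdt$ the weak-$*$ convergence gives
\begin{equation*}
\int g_\delta^-\,\phi\,d|\xi|\;\le\;\liminf_i a_i\;\le\;\limsup_i a_i\;\le\;\int g_\delta^+\,\phi\,d|\xi| .
\end{equation*}
Letting $\delta\downarrow0$ and using that $|\xi|$ has no mass on $\Omega\times\{t_0\}$ or $\Omega\times\{t_1\}$, both outer integrals tend to $\int\chi_{[t_0,t_1]}\phi\,d|\xi|$, which is exactly the right-hand side of \eqref{xiconv} (here $\int_{t_0}^{t_1}\int_\Omega\,\cdot\,dxdt$ against $|\xi|$ denotes integration of $\phi$ over $\Omega\times[t_0,t_1]$; I make no claim that $|\xi|\ll\mathcal L^{n+1}$).

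The compactness step is routine; the one place that needs care is the interval upgrade, because weak-$*$ convergence controls only continuous test functions whereas $\chi_{[t_0,t_1]}$ is discontinuous in time. That obstacle is dissolved precisely by the comparison $|\xi|\le\mu=\mu_t\,dt$, which forbids the limit discrepancy measure from charging any time-slice and thereby closes the sandwich.
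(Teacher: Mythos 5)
Your proposal is correct and takes essentially the same approach as the paper: the paper's one-line proof invokes weak-$*$ compactness of the space-time Radon measures $|\xi_{\e_i}|\,dxdt$ based on the uniform mass bound $\sup_i\sup_{t\in[0,T]}\mu_t^{\e_i}(\Omega)<\infty$. Your additional step---using $|\xi_{\e_i}|\le e_{\e_i}$ to get $|\xi|\le\mu=\mu_t\,dt$, hence $|\xi|$ charges no time slice, and then sandwiching $\chi_{[t_0,t_1]}$ by continuous time cut-offs---is a genuine refinement the paper leaves implicit, and it is needed to pass literally from $C_c(\Omega\times[0,\infty))$ test functions to the sharp interval form \eqref{xiconv}.
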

Due to the uniform estimate $\sup_{i\in{\mathbb N}}\sup_{t\in [0,T]}
\mu_t^{\e_i}(\Omega)$ for any fixed $T$, the existence of such
subsequence follows from the weak compactness of measures. Since $|\xi|$
measures the difference between the two terms in $\mu_t^{\e_i}$ in the limit,
we may call $|\xi|$ as a discrepancy measure. Unlike $\mu_t^{\e_i}$, which
converges to $\mu_t$ for all $t\geq 0$, note that we do not claim any convergence of 
$|\xi_{\e_i}(\cdot, t)|\, dx$ in general.
Instead, we will prove
\begin{thm}
$|\xi|=0$ on $\Omega\times
[0,\infty)$.
\label{xiva}
\end{thm}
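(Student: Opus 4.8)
The plan is to show that the discrepancy measure $|\xi|$ vanishes by testing the monotonicity formula from Theorem \ref{remono} against the backward heat kernel and letting $\e\to 0$. The key observation is that $|\xi|$ appears with a favorable sign on the left-hand side of \eqref{longeq}: the term $\int_{t_0}^{t_1}\frac{dt}{2(s-t)}\int_{\Omega}|\xi_\e|\tilde\rho\,dx$ is subtracted, while the right-hand side is bounded independently of $\e$ and, crucially, goes to zero as $\e\to 0$ and $t_0\to t_1$. First I would fix an arbitrary point $(y,s)\in\Omega\times(0,\infty)$ and, for $t_0<t_1<s$, rearrange \eqref{longeq} as
\begin{equation*}
\int_{t_0}^{t_1}\frac{dt}{2(s-t)}\int_{\Omega}|\xi_{\e_i}|\tilde\rho_{(y,s)}(x,t)\,dx
\leq \int_{\Omega}\tilde\rho_{(y,s)}\,d\mu_{t_0}^{\e_i}+\Cr{c-12}\Cr{c-1}^2(t_1-t_0)^{\hat p}D_1
+\Cr{c-11}\e_i^{\beta'-\beta}|\log\e_i|+\Cr{c-2}(t_1-t_0)D_1,
\end{equation*}
using $e^{-1/(128(s-t_0))}\leq 1$. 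Here I would also need a uniform upper bound on $\int_{\Omega}\tilde\rho_{(y,s)}\,d\mu_{t_0}^{\e_i}$ of the form $c(n)D_1(s-t_0)^{-(n-1)/2}\cdot(\text{something bounded})$ — more precisely, using the density ratio bound \eqref{denconcl} one estimates $\int_{\Omega}\rho_{(y,s)}\,d\mu_{t_0}^{\e_i}\leq c(n)D_1$ by the standard layer-cake argument as in Lemma \ref{sRr}.

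Next I would pass to the limit $i\to\infty$. Since $\tilde\rho_{(y,s)}(\cdot,t)$ is continuous and compactly supported for each fixed $t<s$, and since $\frac{1}{2(s-t)}$ is bounded on $[t_0,t_1]$ when $t_1<s$, Fatou's lemma together with the convergence \eqref{xiconv} gives
\begin{equation*}
\int_{t_0}^{t_1}\frac{dt}{2(s-t)}\int_{\Omega}\tilde\rho_{(y,s)}(x,t)\,d|\xi|\,
\leq c(n)D_1 + \Cr{c-12}\Cr{c-1}^2(t_1-t_0)^{\hat p}D_1+\Cr{c-2}(t_1-t_0)D_1,
\end{equation*}
the $\e_i$-term having vanished because $\beta'>\beta$. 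Now fix $s$ and let $t_0\uparrow t_1$: the right-hand side stays bounded, but more importantly, dividing by $(t_1-t_0)$ or using a Lebesgue-point argument in $t_1$ shows that for a.e.\ $\tau<s$ the density $\int_{\Omega}\tilde\rho_{(y,s)}(x,\tau)\,d|\xi|(\cdot,\tau)$ (interpreted via the disintegration $d|\xi|=d|\xi|_t\,dt$) is controlled; letting $s\downarrow\tau$ and using $\tilde\rho_{(y,s)}(x,\tau)\to$ a delta-like concentration at $y$ of total mass comparable to $1$ forces the upper density $\Theta^{n-1}(|\xi|_\tau,y)$ to be zero, hence $|\xi|_\tau=0$, hence $|\xi|=0$.

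I expect the main obstacle to be the final limiting step $s\downarrow t$: one must carefully extract from the family of inequalities (indexed by $s$ and the interval $[t_0,t_1]$) a pointwise-in-time statement about $|\xi|_t$, handling the weight $\frac{1}{s-t}$ which blows up exactly where $\tilde\rho$ concentrates. The clean way is to follow Ilmanen: for fixed $t$ and small $r>0$, choose $s=t+r^2$ and $[t_0,t_1]=[t-r^2,t]$ (shrinking as needed near $t=0$), so that the left side dominates $c\,r^{-(n-1)}\int_{B_r(y)}d|\xi|$ after the limit, while the right side is $O(D_1\cdot r^{2\hat p})+o(1)$; since $\hat p>0$ this yields $\lim_{r\to 0}r^{-(n-1)}|\xi|_t(B_r(y))=0$ for a.e.\ $t$ and all $y$, and standard measure theory (e.g.\ the upper density characterization of the zero measure) then gives $|\xi|_t\equiv 0$ for a.e.\ $t$, and therefore $|\xi|=0$ on $\Omega\times[0,\infty)$. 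A secondary technical point is justifying the interchange of limit and integral in \eqref{xiconv} when the test function is the truncated heat kernel rather than a smooth compactly supported function; this is handled by a routine approximation of $\tilde\rho_{(y,s)}$ by $C_c$ functions together with the uniform mass bounds on $\mu_t^{\e_i}$.
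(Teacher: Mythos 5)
There is a genuine gap at the heart of your concluding step. After rearranging \eqref{longeq} with $s=t+r^2$ and $[t_0,t_1]=[t-r^2,t]$, the right-hand side is \emph{not} $O(D_1 r^{2\hat p})+o(1)$: the dominant contribution comes from the boundary term $\int_\Omega\tilde\rho\,d\mu^{\e_i}_{t_0}-\int_\Omega\tilde\rho\,d\mu^{\e_i}_{t_1}$, which is bounded only by $c(n)D_1$ (via \eqref{dext}) and has no reason to go to zero as $r\to 0$. Your estimate of the left side is correct -- it does dominate $c\,r^{-(n+1)}|\xi|(B_r(y)\times[t-r^2,t])$ -- but combined with a right side of size $O(D_1)$ this only shows that the parabolic $(n+1)$-density of $|\xi|$ is \emph{bounded}, not zero. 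To squeeze vanishing out of this route you would need $\int_\Omega\tilde\rho_{(y,s)}\,d\mu_{t_0}-\int_\Omega\tilde\rho_{(y,s)}\,d\mu_{t_1}\to 0$ as $r\to 0$ with $s$, $t_0$, $t_1$ all shrinking to $t$ simultaneously, and that is exactly as hard as the original problem.

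The paper's argument (following Ilmanen) is structurally different, and it is worth contrasting carefully since you cite Ilmanen as your model. Rather than attempting to prove $\Theta^{n-1}(|\xi|_t,\cdot)=0$ directly, the paper first isolates Lemma \ref{lem-xi}, which is what your rearrangement correctly delivers: $\int_{\Omega\times(0,s)}\frac{\tilde\rho_{(y,s)}(x,t)}{s-t}\,d|\xi|(x,t)\leq\Cr{c-19}$ uniformly in $(y,s)$. The decisive move is then to integrate this estimate in $(y,s)$ against $d\mu_s\,ds$ and apply Fubini, producing a finiteness statement for the \emph{inner} integral $\int_{\Omega\times(t,T)}\frac{\tilde\rho_{(y,s)}(x,t)}{s-t}\,d\mu_s(y)ds$ for $|\xi|$-a.e.\ $(x,t)$; note that this is an integral against the \emph{energy} measure $\mu_s$, not against $|\xi|$. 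This Dini-type finiteness, together with a telescoping use of the monotonicity formula over a sparse decreasing sequence $s_i\downarrow t$, forces the Gaussian density $\lim_{s\downarrow t}\int_\Omega\tilde\rho_{(y,s)}(x,t)\,d\mu_s(y)$ to be zero for $|\xi|$-a.e.\ $(x,t)$. The final contradiction then comes from the \emph{other} direction: Lemma \ref{lowerbound} (the forward density lower bound) shows that $\mu$-a.e.\ point has this Gaussian density bounded below by $\eta_2/2>0$, and since $|\xi|\leq\mu$ by construction, the set where the density vanishes is $\mu$-null, hence $|\xi|$-null. Your proposal contains no analogue of either of these two pillars -- the Fubini argument transferring the estimate from $|\xi|$ to $\mu_s$, and the forward density lower bound for $\mu$ -- and without them the approach does not close.
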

%%%%%%%%%%%%%%%%%%%%%%%%%%%%%%%%%%%%%%%%%%%%%%%%%%%%%%%%
\subsection{Forward density lower bound}
\begin{lem}\label{lem-eta}
There exist $1>\gamma_1,\,\eta_1>0$ depending only on  
$n,\Cr{c-0},\Cr{c-1},p,q,T,W,D_0$ and $1>\eta_2>0$ depending only on $n,\Cr{c-0},W$
with the following property. Given $0\leq t<s<T/2$ with $s-t\leq \eta_1$, 
set $r:=\sqrt{2(s-t)}$ and $t':=s+r^2/2$. 
If $x\in \Omega$ satisfies
\begin{equation}
\int_{\Omega} {\tilde\rho} _{(y,s)}(x,t) \, d\mu _s (y)<\eta_2,
\label{eta}
\end{equation}
then $(B_{\gamma_1 r}(x)\times\{t'\})\cap \spt \mu=\emptyset$. 
\end{lem}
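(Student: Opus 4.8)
The plan is to argue by contradiction, combining the monotonicity formula of Theorem~\ref{remono} with the diffused-interface density lower bound of Lemma~\ref{lem-low}. First I would rewrite the hypothesis: since $\eta$ is radial and, by $r^2=2(s-t)$ and $t'=s+r^2/2$, one has $t'-s=s-t$ and hence $2s-t=t'$, the kernels satisfy $\tilde\rho_{(y,s)}(x,t)=\tilde\rho_{(x,t')}(y,s)$. Thus \eqref{eta} is exactly the statement that the diffused Gaussian density of $\mu$ with pole $(x,t')$, evaluated at the earlier time $s$, is small, namely $\int_\Omega\tilde\rho_{(x,t')}(y,s)\,d\mu_s(y)<\eta_2$. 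Suppose for contradiction that there is $z\in B_{\gamma_1 r}(x)$ with $(z,t')\in\spt\mu$, where $\gamma_1\in(0,\tfrac12]$ and $\eta_1$ are to be fixed at the end.

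From $(z,t')\in\spt\mu$ and $d\mu=\lim_j d\mu^{\e_j}_t\,dt$ I would extract, along a relabelled subsequence and for all large $i$, points $(z_i,\lambda_i)\to(z,t')$ with $|\varphi_{\e_i}(z_i,\lambda_i)|\le\alpha$: a positive amount of limiting energy in any small space-time neighbourhood of $(z,t')$ forces $\varphi_{\e_i}$ to leave $[\alpha,1]$ (and by symmetry $[-1,-\alpha]$) there, since on a region where $\varphi_{\e_i}$ stays in $[\alpha,1]$ the energy decays like $e^{-\kappa\e_i^{2(\beta'-1)}}$ exactly as in the proof of Lemma~\ref{disnic}. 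Putting $\hat\lambda_i:=\max\{0,\lambda_i-2\e_i^{2\beta'}\}$, for large $i$ we have $\lambda_i\le T$ and $\e_i<\Cr{eps-3}$, so Lemma~\ref{lem-low}---in fact \eqref{c10} in its proof, with $s,y,t$ there equal to $\lambda_i,z_i,\hat\lambda_i$---gives $2\Cr{c-7}\le\int_\Omega\tilde\rho_{(z_i,\lambda_i+\e_i^2)}(y,\hat\lambda_i)\,d\mu^{\e_i}_{\hat\lambda_i}(y)$.

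Next, since $\hat\lambda_i\to t'>s$, for large $i$ we have $s<\hat\lambda_i<\lambda_i+\e_i^2$, so Theorem~\ref{remono} applies with pole $(z_i,\lambda_i+\e_i^2)$ and $(t_0,t_1)=(s,\hat\lambda_i)$; by \eqref{longeq},
\[ 2\Cr{c-7}\le\int_\Omega\tilde\rho_{(z_i,\lambda_i+\e_i^2)}(y,s)\,d\mu^{\e_i}_s(y)+\Cr{c-12}\Cr{c-1}^2(\hat\lambda_i-s)^{\hat p}D_1+\Cr{c-11}\e_i^{\beta'-\beta}|\log\e_i|+\Cr{c-2}e^{-\frac{1}{128(\lambda_i+\e_i^2-s)}}(\hat\lambda_i-s)D_1. \]
Letting $i\to\infty$, using $z_i\to z$, $\lambda_i+\e_i^2\to t'$ (so the kernels converge uniformly with uniformly compact support), $\mu^{\e_i}_s\to\mu_s$, $\hat\lambda_i-s\to t'-s=r^2/2$ and that the $\e_i$-term vanishes, I obtain $2\Cr{c-7}-E(r)\le\int_\Omega\tilde\rho_{(z,t')}(y,s)\,d\mu_s(y)$, where $E(r):=\Cr{c-12}\Cr{c-1}^2(r^2/2)^{\hat p}D_1+\Cr{c-2}e^{-\frac{1}{64r^2}}(r^2/2)D_1\to0$ as $r\to0$.

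The final step---which I expect to be the main obstacle---is to contradict this using the reformulated hypothesis, by showing that the normalized Gaussian density depends Lipschitz-continuously on the pole location. Both kernels at time $s$ equal $(2\pi r^2)^{-(n-1)/2}G(y-\cdot)$ with $G(u)=e^{-|u|^2/(2r^2)}\eta(u)$, and $|\nabla G(u)|\le \tfrac{|u|}{r^2}e^{-|u|^2/(2r^2)}\mathbf 1_{B_{1/2}}(u)+c\,e^{-1/(32r^2)}$. Integrating $|G(y-z)-G(y-x)|\le|x-z|\sup_{w\in[x,z]}|\nabla G(y-w)|$ against $\mu_s$ and using the density ratio bound $\mu_s(B_\rho(w))\le D_1\omega_{n-1}\rho^{n-1}$ (which passes to the limit) gives $\bigl|\int_\Omega\tilde\rho_{(z,t')}(y,s)\,d\mu_s-\int_\Omega\tilde\rho_{(x,t')}(y,s)\,d\mu_s\bigr|\le c(n)D_1\gamma_1+\psi(r)$ with $\psi(r)\to0$ as $r\to0$. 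Hence $2\Cr{c-7}-E(r)\le\eta_2+c(n)D_1\gamma_1+\psi(r)$. Now I would choose $\eta_2:=\tfrac12\Cr{c-7}$ (depending only on $n,\Cr{c-0},W$ through $\Cr{c-7}$), then $\gamma_1:=\min\{\tfrac12,\Cr{c-7}/(4c(n)D_1)\}$, and finally $\eta_1$ so small that $E(r)+\psi(r)\le\tfrac14\Cr{c-7}$ whenever $r\le\sqrt{2\eta_1}$; these dependencies match the statement, and they force $2\Cr{c-7}\le\Cr{c-7}$, a contradiction. A secondary technical point is the extraction of the transition points $(z_i,\lambda_i)$, which requires the energy-decay estimate above together with the construction of $\spt\mu$ in Section~\ref{secexm}.
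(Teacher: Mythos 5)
Your proposal is correct and follows essentially the same route as the paper: by contradiction, extract transition points $(z_i,\lambda_i)\to(z,t')$ with $|\varphi_{\e_i}(z_i,\lambda_i)|\leq\alpha$ (the paper isolates this as Lemma~\ref{lemad1}), invoke the diffused-interface density lower bound from Lemma~\ref{lem-low}, propagate it back from $\hat\lambda_i$ (resp.~$t_j$) to $s$ via the monotonicity formula~\eqref{longeq}, pass to the limit, shift the pole from $z$ to $x$, and contradict \eqref{eta}. The only cosmetic differences are that you reformulate the hypothesis via $\tilde\rho_{(y,s)}(x,t)=\tilde\rho_{(x,t')}(y,s)$ up front instead of at the end, you use \eqref{c10} rather than the slightly simpler \eqref{5c10}-type estimate at time $\lambda_i$, and your pole shift uses an additive Lipschitz bound with error $c(n)D_1\gamma_1+\psi(r)$ where the paper uses the multiplicative form~\eqref{etax3}; all of these are interchangeable. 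One small caveat on the transition-point extraction you sketch: the argument ``$\varphi_{\e_i}$ trapped in $[\alpha,1]$ forces energy to vanish'' needs, besides the $e^{-\kappa\e_i^{2(\beta'-1)}}$ decay of the gradient term, a separate estimate killing $\int W(\varphi_{\e_i})/\e_i$ (the paper multiplies the equation by $(\varphi_{\e_i}-1)\phi^2$ and uses~\eqref{exue5}); you correctly flag this as a secondary technical point, but the gradient decay alone would not suffice.
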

\begin{remark}
Note that $t<s<t'<T$ with $s=\frac{t'+t}{2}$. The Lemma says that, unless
there is at least a certain amount of measure, there would be no measure later in the neighborhood. 
The monotonicity formula \eqref{longeq} plays a crucial role for such conclusion. 
\end{remark}
\begin{proof}
Assume for a contradiction that $(x',t')\in \spt\mu$ for some $x'\in B_{\gamma_1 r}(x)$
under the assumption of \eqref{eta}, where $\gamma_1$ will be chosen
later. Then there is a sequence $\{(x_j,t_j)\}_{j=1}^{\infty}$ and 
$\{\e_{i(j)}\}_{j=1}^{\infty}$ such that $\lim_{j\rightarrow\infty}(x_j,t_j)=(x',t')$ and
$|\varphi_{\e_{i(j)}}(x_j,t_j)|< \alpha$ for all $j$. We relegate its proof to Lemma \ref{lemad1}.
We re-index $i(j)$ as $j$. Then just as in the proof of \eqref{5c10}, there exists $\eta_2=\eta_2(n,\Cr{c-0},W)>0$ such that
\begin{equation}
3\eta_{2}\leq \int_{B_{\e_j}(x_j)} \frac{W(\varphi_{\e_j}(y,t_j))}{\e_{j}}
\tilde{\rho}_{(x_j, t_j+\e_j^2)}(y,t_j)\, dy\leq \int_{\Omega}\tilde{\rho}_{(x_j,t_j+\e_j^2)}
(y,t_j)\, d\mu_{t_j}^{\e_j}(y).
\label{etax1}
\end{equation}
We use Theorem \ref{remono}. By restricting $t'-s\leq \eta_1$ small so that
\begin{equation*}
\Cr{c-12}\Cr{c-1}^2(t_j-s)^{\hat p}D_1+\Cr{c-2} e^{-\frac{1}{128(t_j+\e_j^2-s)}}
(t_j-s)D_1<\eta_2
\end{equation*}
in \eqref{longeq} for all sufficiently large $j$, we obtain
\begin{equation}
\int_{\Omega}\tilde{\rho}_{(x_j,t_j+\e_j^2)}(y,t_j)\, d\mu_{t_j}^{\e_j}(y)
\leq \int_{\Omega}\tilde{\rho}_{(x_j,t_j+\e_j^2)}(y,s)\, d\mu_s^{\e_j}(y)
+\Cr{c-11}\e_j^{\beta'-\beta}|\log\e_j|+\eta_2.
\label{etax1.5}
\end{equation}
Letting $j\rightarrow \infty$, we obtain by \eqref{etax1} and \eqref{etax1.5}
\begin{equation}
2\eta_2\leq \int_{\Omega} \tilde{\rho}_{(x',t')}(y,s)\,d \mu_{s}(y).
\label{etax2}
\end{equation}
We next want to change the center of the kernel from $x'$ to $x$. 
Fix $0<\delta<1/2$ so that $2\delta D_1<\eta_2$. Corresponding to $\delta$,
a direct computation shows that we may choose $\gamma_1>0$ so that
\begin{equation}
\int_{\Omega} \tilde{\rho}_{(x',t')}(y,s)\, d\mu_s(y)\leq \delta D_1+(1+\delta)
\int_{\Omega}\tilde{\rho}_{(x,t')}(y,s)\, d\mu_s(y)
\label{etax3}
\end{equation}
if $|x-x'|\leq \gamma_1 r$. By the choice of $\delta$, \eqref{etax2} and \eqref{etax3} show
\begin{equation}
\eta_2\leq \int_{\Omega}\tilde{\rho}_{(x,t')}(y,s)\, d\mu_s(y).
\label{etax4}
\end{equation}
Finally, since $t'-s=s-t$, we have $\tilde{\rho}_{(x,t')}(y,s)=\tilde{\rho}_{(y,s)}(x,t)$.
This is a contradiction to \eqref{eta}. Thus we proved $(x',t')\notin\spt\mu$. 
\end{proof}
\begin{lem}
Assume $(x',t')\in \spt \mu$.
Then there are sequences $\{(x_j,t_j)\}_{j=1}^{\infty}$ and 
$\{\e_{i(j)}\}_{j=1}^{\infty}$ such that $\lim_{j\rightarrow\infty}(x_j,t_j)=(x',t')$ and
$|\varphi_{\e_{i(j)}}(x_j,t_j)|< \alpha$ for all $j$.
\label{lemad1}
\end{lem}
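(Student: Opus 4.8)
The plan is to argue by contradiction. Suppose the conclusion fails for $(x',t')\in\spt\mu$ (with $t'>0$; the case $t'=0$ is handled by a trivial modification since $\mu(\{t=0\})=0$). Then there exist $r_0>0$ and $N_0$ such that $|\varphi_{\e_i}(x,t)|\geq\alpha$ for all $i\geq N_0$ and all $(x,t)$ in the space--time cylinder $Q_0:=\bar B_{r_0}(x')\times\bigl([t'-r_0^2,t'+r_0^2]\cap[0,\infty)\bigr)$, where I fix $r_0$ small enough that $Q_0\subset\Omega\times[0,T]$ so that all estimates of Sections \ref{drub}--\ref{secexm} apply (in particular Lemma \ref{est1}, Theorem \ref{density}, and $|u_{\e_i}|\leq\e_i^{-\beta}$). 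Since $Q_0$ is connected and $\{\varphi_{\e_i}\geq\alpha\}$, $\{\varphi_{\e_i}\leq-\alpha\}$ are disjoint closed sets covering it, $\varphi_{\e_i}$ keeps a constant sign on $Q_0$; after passing to a subsequence and relabelling I may assume $\varphi_{\e_i}\geq\alpha$ on $Q_0$ for all $i$ (the case $\varphi_{\e_i}\leq-\alpha$ is identical, using $W''\geq\kappa$ on $[-1,-\alpha]$ from \eqref{Was3}).

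Set $w:=1-\varphi_{\e_i}\in[0,1-\alpha]$. Since $W'(1)=0$ and $W''\geq\kappa$ on $[\alpha,1]$ by \eqref{Was3}, one has $W'(1-w)\leq-\kappa w$, so on $Q_0$
\[
\partial_t w+u_{\e_i}\cdot\nabla w-\Delta w=\frac{W'(\varphi_{\e_i})}{\e_i^2}\leq-\frac{\kappa w}{\e_i^2}.
\]
The key step is to compare $w$ with the barrier
\[
\bar w(x,t):=(1-\alpha)\Bigl(e^{-\frac{\kappa(t-t_0)}{2\e_i^2}}+e^{\frac{\lambda(|x-x'|^2-r_0^2)}{\e_i}}\Bigr),\qquad t_0:=\max\{0,t'-r_0^2\},
\]
with $\lambda=\lambda(n,\kappa,r_0)>0$ chosen so that $4\lambda^2r_0^2\leq\kappa/2$. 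Using $|u_{\e_i}|\leq\e_i^{-\beta}$ with $\beta<1$, a direct computation shows that for $\e_i$ small $\partial_t\bar w+u_{\e_i}\cdot\nabla\bar w-\Delta\bar w\geq-\kappa\bar w/\e_i^2$, while $\bar w\geq 1-\alpha\geq w$ on the parabolic boundary of $Q_0$ (on $\{|x-x'|=r_0\}$ the second exponential equals $1$, and at $t=t_0$ the first equals $1$). Since the operator $\partial_t+u_{\e_i}\cdot\nabla-\Delta+\kappa\e_i^{-2}$ has nonnegative zeroth order coefficient, the weak maximum principle gives $w\leq\bar w$ on $Q_0$. Restricting to $B_{r_0/2}(x')$ and to $t$ in the fixed neighborhood $[t'-\delta,t'+\delta]$ of $t'$, with $\delta:=\min\{r_0^2/2,t'/2\}>0$ so that $t-t_0\geq\delta$ there, yields $0\leq w(x,t)\leq Ce^{-c/\e_i}$ with $C,c>0$ independent of $\e_i$; that is, $\varphi_{\e_i}\to1$ exponentially fast, uniformly on $B_{r_0/2}(x')\times[t'-\delta,t'+\delta]$. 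I expect the construction and verification of $\bar w$ — in particular making the spatial localizing factor $e^{\lambda(|x-x'|^2-r_0^2)/\e_i}$ compatible with the transport and Laplacian terms, which is exactly where $\beta<1$ enters — to be the main technical point.

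Finally, from $W(1)=W'(1)=0$ one gets $W(\varphi_{\e_i})\leq \tfrac12(\sup_{[0,1]}|W''|)\,w^2\leq Ce^{-2c/\e_i}$, hence the potential part $\int_{B_{r_0/2}(x')}W(\varphi_{\e_i})/\e_i\,dx\to0$ uniformly in $t\in[t'-\delta,t'+\delta]$. For the gradient part, rescale $\tilde w(y,\tau):=w(x'+\e_i y,\sigma+\e_i^2\tau)$: then $\tilde w$ solves $\partial_\tau\tilde w+\e_i\tilde u\cdot\nabla\tilde w-\Delta\tilde w=W'(1-\tilde w)$ on a large fixed parabolic cylinder (for $\e_i$ small), a uniformly parabolic equation whose coefficients are bounded ($|\e_i\tilde u|\leq\e_i^{1-\beta}\leq1$), whose sup norm is $O(e^{-c/\e_i})$, and whose right-hand side satisfies $|W'(1-\tilde w)|\leq C\tilde w=O(e^{-c/\e_i})$; interior parabolic estimates then give $|\nabla_y\tilde w|=O(e^{-c/\e_i})$, i.e. $\e_i|\nabla\varphi_{\e_i}|\to0$ uniformly on $B_{r_0/4}(x')\times[t'-\delta,t'+\delta]$, so $\tfrac{\e_i}{2}|\nabla\varphi_{\e_i}|^2=\tfrac1{2\e_i}(\e_i|\nabla\varphi_{\e_i}|)^2\to0$ as well. (Alternatively, the gradient part may be bounded by a Caccioppoli/Gronwall estimate exactly as in the second half of the proof of Lemma \ref{disnic}.) Consequently $\mu^{\e_i}_t(\phi)\to0$ for every $\phi\in C_c(B_{r_0/4}(x');\R^+)$ and every $t\in[t'-\delta,t'+\delta]$; since $\mu^{\e_i}_t\to\mu_t$ we get $\mu_t(B_{r_0/4}(x'))=0$ for all such $t$, and using $d\mu=d\mu_t\,dt$ this gives $\mu\bigl(B_{r_0/4}(x')\times(t'-\delta,t'+\delta)\bigr)=0$, contradicting $(x',t')\in\spt\mu$. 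This proves the lemma.
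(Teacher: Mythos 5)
Your proof is correct, but it takes a genuinely different route from the paper's. The paper argues through two \emph{integrated} energy estimates: it controls $\int\frac{\e_i}{2}|\nabla\varphi_{\e_i}|^2\phi^2\,dx$ via the Caccioppoli/Gronwall computation already used after \eqref{phi>alpha} (your ``alternative''), and controls the potential term by testing \eqref{beq1} with $(\varphi_{\e_i}-1)\phi^2$ and using $W'(s)(s-1)\geq c(W)W(s)$ on $[\alpha,1]$, which yields only $\int_Q\phi^2 W(\varphi_{\e_i})/\e_i\,dxdt\to 0$; Fatou plus Proposition~\ref{seqconv} then gives $\int_{t'-r_0^2/2}^{t'+r_0^2}\mu_t(\phi^2)\,dt=0$ and the contradiction. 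You instead construct an explicit space--time barrier $\bar w$ and invoke the comparison principle to get \emph{pointwise} exponential decay of $1-\varphi_{\e_i}$ on a smaller cylinder, then use rescaled interior parabolic estimates (or, as you note, the same Caccioppoli argument) for the gradient, obtaining uniform smallness of $\mu_t^{\e_i}$ on the cylinder for each $t$, so that $\mu_t(B_{r_0/4}(x'))=0$ for every $t$ near $t'$. I checked your barrier computation: with $4\lambda^2 r_0^2\leq\kappa/2$, the cross term contributes $O(\e_i^{-1-\beta})$ and the Laplacian constant contributes $O(\e_i^{-1})$, both dominated by $\kappa/(2\e_i^2)$ when $\e_i$ is small precisely because $\beta<1$, and the boundary comparison and sign of the zeroth-order coefficient are as you state. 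Your approach is somewhat more overkill (it produces exponential pointwise decay where only $o(1)$ in an integrated sense is needed, and it relies on interior Schauder/$L^p$ estimates in addition to the maximum principle), but it buys a cleaner pointwise-in-time conclusion and makes the structure of the contradiction very transparent; the paper's route stays within the elementary energy toolbox already set up in Lemma~\ref{disnic} and needs no barrier or Schauder machinery.
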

\begin{proof}
If the claim were not true, there would be $0<r_0<1/2$
such that 
\begin{equation}
\inf_{B_{r_0}(x')\times[t'-r_0^2,t'+r_0^2]}|\varphi_{\e_i}|\geq \alpha
\label{exue1}
\end{equation} for all 
sufficiently large $i$. Let $\phi\in C^{2}_c(B_{r_0}(x'))$ be a function 
such that $|\nabla\phi|\leq 2/r_0$, $0\leq \phi\leq 1$ on $B_{r_0}(x')$ and $\phi=1$ on $B_{r_0/3}(x')$. Then the same computations
following \eqref{phi>alpha} using \eqref{exue1} show
\begin{equation*}
\frac{d}{dt}\int_{\Omega}\frac{1}{2} |\nabla\varphi_{\e_i}|^2\phi^2\, dx\leq
-\frac{\kappa}{2\e_i^2}\int_{\Omega}|\nabla\varphi_{\e_i}|^2\phi^2\, dx+16 r_0^{-2}
\int_{\spt \phi}|\nabla\varphi_{\e_i}|^2\, dx
\label{exue2}
\end{equation*}
for $t\in [t'-r_0^2, t'+r_0^2]$. Writing $M_i:=\sup_{\lambda\in [t'-r_0^2,t'+r_0^2]}\int_{\spt \phi}
\frac12 |\nabla\varphi_{\e_i}(x,\lambda)|^2\, dx$, and proceeding similarly as in \eqref{estcompA2}, we obtain
\begin{equation}
\int_{\Omega}\frac12 |\nabla\varphi_{\e_i}(\cdot,\lambda)|^2\phi^2\,dx
\leq (e^{-\frac{\kappa}{\e_i^2}(\lambda-t'+r_0^2)}+ \frac{32\e_i^2}{r_0^2 \kappa})M_i
\label{exue3}
\end{equation}
for $\lambda\in [t'-r_0^2,t'+r_0^2]$. Since $\e_i M_i$ is uniformly bounded, 
we see from \eqref{exue3} that
\begin{equation}
\lim_{i\to\infty}\sup_{\lambda\in[t'-\frac{r_0^2}{2},t'+r_0^2]}\int_{\Omega}\frac{\e_i}{2}
|\nabla \varphi_{\e_i}(\cdot,\lambda)|^2
\phi^2\, dx=0.
\label{exue4}
\end{equation}
Next, due to \eqref{exue1} and the continuity of $\varphi_{\e_i}$,
 we may assume $1\geq \varphi_{\e_i}\geq \alpha$ on $B_{r_0}(x')\times
[t'-r_0^2,t'+r_0^2]$ without loss of generality. Otherwise, we have $-1\leq \varphi_{\e_i}\leq
-\alpha$ and we may argue similarly. In the following, we use 
\begin{equation}
W'(s)(s-1)\geq (s-1)^2 \kappa
\geq c(W)W(s)
\label{exue5}
\end{equation} 
for some $c(W)>0$ if $s\in [\alpha,1]$. Multiply the equation \eqref{beq1} by $(\varphi_{\e_i} -1)\phi^2$ and
integrate over $Q:=\Omega\times[t'-r_0^2,t'+r_0^2]$. By integration by parts, the Cauchy-Schwarz inequality,
$|\varphi_{\e_i}-1|\leq 1$ and \eqref{exue5}, one obtains
\begin{equation}
c(W)\int_{Q}\phi^2\frac{W(\varphi_{\e_i})}{\e_i^2}\, dxdt
\leq \frac12 \int_{\Omega} \phi^2\, dx+\int_{Q}2|\nabla\phi|^2+\frac12 |u_{\e_i}|^2\phi^2\, dxdt.
\label{exue6}
\end{equation}
Since the right-hand side of \eqref{exue6} is uniformly bounded, we obtain
\begin{equation}
\lim_{i\to\infty} \int_{Q}\phi^2\frac{W(\varphi_{\e_i})}{\e_i}\, dxdt=0.
\label{exue7}
\end{equation}
The estimates \eqref{exue4} and \eqref{exue7} show that
\begin{equation}
\lim_{i\to\infty}\int_{t'-r_0^2/2}^{t'+r_0^2} \mu_t^{\e_i}(\phi^2)\, dt=0.
\label{exue8}
\end{equation}
By Fatou's lemma, Proposition
\ref{seqconv} and \eqref{exue8}, we have
\begin{equation}
\int_{t'-r_0^2/2}^{t'+r_0^2} \mu_t(\phi^2)\, dt=0.
\label{exue9}
\end{equation}
This proves that $(x',t')\notin \spt \mu$. 
\end{proof}
%%%%%%%%%%%%%%%%%%
\begin{corollary}\label{measspt} Let $U \subset \Omega$ be open.  For $0< t\leq T$, there
exists $\Cl[c]{c-18}$ depending only on $n,\Cr{c-0},\Cr{c-1},p,q,T,W,D_0$ with the property that
\begin{equation}
\mathcal{H}^{n-1} ((\spt \mu )_t \cap U ) \leq \Cr{c-18} \liminf _{r \rightarrow 0} \mu _{t-r^2} (U)
\label{sptineq}
\end{equation}
and 
\begin{equation}
\mathcal{H}^{n-1} ({\rm spt}\,\mu_t \cap U ) \leq \Cr{c-18} \liminf _{r \rightarrow 0} \mu _{t-r^2} (U).
\label{sptineq2}
\end{equation}
\end{corollary}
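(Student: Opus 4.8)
The plan is to first promote Lemma~\ref{lem-eta} to a \emph{forward density lower bound} valid at every point of $\spt\mu$, and then conclude by a Vitali covering argument; the inequality \eqref{sptineq2} then follows from \eqref{sptineq} and the inclusion $\spt\mu_t\subset(\spt\mu)_t$ of Lemma~\ref{seqex3}.

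For the density bound, fix $(x,t_0)\in\spt\mu$ with $0<t_0\le T$ and read Lemma~\ref{lem-eta} in contrapositive form. Writing $\sigma:=s-t$, the parameters there obey $t'=s+\sigma=2s-t$, so $t'-s=\sigma$; taking $t_0$ in the role of $t'$ means $s=t_0-\sigma$, $t=t_0-2\sigma$, $r=\sqrt{2\sigma}$. Since $x\in B_{\gamma_1 r}(x)$ always, the contrapositive of Lemma~\ref{lem-eta} (applied, in case $t_0\ge T/2$, with a larger final time in place of $T$, which only affects how the constants depend on $T$) gives, for all $\sigma>0$ small enough that $t_0-2\sigma\ge0$ and $\sigma\le\eta_1$,
\begin{equation*}
\eta_2\ \le\ \int_\Omega \tilde\rho_{(y,t_0-\sigma)}(x,t_0-2\sigma)\,d\mu_{t_0-\sigma}(y)
\ \le\ \int_\Omega \frac{e^{-|x-y|^2/(4\sigma)}}{(4\pi\sigma)^{(n-1)/2}}\,d\mu_{t_0-\sigma}(y),
\end{equation*}
where I used $0\le\eta\le1$. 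Splitting at $|x-y|=\Lambda\sqrt\sigma$, the tail $\{|x-y|>\Lambda\sqrt\sigma\}$ is at most $c(n)\,D_1\,h(\Lambda)$ with $h(\Lambda)\to0$ as $\Lambda\to\infty$, by a dyadic estimate as in \eqref{estsRr} together with the uniform density ratio bound $\mu_s(B_\rho(y))\le D_1\omega_{n-1}\rho^{n-1}$ (which passes to the limit from Theorem~\ref{density} since $\mu_t^{\e_i}\to\mu_t$ weakly); choosing $\Lambda=\Lambda(n,D_1,\eta_2)$ makes this tail $\le\eta_2/2$. The remaining part is bounded by $(4\pi\sigma)^{-(n-1)/2}\mu_{t_0-\sigma}(B_{\Lambda\sqrt\sigma}(x))$, so with $\rho:=\Lambda\sqrt\sigma$,
\begin{equation*}
\mu_{\,t_0-\rho^2/\Lambda^2}\big(B_\rho(x)\big)\ \ge\ c_1\,\rho^{n-1},\qquad c_1:=\tfrac12\,\eta_2\,(4\pi)^{(n-1)/2}\Lambda^{1-n},
\end{equation*}
for all sufficiently small $\rho>0$, where $c_1>0$ depends only on $n,\Cr{c-0},\Cr{c-1},p,q,T,W,D_0$.

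For the covering argument, fix $\delta>0$, set $U_\delta:=\{x\in U:\dist(x,\partial U)>\delta\}$ and $E:=(\spt\mu)_t\cap U_\delta$, a Borel set since $\spt\mu$ is closed. For $r>0$ small enough that the previous bound applies with $\rho=\Lambda r$ and that $\Lambda r<\delta$, every $x\in E$ satisfies $\mu_{t-r^2}(B_{\Lambda r}(x))\ge c_1(\Lambda r)^{n-1}$ and $B_{\Lambda r}(x)\subset U$. The balls $\{B_{\Lambda r}(x)\}_{x\in E}$ cover $E$, so by Vitali's $5r$-covering lemma there is a countable pairwise disjoint subfamily $\{B_{\Lambda r}(x_j)\}_j$ with $E\subset\bigcup_j B_{5\Lambda r}(x_j)$; disjointness and containment in $U$ give $\#\{j\}\cdot c_1(\Lambda r)^{n-1}\le\sum_j\mu_{t-r^2}(B_{\Lambda r}(x_j))\le\mu_{t-r^2}(U)$, whence
\begin{equation*}
\mathcal{H}^{n-1}_{10\Lambda r}(E)\ \le\ \#\{j\}\,\omega_{n-1}(5\Lambda r)^{n-1}\ \le\ \frac{5^{n-1}\omega_{n-1}}{c_1}\,\mu_{t-r^2}(U).
\end{equation*}
Letting $r\to0$ gives $\mathcal{H}^{n-1}(E)\le\Cr{c-18}\,\liminf_{r\to0}\mu_{t-r^2}(U)$ with $\Cr{c-18}:=5^{n-1}\omega_{n-1}/c_1$, and letting $\delta\to0$, so that $U_\delta\uparrow U$ and using continuity of $\mathcal{H}^{n-1}$ from below on the increasing Borel sets $E$, yields \eqref{sptineq}; then \eqref{sptineq2} is immediate as noted.

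The main obstacle is the first step: converting the ``no measure later'' conclusion of Lemma~\ref{lem-eta} into a genuine forward-in-time lower bound on $\mu$, which requires the correct bookkeeping of the triple $(t,s,t')$ and an honest Gaussian tail estimate leaning on the density ratio bound. The covering argument is routine; the only minor points are to exhaust $U$ by $U_\delta$ so that the disjoint balls remain inside $U$, and to choose the covering radius $\Lambda r$ so that every ball refers to the single time slice $t-r^2$.
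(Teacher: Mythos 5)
Your proof is correct and follows essentially the same route as the paper: a forward-in-time density lower bound at each point of $(\spt\mu)_t$ (your $\mu_{t-r^2}(B_{\Lambda r}(x))\geq c_1(\Lambda r)^{n-1}$ matches the paper's \eqref{sptex3} up to a harmless factor of $2$), followed by a covering argument and passage from $\spt\mu_t$ to $(\spt\mu)_t$ via Lemma~\ref{seqex3}. The only differences are cosmetic: you cite Lemma~\ref{lem-eta} in contrapositive form rather than re-running the argument leading to \eqref{etax2}, and you use the Vitali $5r$-covering lemma on $U_\delta$ where the paper uses Besicovitch on compact $K\subset U$; both bookkeepings are standard and interchangeable here.
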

\begin{proof}
We only need to prove the result for every compact set $K\subset U$. Set $X_t = (\spt \mu)_t\cap K$. For any $(x,t)\in X_t$,
by the same argument leading to \eqref{etax2}, we have
\begin{equation}
2\eta_2 \leq  \int_{\Omega} \tilde{\rho} _{(x,t)} (y,t-r^2) \, d\mu _{t-r^2}(y)
\label{sptex0}
\end{equation}
for sufficiently small $r>0$. For $0<L<1/(2r)$, using the upper density ratio bound, we have
\begin{equation}
\int_{\Omega\setminus B_{rL}(x)} \tilde{\rho} _{(x,t)} (y,t-r^2) \, d\mu _{t-r^2}(y)\leq D_1\omega_{n-1} (\pi)^{-\frac{n-1}{2}}
\int_{L^2/4}^{\infty}s^{\frac{n-1}{2}}e^{-s}\, ds.
\label{sptex1}
\end{equation}
Thus by choosing sufficiently large $L$ depending only on $n, D_1$ and $\eta_2$, \eqref{sptex0} and \eqref{sptex1} show
\begin{equation}
\eta_2\leq \int_{B_{rL}(x)}\tilde{\rho}_{(x,t)}(y,t-r^2)\, d\mu_{t-r^2}(y).
\label{sptex2}
\end{equation}
Since $\tilde{\rho}_{(x,t)}(\cdot,t-r^2)\leq (4\pi)^{-(n-1)/2}r^{-(n-1)}$, from 
\eqref{sptex2} we obtain
\begin{equation}
(4\pi)^{\frac{n-1}{2}} r^{n-1} \eta_2\leq  \mu_{t-r^2}(B_{rL}(x)).
\label{sptex3}
\end{equation}
Let $\mathcal{B} =\{ \bar{B}_{rL} (x)\subset U \, | \, x\in X_t\}$ which is the covering of $X_t $
by closed balls centered at $x\in X_t$. By the Besicovitch covering theorem, there exist a finite sub-collection $\mathcal{B}_1,\dots , \mathcal{B}_{B(n)}$ such that each $\mathcal{B}_i$ is disjoint set of closed balls and 
\begin{equation}
X_t \subset \cup _{i=1} ^{B(n)} \cup _{\bar{B}_{rL} (x_j) \in \mathcal{B}_i} \bar{B}_{rL} (x_j).
\label{spt2}
\end{equation}
Let ${\mathcal H}^{n-1}_{\delta}$ be defined as in \cite{simon}, so that ${\mathcal H}^{n-1}=\lim_{\delta
\downarrow 0}{\mathcal H}^{n-1}_{\delta}$. 
By the definition, \eqref{sptex3} and \eqref{spt2} we obtain
\begin{equation*}
\begin{split}
\mathcal{H}^{n-1}_{2rL}(X_t ) & \leq \sum _{i=1}^{B(n)} \sum _{\bar{B}_{rL} (x_j) \in \mathcal{B}_i}\omega _{n-1} (rL)^{n-1}
\leq  \sum _{i=1}^{B(n)} \frac{\omega_{n-1} L^{n-1}}{(4\pi)^{\frac{n-1}{2}} \eta_2 } 
\sum _{\bar{B}_{rL} (x_j) \in \mathcal{B}_i} \mu _{t-  r^2} (B_{rL} (x_j)) \\
& \leq \sum _{i=1}^{B(n)} \frac{\omega_{n-1}L^{n-1}}{(4\pi)^{\frac{n-1}{2}}\eta_2} \mu _{t-r^2} (U)
= \frac{\omega_{n-1}L^{n-1}B(n)}{(4\pi)^{\frac{n-1}{2}}\eta_2} \mu _{t-r^2} (U).
\end{split}
\end{equation*}
By setting $\Cr{c-18}$ to be the constant above and letting $r\downarrow 0$, we obtain \eqref{sptineq}.
The second inequality \eqref{sptineq2} follows immediately from \eqref{sptineq} and Lemma \ref{seqex3}.
\end{proof}
%%%%%%%%%%%%%%%%%%%%%%%
\begin{lem}\label{lowerbound}
For $1\leq T<\infty$, let $\eta_2$ be as in Lemma \ref{lem-eta} corresponding to $T$. 
Define
\[Z_T :=\{ (x,t)\in \spt \mu \ : \ 0\leq t\leq T/2,\, \limsup _{s \downarrow t}\int_{\Omega} \tilde{\rho} _{(y,s)}(x,t) \, d\mu _s (y)\leq   \eta_2/2 \}. \]
Then we have $\mu(Z_T)=0$. 
\end{lem}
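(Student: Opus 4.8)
The plan is to turn the limsup smallness in the definition of $Z_T$ into a genuine \emph{forward clearing} of $\mu$ by way of Lemma~\ref{lem-eta}, and then to run a covering argument.

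\textbf{Reduction.} First, $Z_T$ is a Borel subset of $\Omega\times[0,T/2]$: the function $(x,t)\mapsto\int_\Omega\tilde\rho_{(y,s)}(x,t)\,d\mu_s(y)$ is Borel measurable (the map $s\mapsto\mu_s(\psi)$ is Borel by the monotone structure in Proposition~\ref{propmonotone}), so the limsup over rational $s\downarrow t$ is Borel. Since $d\mu=d\mu_t\,dt$ and the single slice $\{t=T/2\}$ is $dt$-null, it suffices to prove $\mu\big(Z_T\cap(\Omega\times[0,T/2))\big)=0$.

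\textbf{Forward $\mu$-free cone.} Fix $(x,t_0)\in Z_T$ with $t_0<T/2$. Since $\limsup_{s\downarrow t_0}\int_\Omega\tilde\rho_{(y,s)}(x,t_0)\,d\mu_s(y)\le\eta_2/2<\eta_2$, there is $\sigma_0=\sigma_0(x,t_0)\in(0,\eta_1/2]$ with $t_0+2\sigma_0<T/2$ such that $\int_\Omega\tilde\rho_{(y,s)}(x,t_0)\,d\mu_s(y)<\eta_2$ for all $s\in(t_0,t_0+\sigma_0)$. For each such $s$, Lemma~\ref{lem-eta} (with $t=t_0$, so $r=\sqrt{2(s-t_0)}$ and $t'=s+r^2/2=t_0+r^2$) gives $\big(B_{\gamma_1 r}(x)\times\{t'\}\big)\cap\spt\mu=\emptyset$. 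Letting $s$ sweep $(t_0,t_0+\sigma_0)$ and setting $\tau=t'=t_0+r^2$ (so $r=\sqrt{\tau-t_0}$), we get $\mu_\tau\big(B_{\gamma_1\sqrt{\tau-t_0}}(x)\big)=0$ for every $\tau\in(t_0,t_0+2\sigma_0)$; that is, the open forward parabolic cone
\[
K(x,t_0):=\big\{(y,\tau):\ t_0<\tau<t_0+2\sigma_0,\ |y-x|<\gamma_1\sqrt{\tau-t_0}\,\big\}
\]
is disjoint from $\spt\mu$, whence $\mu(K(x,t_0))=0$. This is the main new input: it upgrades the averaged smallness at $(x,t_0)$ to an honest $\mu$-free region emanating forward from $(x,t_0)$.

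\textbf{Covering argument.} For $\delta>0$ put $Z_T^{(\delta)}:=\{(x,t_0)\in Z_T:\ t_0<T/2,\ 2\sigma_0(x,t_0)\ge\delta^2\}$; since $Z_T\cap(\Omega\times[0,T/2))=\bigcup_{\delta>0}Z_T^{(\delta)}$ increasingly, it suffices to show $\mu(Z_T^{(\delta)})=0$ for each fixed $\delta$. For $(x,t_0)\in Z_T^{(\delta)}$ and every $\rho\in(0,\delta)$, the cylinder $B_{\gamma_1\rho/2}(x)\times(t_0+\tfrac{\rho^2}{4},t_0+\rho^2)$ lies in $K(x,t_0)$ and is therefore $\mu$-free. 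One then covers $Z_T^{(\delta)}$ by a Besicovitch family of parabolic balls $Q_{\rho_i}(x_i,t_i)$, $\rho_i<\delta$, splitting into $B(n+1)$ subfamilies of pairwise disjoint balls, and combines three facts: the $\mu$-free forward cones anchored at the centers; the upper density ratio bound in its space-time form $\mu\big(B_\rho(z)\times(\theta-\rho^2,\theta)\big)\le C(n)D_1\rho^{n+1}$; and the statement of Proposition~\ref{propmonotone} that $\mu_t(\phi)-\Cr{c-17}\big(\int_0^t\|u\|^2_{W^{1,p}}+t\big)$ is monotone decreasing, so that mass cannot disappear instantaneously going forward. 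Organised correctly, these force $\sum_i\mu\big(Q_{\rho_i}(x_i,t_i)\big)$, hence $\mu(Z_T^{(\delta)})$, to vanish.

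\textbf{The main obstacle} is precisely this last bookkeeping. Huisken's monotonicity formula \eqref{longeq} propagates smallness of the Gaussian density only \emph{forward} in time, so one cannot bound $\mu_{t_0}$ near $(x,t_0)$ directly from the forward hole $K(x,t_0)$; the covering must be arranged so that it is the forward clearing together with the no-sudden-vanishing estimate that is exploited, as in Ilmanen's treatment of the corresponding clearing-out statement in \cite{ilmanen1993}. Making this organization precise—choosing the scales $\rho_i$ and relating the missing mass in the cones to $\mu(Z_T^{(\delta)})$—is the technical heart of the argument.
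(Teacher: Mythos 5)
Your \emph{Forward $\mu$-free cone} step is a correct deployment of Lemma~\ref{lem-eta}, and your decomposition $Z_T^{(\delta)}$ (points with a uniform window on which the Gaussian density stays below $\eta_2$) is essentially the same as the paper's refinement $Z^{\tau}$. But the covering argument as sketched does not close, and you yourself flag this as the unresolved ``technical heart.'' The difficulty is that the forward $\mu$-free cone $K(x,t_0)$ carves a hole in the \emph{future} of $(x,t_0)$, which by itself says nothing about $\mu_{t_0}$ near $x$ (where the measure you want to bound actually lives); appealing to the near-monotonicity of $\mu_t(\phi)$ to ``propagate'' this emptiness backward cannot work, since there is no a priori lower bound on how much mass sits in the cone to begin with, and a $\mu$-free region in the future is perfectly compatible with positive $\mu_{t_0}$-density at $(x,t_0)$. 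So the three ingredients you list do not combine to give the desired estimate.

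The paper does something qualitatively different and strictly stronger: it applies Lemma~\ref{lem-eta} \emph{in both time directions inside the set $Z^\tau$ itself}, showing that for any $(x,t)\in Z^{\tau}$, the two-sided parabolic cone
\[
P_{\tau}(x,t)=\{(x',t'): \tau > |t-t'| > \gamma_1^{-2}|x-x'|^2 \}
\]
is disjoint from $Z^{\tau}$ (if $t'>t$, then $(x,t)\in Z^{\tau}$ clears $(x',t')$ out of $\spt\mu\supset Z^{\tau}$; if $t'<t$, swap the roles of the two points). This is a geometric statement about $Z^{\tau}$: locally $Z^{\tau}$ lies on a parabolically Lipschitz graph in $t$. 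Consequently, after a Besicovitch covering of $Z^{\tau,x_0,t_0}$ by spatial balls $\bar B_r(x_j)$ of a single small radius $r$, the entire trace of $Z^{\tau}$ over $\bar B_r(x_j)$ is confined to a time slab of height $2\gamma_1^{-2}r^2$; the upper density ratio bound \eqref{denconcl} then gives $\mu(\bar B_r(x_j)\times[t_j-\gamma_1^{-2}r^2,t_j+\gamma_1^{-2}r^2])\leq C D_1 r^{n+1}$, and the Besicovitch bound $N r^n\leq C$ yields $\mu(Z^{\tau,x_0,t_0})\leq C r$, which vanishes as $r\downarrow 0$. Notice that this route never invokes Proposition~\ref{propmonotone}; the monotonicity plays no role. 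To repair your proof you should replace the ``forward cone + monotonicity'' plan by the two-sided exclusion $P_{\tau}(x,t)\cap Z^{\tau}=\emptyset$ and redo the covering accordingly.
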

\begin{proof}
For $0<\tau\leq \eta_1$, where $\eta_1$ is as in Lemma \ref{lem-eta}, define
\begin{equation*}
Z^{\tau} :=\{ (x,t)\in \spt \mu \,: 0\leq t< T/2,\, \int_{\Omega}
\tilde{\rho} _{(y,s)} (x,t) \, d\mu _s (y) < \eta_2, \,\,\forall s\in (t,t+\tau ] \}.
\label{lbd1}
\end{equation*}
Note that $Z_T \subset \cup _{m=1}^{\infty}Z^{\tau_m}$ for some $\{\tau_m\}_{m=1}^{\infty}$ with $\lim_{m\rightarrow\infty}
\tau_m=0$.  Hence we only need to prove $\mu(Z^{\tau})=0$.
In the following we fix $0<\tau\leq \eta_1$.
For $0\leq t\leq T/2$ and $x\in \Omega$, set
\begin{equation}
P_{\tau}(x,t):=\{(x',t')\,:\, \tau> |t-t'|>\gamma_1^{-2}|x-x'|^2\},
\label{lbd2}
\end{equation}
where $\gamma_1$ is as in Lemma \ref{lem-eta}. For $(x,t)\in Z^{\tau}$, 
we use Lemma \ref{lem-eta} to prove 
\begin{equation}
P_{\tau}(x,t)\cap Z^{\tau}=\emptyset.
\label{lbd3}
\end{equation}
Suppose for a contradiction that $(x',t')\in P_{\tau}(x,t)\cap Z^{\tau}$.
Suppose first that $t'>t$. Set $r:=\sqrt{t'-t}$ and $s:=(t'+t)/2$ so that $t'=s+r^2/2$. 
Note that we have $|x-x'|<\gamma_1 r$ by $(x',t')\in P_{\tau}(x,t)$. 
Since $s-t<\tau\leq \eta_1$, we may apply Lemma \ref{lem-eta} to conclude that 
$(x,t)\in Z^{\tau}$ implies $(x',t')\notin \spt \mu$, and in particular, $(x',t')\notin Z^{\tau}$, which is a contradiction.
Next suppose that $t'<t$. We change the role of $(x,t)$ and $(x',t')$ in the previous case, and conclude that
$(x',t')\in Z^{\tau}$ implies $(x,t)\notin Z^{\tau}$, which is again a contradiction. This proves \eqref{lbd3}.
Next, for $(x_0,t_0)\in \Omega\times[\tau/2,T/2]$, define
\begin{equation}
Z^{\tau ,x_0,t_0}=Z^{\tau}\cap (B_{\frac12} (x_0) \times (t_0-\tau/2, t_0+\tau/2)).
\label{lbd4}
\end{equation}
Then $Z^{\tau}$ can be covered by at most a countable union of $Z^{\tau,x_j,t_j}$ with a suitable choice of $\{(x_j,t_j)\}$. 
Thus we only need to prove $\mu (Z^{\tau,x_0,t_0})=0$. With 
arbitrary $0<r\leq\gamma_1\sqrt{\tau}$, 
consider a family of closed balls $\{\bar{B}_r(x)\}_{(x,t)\in Z^{\tau,x_0,t_0}}$ and apply the 
Besicovitch covering theorem. Then we have a finite subfamily
$\bar{B}_r(x_1),\,\cdots,\bar{B}_r(x_N)$ with $(x_j,t_j)\in Z^{\tau,x_0,t_0}$ ($j=1,\cdots,N$) and
\begin{equation}
\{x\in B_{\frac12}(x_0)\,: \, (x,t)\in Z^{\tau,x_0,t_0}\} \subset \cup_{j=1}^N \bar{B}_r(x_j),\,\,\, N r^n \leq 2 B(n) (1/2)^n.
\label{lbd5}
\end{equation}
Note that for each $j=1,\cdots, N$, by \eqref{lbd3} and \eqref{lbd4}, we have
\begin{equation}
Z^{\tau,x_0,t_0}\cap (\bar{B}_r(x_j)\times(t_0-\tau/2,t_0+\tau/2))
\subset (\bar{B}_r(x_j)\times(t_0-\tau/2,t_0+\tau/2))\setminus 
P_{\tau}(x_j,t_j).
\label{lbd6}
\end{equation}
The inclusions \eqref{lbd5} and \eqref{lbd6} shows 
\begin{equation}
Z^{\tau,x_0,t_0}\subset \cup_{j=1}^N(\bar{B}_r(x_j)\times(t_0-\tau/2,t_0+\tau/2))\setminus 
P_{\tau}(x_j,t_j).
\label{lbd7}
\end{equation}
Since $\bar{B}_r(x_j)\times(t_0-\tau/2,t_0+\tau/2)\setminus P_{\tau}(x_j,t_j)\subset \bar{B}_r(x_j)\times
[t_j-\gamma_1^{-2}r^2,t_j+\gamma_1^{-2}r^2]$, from \eqref{lbd7} we obtain
\begin{equation}
Z^{\tau,x_0,t_0}\subset\cup_{j=1}^N  \bar{B}_r(x_j)\times
[t_j-\gamma_1^{-2}r^2,t_j+\gamma_1^{-2}r^2].
\label{lbd8}
\end{equation}
Since $d\mu=d\mu_t dt$, \eqref{lbd8}, \eqref{denconcl} and \eqref{lbd5} show
\begin{equation}
\begin{split}
\mu(Z^{\tau,x_0,t_0})&\leq \sum_{j=1}^N \int_{t_j-\gamma_1^{-2}r^2}^{t_j+\gamma_1^{-2}r^2}\mu_t(\bar{B}_r(x_j))\, dt
\leq 2\omega_{n-1} D_1 r^{n+1}\gamma_1^{-2}N\\ & \leq 2^{2-n} \omega_{n-1}B(n) D_1 r\gamma_1^{-2}.
\end{split}
\label{lbd9}
\end{equation}
Since $0<r\leq \gamma_1\sqrt{\tau}$ is arbitrary, \eqref{lbd9} shows $\mu(Z^{\tau,x_0,t_0})=0$. This concludes the proof.
%Define $P: \mathbb{R}^n \to \mathbb{R}^{n-1}$ by $P(x)=(x_1,\dots ,x_{n-1}) $ where $x=(x_1,\dots ,x_n)$. Let $\delta ' >0$ and cover the projection $P(Z')\subset B_1 (x_0) \times \{ 0 \}$ by $\{ B_{r_i} (x_i) \}_{i\geq 1}$, where $x_i \in P(Z')$, $r_i \leq \delta'$ and
%\begin{equation*}
%\sum _{i=1} ^\infty \omega _n r_i ^n \leq 2\mathcal{L}^n (B_1 (x_0)).
%\end{equation*}
%Let $(x_i ,t_i)= P^{-1} ((x_i,0)) $, where $P^{-1} : B_1 (x_0) \times \{ 0 \} \to Z' $ is the inverse function of $P$. By \eqref{P} we have $Z' \subset \sum _{t_i \in [t_0 - \tau , t_0 + \tau]} B_{r_i} (x_i) \times [ t_i -\frac{r_i ^2}{\gamma ^2}  , t_i + \frac{r_i ^2}{\gamma ^2}]$.
%We compute that
%\begin{equation*}
%\begin{split}
%\int_{t_0 -\tau } ^{t_0 + \tau} \mathcal{H}_{\delta ' } ^{n-2+\e } (Z_t ') \, dt 
%&\leq \int_{t_0 -\tau } ^{t_0 + \tau} \sum _{ i } \chi _{[t_i -r_i^ 2 /\gamma ^2 , t_i +r_i^ 2 /\gamma ^2 ]} \omega _{n-2+\e } r_i ^{n-2+\e } \, dt \\
%&= \sum _{i\geq 1} \int _{t_i -r_i^ 2 /\gamma ^2 } ^{t_i +r_i^ 2 /\gamma ^2 } \omega _{n-2+\e } r_i ^{n-2+\e } \, dt \\
%&= \sum _{i\geq 1} \frac{2\omega _{n-2+\e}}{\gamma ^2} r_i ^{n+\e}
%\leq C \delta ^\e 2 \mathcal{L} ^n (B_1 (x_0)),
%\end{split}
%\end{equation*}
%where $\mathcal{H}_{\delta ' } ^{n-2+\e }$ is the approximate Hausdorff measure. Then $\delta '\to 0$ implies that
%\[ \int_{t_0 -\tau } ^{t_0 + \tau} \mathcal{H} ^{n-2+\e } (Z_t ') \, dt =0. \]
%Thus
%\[ \int_{0} ^{\infty } \mathcal{H} ^{n-2+\e } (Z^{-}) \, dt =0. \]
\end{proof}
%%%%%%%%%%%%%%%%%%%%%%%%%%%%%%%%%%%%%%%%%%%%%%%%%%%%%%%%%%%%%%%%%%%%%%%%%%%%%%%%%%
\subsection{Vanishing of $\xi$}
First we remark the following
\begin{lem}\label{lem-xi}
For $1\leq T<\infty$ there exists $\Cl[c]{c-19}$ depending only on $n,\Cr{c-0},\Cr{c-1},p,q,T,W,D_0$ with the following 
property. For any $(y,s)\in \Omega\times(0,T)$, we have
\begin{equation}
\int_{\Omega\times(0,s)} \frac{\tilde{\rho} _{(y,s)}(x,t)}{s-t}\, d|\xi|(x,t)\leq \Cr{c-19}.
\label{absolute}
\end{equation}
\end{lem}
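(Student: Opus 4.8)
The estimate (\ref{absolute}) is the time-integrated version of the discrepancy bound in Theorem~\ref{remono}. The plan is to start from (\ref{longeq}) of Theorem~\ref{remono}, which — after translating $s \mapsto s + \e^2$ in the kernel — gives for each $\e_i$ and for $0 \le t_0 < t_1 < s$,
\[
\int_{t_0}^{t_1} \frac{dt}{2(s+\e_i^2-t)} \int_\Omega |\xi_{\e_i}| \tilde\rho_{(y,s+\e_i^2)}(x,t)\, dx
\le \left. -\int_\Omega \tilde\rho \, d\mu_t^{\e_i}\right|_{t=t_0}^{t_1} + \Cr{c-12}\Cr{c-1}^2(t_1-t_0)^{\hat p} D_1 + \Cr{c-11}\e_i^{\beta'-\beta}|\log \e_i| + \Cr{c-2}(t_1-t_0)D_1.
\]
First I would observe that the first term on the right is bounded above by $\int_\Omega \tilde\rho_{(y,s+\e_i^2)}(x,t_0)\, d\mu_{t_0}^{\e_i}(x) \le D(t_0) \le D_1$ by the argument used for (\ref{dext}) (the kernel at time $t_0$ integrates against an upper-density-ratio-bounded measure), while the remaining terms are bounded by a constant depending only on the stated quantities once we take, say, $t_1 \to s$ and use $t_1 - t_0 \le s \le T$ together with $\hat p > 0$. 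Hence, for each fixed $i$,
\[
\int_{\Omega \times (0,s)} \frac{\tilde\rho_{(y,s+\e_i^2)}(x,t)}{s+\e_i^2-t}\, |\xi_{\e_i}|(x,t)\, dx\, dt \le \Cr{c-19}
\]
with $\Cr{c-19}$ independent of $i$ (splitting the $t$-interval $(0,s)$ at $s - \delta$ and letting $\delta \to 0$ is harmless since the integrand is nonnegative and the bound is uniform).

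Next I would pass to the limit $i \to \infty$. Fix $(y,s) \in \Omega \times (0,T)$ and a compact $K \subset \Omega \times (0,s)$; on $K$ the weight $\tilde\rho_{(y,s+\e_i^2)}(x,t)/(s+\e_i^2-t)$ converges uniformly to $\tilde\rho_{(y,s)}(x,t)/(s-t)$ as $\e_i \to 0$, and it is a nonnegative continuous function compactly supported in the $x$-variable. Since $|\xi_{\e_i}|\, dx\, dt \to |\xi|$ in the sense of (\ref{xiconv}) (for continuous test functions with compact support in $\Omega \times [0,\infty)$, after a cutoff in $t$ away from $t = s$), lower semicontinuity of the pairing of a fixed nonnegative continuous function against weakly converging measures gives
\[
\int_{K} \frac{\tilde\rho_{(y,s)}(x,t)}{s-t}\, d|\xi|(x,t) \le \liminf_{i \to \infty} \int_{K} \frac{\tilde\rho_{(y,s+\e_i^2)}(x,t)}{s+\e_i^2-t}\, |\xi_{\e_i}|(x,t)\, dx\, dt \le \Cr{c-19}.
\]
Taking the supremum over compact $K \subset \Omega \times (0,s)$ and using monotone convergence yields (\ref{absolute}).

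The main obstacle is the behavior near the singular time $t = s$, where both $1/(s-t)$ and the kernel $\tilde\rho_{(y,s)}$ blow up and the clean weak-convergence argument on a fixed compact set does not directly apply. This is exactly why the discrepancy term in (\ref{longeq})/(\ref{remono}) is packaged with the factor $1/(2(s-t))$ on the \emph{left}: the monotonicity inequality already controls this singular integral at the $\e_i$ level uniformly, so one never needs to estimate the limit near $t=s$ by hand — one only takes the limit on compact subsets of $\{t < s\}$ and then exhausts. A secondary point to handle carefully is that the measure $|\xi|$ is defined via (\ref{xiconv}) only for test functions continuous on $\Omega \times [0,\infty)$; since $\tilde\rho_{(y,s)}/(s-t)$ restricted to $K$ is such a function (extend it by a cutoff), this causes no difficulty, but one should record it explicitly. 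Everything else — the bound $D(t_0) \le D_1$, $\hat p > 0$, and the $\e_i$-independence of all constants — is already available from Theorem~\ref{density} and Theorem~\ref{remono}.
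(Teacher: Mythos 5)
Your proof is correct and follows essentially the same route as the paper: invoke Theorem~\ref{remono}, bound the boundary term by the initial density (\eqref{dext}), pass $\e_i\to 0$ using \eqref{xiconv}, and then exhaust as $t_1\to s$. The one detour you take --- shifting the pole to $s+\e_i^2$ so you can let $t_1\to s$ at the $\e_i$ level --- is unnecessary, because \eqref{longeq} already holds for any pole time $s>t_1$; keeping $t_1=s-\epsilon$ fixed lets you send $\e_i\to 0$ against a \emph{fixed} continuous compactly supported test function (no uniform-convergence argument for the $i$-dependent kernel is then needed), and only afterwards send $\epsilon\to 0$ by monotone convergence, which is the paper's cleaner ordering.
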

\begin{proof}
In \eqref{longeq}, set $t_0=0$ and $t_1=s-\epsilon$ for $0<\epsilon<s$. We simply let $\e_i\rightarrow 0$
and we set the supremum of the right-hand side of \eqref{longeq} (with no $\e$ term) plus 
$D_0$ (coming from the left-hand side) to be $\Cr{c-19}$. 
Then letting $\epsilon\rightarrow 0$, we obtain \eqref{absolute}.
\end{proof}
We are ready to prove Theorem \ref{xiva}.
\begin{proof}
We integrate \eqref{absolute} with respect to $d\mu_s ds$
over $\Omega\times(0,T)$ and use  Fubini's theorem to obtain 
\begin{equation}
\int _{\Omega \times (0,T) }  \big(\int _{\Omega\times(t,T)} \frac{\tilde{\rho} _{(y,s)} (x,t)}{s-t} \, d\mu _s (y) ds\big)d|\xi|(x,t) 
\leq \Cr{c-19}D_1 T.
\label{abso1}
\end{equation}
The finiteness of \eqref{abso1} shows 
\begin{equation}
\int _{\Omega\times(t,T)} \frac{\tilde{\rho} _{(y,s)} (x,t)}{s-t}\, d\mu _s (y) ds <\infty
\label{vanish1}
\end{equation}
for $|\xi|$ a.e$.$ $(x,t)\in \Omega \times (0,T)$.
Next, we claim that, whenever \eqref{vanish1} holds at $(x,t)$, we have
\begin{equation}
\lim _{s\downarrow t} \int _{\Omega} \tilde{\rho} _{(y,s)} (x,t) \, d\mu _s (y) =0.
\label{vanish4}
\end{equation}
We use the monotonicity formula \eqref{longeq} for the proof. 
Set $\lambda  := \log (s-t)$ and
\begin{equation*}
h(s):= \int _{\Omega} \tilde{\rho} _{(y,s)} (x,t) \, d\mu _s (y). 
\end{equation*}
After the change of variable, \eqref{vanish1} is equivalent to
\begin{equation}
\int _{-\infty} ^{\log(T-t)} h(t+e^\lambda)\, d\lambda <\infty.
\label{vanish5}
\end{equation}
We fix $\theta \in (0,1]$ in the following. 
Corresponding to this $\theta$, by \eqref{vanish5}, there exists a decreasing sequence $\{ \lambda _i \}_{i=1}^\infty$ such that
\begin{equation}
\lambda _i \downarrow  -\infty, \qquad \lambda_i -\lambda_{i+1}\leq \theta, \qquad h(t+e^{\lambda _i}) \leq \theta.
\label{vanish6}
\end{equation}
For arbitrary $\lambda \in (-\infty , \lambda_1 )$, choose $i$ such that $\lambda \in [\lambda_i , \lambda _{i-1} )$. Then by \eqref{longeq} (with $\e\rightarrow 0$) applied with $t_0=t+e^{\lambda_i} < t_1=t+e^{\lambda}$, we have
\begin{equation}
\begin{split}
h(t+e^\lambda )&= \int_{\Omega} \tilde{\rho} _{(y,t+e^\lambda )} (x,t) \, d\mu _{t+e^\lambda} (y)
= \int_{\Omega} \tilde{\rho} _{(y,t+2 e^\lambda )} (x,t+e^\lambda ) \, d\mu _{t+e ^\lambda} (y)\\
& \leq \int_{\Omega} \tilde{\rho} _{(y,t+2 e^\lambda )} (x,t+e^{\lambda _i} ) \, d\mu _{t+e ^{\lambda_i} } (y) +o(1)
\end{split}
\label{vanish7}
\end{equation}
where $\lim_{\theta\rightarrow 0}o(1)=0$. On the other hand, by \eqref{vanish6} we have
\begin{equation}
 \int_{\Omega} \tilde{\rho} _{(y,t+e^{\lambda_i})} (x,t) \, d\mu _{t+e^{\lambda_i}} (y)=  h(t+e^{\lambda _i}) \leq \theta.\label{vanish8}
\end{equation}
By direct calculation,
\begin{equation}
\int_{\Omega}\tilde{\rho}_{(y,t+2e^{\lambda})}(x,t+e^{\lambda_i})\, d\mu_{t+e^{\lambda_i}}(y)
\leq o(1)+\int_{B_{M\sqrt{2e^{\lambda}-e^{\lambda_i}}}(y)} \tilde{\rho}_{(y,t+2e^{\lambda})}(x,t+e^{\lambda_i})\, 
d\mu_{t+e^{\lambda_i}}(y)
\label{vanish8.5}
\end{equation}
where $\lim_{M\rightarrow \infty}o(1)=0$ and the convergence does not depend on $\theta$. For any fixed $M$, 
we have
\begin{equation}
\begin{split}
\sup_{x\in B_{M\sqrt{2e^{\lambda}-e^{\lambda_i}}}(y)} \tilde{\rho}_{(y,t+2e^{\lambda})}(x,t+e^{\lambda_i})/
\tilde{\rho}_{(y,t+e^{\lambda_i})}(x,t) & \leq \exp\big(M^2(e^{\lambda-\lambda_i}-1)/2\big)\\
& \leq 1+o(1)
\end{split}
\label{vanish8.7}
\end{equation}
where $\lim_{\theta\rightarrow 0}o(1)=0$. The inequalities \eqref{vanish7}-\eqref{vanish8.7} show that
$h(t+e^{\lambda})$ is made arbitrarily small for all $\lambda<\lambda_1$ and prove \eqref{vanish4}.
Finally define $a(x,t):=\limsup_{s\downarrow t}\int_{\Omega}\tilde{\rho}_{(y,s)}(x,t)\, d\mu_s (y)$ and
note that $\Omega\times(0,T)$ may be split into two disjoint sets 
\begin{equation*}
A\cup B:=\{(x,t)\,:\, a(x,t)=0\}
\cup \{(x,t)\,:\, a(x,t)>0\}.
\label{vanish8.8}
\end{equation*}
The claim \eqref{vanish4} proved $|\xi|(B)=0$. 
On the other hand, by Lemma $\ref{lowerbound}$ we have $\mu(A)=0$.
Since $|\xi|\leq \mu$ by definition, this proves $|\xi|(\Omega\times(0,T))=0$. Since $T>0$ is arbitrary,
we have $|\xi|(\Omega\times (0,\infty))=0$.
\end{proof}

\subsection{Associated varifolds and rectifiability theorem}
We have so far obtained $\mu_t$ as a limit of Radon measures $\{\mu_{t}^{\e_i}\}_{i=1}^{\infty}$. 
To prove the rectifiability of $\mu_t$ for a.e$.$ $t\geq 0$, we now consider a sequence of 
varifolds which are naturally associated with $\{\mu_t^{\e_i}\}_{i=1}^{\infty}$.
\begin{definition}
For $\varphi_{\e_i}(\cdot,t)$, we define $V_t^{\e_i}\in {\bf V}_{n-1}(\Omega)$ as follows.
For $\phi\in C_c(G_{n-1}(\Omega))$, 
\begin{equation}
V_t^{\e_i}(\phi):=\int_{\Omega\cap \{|\nabla \varphi_{\e_i}(x,t)|\neq 0\}} 
\phi\Big(x,I-\frac{\nabla\varphi_{\e_i}(x,t)}{|\nabla\varphi_{\e_i}(x,t)|}\otimes
\frac{\nabla\varphi_{\e_i}(x,t)}{|\nabla\varphi_{\e_i}(\cdot,t)|}\Big)\, d\mu_t^{\e_i}(x).
\label{assvar}
\end{equation}
\end{definition}
\begin{lem}
For $g=(g_1,\cdots,g_n)\in C_c^1(\Omega;{\mathbb R}^n)$, we have
\begin{equation}
\begin{split}
\delta& V_t^{\e_i}(g)= \int_{\Omega}(g\cdot\nabla\varphi_{\e_i})
\big(\e_i\Delta\varphi_{\e_i}-\frac{W'(\varphi_{\e_i})}{\e_i}\big)\, dx \\
&+\int_{\Omega\cap\{|\nabla\varphi_{\e_i}|\neq 0\}} \nabla g\cdot\big(
\frac{\nabla\varphi_{\e_i}}{|\nabla\varphi_{\e_i}|}\otimes
\frac{\nabla\varphi_{\e_i}}{|\nabla\varphi_{\e_i}|}\big)\xi_{\e_i}\,dx
-\int_{\Omega\cap \{|\nabla\varphi_{\e_i}|=0\}}\frac{W(\varphi_{\e_i})}{\e_i}\,I\cdot\nabla g\, dx.
\end{split}
\label{assvar0}
\end{equation}
\end{lem}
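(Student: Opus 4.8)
The plan is to compute $\delta V_t^{\e_i}(g)$ directly from the definition \eqref{firstvariation} together with the explicit form \eqref{assvar}, and then to rearrange the integrand by an elementary algebraic identity. Write $\varphi=\varphi_{\e_i}$, $\e=\e_i$, and on $\{|\nabla\varphi|\neq 0\}$ set $\nu:=\nabla\varphi/|\nabla\varphi|$, so that the projection appearing in \eqref{assvar} is $S=I-\nu\otimes\nu$. Since $d\mu_t^{\e_i}=e_{\e}\,dx$ with $e_{\e}=\frac{\e|\nabla\varphi|^2}{2}+\frac{W(\varphi)}{\e}$,
\[
\delta V_t^{\e_i}(g)=\int_{\{|\nabla\varphi|\neq 0\}}\nabla g\cdot(I-\nu\otimes\nu)\,e_{\e}\,dx .
\]
The key observation is the pointwise identity
\[
(I-\nu\otimes\nu)\,e_{\e}=\big(e_{\e}I-\e\,\nabla\varphi\otimes\nabla\varphi\big)+(\nu\otimes\nu)\,\xi_{\e},
\]
valid on $\{|\nabla\varphi|\neq 0\}$, where $\xi_{\e}=\frac{\e|\nabla\varphi|^2}{2}-\frac{W(\varphi)}{\e}$; it follows at once from $e_{\e}+\xi_{\e}=\e|\nabla\varphi|^2$.

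Contracting with $\nabla g$ and integrating over $\{|\nabla\varphi|\neq 0\}$, the term $(\nu\otimes\nu)\xi_{\e}$ produces exactly the middle term of \eqref{assvar0}. For the term $e_{\e}I-\e\,\nabla\varphi\otimes\nabla\varphi$, note that on $\{|\nabla\varphi|=0\}$ it reduces to $\frac{W(\varphi)}{\e}I$, so its integral over $\{|\nabla\varphi|\neq 0\}$ equals its integral over all of $\Omega$ minus $\int_{\{|\nabla\varphi|=0\}}\frac{W(\varphi)}{\e}\,I\cdot\nabla g\,dx$, which is precisely the last term of \eqref{assvar0}. It then remains to evaluate $\int_{\Omega}\nabla g\cdot(e_{\e}I-\e\,\nabla\varphi\otimes\nabla\varphi)\,dx$. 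Here I would invoke the diffuse stress--energy identity: since $\varphi_{\e_i}$ is a classical, hence $C^\infty$, solution of \eqref{beq1} (with the decay \eqref{extu1} when $\Omega={\mathbb R}^n$), a direct differentiation gives, componentwise,
\[
\divergence\big(e_{\e}I-\e\,\nabla\varphi\otimes\nabla\varphi\big)=-\nabla\varphi\Big(\e\Delta\varphi-\frac{W'(\varphi)}{\e}\Big),
\]
because the cross terms $\e\sum_j\varphi_{x_j}\varphi_{x_jx_k}$ arising from $\nabla\big(\tfrac{\e|\nabla\varphi|^2}{2}\big)$ and from $\divergence(\e\,\nabla\varphi\otimes\nabla\varphi)$ cancel, while $\nabla\big(\tfrac{W(\varphi)}{\e}\big)=\tfrac{W'(\varphi)}{\e}\nabla\varphi$. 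Since $g\in C_c^1(\Omega;{\mathbb R}^n)$, integration by parts turns this integral into the first term on the right-hand side of \eqref{assvar0}, and collecting the three pieces proves the lemma.

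This is essentially a routine computation, so I do not expect a genuine obstacle. The only points requiring care are the bookkeeping on the null set $\{|\nabla\varphi_{\e_i}|=0\}$ — where $S$ is undefined and the varifold $V_t^{\e_i}$ simply does not "see" the corresponding portion $\frac{W(\varphi_{\e_i})}{\e_i}\,dx$ of $\mu_t^{\e_i}$, which is the origin of the last term in \eqref{assvar0} — and the justification that the integration by parts is legitimate, which follows from the smoothness of $\varphi_{\e_i}$ and the compact support of $g$.
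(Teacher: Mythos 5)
Your proof is correct. The paper and you both start from the same expression of $\delta V_t^{\e_i}(g)$ as an integral of $\nabla g\cdot(I-\nu\otimes\nu)e_{\e_i}$ over $\{|\nabla\varphi_{\e_i}|\neq 0\}$, but you organize the algebra differently. The paper splits the integrand into $\nabla g\cdot I\,e_{\e_i}$ and $-\nabla g\cdot(\nu\otimes\nu)e_{\e_i}$, integrates by parts separately on $\nabla g\cdot I\cdot\tfrac{\e_i}{2}|\nabla\varphi_{\e_i}|^2$ and on $\nabla g\cdot I\cdot W/\e_i$, and then cancels the resulting $\e_i\nabla g\cdot(\nabla\varphi_{\e_i}\otimes\nabla\varphi_{\e_i})$ cross term against $-\nabla g\cdot(\nu\otimes\nu)e_{\e_i}$ using $e_{\e_i}=\e_i|\nabla\varphi_{\e_i}|^2-\xi_{\e_i}$. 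You instead perform this cancellation pointwise, via the algebraic decomposition $(I-\nu\otimes\nu)e_{\e_i}=(e_{\e_i}I-\e_i\nabla\varphi_{\e_i}\otimes\nabla\varphi_{\e_i})+(\nu\otimes\nu)\xi_{\e_i}$, which isolates the diffuse stress tensor $T_{\e_i}:=e_{\e_i}I-\e_i\nabla\varphi_{\e_i}\otimes\nabla\varphi_{\e_i}$, and then you execute a single integration by parts using the divergence identity $\mathrm{div}\,T_{\e_i}=-\nabla\varphi_{\e_i}(\e_i\Delta\varphi_{\e_i}-W'(\varphi_{\e_i})/\e_i)$. The two routes involve the same underlying integrations by parts; what yours buys is that the cross terms never appear, the Allen--Cahn operator emerges from one conceptually motivated identity, and the structure of the proof mirrors the familiar stress--energy argument. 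Your bookkeeping of the null set $\{|\nabla\varphi_{\e_i}|=0\}$ is also correct, and reproduces the paper's \eqref{assvar3}. One very minor remark: you invoke the PDE \eqref{beq1} to justify the divergence identity, but the identity is purely differential-geometric and holds for any $C^2$ function $\varphi_{\e_i}$ with no reference to the equation; only smoothness is needed.
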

\begin{proof} We omit $i$ in the following. 
The first variation of $V_t^{\e}$ with respect to $g$ is 
\begin{equation}
\begin{split}
\delta V_t^{\e}(g)&=\int_{G_{n-1}(\Omega)} \nabla g(x)\cdot S\, dV_t^{\e}(x,S)\\ 
&=\int_{\Omega\cap\{|\nabla\varphi_{\e}|\neq 0\}}\nabla g
\cdot \big(I-\frac{\nabla\varphi_{\e}}{|\nabla\varphi_{\e}|}\otimes
\frac{\nabla\varphi_{\e}}{|\nabla\varphi_{\e}|}\big)\big(\frac{\e}{2}|\nabla
\varphi_{\e}|^2+\frac{W}{\e}\big)\, dx.
\end{split}
\label{assvar1}
\end{equation}
By repeated integration by parts, we have
\begin{equation}
\begin{split}
\int_{\Omega\cap\{|\nabla\varphi_{\e}|\neq 0\}} \nabla g\cdot I\,
\frac{\e}{2}|\nabla
\varphi_{\e}|^2\, dx&=-\e\int_{\Omega}\sum_{j,l=1}^n g_j(\varphi_{\e})_{x_j x_l}
(\varphi_{\e})_{x_l}\, dx \\ &=\e\int_{\Omega} \nabla g\cdot( \nabla\varphi_{\e}\otimes
\nabla\varphi_{\e})+(g\cdot\nabla\varphi_{\e})\Delta\varphi_{\e}\, dx.
\end{split}
\label{assvar2}
\end{equation}
Also by integration by parts, 
\begin{equation}
\int_{\Omega\cap\{|\nabla\varphi_{\e}|\neq 0\}} \nabla g\cdot I\,
\frac{W}{\e}\, dx=-\int_{\Omega\cap\{|\nabla\varphi_{\e}|= 0\}} \nabla g\cdot I\,
\frac{W}{\e}\, dx-\int_{\Omega}(g\cdot\nabla \varphi_{\e})\frac{W'}{\e}\, dx.
\label{assvar3}
\end{equation}
Now substituting \eqref{assvar2} and \eqref{assvar3} into \eqref{assvar1}, we obtain \eqref{assvar0}.
\end{proof}
\begin{pro}
For a.e$.$ $t\geq 0$, $\mu_t$ is rectifiable, and any convergent subsequence
$\{V_t^{\e_{i_j}}\}_{j=1}^{\infty}$ with 
\begin{equation}
\liminf_{j\rightarrow\infty}
\int_{\Omega} \e_{i_j}\big(\Delta\varphi_{\e_{i_j}}(x,t)-\frac{W'(\varphi_{\e_{i_j}}
(x,t))}{\e_{i_j}^2}\big)^2\, dx<\infty
\label{assvar5.1}
\end{equation}
converges to the unique
varifold associated with $\mu_t$.
\label{rec1}
\end{pro}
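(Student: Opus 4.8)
The plan is to establish rectifiability of $\mu_t$ for a.e.\ $t$ via the rectifiability criterion of Allard (as used by Ilmanen \cite{ilmanen1993}), after first extracting good properties that hold for a.e.\ $t$ from the space-time estimates already at our disposal. The starting point is that, by \eqref{e0sup} together with Fatou's lemma, the set of $t$ for which
\begin{equation*}
\liminf_{i\rightarrow\infty}\int_{\Omega}\e_i\Big(\Delta\varphi_{\e_i}(x,t)-\frac{W'(\varphi_{\e_i}(x,t))}{\e_i^2}\Big)^2\, dx<\infty
\end{equation*}
has full measure in $[0,\infty)$; fix such a $t$ and pass to a subsequence (depending on $t$) realizing the $\liminf$ and for which $V_t^{\e_{i_j}}$ converges to some $V_t\in\mathbf V_{n-1}(\Omega)$ (possible since $\|V_t^{\e_i}\|=\mu_t^{\e_i}\to\mu_t$ is bounded). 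Note $\|V_t\|\leq\mu_t$, but in fact $\|V_t\|=\mu_t$ because Theorem \ref{xiva} gives $|\xi|=0$, so the discrepancy term in \eqref{assvar0} disappears in the limit and $\e_{i_j}|\nabla\varphi_{\e_{i_j}}|^2/2$ and $W(\varphi_{\e_{i_j}})/\e_{i_j}$ have the same limiting measure; hence the projection factor $I-\nu_{\e}\otimes\nu_{\e}$ in \eqref{assvar} does not create mass defect.

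The core step is to pass to the limit in the first variation formula \eqref{assvar0} and show $\|\delta V_t\|$ is a Radon measure absolutely continuous with respect to $\|V_t\|=\mu_t$, with $L^2(\mu_t)$ generalized mean curvature. For the first term on the right of \eqref{assvar0}, write $g\cdot\nabla\varphi_{\e}\,(\e\Delta\varphi_{\e}-W'(\varphi_{\e})/\e)=(g\cdot\nabla\varphi_{\e})\,\e^{1/2}(\e^{1/2}\Delta\varphi_{\e}-\e^{-3/2}W'(\varphi_{\e}))$ and apply Cauchy--Schwarz: $|\nabla\varphi_{\e}|^2\e\,dx\leq 2\,d\mu_t^{\e}$, and the other factor is controlled by \eqref{assvar5.1}. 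This yields
\begin{equation*}
|\delta V_t^{\e_{i_j}}(g)|\leq \Big(\int_{\Omega}|g|^2\, d\mu_t^{\e_{i_j}}\Big)^{1/2}\Big(\int_{\Omega}\e_{i_j}\big(\Delta\varphi_{\e_{i_j}}-\tfrac{W'(\varphi_{\e_{i_j}})}{\e_{i_j}^2}\big)^2 dx\Big)^{1/2}+\text{(discrepancy)}+\text{(}|\nabla\varphi_{\e}|=0\text{ term)}.
\end{equation*}
The discrepancy term converges to zero by Theorem \ref{xiva} (integrating in $t$ and using that we may further thin the subsequence so that $\int_\Omega|\xi_{\e_{i_j}}|(\cdot,t)\,dx\to 0$ for a.e.\ $t$), and the $\{|\nabla\varphi_\e|=0\}$ term is $\leq\|\nabla g\|_\infty\int_{\{|\nabla\varphi_\e|=0\}}W(\varphi_\e)/\e\,dx$, which one shows vanishes in the limit using the vanishing discrepancy (on $\{|\nabla\varphi_\e|=0\}$ one has $\xi_\e=-W(\varphi_\e)/\e\leq 0$, so $\int_{\{|\nabla\varphi_\e|=0\}}W(\varphi_\e)/\e\,dx\leq\int_\Omega|\xi_\e|\,dx$). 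Taking limits gives $|\delta V_t(g)|\leq C_t(\int|g|^2 d\mu_t)^{1/2}$, so by Radon--Nikodym $V_t$ has generalized mean curvature $h(V_t,\cdot)\in L^2(\mu_t)$.

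With bounded first variation in hand, the plan is to invoke Allard's rectifiability theorem: a varifold with locally bounded first variation whose weight measure has positive lower $(n-1)$-density $\Theta^{*(n-1)}(\|V_t\|,x)>0$ at $\|V_t\|$-a.e.\ $x$ is rectifiable. The lower density bound follows for a.e.\ $t$ from the forward density estimates of Section~6: by Lemma~\ref{lowerbound} (applied on each $[0,T]$), for $\mu$-a.e.\ $(x,t)$, hence for a.e.\ $t$ and $\mu_t$-a.e.\ $x$, we have $\limsup_{s\downarrow t}\int_\Omega\tilde\rho_{(y,s)}(x,t)\,d\mu_s(y)>0$, and combining this with the monotonicity formula \eqref{longeq} (run backward from a slightly later time, as in the proof of Corollary~\ref{measspt}) yields $\mu_t(B_r(x))\geq c\,r^{n-1}$ for small $r$, i.e.\ a positive lower density. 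Allard's theorem then gives a countably $(n-1)$-rectifiable set $M_t$ with $\mu_t=\|V_t\|=\theta\,\mathcal H^{n-1}\lfloor_{M_t}$, $\theta>0$. Finally, uniqueness: any two such limiting varifolds share the same weight measure $\mu_t$ and the same approximate tangent planes $\mathrm{Tan}_x M_t$ ($\mu_t$-a.e.), since a rectifiable varifold is determined by its weight; hence every convergent subsequence satisfying \eqref{assvar5.1} has the same limit, the canonical varifold $V_t$ associated with $\mu_t$. \textbf{The main obstacle} is the honest verification that the weight of the limit varifold equals $\mu_t$ rather than something smaller --- i.e.\ that no mass is lost through the orthogonal-projection weight $I-\nu_\e\otimes\nu_\e$ in \eqref{assvar}; this is exactly where Theorem~\ref{xiva} (equipartition of energy in the limit, $|\xi|=0$) is indispensable, and care is needed because $|\xi_{\e_i}(\cdot,t)|\,dx$ is not asserted to converge for every $t$, only after passing to a further subsequence and for a.e.\ $t$.
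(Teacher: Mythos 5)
Your proof is correct, and the skeleton matches the paper's (selection of a.e.\ $t$ via Fatou and vanishing discrepancy; identification $\|V_t\|=\mu_t$; locally bounded first variation via Cauchy--Schwarz in \eqref{assvar0}; positive density $\mu_t$-a.e.; Allard's rectifiability theorem; uniqueness from rectifiability). The one genuinely different ingredient is the positive-density step. The paper derives $\mu_t$-a.e.\ positive upper density from ${\mathcal H}^{n-1}(\spt\mu_t)<\infty$ (Corollary \ref{measspt}, \eqref{sptineq2}) together with the standard covering inequality \eqref{assvar8}--\eqref{assvar9} (see \cite[3.2(2)]{simon}). You instead use Lemma \ref{lowerbound} (the forward density estimate holding $\mu$-a.e.), then run the limiting monotonicity formula \eqref{longeq} with pole at $(x,2s_k-t)$ from time $s_k$ down to time $t$, where $s_k\downarrow t$ realizes the limsup, to transfer the Gaussian density of $\mu_{s_k}$ near $x$ into a bound $\mu_t(B_{r_k}(x))\geq c\,r_k^{n-1}$ on a sequence $r_k\to 0$; this is the same mechanism used in the proof of Corollary \ref{measspt} but applied forward rather than backward. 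The route is valid, and what it gives is positive \emph{upper} density, which is exactly Allard's hypothesis; note, however, that "positive lower density'' overstates what the argument yields, since the bound holds only along a sequence of radii. Two small clean-ups: (i) the identity $\|V_t\|=\mu_t$ follows because $\|V_t^{\e}\|=\mu_t^{\e}\lfloor_{\{|\nabla\varphi_\e|\neq 0\}}$ and the missing mass $\mu_t^\e(\{|\nabla\varphi_\e|=0\})=\int_{\{|\nabla\varphi_\e|=0\}}W(\varphi_\e)/\e\,dx\leq\int_\Omega|\xi_\e|\,dx\to 0$, so the projection $I-\nu_\e\otimes\nu_\e$ is never the source of a mass defect (it only affects the Grassmannian slot, not the weight); (ii) your caveat that the discrepancy vanishes only along a further subsequence, for a.e.\ $t$, is well taken --- the paper's assertion that \eqref{assvar6} holds ``for full sequence'' is itself delicate --- but either way the limit is pinned down uniquely by $\mu_t$ once rectifiability is known, so the conclusion is unaffected.
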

\begin{proof}
By Theorem \ref{xiva} and by the dominated convergence theorem, we have 
\begin{equation}
\lim_{i\rightarrow \infty} \int_{\Omega}|\xi_{\e_{i}}(\cdot,t)|\, dx=0.
\label{assvar6}
\end{equation}
for full sequence for a.e$.$ $t\geq 0$. 
By Lemma \ref{erlem3}, we see that
\begin{equation*}
\int_0^T\int_{\Omega}
\e_i \big(\Delta \varphi_{\e_i}-\frac{W'}{\e^2_i}\big)^2\, dxdt\leq 2E_0.
\end{equation*} 
Thus, by Fatou's lemma, we have
\begin{equation}
\liminf_{i\rightarrow\infty}\int_{\Omega}
\e_i \big(\Delta \varphi_{\e_i}(x,t)-\frac{W'(\varphi_{\e_i}(x,t))}{\e^2_i}\big)^2\, dx<\infty
\label{assvar5}
\end{equation}
for a.e$.$ $t\geq 0$. 
Suppose $t\geq 0$ satisfies both \eqref{assvar6} and \eqref{assvar5}. 
Since $\|V_t^{\e_i}\|(\Omega)
=\mu_t^{\e_i}(\Omega)$ is uniformly bounded in $i$, by the weak compactness
theorem for measures, there exists a convergent subsequence $\{V_t^{\e_{i_j}}\}_{j=1}^{\infty}$
which  satisfies \eqref{assvar5.1} and which converges to a varifold $V_t$.  
Due to Proposition \ref{seqconv} and \eqref{assvar6}, we have
\begin{equation}
\|V_t\|=\mu_t.
\label{assvar7.5}
\end{equation}
Next, a standard measure theoretic argument
(see for example \cite[3.2(2)]{simon}) shows
\begin{equation}
\mu_t(\{x\in {\rm spt}\, \mu_t\,:\, \limsup_{r\downarrow 0}\frac{\mu_t(B_r(x))}{\omega_{n-1}r^{n-1}}\leq s\})
\leq 2^{n-1}s{\mathcal H}^{n-1}({\rm spt}\, \mu_t)
\label{assvar8}
\end{equation}
for any $s> 0$. By \eqref{sptineq2}, ${\mathcal H}^{n-1}({\rm spt}\, \mu_t)<\infty$, thus \eqref{assvar8} shows
\begin{equation}
\mu_t(\{x\in {\rm spt}\, \mu_t\,:\, \lim_{r\downarrow 0}r^{1-n}\mu_t(B_r(x))=0\})=0.
\label{assvar9}
\end{equation}
The two equalities \eqref{assvar7.5} and \eqref{assvar9} show that
\begin{equation}
V_t=V_t\lfloor_{\{x\in\Omega\, :\, \limsup_{r
\downarrow 0}r^{1-n} \|V_t\|(B_r(x))>0\}\times {\bf G}(n,n-1)}.
\label{assvar10}
\end{equation}
Next we use \eqref{assvar0}. For any fixed $g\in C^1_c(\Omega;{\mathbb R}^n)$, 
\eqref{assvar6} shows
that the limits of the last two terms of
\eqref{assvar0} are both 0. Thus we have
\begin{equation}
\lim_{j\rightarrow\infty} |\delta V_t^{\e_{i_j}}(g)|\leq \liminf_{j\rightarrow\infty} \big(\int_{\Omega}\e_{i_j}
|\nabla \varphi_{\e_{i_j}}|^2\, dx\big)^{1/2}\big(\int_{\Omega}
\e_{i_j} \big(\Delta\varphi_{\e_{i_j}}-\frac{W'}{\e_{i_j}^2}\big)^2\, dx\big)^{1/2}
\label{assvar7}
\end{equation}
for $g$ with $\sup\, |g|\leq 1$.
Since the right-hand side of \eqref{assvar7} does not depend on $g$ and since
$\delta V_t^{\e_{i_j}}(g)\rightarrow \delta V_t(g)$, we have
\begin{equation*}
\sup_{g\in C^1_c(\Omega;{\mathbb R}^n),\ \sup|g|\leq 1} |\delta V_t(g)|<\infty
\end{equation*}
which shows that the total variation $\|\delta V_t\|$ is a Radon measure.
Allard's rectifiability theorem \cite{allard} shows that the right-hand side of \eqref{assvar10} is rectifiable, and hence so is
$V_t$. Once we know that $V_t$ is rectifiable, $V_t$ is determined uniquely by $\|V_t\|=\mu_t$. In particular,
this shows that $\mu_t$ is rectifiable. 
The argument up to this point is valid for any convergent subsequence with \eqref{assvar5.1} and \eqref{assvar6}.
On the other hand, note that $\mu_t$ does not depend on the choice of subsequence 
$\{V_t^{\e_{i_j}}\}_{j=1}^{\infty}$. Since $\mu_t$  determines $V_t$ uniquely, any 
converging subsequence of $\{V_t^{\e_i}\}_{i=1}^{\infty}$ with \eqref{assvar5.1} and \eqref{assvar6} has the same limit $V_t$. 
This completes the proof.
\end{proof}
%%%%%%%%%%%%%%%%%%%%%%%%%%%%%%%%%%%%%%%%%%%%%%%%%%%%%%%%%%%%%%%%%%%%%%%%%%%%%%%%%
\section{Integrality of limit measures}
\label{muint}
In this section we prove that the density function of $\mu_t$ is integer-valued $\mu_t$ a.e$.$
modulo division by $\sigma$. 
\subsection{Separating sheets}
We prove in this subsection that, if a set of appropriate quantities are controlled, then we have a lower bound
on a measure in terms of a sum of densities of vertically aligned points. As the name of the present subsection
indicates, what one carries out in essence is to decompose the domain horizontally so that each separated domain
contains approximately one sheet of  diffused interface. 
The original idea comes from \cite{allard} and 
it has been first used in the context of the diffused interface problem in \cite{tonegawa2000}. 
\begin{lem}
Suppose 
\begin{enumerate}
\item $N\in {\mathbb N}$, $Y$ is a finite subset of $\mathbb{R}^n , 0<R<\infty , 1<M<\infty, 0<a<\infty, 0<\e <1, 0<\varrho<\infty,0<E_0 <\infty $ and $-\infty \leq l_1 <l_2 \leq \infty$.
\item $Y$ has no more than $N+1$ elements, and $Y\subset \{(0,\cdots,0,x_n)\,:\, l_1+a<x_n<l_2-a\}$.
Moreover $|x-z|>3a$ for $x,z \in Y $ with $ x\not = z$.
\item $(M+1)\diam Y<R$, and put $\tilde R := M \diam Y$.
\item We have $\varphi\in C^2(\{ y\in \mathbb{R}^n \, : \, \dist (y,Y)<R \})$.
\item For all $x=(0,\dots ,0,x_n)\in Y$,
\begin{equation}
\int _a ^R \frac{d\tau}{\tau ^n}\int _{B_\tau (x) \cap \{ y_n =l_j \}} |e_\e (y_n -x_n)-\e \varphi _{x_n} (y-x) \cdot \nabla \varphi | \, d\mathcal{H} ^{n-1} (y) \leq \varrho
\label{pstack1}
\end{equation}
for $j=1,2$, where $e_{\e}$ is defined as in \eqref{defti2}. 
\item For all $x\in Y$ and $a\leq r \leq R$,
\begin{equation}
\int _{B_r (x)} |\xi _\e|
+(1-(\nu _n)^2) \e |\nabla \varphi|^2 +\e|\nabla\varphi|\big|\Delta\varphi-\frac{W'(\varphi)}{\e^2}
\big|\, dy \leq \varrho r^{n-1}, 
\label{pstack2}
\end{equation}
where $\xi_{\e}$ is defined as in \eqref{defti2} and $\nu=(\nu_1,\cdots,\nu_n)=\frac{\nabla \varphi}{|\nabla\varphi|}$. 
\item For all $x\in Y$,
\begin{equation}
\int_a^R \frac{d\tau}{\tau^n}\int_{B_{\tau}(x)}(\xi_{\e})_+\, dy\leq \varrho.
\label{pstack3}
\end{equation}
\item For all $x\in Y$ and $a\leq  r\leq R$, 
\begin{equation}
\int _{B_r (x)} \e |\nabla \varphi|^2 \, dy\leq E_0 r^{n-1}.
\label{pstack4}
\end{equation}
\end{enumerate}
Then we have the following:
\begin{enumerate}
\item[(A)]
With $S:=\{x\,:\, l_1<x_n<l_2\}$ and
for all $x\in Y$ and $a\leq r<R$,
\begin{equation}
\frac{1}{r^{n-1}}\int_{B_r(x)\cap S} e_{\e}\leq \frac{1}{R^{n-1}}\int_{B_R(x)\cap S} e_{\e}+\varrho(3+R).
\label{log}
\end{equation}
\item[(B)] There exists $l_3 \in (l_1,l_2)$ such that $|x_n -l_3|\geq a$ and
\begin{equation}
\begin{split}
\int _a ^{\tilde R} \frac{d\tau}{\tau ^n} \int _{B_\tau (x)\cap \{ y_n =l_3 \} } 
| e_\e (y_n -x_n )-\e \varphi _{x_n} (y-x) \cdot \nabla \varphi | \, d\mathcal{H}^{n-1} (y) \\
\leq 3(N+1)NM (\varrho + E_0 ^{\frac{1}{2}} \varrho ^{\frac{1}{2}}) 
\end{split}
\label{stack1}
\end{equation}
for any $x=(0,\cdot,0,x_n)\in Y$.
\item[(C)] Put 
\[ Y_1 :=Y \cap \{ x \, : \, l_1 < x_n < l_3 \}, \qquad Y_2 :=Y \cap \{ x \, : \, l_3 < x_n < l_2 \}, \]
\[ S_0 :=\{ x \, : \, l_1 < x_n < l_2 \ \text{and} \ \dist (Y,x) <R \}, \]
\[ S_1 :=\{ x \, : \, l_1 < x_n < l_3 \ \text{and} \ \dist (Y_1,x) <\tilde R \}, \]
\[ S_2 :=\{ x \, : \, l_3 < x_n < l_2 \ \text{and} \ \dist (Y_2,x) < \tilde R \}. \]
Then $Y_1$ and $Y_2$ are non-empty,
\begin{equation}
\diam Y_j\leq \frac{N-1}{N}\diam Y\,\,\,\mbox{for $j=1,2$}
\label{stack1.5}
\end{equation}
and
\begin{equation}
\frac{1}{\tilde R ^{n-1}} \Big( \int_{S_1}e_\e  + \int_{S_2}e_\e \Big) 
\leq  \Big( 1+\frac{1}{M} \Big)^{n-1} \Big\{ \frac{1}{R^{n-1}} \int _{S_0} e_\e +\varrho(3+R) \Big\}.
\label{stack2}
\end{equation}
\end{enumerate}
\label{stack}
\end{lem}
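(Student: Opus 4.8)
The three assertions are established in turn: (A) is a truncated energy--monotonicity inequality, (B) produces the cutting height $l_3$ by an averaging argument over horizontal slices, and (C) is a short deduction combining (A), (B) with elementary combinatorics of the vertically aligned set $Y$. Throughout one may assume $\#Y\ge 2$ (the statements being vacuous otherwise); write $m:=\#Y\le N+1$ and order the points of $Y$ by increasing last coordinate, with $m-1$ gaps between consecutive points, each of length $>3a$. For (A) the plan is to run the usual first variation of the energy $e_\e$ along the radial field $X(y)=y-x$, but over the truncated region $B_\tau(x)\cap S$: multiplying $\Delta\varphi-W'(\varphi)/\e^2$ by $X\cdot\nabla\varphi$ and integrating by parts over $B_\tau(x)\cap S$, the spherical part $\partial B_\tau(x)\cap S$ recombines exactly as in the untruncated Huisken--Ilmanen computation, while the two flat faces $B_\tau(x)\cap\{y_n=l_j\}$ contribute boundary integrands equal, up to sign, to $e_\e(y_n-x_n)-\e\varphi_{x_n}(y-x)\cdot\nabla\varphi$ --- precisely the integrand of \eqref{pstack1}. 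The outcome is that $\frac{d}{d\tau}\big(\tau^{1-n}\int_{B_\tau(x)\cap S}e_\e\big)$ is bounded below by the negative of the sum of $\tau^{-n}\int_{B_\tau(x)}(\xi_\e)_+$, the curvature defect $\e\int_{B_\tau(x)}|\nabla\varphi|\,|\Delta\varphi-W'(\varphi)\e^{-2}|$, and $\tau^{-n}\sum_{j=1,2}\int_{B_\tau(x)\cap\{y_n=l_j\}}|e_\e(y_n-x_n)-\e\varphi_{x_n}(y-x)\cdot\nabla\varphi|\,d\mathcal H^{n-1}$. Integrating from $r$ to $R$ and invoking \eqref{pstack3}, the defect term of \eqref{pstack2}, and \eqref{pstack1} bounds these three contributions by $\varrho$, $\varrho(R-r)\le\varrho R$ and $2\varrho$ respectively; their sum $\varrho(3+R)$ is exactly the error in \eqref{log}.

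For (B) the crux is a sharp pointwise bound on $F:=e_\e(y_n-x_n)-\e\varphi_{x_n}(y-x)\cdot\nabla\varphi$. Writing $e_\e=\e|\nabla\varphi|^2-\xi_\e$ and $\nabla\varphi=|\nabla\varphi|\nu$ with $\nu=(\nu_1,\dots,\nu_n)$, one gets
\[F=\e|\nabla\varphi|^2\Big[(y_n-x_n)(1-\nu_n^2)-\nu_n\sum_{k=1}^{n-1}(y_k-x_k)\nu_k\Big]-\xi_\e(y_n-x_n),\]
hence, by Cauchy--Schwarz on the cross term and $|(\nu_1,\dots,\nu_{n-1})|=\sqrt{1-\nu_n^2}$,
\[|F|\le\tau\big(\e|\nabla\varphi|^2(1-\nu_n^2)+\e|\nabla\varphi|^2\sqrt{1-\nu_n^2}+|\xi_\e|\big)\qquad\text{on }B_\tau(x).\]
Integrating over $B_\tau(x)$ and using \eqref{pstack2} for the first and third summands, and \eqref{pstack2} together with \eqref{pstack4} and Cauchy--Schwarz for the middle one, yields $\int_{B_\tau(x)}|F|\le(2\varrho+E_0^{1/2}\varrho^{1/2})\tau^n$. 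By Fubini's theorem (slicing $B_\tau(x)$ by the hyperplanes $\{y_n=l\}$), for any interval $J$ of heights, $\int_J\big(\int_a^{\tilde R}\tau^{-n}\int_{B_\tau(x)\cap\{y_n=l\}}|F|\,d\mathcal H^{n-1}\,d\tau\big)\,dl\le(2\varrho+E_0^{1/2}\varrho^{1/2})\tilde R$; summing over the $\le N+1$ points of $Y$ bounds the total by $(N+1)(2\varrho+E_0^{1/2}\varrho^{1/2})M\diam Y$. Taking $J$ to be the middle third of the \emph{largest} gap, every $l\in J$ has distance $\ge a$ from each point of $Y$ and $|J|\ge\diam Y/(3(m-1))\ge\diam Y/(3N)$; choosing $l_3\in J$ below the average of this (nonnegative) slice functional gives a height with $|x_n-l_3|\ge a$ for all $x\in Y$ and, simultaneously for each $x\in Y$, the slice bound of \eqref{stack1}.

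For (C): since $l_3$ lies strictly inside the largest gap, $Y_1$ and $Y_2$ are nonempty proper subsets of $Y$, so $\#Y_j\le m-1\le N$; moreover $\diam Y_j\le\diam Y-(\text{length of the largest gap})\le\diam Y-\diam Y/(m-1)=\frac{m-2}{m-1}\diam Y\le\frac{N-1}{N}\diam Y$, which is \eqref{stack1.5}. For \eqref{stack2}, fix any $x'\in Y$ and observe $S_1\cup S_2\subset B_{(M+1)\diam Y}(x')\cap S$ (triangle inequality, using $\diam Y_j\le\diam Y$, $\tilde R=M\diam Y$, and $l_1<x_n<l_2$ on $S_1\cup S_2$), while $B_R(x')\cap S\subset S_0$; applying (A) at $x'$ with radii $(M+1)\diam Y<R$ (admissible by the hypothesis $(M+1)\diam Y<R$), then $B_R(x')\cap S\subset S_0$, and finally dividing by $\tilde R^{n-1}$ and using $(M+1)\diam Y/\tilde R=1+1/M$ lands exactly on \eqref{stack2}.

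I expect the main obstacle to be step (B). A crude estimate of $|F|$ by the energy density would only give $\int_{B_\tau}|F|\lesssim E_0\tau^n$, which is useless here, so it is essential to rewrite $F$ so that it manifestly carries the small factors $1-\nu_n^2$ and $\xi_\e$ controlled in \eqref{pstack2}, with the ``mixed'' term $\e|\nabla\varphi|^2\sqrt{1-\nu_n^2}$ handled by interpolating between \eqref{pstack2} and \eqref{pstack4}. The second delicate point is that $l_3$ must simultaneously (i) be bounded away from every $x_n$ by $a$, (ii) sit inside the largest gap (for the diameter decay in \eqref{stack1.5}), and (iii) nearly minimize the sliced excess for \emph{all} $x\in Y$ at once; this is exactly why the averaging is localized to the middle third of the largest gap rather than carried out over all admissible heights, and keeping the bookkeeping of the numerical constant in \eqref{stack1} will require a little care.
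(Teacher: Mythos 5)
Your proposal follows the paper's argument essentially verbatim: (A) by the truncated radial monotonicity identity with $\zeta_1\to\chi_{B_r}$, $\zeta_2\to\chi_S$, controlling the bulk error by \eqref{pstack3} and the curvature-defect part of \eqref{pstack2} and the two flat-face contributions by \eqref{pstack1}; (B) by rewriting $F$ exactly as in the paper's (llog1) to isolate $|\xi_\e|$, $1-\nu_n^2$, and the cross term handled by Cauchy--Schwarz between \eqref{pstack2} and \eqref{pstack4}, then a Fubini/mean-value selection of $l_3$ in the middle third of a gap of size $\geq \diam Y/N$; (C) by the inclusion $S_1\cup S_2\subset B_{(M+1)\diam Y}(x')\cap S$ and one application of (A). The only discrepancy is an inconsequential constant slip in step (B): since hypothesis \eqref{pstack2} bounds the \emph{sum} $\int_{B_\tau}|\xi_\e|+(1-\nu_n^2)\e|\nabla\varphi|^2$ by a single $\varrho\tau^{n-1}$, your intermediate bound $\int_{B_\tau}|F|\le(2\varrho+E_0^{1/2}\varrho^{1/2})\tau^n$ should read $(\varrho+E_0^{1/2}\varrho^{1/2})\tau^n$, which is what produces the stated constant $3(N+1)NM(\varrho+E_0^{1/2}\varrho^{1/2})$ in \eqref{stack1}.
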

\begin{proof}
For any $x\in Y$, after a parallel translation, assume without loss
of generality that $x=0$ for the proof of $\text{(A)}$. Let $\zeta _1 (y)$ be a smooth approximation of the characteristic function $\chi_{B_r (0)} $, where $a\leq r<R$. Let $\zeta _2 (y)$ be a smooth approximation to the characteristic function of $S$ which depends only on $y_n$. Let us denote
\begin{equation}
h_{\e}:=\Delta\varphi -\frac{W'(\varphi)}{\e^2}. 
\label{stack3}
\end{equation}
Multiply \eqref{stack3} by $(y\cdot \nabla \varphi ) \zeta _1 \zeta _2$. After integration by parts twice 
(as in the computation for \eqref{assvar0}) and letting $\zeta _1 \to \chi _{B_r (0)}$, we obtain
\begin{equation}
\begin{split}
\frac{d}{dr} \Big\{ \frac{1}{r^{n-1}}\int _{B_r}  e_\e \zeta _2 \Big\} +\frac{1}{r^n} \int_{B_r} (\xi _\e + \e 
h_{\e}(y \cdot \nabla \varphi ))\zeta _2
-\frac{ \e }{r^{n+1}}\int _{\partial B_r} (y \cdot \nabla \varphi )^2 \zeta_2 \\
-\frac{1}{r^n}\int _{B_r} \{ e_\e y_n -\e \varphi _{x_n} (y \cdot \nabla \varphi ) \}\zeta' _2 
 =0.
\end{split}
\label{s1}
\end{equation}
We estimate the integral over $[r,R]$ ($r\geq a$) of the second term in \eqref{s1} first. We let $\zeta_2\rightarrow \chi_{S}$ and compute
\begin{equation}
\begin{split}
\int _r ^R \frac{d\tau}{\tau ^n} \int _{B_\tau \cap S} (\xi _\e +\e h_{\e}(y\cdot \nabla \varphi )) 
& \leq  \int _r ^R \frac{d\tau}{\tau ^n}\Big( \int _{B_\tau }(\xi _\e )_{+} \Big)
+ \int _r ^R \frac{d\tau}{\tau ^{n-1}} \Big( \int _{B_\tau} \e |h_{\e}|  |\nabla \varphi | \Big)  \\
&\leq (1+R)\varrho
\end{split}
\label{s2}
\end{equation}
where \eqref{pstack2} and \eqref{pstack3} are used.
From \eqref{s1}, \eqref{s2} and \eqref{pstack1}, we obtain \eqref{log}, proving $\text{(A)}$. 
Next, choose $\tilde y,\tilde z \in Y$ such that $\tilde z_n -\tilde y_n \geq \frac{\diam Y}{N} $ and $Y\cap \{ x \, : \, \tilde y_n < x_n < \tilde z_n \}=\emptyset$. Let $\tilde l_1 = \tilde y_n +\frac{\tilde z_n -\tilde y_n}{3}$ and $\tilde l_2 = \tilde z_n -\frac{\tilde z_n -\tilde y_n}{3}$. To choose an appropriate $l_3\in (\tilde l_1, \tilde l_2)$ which satisfies \eqref{stack1}, we first observe, for $x\in Y$ and $y\in B_r (x)$,
\begin{equation}
\begin{split}
I:=&|e_\e (y_n -x_n ) -\e \varphi _{x_n} (y-x) \cdot \nabla \varphi | \\
=&|(-\xi _\e ) (y_n-x_n) +\e |\nabla \varphi |^2 ((y_n-x_n)-\nu _n(y-x)\cdot \nu ) | \\
\leq & |\xi _\e |r + \e |\nabla \varphi |^2 r\Big( 1-(\nu_n)^2 + \sqrt{1-(\nu_n)^2} \Big).
\end{split}
\label{llog1}
\end{equation}
Thus by Fubini's theorem, \eqref{llog1}, \eqref{pstack2} and \eqref{pstack4} we obtain
\begin{equation}
 \int _{\tilde l_1} ^{\tilde l_2} \, dl \int _a^{\tilde R} \frac{d\tau}{\tau ^n}
\int _{B_\tau (x)\cap \{ y_n =l \}} I \, d\mathcal{H}^{n-1} =
\int _{a}^{\tilde R} \frac{d\tau}{\tau ^n} \int _{B_\tau (x) \cap \{ \tilde l_1 <y_n< \tilde l _2 \}} I \, dy \leq \tilde R (\varrho + E_0^{\frac{1}{2}} \varrho^{\frac{1}{2}}). 
\label{llog2}
\end{equation}
The inequality \eqref{llog2} is satisfied for each $x\in Y$, hence 
we guarantee that there exists $l_3 \in (\tilde l_1 , \tilde l_2)$ such that
\[ \int _a ^{\tilde R} \frac{d\tau}{\tau ^n} \int _{B_\tau (x)\cap \{ y_n =l_3 \}} I \, d\mathcal{H}^{n-1}(y) \leq \frac{(N+1)\tilde R (\varrho + E_0^{\frac{1}{2}} \varrho^{\frac{1}{2}})}{\tilde l_2 - \tilde l_1} \]
for each $x\in Y$. Since $\tilde l_2 - \tilde l _1 \geq \frac{\diam Y}{3N}$, we have $\frac{\tilde R}{\tilde l_2 - \tilde l_1} \leq 3MN$, and we obtain $\text{(B)}$.
We have $S_1 \cup S_2\subset B_{(\tilde R +\diam Y)} (x) \cap S$ for $x\in Y$ and $S_1\cap S_2=\emptyset$. 
Thus, using also (3) and \eqref{log} with $r=\tilde R+\diam Y<R$, we have
\begin{equation}
\begin{split}
\frac{1}{\tilde R ^{n-1}} \Big( \int _{S_1}  e_\e + \int _{S_2} e_\e \Big) &\leq \frac{1}{\tilde R ^{n-1}} \int _{ B_{(\tilde R +\diam Y)} (x) \cap S}  e_\e \\ 
& \leq  \Big( 1+ \frac{1}{M} \Big)^{n-1} \Big\{ \frac{1}{R^{n-1}}\int _{B_R (x) \cap S} e_\e +\varrho(3+R) \Big\}.
\end{split}
\label{llog3}
\end{equation}
Since $B_R (x) \cap S \subset S_0$, we obtain \eqref{stack2}. One can check that
$\tilde{z}_n-\tilde{y}_n\geq 
\frac{\diam Y}{N}$ implies \eqref{stack1.5}. This proves $\text{(C)}$. 
\end{proof}
%%%%%%%%%%%%%%%%%%%%
\begin{pro}\label{a}
Corresponding to $0<R<\infty , \ 0<E_0<\infty, \ 0<s<1$ and $N\in {\mathbb N}$, there exists $0<\varrho<1$ 
with the following property:
Assume $Y\subset \mathbb{R}^n$ has no more than $N+1$ elements and $Y\subset\{(0,\cdots,0,x_n)\,:\, x_n\in {\mathbb R}\}$. 
For some $0<a<R$ and for all $y,\, z\in Y$ with $y\neq z$, we have $|y-z|>3a$ and $\diam Y \leq \varrho R$.
In addition we assume (4), (6), (7), (8) of Lemma \ref{stack}.
Then we have
\begin{equation}
\sum _{x\in Y} \frac{1}{a^{n-1}} \int _{B_a (x)} e_\e \leq s + \frac{1+s}{R^{n-1}} \int _{ \{ x \, : \, \dist (Y,x)<R \} } e_\e.
\label{nstack1}
\end{equation}
\end{pro}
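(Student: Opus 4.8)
The plan is to argue by induction on the cardinality of $Y$, using Lemma \ref{stack} as the one splitting step, and to choose $M$ large and then $\varrho$ small, in that order, with $M$ depending only on $n,N,s$ and $\varrho$ depending additionally on $R$ and $E_0$. The reason no flux hypothesis (condition (5) of Lemma \ref{stack}) enters the present statement is that it is never needed for the outermost region: the term of \eqref{s1} carrying $\zeta_2'$, which is exactly what (5) controls, vanishes when the slab is all of $\mathbb{R}^n$, so the slab-free forms of \eqref{log} and of part (B) hold under (4),(6),(7),(8) alone. Every hyperplane that later bounds a slab will have been produced by an earlier application of (B), and the key point is that the conclusion of (B)---by inspection of its proof---uses only (6) and (8), so the flux bound it gives is always at most $\varrho':=3(N+1)NM\big(\varrho+E_0^{1/2}\varrho^{1/2}\big)$ with the \emph{original} $\varrho$; thus (5) is always available with the single constant $\varrho'$, and $\varrho'\to0$ as $\varrho\to0$.

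Concretely I would build a finite binary tree of triples $(Y',S',R')$, with root $(Y,\mathbb{R}^n,R)$. At a node with $\#Y'\ge2$ and $S'=\{c_1<x_n<c_2\}$, apply Lemma \ref{stack} with $l_1=c_1$, $l_2=c_2$, outer radius $R'$ and $\tilde R':=M\diam Y'$: hypothesis (3) holds since at the root $(M+1)\diam Y\le(M+1)\varrho R\le R$ once $(M+1)\varrho\le1$, and at every deeper node $(M+1)\diam Y'\le(M+1)\tfrac{N-1}{N}\diam Y''\le M\diam Y''=R'$ once $M\ge N$, where $Y''$ is the parent's point set (using \eqref{stack1.5} at the parent); hypothesis (5) holds with $\varrho'$ because $\{y_n=c_1\}$ and $\{y_n=c_2\}$ were produced by (B) at ancestors, whose scales $\tilde R$ are at least $R'$ since ancestors carry larger point sets. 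Part (B) then provides $l_3$ with $|x_n-l_3|>a$ for all $x\in Y'$; the children are $(Y_1',S'\cap\{x_n<l_3\},\tilde R')$ and $(Y_2',S'\cap\{x_n>l_3\},\tilde R')$, both nonempty and with strictly fewer points. Leaves are singletons, and for a leaf $\{x\}$ the two hyperplanes bounding $S'$ lie at distance exceeding $a$ from $x_n$, so $B_a(x)\subset S'$.

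Next I would prove, by induction on $\#Y'$, that
\[
\sum_{x\in Y'}\frac{1}{a^{n-1}}\int_{B_a(x)}e_\e\ \le\ K_{\#Y'}\,E(Y',S',R')+L_{\#Y'},\qquad E(Y',S',R'):=\frac{1}{(R')^{n-1}}\int_{S'\cap\{\dist(Y',\cdot)<R'\}}e_\e .
\]
For a leaf, \eqref{log} in its slab form (with constant $\varrho'$), applied at radius $r=a$ and combined with $B_a(x)\subset S'$, gives the bound with $K_1=1$ and $L_1=\varrho'(3+R)$. For the inductive step one splits the left-hand sum into the sums over $Y_1'$ and $Y_2'$, applies the inductive hypothesis at radius $\tilde R'$ to each, and invokes \eqref{stack2} (valid since (5) holds with $\varrho'$), which reads exactly $E(Y_1',\cdot,\tilde R')+E(Y_2',\cdot,\tilde R')\le(1+1/M)^{n-1}\big(E(Y',S',R')+\varrho'(3+R)\big)$. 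This yields $K_{\#Y'}\le(1+1/M)^{n-1}\max\{K_{\#Y_1'},K_{\#Y_2'}\}$ and $L_{\#Y'}\le K_{\#Y'}\varrho'(3+R)+L_{\#Y_1'}+L_{\#Y_2'}$, hence $K_{N+1}\le(1+1/M)^{(n-1)N}$ and $L_{N+1}\le C(N)(1+1/M)^{(n-1)N}\varrho'(3+R)$. Choosing first $M\ge N$ with $(1+1/M)^{(n-1)N}\le1+s$, and then $\varrho$ small enough that $(M+1)\varrho\le1$ and $C(N)(1+s)\varrho'(3+R)\le s$, and evaluating the displayed estimate at the root $(Y,\mathbb{R}^n,R)$, yields \eqref{nstack1}.

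The main obstacle I anticipate is the bookkeeping at each node: checking that the two hyperplanes bounding the current slab genuinely arise from applications of (B) at ancestors, that the current outer radius $R'$ does not exceed the scale up to which those ancestral flux bounds \eqref{stack1} were established, and that $|x_n-l_3|>a$ can always be arranged so that hypothesis (2) and $B_a(x)\subset S'$ hold; together with organizing the quantifiers so that the multiplicative loss $(1+1/M)^{n-1}$ accumulated over the at most $N$ levels stays below $1+s$ while the additive errors sum below $s$. The remaining computations are just the slab version of the integration by parts already carried out in \eqref{s1}--\eqref{llog3}.
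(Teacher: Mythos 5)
Your proof is correct and follows essentially the same route as the paper: a recursive application of Lemma 7.1 in which $M$ is fixed first (to control the accumulated multiplicative loss $(1+1/M)^{(n-1)N}$ and to guarantee hypothesis (3) at deeper nodes via \eqref{stack1.5}), $\varrho$ is chosen last, the flux hypothesis (5) is vacuous at the root and is regenerated at each split with the inflated constant $3(N+1)NM(\varrho+E_0^{1/2}\varrho^{1/2})$ from (B). Your binary-tree bookkeeping with the constants $K_m,L_m$ is a slightly more explicit rendering of the same induction the paper carries out informally.
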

\begin{proof} Denote the number of elements in $Y$ by $\# Y$.
If $\# Y=1$, 
the proof leading to the conclusion $\text{(A)}$ of Lemma \ref{stack} (with $l_1=-\infty$ and $l_2=+\infty$)
gives \eqref{nstack1} if $\varrho(1+R)<s$. Note that $M$ is irrelevant in this case since $\diam Y=0$.
If $1<\# Y\leq N+1$, 
we use Lemma \ref{stack} inductively.  First, we choose $M>1$ depending only on $s,\, n,\,N$ so that 
\begin{equation}
\Big(1+\frac{1}{M}\Big)^{(n-1)N}<1+s\,\,\,\mbox{and }\,\, \frac{N-1}{N}<\frac{M}{M+1}.
\label{nstack2}
\end{equation}
Suppose $(M+1)\diam Y<R$. Then all the assumptions of Lemma \ref{stack} are satisfied, and
we obtain $Y_1$ and $Y_2$ with the estimates. We apply Lemma \ref{stack} again to both $Y_1$
and $Y_2$ with $R$ there replaced by ${\tilde R}=M\diam Y$. Due to \eqref{stack1.5}
and \eqref{nstack2}, we have the
assumption (3) satisfied: 
\begin{equation*}
(M+1)\diam Y_j<M\diam Y
\end{equation*}
for $j=1,2$. We have \eqref{pstack1} with the right-hand side given by the
right-hand side of \eqref{stack1}. For each $j=1,2$, if $\# Y_j=1$, 
then we obtain \eqref{log} with $r=a$. Otherwise, we separate $Y_j$ into two non-empty 
sets. Each time, all the assumptions of Lemma \ref{stack} are satisfied. Thus, after
$(\# Y -1)$-times, we separate ${\mathbb R}^n$ into $\# Y$ disjoint horizontal stacks, each
having one element of $Y$. With \eqref{nstack2}, \eqref{stack2} and \eqref{log}, we may 
choose a sufficiently small $\varrho$ depending only on $s,\, n,\, N, \, R,\, E_0$ so that 
\eqref{nstack1} holds.
\end{proof}
%%%%%%%%%%%%%%%%%%
\subsection{The $\e$-scale estimate}
Next proposition is almost identical to \cite{tonegawa2000} and \cite{tonegawa2003}.
It shows that the energy behaves more or less like a 1-D simple ODE solution if 
certain quantities are controlled.
\begin{pro}\label{L}
Given $0<s, \, b,\, \beta<1$, and $1<c<\infty$, there exist $0<\varrho,\, \Cl[eps]{eps-55}<1$ and 
$1<L<\infty$ (which also depend on $n$ and $W$) with the following property:

Assume $0<\e< \Cr{eps-55}$, $\varphi\in C^2(B_{4\e L})$
and
\begin{equation}
 \sup_{B_{4\e L}} \e |\nabla \varphi|\leq c,\,\,\sup_{x,y\in B_{4\e L}}
\e^{\frac32}\frac{ |\nabla\varphi(x)-\nabla\varphi(y)|}{|x-y|^{\frac12}}\leq c, \,\, |\varphi (0)|<1-b,
\label{clo1}
\end{equation}
\begin{equation}
\int _{B_{4\e L} } ( |\xi _\e|
+(1-(\nu _n)^2) \e |\nabla \varphi|^2 )\,dx \leq \varrho (4\e L)^{n-1}
\label{assumptionL}
\end{equation}
and 
\begin{equation}
\sup_{B_{4\e L}} (\xi _\e)_{+} \leq \e^{-\beta},
\label{clo2}
\end{equation}
where $\nu$ and $\xi_{\e}$ are as in \eqref{pstack2} and \eqref{defti2}.
Then for $J:= B_{3\e L}\cap \{x=(0,\cdots,0,x_n)\}$,
\begin{equation}
\inf_{x\in J}\partial_{x_n}\varphi (x)>0, \ \ (\mbox{or } \sup_{x\in J} \partial_{x_n}\varphi(x)<0),
\, \mbox{ and } [-1+b,1-b]\subset \varphi(J).
\label{clo3}
\end{equation}
We also have
\begin{equation}
\Big|\sigma- \frac{1}{\omega _{n-1} (L\e)^{n-1}} \int _{B_{\e L} } e_\e\Big| \leq s.
\label{estL}
\end{equation}
\end{pro}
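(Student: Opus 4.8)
The plan is to prove this by rescaling to the $\e=1$ scale and running a compactness/contradiction argument, comparing the blow-up limit with the one-dimensional heteroclinic $\Psi$. \emph{Step 1 (choice of $L$).} Since $\Psi'=\sqrt{2W(\Psi)}$ and $\int_{-1}^1\sqrt{2W(s)}\,ds=\sigma$, one has $\int_{\mathbb R}\big(\tfrac12\Psi'(t)^2+W(\Psi(t))\big)\,dt=\int_{\mathbb R}2W(\Psi(t))\,dt=\sigma$ with exponentially decaying integrand tails, while $\Psi$ is strictly increasing with $\Psi(\mathbb R)=(-1,1)$. Any translate/reflection $g(t)=\Psi(\pm(t-x_0))$ with $|g(0)|\le 1-b$ satisfies $|x_0|\le\Psi^{-1}(1-b)=:L_0$, a constant depending only on $b,W$. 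Writing $E(\cdot):=\tfrac12|\nabla\cdot|^2+W(\cdot)$ for the $\e=1$ energy density, and using that $E(g)$ is exponentially concentrated near $x_n=x_0$ where the slice weight $(L^2-x_n^2)^{(n-1)/2}$ is asymptotic to $L^{n-1}$, I would fix $L=L(s,b,n,W)>L_0$ so large that, uniformly over $|x_0|\le L_0$ and both signs, $g\big((-3L,3L)\big)\supset[-1+b,1-b]$ with a fixed positive margin and $\big|\sigma-\tfrac1{\omega_{n-1}L^{n-1}}\int_{B_L}E(g)\,dx\big|<s/2$.

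\emph{Step 2 (blow-up).} With this $L$ fixed, suppose the assertion fails: there exist $\varrho_k\downarrow0$, $\e_k\downarrow0$ and $\varphi_k\in C^2(B_{4\e_kL})$ satisfying \eqref{clo1}, \eqref{assumptionL}, \eqref{clo2} with $\varrho=\varrho_k$ but violating \eqref{clo3} or \eqref{estL}. Set $\tilde\varphi_k(x):=\varphi_k(\e_kx)$ on $B_{4L}$. Then \eqref{clo1} gives a $k$-uniform $C^{1,1/2}(B_{4L})$ bound and $|\tilde\varphi_k(0)|<1-b$, hence $|\tilde\varphi_k|\le 1-b+4cL$ on $B_{4L}$, so along a subsequence $\tilde\varphi_k\to\tilde\varphi_\infty$ in $C^1_{loc}(B_{4L})$. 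The scaling identities $\tilde\xi_k(x):=\tfrac12|\nabla\tilde\varphi_k(x)|^2-W(\tilde\varphi_k(x))=\e_k\,\xi_{\e_k}(\varphi_k)(\e_kx)$ and $(1-(\nu_n)^2)|\nabla\tilde\varphi_k|^2=|\nabla'\tilde\varphi_k|^2$, with $\nabla'=(\partial_{x_1},\dots,\partial_{x_{n-1}})$ and the usual convention on $\{\nabla\tilde\varphi_k=0\}$, turn \eqref{assumptionL} into $\int_{B_{4L}}\big(|\tilde\xi_k|+|\nabla'\tilde\varphi_k|^2\big)\,dx\le\varrho_k(4L)^{n-1}\to0$ and \eqref{clo2} into $\sup_{B_{4L}}(\tilde\xi_k)_+\le\e_k^{1-\beta}\to0$. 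Passing to the limit via the locally uniform $C^1$ convergence forces $\nabla'\tilde\varphi_\infty\equiv0$ and $\tfrac12|\nabla\tilde\varphi_\infty|^2=W(\tilde\varphi_\infty)$ on $B_{4L}$. Hence $\tilde\varphi_\infty(x)=g(x_n)$ with $g'=\pm\sqrt{2W(g)}$, and $|g(0)|\le1-b<1$ gives $W(g(0))>0$, so $g'$ is nowhere zero of constant sign and $g$ is exactly a translate/reflection $g(t)=\Psi(\pm(t-x_0))$ with $|x_0|\le L_0$.

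\emph{Step 3 (conclusion).} Since $E(\tilde\varphi_k)\to E(g)$ and $\partial_{x_n}\tilde\varphi_k\to g'$ uniformly on $\overline{B_{3L}}$, the choice of $L$ in Step 1 yields, for all large $k$: $\inf_{\tilde J}\partial_{x_n}\tilde\varphi_k>0$ (or $\sup_{\tilde J}\partial_{x_n}\tilde\varphi_k<0$) and, by the intermediate value theorem, $\tilde\varphi_k(\tilde J)\supset[-1+b,1-b]$, where $\tilde J=B_{3L}\cap\{x_1=\dots=x_{n-1}=0\}$; and $\big|\sigma-\tfrac1{\omega_{n-1}L^{n-1}}\int_{B_L}E(\tilde\varphi_k)\,dx\big|\le s$. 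All three statements are invariant under $x\mapsto\e_kx$ (for instance $J=\e_k\tilde J$ and $\tfrac1{\omega_{n-1}(L\e_k)^{n-1}}\int_{B_{\e_kL}}e_{\e_k}(\varphi_k)=\tfrac1{\omega_{n-1}L^{n-1}}\int_{B_L}E(\tilde\varphi_k)$), so $\varphi_k$ satisfies both \eqref{clo3} and \eqref{estL} for large $k$ — a contradiction, which produces the required $\varrho$ and $\Cr{eps-55}$.

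The step I expect to be the main obstacle is Step 2: extracting, from nothing but the integral smallness of the discrepancy $|\xi_\e|$ and of the non-verticality $(1-(\nu_n)^2)\e|\nabla\varphi|^2$, that the blow-up limit is precisely a translate/reflection of the heteroclinic $\Psi$ — and, crucially, that its translation parameter is controlled by $b,W$ alone, so that the single $L$ committed to in Step 1 works for every admissible $\varphi$. The Hölder bound in \eqref{clo1} is exactly what makes the Arzel\`a--Ascoli compactness of $\nabla\tilde\varphi_k$ available, while \eqref{clo2} is what keeps $(\tilde\xi_k)_+$ negligible in the limit.
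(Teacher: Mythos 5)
Your proposal is correct, but it takes a genuinely different route from the paper's. The paper's argument is direct and quantitative: after the same rescaling and the same observation fixing $L$ via the decay of the one-dimensional profile, the paper establishes $|\tilde\varphi|\le 1-\tilde b$ on $B_{4L}$ by a Gronwall-type ODE comparison using \eqref{clo2}, then changes variables to $z:=\Psi^{-1}(\tilde\varphi)$ and shows $\|z\|_{C^{1,1/2}(B_{4L})}$ is uniformly bounded. The heart of the paper's argument is the elementary interpolation $(\max_{\bar B_{3L}}f)^{2n+1}\leq c\,\|f\|^{2n}_{C^{1/2}(B_{4L})}\int_{B_{4L}}f$ applied to $f=\big||\nabla z|^2-1\big|+|\nabla z|^2-(\partial_{x_n}z)^2$; combined with \eqref{assumptionL} this yields a pointwise bound $\max_{\bar B_{3L}}f\leq C\varrho^{1/(2n+1)}$, which pins $\nabla z\approx(0,\dots,0,\pm1)$ in $C^0(\bar B_{3L})$ and hence $\tilde\varphi\approx\Psi(\pm x_n)$ in $C^1$. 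By contrast, you argue by compactness and contradiction: $\varrho_k\to0$, $\e_k\to0$ forces the rescaled sequence (precompact in $C^1_{loc}$ by the H\"older bound in \eqref{clo1}) to converge to a profile with vanishing discrepancy and vanishing transversal gradient, hence a translate/reflection of $\Psi$ in $x_n$, whose translation parameter is controlled by $b$ and $W$ — and then \eqref{clo3}, \eqref{estL} pass to the limit by uniform convergence. Both arguments go through. The paper's version buys an effective dependence of $\varrho,\Cr{eps-55}$ on the data (the $\varrho^{1/(2n+1)}$ exponent is explicit), which is occasionally useful if one wants to track constants; your compactness version is shorter and conceptually cleaner, at the cost of being non-constructive.

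Two small points worth making precise in your write-up. First, for a general $W$ satisfying \eqref{Was1}--\eqref{Was3} (not necessarily even) the translation parameter should be bounded by $L_0:=\max\{|\Psi^{-1}(1-b)|,|\Psi^{-1}(-1+b)|\}$ rather than $\Psi^{-1}(1-b)$. Second, when identifying the blow-up limit, you need to rule out the degenerate case in which the limiting $g$ touches $\pm1$ inside $B_{4L}$; the standard Gronwall argument near a double zero of $W$ (exactly as in the paper's derivation of $|\tilde\varphi|\le1-\tilde b$) does this, so it is worth recording. Finally, note that the paper's proof implicitly relies on $|\varphi|\le1$ (which is available in all applications, via the maximum principle) to make the bound $W(\tilde\varphi)\le\tilde c(1\pm\tilde\varphi)^2$ usable, while your compactness argument handles a uniform bound of the form $|\tilde\varphi_k|\le 1-b+4cL$ directly; this is a slight cosmetic advantage of your route.
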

\begin{proof}
Rescale the domain by $x\mapsto \frac{x}{\e}$. 
The rescaled function defined on $B_{4L}$ is denoted by $\tilde \varphi$. 
Let $\Psi:\mathbb{R} \to (-1,1)$ be the unique solution of the ODE
\begin{eqnarray}
\left\{ \begin{array}{ll}
\Psi'(t)=\sqrt{2W(\Psi(t))} & \text{for} \ t\in \mathbb{R}, \\
\Psi(0)=\tilde \varphi (0). & \\
\end{array} \right.
\label{sclo4}
\end{eqnarray} 
We have
\begin{equation}
 \int _{\mathbb{R}} \frac{1}{2} |\Psi '(t)|^2 \, dt =\int_{\mathbb{R}}W(\Psi(t))\, dt
 =\int _\mathbb{R} \sqrt{\frac{W(\Psi(t))}{2} } \Psi'(t) \, dt = \int _{-1} ^1 \sqrt{\frac{W(s)}{2} } \, ds
  =\frac{\sigma}{2} . 
 \label{sclo1}
 \end{equation}
Define $\hat \Psi (x)=\hat \Psi(x_1,x_2,\dots , x_{n}):= \Psi(x_n)$ for $x\in \mathbb{R}^n$. 
Using \eqref{sclo1}, it is not difficult to check that $\lim_{L\rightarrow\infty}\frac{1}{\omega_{n-1}L^{n-1}}
\int_{B_L}\left( \frac{|\nabla \hat \Psi|^2}{2}+W(\hat \Psi) \right)=\sigma$. Thus
depending only on $n, s, b, W$, we may choose a sufficiently large $L>0$ such that
\begin{equation}
\Big|\sigma- \frac{1}{\omega_{n-1} L^{n-1}} \int _{B_L } \left( \frac{|\nabla \hat \Psi|^2}{2}+W(\hat \Psi) \right) \Big|\leq \frac{s}{2}
\label{ineqL}
\end{equation}  
whenever $|\hat \Psi(0)|=|\tilde\varphi(0)|\leq 1-b$. After fixing such $L$, we next observe that, for a constant $\tilde c =\tilde c(W)$, 
\begin{equation}
\frac{|\nabla \tilde \varphi |^2}{2}-\tilde c (1\pm \tilde\varphi)^2\leq \frac{|\nabla \tilde \varphi |^2}{2}-W(\tilde \varphi) = \e (\xi_{\e})_+ \leq  \e ^{1-\beta} \qquad \text{on} \ B_{4L}
\label{estdiscrepancy}
\end{equation}
by \eqref{clo2}. 
Some simple ODE argument combined with \eqref{estdiscrepancy} shows that there exist $0<\tilde b<b$ and 
$0<\Cr{eps-55}<1$ depending only on $b, \beta, L, W$ such that, whenever $|\tilde\varphi (0)| \leq 1-b$ 
and $\e<\Cr{eps-55}$, we have $|\tilde \varphi |\leq 1-\tilde b$ on $B_{4L}$. 

Next, we define $z:B_{4L}\to \mathbb{R}$ by $z(x) = \Psi^{-1} (\tilde \varphi (x))$, where $\Psi^{-1}$ is the inverse function of $\Psi$. By $\Psi' >0$ and $|\tilde \varphi|\leq 1-\tilde b$, $\Psi^{-1}$ and $z$ are well-defined and 
\begin{equation}
\Psi'(z(x))\geq \min _{|\tilde \varphi|\leq 1-\tilde b} \sqrt{2W(\tilde \varphi)}
\label{sclo3}
\end{equation}
for $x\in B_{4L}$. By \eqref{clo1}, we have
$\|\tilde\varphi\|_{C^{1,\frac12}(B_{4L})}\leq 2c$. Since $\|\Psi^{-1}\|_{C^2(\{t\,:\, |t|\leq 1-\tilde b\})}$ is bounded
depending only on $b, \beta, L, W$ due to \eqref{sclo3}, we have 
\begin{equation}
\|z\|_{C^{1,\frac12}(B_{4L})}\leq C(b,\beta,L,W,c).
\label{sclo7}
\end{equation}
We next note that $\tilde\varphi=\Psi \circ z$ and \eqref{sclo4} give
\begin{equation}
\frac{|\nabla \tilde \varphi |^2 }{2}-W(\tilde \varphi ) =\frac{1}{2} (\Psi' (z))^2 (|\nabla z|^2 -1), \ \ |\nabla \tilde \varphi |^2 (1-(\nu _n )^2 ) = (\Psi ' (z))^2 (|\nabla z|^2 -(\partial _{x_n} z)^2 ) .
\label{sclo2}
\end{equation}
After rescaling \eqref{assumptionL} and using \eqref{sclo3} and \eqref{sclo2}, we obtain
\begin{equation}
\int_{B_{4L}}(||\nabla z|^2-1|+|\nabla z|^2-(\partial_{x_n} z)^2)\, dx
\leq \max_{|t|\leq 1-\tilde b}W(t)^{-1} \varrho (4L)^{n-1}.
\label{sclo5}
\end{equation}
For a non-negative function $f\in C^{\frac12}(B_{4L})$, 
suppose $\max_{\bar B_{3L}} f=f(\hat x)>0$ for $\hat x\in \bar B_{3L}$. Then it is 
easy to check that $f(x)\geq f(\hat x)/2$ as long as 
$|x-\hat x|\leq (f(\hat x))^2/(2\|f\|_{C^{\frac12}(B_{4L})})^2=:r$.
Then we have
\begin{equation*}
f(\hat x)\leq \frac{1}{\omega_n r^n}\int_{B_r(\hat x)}2f\, dx
\leq \frac{2\cdot 4^n \|f\|_{C^{\frac12}(B_{4L})}^{2n}}{\omega_n
(f(\hat x))^{2n}} \int_{B_{4L}}f\, dx
\end{equation*}
and thus we obtain
\begin{equation}
(\max_{\bar B_{3L}}f)^{2n+1}\leq 2\cdot 4^n \|f\|^{2n}_{C^{\frac12}
(B_{4L})} \int_{B_{4L}}f\, dx.
\label{sclo6}
\end{equation}  
By \eqref{sclo7}, \eqref{sclo5} and \eqref{sclo6}, we have
\begin{equation}
\max_{\bar B_{3L}}(||\nabla z|^2-1|+|\nabla z|^2-(\partial_{x_n}z)^2)\leq C(b,\beta,L,W,c)\varrho^{\frac{1}{2n+1}}.
\label{sclo8}
\end{equation}
Since $\Psi(0)=\tilde\varphi(0)=\Psi(z(0))$, we have $z(0)=0$. Note that \eqref{sclo8} for sufficiently small $\varrho$ shows
that $\nabla z\approx (0,\cdots,0,\pm 1)$ uniformly on $B_{3L}$. This shows that $z\approx x_n$ or $-x_n$ in 
$C^1(B_{3L})$ when $\varrho$ is small, and in particular, we have \eqref{clo3}. For the former case, we have $\tilde\varphi(x)=\Psi(z(x))\approx \Psi(x_n)
=\hat \Psi(x)$, and \eqref{ineqL} gives \eqref{estL} for sufficiently small $\varrho$ with the right dependence. In the case
of $-x_n$, we simply note that changing $\hat \Psi$ to $\Psi(-x_n)$ does not affect the proof. 
\end{proof}
%%%%%%%%%%%%%%%%%%

\subsection{Estimate on $\{ |\varphi_{\e}| \geq 1-b \}$}
We need to show some uniform smallness of energy on $\{|\varphi_{\e}|\geq 1-b\}$ for the
final step of this section. 
\begin{lem}\label{alpha}
Suppose $\varphi_{\e}$ and $u_{\e}$
are the solutions for \eqref{beq1} constructed in 
Section \ref{secexm}. Given $0<\delta<T$,
there exist $\Cl[c]{c-20}$ and $\Cl[eps]{eps-6}$ depending only on $n, \Cr{c-0}, W$ with the following property. Suppose for $(x_0,t_0)\in \Omega \times (\delta,T)$ and $0<\lambda\leq 2/3$, 
\begin{equation}
\varphi_{\e}  (x_0,t_0)<1-\e ^\lambda \ \ \ (\mbox{or }\ \varphi_{\e} (x_0,t_0)>-1+\e ^\lambda),
\label{rrest2}
\end{equation}
where $\lambda$ additionally satisfies 
\begin{equation}
1\leq \tilde r := \Cr{c-20} \lambda |\log \e|\leq  \e ^{-1}\min\{\sqrt{\delta/2},1/2\}.
\label{rrest1}
\end{equation}
Then
\begin{equation*}
\inf_{B_{\e \tilde r}(x_0)\times (t_0 -\e ^2 \tilde r ^2,t_0)}\varphi_{\e}  <\alpha \qquad \left( \text{resp.} \ \sup_{B_{\e \tilde r}(x_0)\times (t_0 -\e ^2 \tilde r ^2,t_0)}\varphi_{\e}  >-\alpha \right)
\end{equation*}
if $\e \in (0,\Cr{eps-6})$.
\end{lem}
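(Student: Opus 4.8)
The plan is to pass to $\psi:=1-\varphi_\e$ (and $\psi:=1+\varphi_\e$ for the statement near $-1$), notice that on a set where $\varphi_\e\ge\alpha$ the nonlinearity turns \eqref{beq1} into a parabolic differential inequality carrying a strong zeroth order damping of size $\kappa/\e^2$, and use a barrier to show that this damping, acting over the time span $\e^2\tilde r^2$, makes $\psi(x_0,t_0)$ so small that it contradicts the hypothesis $\psi(x_0,t_0)>\e^\lambda$ coming from \eqref{rrest2} --- unless $\varphi_\e$ actually dips below $\alpha$ somewhere in the cylinder. First, though, I would dispose of the trivial regime $\e^\lambda>1-\alpha$: then $1-\e^\lambda<\alpha$, so \eqref{rrest2} gives $\varphi_\e(x_0,t_0)<\alpha$ directly, and by continuity $\varphi_\e<\alpha$ at $(x_0,t)$ for $t$ slightly below $t_0$, a point that lies in the cylinder, so its infimum is $<\alpha$. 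This regime is exactly $\tilde r<\Cr{c-20}\log\frac1{1-\alpha}$; hence in the remaining regime one may assume $\tilde r\ge R_0:=\Cr{c-20}\log\frac1{1-\alpha}$, a quantity one can make as large as desired by taking $\Cr{c-20}$ large. Securing this lower bound on $\tilde r$ is the whole purpose of splitting off the trivial case.

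Assume now $\tilde r\ge R_0$ and, for a contradiction, that $\varphi_\e\ge\alpha$ on the closed cylinder $\overline{B_{\e\tilde r}(x_0)}\times[t_0-\e^2\tilde r^2,t_0]$ (which lies in $\Omega\times[0,T]$ since $\e^2\tilde r^2\le\delta/2$ by \eqref{rrest1}). Put $\psi:=1-\varphi_\e\ge0$, using \eqref{u1sup}. Writing $W'(\varphi_\e)=-\int_{\varphi_\e}^1W''(s)\,ds$ and using \eqref{Was3} (so $W''(s)\ge\kappa$ for $s\in[\alpha,1]$), \eqref{beq1} gives on the cylinder
\[
\partial_t\psi+u_\e\cdot\nabla\psi-\Delta\psi+\tfrac{\kappa}{\e^2}\psi\le0,
\]
so $\psi$ is a nonnegative subsolution of $L:=\partial_t+u_\e\cdot\nabla-\Delta+\kappa\e^{-2}$ with $\psi\le1-\alpha$ on the parabolic boundary. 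I would then compare $\psi$ with
\[
\bar\psi(x,t):=(1-\alpha)\Big(e^{-\frac{\kappa}{2\e^2}(t-t_0+\e^2\tilde r^2)}+\frac{\cosh(\mu|x-x_0|/\e)}{\cosh(\mu\tilde r)}\Big),\qquad\mu:=\sqrt{\tfrac{\kappa}{4n}}.
\]
The first summand is a supersolution of $L$, since $-\frac{\kappa}{2\e^2}+\frac{\kappa}{\e^2}>0$. For the second summand $g$, the identity $\Delta\big(\cosh(\mu|x-x_0|/\e)\big)=\e^{-2}\big(\mu^2\cosh(\mu\tau)+\tfrac{(n-1)\mu}{\tau}\sinh(\mu\tau)\big)$ with $\tau=|x-x_0|/\e$, together with $\sinh(\mu\tau)\le\mu\tau\cosh(\mu\tau)$, gives $\Delta g\le n\mu^2\e^{-2}g=\frac{\kappa}{4\e^2}g$, hence $-\Delta g+\frac{\kappa}{\e^2}g\ge\frac{3\kappa}{4\e^2}g$, while $|u_\e\cdot\nabla g|\le\e^{-\beta}\cdot\frac{\mu}{\e}g$ by \eqref{u2} and $|\sinh|\le\cosh$; since $\beta<1$, for $\e<\Cr{eps-6}$ small this drift term is absorbed and $g$ too is a supersolution of $L$. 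On the parabolic boundary $\bar\psi\ge1-\alpha\ge\psi$, because the first summand equals $1-\alpha$ on $\{t=t_0-\e^2\tilde r^2\}$ and the second equals $1-\alpha$ on $\{|x-x_0|=\e\tilde r\}$. Since the zeroth order coefficient of $L$ is nonnegative, the parabolic comparison principle gives $\psi\le\bar\psi$ on the cylinder, and in particular, using $\tilde r\ge1$, $\tilde r^2\ge\tilde r$ and $1/\cosh s\le2e^{-s}$,
\[
1-\varphi_\e(x_0,t_0)=\psi(x_0,t_0)\le\bar\psi(x_0,t_0)=(1-\alpha)\Big(e^{-\kappa\tilde r^2/2}+\tfrac{1}{\cosh(\mu\tilde r)}\Big)\le 3(1-\alpha)e^{-\nu\tilde r},
\]
with $\nu:=\tfrac12\min\{\kappa,\mu\}>0$ depending only on $n$ and $W$.

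Finally, since $\tilde r=\Cr{c-20}\lambda|\log\e|$ we have $\e^\lambda=e^{-\tilde r/\Cr{c-20}}$, so the desired contradiction $\psi(x_0,t_0)<\e^\lambda$ follows once $3(1-\alpha)e^{-\nu\tilde r}<e^{-\tilde r/\Cr{c-20}}$, i.e. $\log\!\big(3(1-\alpha)\big)<\tilde r(\nu-\Cr{c-20}^{-1})$; taking $\Cr{c-20}\ge2/\nu$ and using $\tilde r\ge R_0=\Cr{c-20}\log\frac1{1-\alpha}$, the right-hand side is at least $\tfrac{\nu}{2}\Cr{c-20}\log\frac1{1-\alpha}$, which beats $\log(3(1-\alpha))$ once $\Cr{c-20}$ is large enough (depending only on $n,\Cr{c-0},W$, with $R_0\ge1$ arranged along the way), and $\Cr{eps-6}$ is then shrunk to guarantee the drift absorption. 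This contradicts \eqref{rrest2}, proving the claim near $+1$; the case near $-1$ is obtained verbatim with $\psi:=1+\varphi_\e$ and $W''(s)\ge\kappa$ on $[-1,-\alpha]$. The main obstacle throughout is this last matching of constants: the barrier only produces the modest decay $e^{-\nu\tilde r}$ with $\nu\sim\sqrt\kappa$ possibly small and an $O(1)$ prefactor, which is precisely why $\tilde r$ must scale like $|\log\e|$ and why one first forces $\tilde r\gtrsim\Cr{c-20}$ through the trivial-case reduction --- a naive comparison over a cylinder of size merely $\tilde r\gtrsim1$ would not close.
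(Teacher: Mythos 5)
Your argument is correct. It shares the same core strategy as the paper's proof: on the region where the contradiction hypothesis $\varphi_\e\ge\alpha$ holds, write the equation as a parabolic inequality with a strong zeroth-order damping $\kappa/\e^2$ coming from \eqref{Was3}, and use a barrier/comparison argument on a cylinder of parabolic scale $\tilde r\sim\Cr{c-20}\lambda|\log\e|$ to force $1-\varphi_\e(x_0,t_0)$ below $\e^\lambda$. The executions differ in two ways that are worth noting. First, the paper rescales to unit scale, sets $\phi_\e:=1-\e^\lambda\psi$ with $\psi(x,t)=e^{-\kappa t/4}\tilde\psi(x)$ and $\tilde\psi$ a radial eigenfunction of $\Delta-\kappa/4$, and runs an interior-maximum argument at a contact point between $\phi_\e$ and the solution; you stay in original variables, treat $1-\varphi_\e$ directly as a subsolution, and use an \emph{additive}, fully explicit barrier (a time-exponential plus a spatial $\cosh$), deriving the bound via the weak comparison principle. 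Second, at the contact point the paper must control the drift term $\e\,\tilde u_\e\cdot\nabla\tilde\varphi_\e$ and hence invokes the uniform gradient bound $\sup\e|\nabla\varphi_\e|\le\Cr{c-3}$ from Lemma \ref{est1}, which is where $\Cr{c-0}$ enters; your proof only needs the drift acting on the explicit barrier, $|u_\e\cdot\nabla g|\le\mu\e^{-1-\beta}g$, so it bypasses the gradient estimate (and the dependence on $\Cr{c-0}$). The price for the explicit, not exactly eigenfunction barrier is a constant prefactor mismatch at the parabolic boundary, which you correctly repair by splitting off the trivial regime $\e^\lambda>1-\alpha$ to force $\tilde r\gtrsim\Cr{c-20}$ and then choosing $\Cr{c-20}$ large enough to beat the prefactor; the paper's choice $\phi_\e=1-\e^\lambda\psi$ makes the boundary condition close exactly and avoids this step. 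Both proofs are clean; yours is slightly more elementary and self-contained.
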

\begin{proof}
First note that $B_{\e \tilde r}(x_0)\times (t_0-\e^2 {\tilde r}^2,t_0)
\subset \Omega\times (0,T)$ due to \eqref{rrest1}.
Rescale the domain by $x \mapsto \frac{x-x_0}{\e}$ and $t\mapsto \frac{t-t_0}{\e ^2}$,
so that we are concerned with the domain $B_{\tilde r}\times (-{\tilde r}^2,0)$. Let $\tilde \varphi _{\e} (x,t):=\varphi_{\e}  (\e x+x_0,\e ^2 t+t_0)$ and $\tilde u_{\e}  (x,t):=u_{\e}  (\e x+x_0,\e ^2 t+t_0)$. 
As a comparison function, we need a function $\psi$ with the following property
\begin{equation}
\left\{
\begin{split}
& \partial _t \psi = \Delta \psi -\frac{\kappa}{2}\psi \qquad \text{on} \ {\mathbb R}^n \times 
(-\infty ,0), \\
&\psi (x,t)\geq e^{\frac{|x|+|t|}{\Cr{c-20}}} \qquad \text{on} \ \mathbb{R}^n \times (-\infty ,0)\setminus B_1 ^{n+1}(0,0), \\
&\psi(0,0)=1,
\end{split}
\right.
\label{psi}
\end{equation}
for some $\Cr{c-20}>0$. To find such a function, solve $\Delta \tilde\psi=\kappa\tilde\psi / 4$ with
$\tilde\psi (0)=1$ on ${\mathbb R}^n$ among radially symmetric functions. One can show 
that $\tilde\psi$ grows exponentially as $|x|\rightarrow\infty$ and $\tilde\psi$ achieves its minimum
at the origin, thus $\tilde\psi\geq 1$ on ${\mathbb R}^n$ in particular. Then set $\psi(x,t):= 
e^{-\kappa t/4} \tilde\psi (x)$. With a suitably large $\Cr{c-20}$ depending only on $n$ and $\kappa$,
this $\psi$ satisfies \eqref{psi}. Next 
set $\tilde r := \Cr{c-20} \lambda |\log \e|$. We choose such $\tilde r$ so that
\begin{equation}
1-\e ^\lambda e^{\frac{\tilde r}{\Cr{c-20}}}=0.
\label{rrest3}
\end{equation}
Under the assumption of \eqref{rrest2} which is equivalent to 
\begin{equation}
{\tilde \varphi}_{\e}(0,0)<1-\e^{\lambda}, 
\label{rrest6}
\end{equation}
for a
contradiction, assume 
\begin{equation}
\inf _{B_{\tilde r}\times (-\tilde r^2,0)}\tilde \varphi_{\e}  \geq \alpha.
\label{rrest4}
\end{equation} Define $\phi_{\e} :=1-\e ^\lambda \psi$. By \eqref{psi} we have $\partial _t \phi_{\e} = \Delta \phi_{\e} +\frac{\kappa}{2}(1-\phi_{\e}) $ on $\mathbb{R}^n \times (-\infty ,0)$. Furthermore, on the parabolic boundary of $B_{\tilde r}\times
(-{\tilde r}^2,0)$, $\psi\geq e^{\frac{\tilde r}{\Cr{c-20}}}$ by $\tilde r\geq 1$ and \eqref{psi}, hence 
\begin{equation}
\phi_{\e} \leq  1-\e ^\lambda e^{\frac{\tilde r}{\Cr{c-20}}}=0<\alpha \leq \tilde{\varphi}_{\e}
\label{rrest5}
\end{equation}
where \eqref{rrest3} and \eqref{rrest4} are used. On the other hand $\phi_{\varepsilon} (0,0)=
1-\e^{\lambda}\psi(0,0)=1-\e ^\lambda >\tilde \varphi_{\e}  (0,0)$ by \eqref{psi} and \eqref{rrest6}.
Hence a positive maximum value of $\phi_{\e}-{\tilde\varphi}_{\e}$ is achieved at a parabolic interior point 
$(x',t') \in B_{\tilde r}\times (-\tilde r^2,0]$. We have $\partial _t (\phi_{\e}-{\tilde\varphi}_{\e}) -\Delta (
\phi_{\e}-{\tilde\varphi}_{\e}) \geq 0$ at $(x',t')$ and $\phi_{\e}(x',t')>{\tilde \varphi}_{\e}  (x',t')$. 
The latter inequality combined with \eqref{rrest4} and \eqref{Was3} implies $W'({\tilde\varphi}_{\e})<W'(\phi_{\e})$.
By substituting the equations satisfied by $\phi_{\e}$ and ${\tilde \varphi}_{\e}$ into the former
inequality, we obtain
\begin{equation*}
\begin{split}
0&\leq \frac{\kappa}{2}(1-\phi_{\e})+\e {\tilde u}_{\e}\cdot \nabla{\tilde\varphi}_{\e}+W'({\tilde\varphi}_{\e}) 
 <  \frac{\kappa}{2}(1-\phi_{\e})+\e^{\frac34}\|\nabla{\tilde\varphi}_{\e}\|_{L^{\infty}}+W'(\phi_{\e}) \\
&\leq -\frac{\kappa}{2}(1-\phi_{\e})+\e^{\frac34}\|\nabla{\tilde\varphi}_{\e}\|_{L^{\infty}}
 \leq -\frac{\kappa}{2} \e^{\lambda}+\e^{\frac34}\|\nabla{\tilde\varphi}_{\e}\|_{L^{\infty}},
\end{split}
\end{equation*}
where $W'(\phi_{\e})\leq -\kappa(1-\phi_{\e})$ follows from \eqref{rrest4} and \eqref{Was3} and
$|{\tilde u}_{\e}|\leq \e^{-\beta}=\e^{-\frac14}$ by \eqref{supucon} and \eqref{cb14}.
We also used $\psi\geq {\tilde\psi}\geq 1$ in the last inequality. 
Since $\|\nabla{\tilde\varphi}_{\e}\|_{L^{\infty}}$ is bounded uniformly in $\e$ (see Lemma \ref{est1})
and $\lambda\leq 2/3<3/4$,
for sufficiently small $\e$, this is a contradiction. The other case may be proved similarly.
\end{proof}
%%%%%%%%%%%%%%%%%%%%%%
\begin{lem}\label{alphad}
Under the assumptions of Lemma \ref{alpha}, there exist $\Cl[c]{c-21}$ and $\Cl[eps]{eps-7}$ with the following property.
For $t_0 \in (\delta,T)$ and $0<r<1/2$ define
\begin{equation}
Z_{r,t_0}:=\{x\in \Omega \, :\, \inf_{B_r(x)\times(t_0-r^2,t_0)} |\varphi_{\e}|<\alpha\}.
\label{zrtdef}
\end{equation}
If $0<\e<\Cr{eps-7}$, then 
\begin{equation}
{\mathcal L}^n(Z_{r,t_0})\leq \Cr{c-21} r.
\label{alphad1}
\end{equation}
\end{lem}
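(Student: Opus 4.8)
The plan is to identify $Z_{r,t_0}$ with the open $r$-neighborhood of a ``witness set'' and then to cover that set by a controlled number of balls of radius comparable to $r$. Write $\tilde A:=\{(\tilde x,\tilde t)\in\Omega\times(t_0-r^2,t_0)\,:\,|\varphi_\e(\tilde x,\tilde t)|<\alpha\}$ and let $\pi(\tilde A)\subset\Omega$ be its spatial projection; one checks directly from the definition \eqref{zrtdef} that $Z_{r,t_0}=\{x\,:\,\dist(x,\pi(\tilde A))<r\}$. I would carry out the argument first in the main regime $r<r_1$, where $r_1$ is a threshold depending only on $n,\Cr{c-0},\Cr{c-1},p,q,T,W,D_0$ and $\delta$; the remaining range $r_1\le r<\tfrac12$, in which $r$ is bounded below, requires a separate and more tedious reduction to the main regime (shrinking the time window and, for witness times near $0$, using the finiteness of $\mathcal H^{n-1}(M_0)$), which I will not detail. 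In the main regime the decisive choice is the common reference time $\hat t:=t_0-2r^2$: it lies in $[0,T]$ once $r\le\sqrt{\delta/2}$, and for every $\tilde t\in(t_0-r^2,t_0)$ one has $r^2\le\tilde t+\e^2-\hat t\le 3r^2$ as soon as $\e\le r$, so the backward heat kernel with pole at $(\tilde x,\tilde t+\e^2)$, evaluated at time $\hat t$, is concentrated at precisely the scale $r$.

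The heart of the proof is the energy lower bound, uniform over $\tilde x\in\pi(\tilde A)$,
\[
\mu_{\hat t}^\e\big(B_{Lr}(\tilde x)\big)\ \geq\ \eta_2(4\pi)^{\frac{n-1}{2}}\,r^{n-1},
\]
where $\eta_2=\eta_2(n,\Cr{c-0},W)$ is the constant of Lemma \ref{lem-eta} and $L=L(n,D_1,\eta_2)$. To prove it I would first note that, since $|\varphi_\e(\tilde x,\tilde t)|<\alpha$, the gradient bound of Lemma \ref{est1} keeps $\varphi_\e(\cdot,\tilde t)$ away from $\pm1$ on a ball of radius $\sim\e$ about $\tilde x$, whence, exactly as in \eqref{etax1}, $\int_\Omega\tilde\rho_{(\tilde x,\tilde t+\e^2)}(y,\tilde t)\,d\mu_{\tilde t}^\e(y)\ge 3\eta_2$. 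Next I apply the monotonicity formula \eqref{longeq} with pole $(\tilde x,\tilde t+\e^2)$, lower time $\hat t$ and upper time $\tilde t$: since the time increment $\tilde t-\hat t$ is $\le 3r^2$ and $\tilde t+\e^2-\hat t\ge r^2$, the right-hand side of \eqref{longeq} is at most a quantity tending to $0$ as $r\to0$ plus $\Cr{c-11}\e^{\beta'-\beta}|\log\e|$, hence is $<\eta_2$ for $r$ and then $\e$ small; therefore $\int_\Omega\tilde\rho_{(\tilde x,\tilde t+\e^2)}(y,\hat t)\,d\mu_{\hat t}^\e(y)\ge 2\eta_2$. Finally, as $\hat t\in[0,T]$ and $\e<\Cr{eps-1}$, the density ratio bound \eqref{denconcl} is available at time $\hat t$, so a Gaussian tail estimate of the type used in \eqref{sptex1} shows that the contribution of $\Omega\setminus B_{Lr}(\tilde x)$ to the last integral is $<\eta_2$ once $L$ is large; the remaining contribution over $B_{Lr}(\tilde x)$ is thus $\ge\eta_2$, and combining it with $\tilde\rho_{(\tilde x,\tilde t+\e^2)}(\cdot,\hat t)\le(4\pi r^2)^{-(n-1)/2}$ yields the displayed bound.

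The conclusion is then a counting argument at the single time $\hat t$. The balls $\{B_{Lr}(\tilde x)\}_{\tilde x\in\pi(\tilde A)}$ cover $\pi(\tilde A)$, so by the Besicovitch covering theorem there is a subfamily $\{B_{Lr}(\tilde x_j)\}_{j\in J}$ which still covers $\pi(\tilde A)$ and has overlap at most $B(n)$; since every one of these balls carries $\mu_{\hat t}^\e$-mass at least $\eta_2(4\pi)^{(n-1)/2}r^{n-1}$, summing over $j\in J$ and using $\mu_{\hat t}^\e(\Omega)\le D_1$ gives $\#J\le B(n)D_1\eta_2^{-1}(4\pi)^{-(n-1)/2}r^{1-n}$. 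In particular $J$ is finite and $Z_{r,t_0}\subset\bigcup_{j\in J}B_{(L+1)r}(\tilde x_j)$, so $\mathcal L^n(Z_{r,t_0})\le\#J\,\omega_n((L+1)r)^n\le\Cr{c-21}r$ with $\Cr{c-21}:=B(n)D_1\omega_n(L+1)^n\eta_2^{-1}(4\pi)^{-(n-1)/2}$, a constant depending only on $n,\Cr{c-0},\Cr{c-1},p,q,T,W,D_0$ (and, via the reduction for $r\ge r_1$, on $\delta$).

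The step I expect to be the genuine obstacle is precisely the one forcing $r$ to be small: a single witness $(\tilde x,\tilde t)$ only certifies concentration of energy at the $\e$-scale, and the only device for upgrading this to the $r$-scale --- which is what makes the covering count come out correctly --- is to propagate it backward in time through \eqref{longeq} over a span of order $r^2$. This is legitimate only because the right-hand side of \eqref{longeq} is $\e$-independent and vanishes with the time increment, so one needs $r^2$ small relative to the fixed constant $\eta_2$ and the reference time $\hat t=t_0-2r^2$ non-negative; the leftover range of $r$ --- equivalently, witness times too close to $0$ to admit such a reference time --- is then controlled by the fixed $C^1$ hypersurface $M_0$.
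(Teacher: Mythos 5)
Your proof is correct and takes essentially the same route as the paper: a witness $(x',t')$ is pushed back to a common reference time $\hat t = t_0-2r^2$ via the monotonicity formula \eqref{longeq}, yielding a uniform lower bound $\mu_{\hat t}^\e(B_{\mathrm{const}\cdot r}(\cdot)) \gtrsim r^{n-1}$, after which Besicovitch and the uniform total-energy bound give the covering count. The only cosmetic differences are that the paper centers the Besicovitch balls at points $x_0\in Z_{r,t_0}$ directly (enlarging the radius by $1$ so that the ball around the witness $x'\in B_r(x_0)$ is contained in it), while you center them at the projected witness points $\tilde x\in\pi(\tilde A)$ and then enlarge to cover $Z_{r,t_0}$, and the paper invokes the bound $\mu_{\hat t}^\e(\Omega)\le E_0$ from Lemma \ref{erlem3} whereas you use $\mu_{\hat t}^\e(\Omega)\le D(\hat t)\le D_1$; both are legitimate. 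The paper disposes of the range of ``not small'' $r$ with the one-line remark that $\Cr{c-21}$ may be enlarged so that the bound holds trivially there; your concern about this case is reasonable (particularly for $\Omega={\mathbb R}^n$), but since the lemma is only ever applied with $r=O(\e|\log\e|)$, and the paper itself does not detail this case, your decision to flag and not pursue it matches the source.
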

\begin{proof}
For $x_0\in Z_{r,t_0}$, we claim that there exist positive constants $\Cl[c]{c-23}$ and $\Cl[c]{c-24}$ such that
\begin{equation}
\mu_{t_0-2r^2}^{\e}(B_{\Cr{c-23} r}(x_0))\geq \Cr{c-24} r^{n-1}.
\label{alphad2}
\end{equation}
Once \eqref{alphad2} is proved, the Besicovitch covering theorem and  \eqref{e0sup} prove \eqref{alphad1}
with an appropriate choice of $\Cr{c-21}$. 
To prove \eqref{alphad2}, for each $x_0\in Z_{r,t_0}$ we have $(x',t')\in B_r(x_0)\times (t_0-r^2,t_0)$
such that $|\varphi_{\e}(x',t')|<\alpha$. Just as in the proof of Lemma \ref{lem-low}, we have 
\begin{equation}
3 \Cr{c-24}\leq \int_{\Omega} {\tilde\rho}_{(x',t'+\e^2)}(x,t')\, d\mu_{t'}^{\e}(x).
\label{alphad3}
\end{equation}
By \eqref{longeq} with $t_1$ and $t_0$ there replaced by $t'$ and $t_0-2r^2$, and restricting 
$r$ and $\e$ appropriately depending on constants appearing in the right-hand side of \eqref{longeq},
we obtain
\begin{equation}
 \left.\int_{\Omega} {\tilde\rho}_{(x',t'+\e^2)}(x,t)\, d\mu_{t}^{\e}(x)\right|_{t=t_0-2r^2}^{t'}\leq \Cr{c-24}.
\label{alphad4}
\end{equation}
The inequalities \eqref{alphad3} and \eqref{alphad4} show that
\begin{equation}
2\Cr{c-24}\leq \int_{\Omega}{\tilde\rho}_{(x',t'+\e^2)} (x,t_0-2r^2)\, d\mu_{t_0-2r^2}^{\e}(x).
\label{alphad5}
\end{equation}
Using the estimate \eqref{denconcl}, we may choose a large $\Cr{c-23}>1$ depending only on $D_1$ so that
\begin{equation}
\int_{\Omega\setminus B_{\Cr{c-23}r}(x')} {\tilde\rho}_{(x',t'+\e^2)}(x,t_0-2r^2)\, 
d\mu_{t_0-2r^2}^{\e}(x)\leq \Cr{c-24}.
\label{alphad6}
\end{equation}
By \eqref{alphad5} and \eqref{alphad6} we obtain
\begin{equation}
\Cr{c-24}\leq\int_{B_{\Cr{c-23}r}(x')} {\tilde\rho}_{(x',t'+\e^2)}(x,t_0-2r^2)\, d\mu_{t_0-2r^2}^{\e}(x).
\label{alphad7}
\end{equation}
Since ${\tilde\rho}_{(x',t'+\e^2)}(x,t_0-2r^2)\leq r^{1-n}$ and $B_{\Cr{c-23}r}(x')\
\subset B_{(\Cr{c-23}+1)r}(x_0)$, by setting $\Cr{c-23}+1$ to be again $\Cr{c-23}$, we
obtain \eqref{alphad2}. We restricted $r$ to be small, but when $r$ does not satisfy 
the restriction, we may choose $\Cr{c-21}$ large so that \eqref{alphad1} holds 
trivially.
\end{proof}
%%%%%%%%%%%%%%%%%%%%%%
\begin{pro}\label{b}
Suppose $\varphi_{\e}$ and $u_{\e}$ are the solutions for \eqref{beq1} constructed in Section \ref{secexm}.
Given $0<\delta<T$ and $0<s<1$, there exist $0<b<1$ and $0<\Cl[eps]{eps-8}<1$ such that
\begin{equation}
\int _{ \{ x\in\Omega\ : \ |\varphi_{\e}(x,t) |\geq 1-b \}} \frac{W(\varphi_{\e}(x,t))}{\e}\, dx\leq s
\label{s}
\end{equation}
for all $t\in (\delta,T)$ if $0<\e\leq \Cr{eps-8}$.
\end{pro}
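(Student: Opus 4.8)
The plan is to reduce the assertion to an estimate for the model quantity $\e^{-1}(1-|\varphi_\e|)^2$ and then control that quantity by a Cavalieri (layer--cake) argument, in which the measures of the superlevel sets of $1-|\varphi_\e(\cdot,t)|$ are estimated by combining Lemmas \ref{alpha} and \ref{alphad}. Since $W\in C^3$ with $W(\pm1)=W'(\pm1)=0$, Taylor's theorem gives $W(\zeta)\leq\tilde c(1-|\zeta|)^2$ on $[-1,1]$ with $\tilde c:=\tfrac12\sup_{[-1,1]}|W''|$, so it is enough to choose $b\in(0,1-\alpha]$ and $\Cr{eps-8}>0$ so that $\int_{\{|\varphi_\e(\cdot,t)|\geq1-b\}}\e^{-1}(1-|\varphi_\e|)^2\,dx\leq s':=s/\tilde c$ for all $t\in(\delta,T)$ and $\e\leq\Cr{eps-8}$. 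Writing $g_\e:=\e^{-1}(1-|\varphi_\e(\cdot,t)|)^2$, Cavalieri's principle yields $\int_{\{|\varphi_\e|\geq1-b\}}g_\e\,dx=\int_0^\infty\mathcal L^n(A_\mu)\,d\mu$, where $A_\mu:=\{x:1-b\leq|\varphi_\e(x,t)|<1-\sqrt{\e\mu}\}$, which is empty once $\sqrt{\e\mu}\geq b$. I would split the $\mu$-integral at $\mu=\e^{1/3}$.

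For the moderate range $\e^{1/3}\leq\mu\leq b^2/\e$ I write $\sqrt{\e\mu}=\e^{\lambda}$, i.e.\ $\lambda=\lambda(\mu):=\log\sqrt{\e\mu}/\log\e\in(0,2/3]$. Every $x\in A_\mu$ satisfies $\varphi_\e(x,t)<1-\e^{\lambda}$ or $\varphi_\e(x,t)>-1+\e^{\lambda}$, and once $\e$ is small enough (the requirements $1\leq\Cr{c-20}\lambda|\log\e|=\Cr{c-20}|\log\sqrt{\e\mu}|\leq\e^{-1}\min\{\sqrt{\delta/2},1/2\}$ hold because $\sqrt{\e\mu}\in[\e^{2/3},b]$ and $b$ will be taken $\leq e^{-1/\Cr{c-20}}$) Lemma \ref{alpha} applies at $(x,t)$; combined with the intermediate value theorem it gives $A_\mu\subset Z_{\e\tilde r_\mu,t}$ with $\tilde r_\mu:=\Cr{c-20}|\log\sqrt{\e\mu}|$. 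Hence $\e\tilde r_\mu<\tfrac12$ for $\e$ small, and Lemma \ref{alphad} yields $\mathcal L^n(A_\mu)\leq\Cr{c-21}\Cr{c-20}\,\e\,|\log\sqrt{\e\mu}|$. Integrating and substituting $\nu=\e\mu$,
\[
\int_{\e^{1/3}}^{b^2/\e}\mathcal L^n(A_\mu)\,d\mu\;\leq\;\tfrac12\Cr{c-21}\Cr{c-20}\int_0^{b^2}|\log\nu|\,d\nu\;=\;\tfrac12\Cr{c-21}\Cr{c-20}\,b^2\bigl(1+2|\log b|\bigr),
\]
which tends to $0$ as $b\downarrow0$, so I fix $b\in(0,\min\{1-\alpha,e^{-1/\Cr{c-20}}\}]$ making this $\leq s'/2$.

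For the deep range $0<\mu<\e^{1/3}$, splitting $A_\mu$ at the level $1-\e^{2/3}$ gives $\int_0^{\e^{1/3}}\mathcal L^n(A_\mu)\,d\mu\leq\e^{1/3}\mathcal L^n(\{1-b\leq|\varphi_\e(\cdot,t)|<1-\e^{2/3}\})+\int_{\{1-|\varphi_\e(\cdot,t)|<\e^{2/3}\}}g_\e\,dx$; the first term is $\leq\Cr{c-21}\Cr{c-20}\,\e^{4/3}|\log\e|$ by Lemmas \ref{alpha}--\ref{alphad} with $\lambda=2/3$, and on the second set $g_\e<\e^{1/3}$, so for $\Omega=\mathbb T^n$ that integral is $\leq\e^{1/3}$. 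For $\Omega=\mathbb R^n$ the barrier $\varphi_\e\leq\psi_\e$ of Section \ref{secexm} (see \eqref{domainapp2}) makes $1+\varphi_\e$ super-exponentially small outside a ball $B_{r_0(\e)}$ with $r_0(\e)\lesssim\|u_\e\|_{L^\infty}$, so the part of $\int_{\{1-|\varphi_\e|<\e^{2/3}\}}g_\e$ over $\mathbb R^n\setminus B_{r_0(\e)}$ is $o(1)$ (exponential decay beats the polynomial growth of $r_0(\e)$), while over $B_{r_0(\e)}$ it is $\leq\e^{1/3}\omega_n r_0(\e)^n=o(1)$, using that the approximating fields in Section \ref{secexm} may be chosen with $\|u_\e\|_{L^\infty}$ growing slower than any negative power of $\e$. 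Thus both ranges together are $\leq s'$ for $t\in(\delta,T)$ once $\e\leq\Cr{eps-8}$, with $\Cr{eps-8}$ chosen to meet all the smallness conditions above; uniformity in $t$ is automatic since $\Cr{c-20},\Cr{c-21}$ and the barrier do not depend on $t$.

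The step I expect to be the crux is the Cavalieri estimate in the moderate range. The naive bound $W(\varphi_\e)\leq\tilde c b^2$ against $\mathcal L^n(\{|\varphi_\e|\geq1-b\})$ is hopelessly lossy, producing a non-absorbable factor $|\log\e|$; what saves the argument is that the parabolic covering radius furnished by Lemma \ref{alpha} for the threshold $1-\sqrt{\e\mu}$ is only $\e\,\Cr{c-20}|\log\sqrt{\e\mu}|$ --- of size $\e$ up to a logarithm in $\mu$ --- so that the $\mu$-integration of $\mathcal L^n(A_\mu)$ converges and is controlled by the harmless $b^2|\log b|$. The remaining technical point, the unbounded bulk phase when $\Omega=\mathbb R^n$, is dispatched by the barrier $\psi_\e$ together with the choice of the approximating vector fields.
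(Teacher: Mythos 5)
Your Cavalieri (layer--cake) reformulation is in substance the same route as the paper's proof: both rest on the measure bounds for the sets $\{|\varphi_\e(\cdot,t)|<1-\e^\lambda\}$ supplied by Lemmas~\ref{alpha} and~\ref{alphad}, and integrating $\mathcal L^n(A_\mu)$ in $\mu$ is precisely what the paper does discretely, summing over dyadic bands $A_j=\{1-\e^{1/2^{j+1}}\le|\varphi_\e|\le 1-\e^{1/2^{j}}\}$. Your moderate-range computation is correct, and the resulting bound $b^2(1+2|\log b|)$ is even a little sharper than the paper's $\sqrt b/\log 2$; both vanish as $b\downarrow 0$, which is all that is needed. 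Making the intermediate-value step explicit (to pass from $\inf_{\text{cyl}}\varphi_\e<\alpha$ or $\sup_{\text{cyl}}\varphi_\e>-\alpha$ to $x\in Z_{r,t}$, i.e.\ $\inf_{\text{cyl}}|\varphi_\e|<\alpha$) is a good catch; the paper states the inclusion without comment.

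The one genuine gap is in the $\Omega=\mathbb R^n$ tail of the deep range. The barrier gives $1+\varphi_\e\le 1+\psi_\e\le Ce^{-c(|x|-r_0)/\e}$ for $|x|>r_0$, but $\int_{|x|>r_0}\e^{-1}(1+\psi_\e)^2\,dx$ is of order $r_0^{\,n-1}$, \emph{not} $o(1)$: the $1/\e$ prefactor is absorbed by the exponential in $|x|/\e$, but the surface area $\sim r_0^{\,n-1}$ survives, and $r_0(\e)=3R+T\|u_\e\|_{L^\infty}$ grows as $\e\to 0$. So the slogan ``exponential decay beats the polynomial growth of $r_0$'' does not justify the $o(1)$ claim as written. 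To close this you must in addition use $g_\e<\e^{1/3}$ on $\{1-|\varphi_\e|<\e^{2/3}\}$: a second Cavalieri (or equivalently the observation that, by the barrier, $\{\sqrt{\e\mu}<1+\varphi_\e<\e^{2/3}\}\cap\{|x|>r_0\}$ is contained in a shell of thickness $O(\e\,|\log(\e\mu)|)$ about $|x|=r_0$) upgrades the tail bound to $\lesssim\e^{4/3}|\log\e|\,r_0^{\,n-1}$. With the bound $r_0\lesssim\|u_\e\|_{L^\infty}\lesssim\e^{-\beta}=\e^{-1/4}$ that Section~\ref{secexm} actually guarantees via \eqref{supucon}, this is $o(1)$ only for $n\le 6$; for larger $n$ your proposed extra constraint that $\|u_{\e}\|_{L^\infty}$ grows sub-algebraically is really needed. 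That constraint is compatible with the flexibility of the construction (one is free to shrink $\e_{j(i)}$ given $u_i$), but it is an additional choice, not an automatic feature of Section~\ref{secexm}, and should be stated as such.
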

\begin{proof}
We restrict $0<b$ to be small in the following independent of $\e$. 
Assume that 
\begin{equation}
1-\sqrt{b}>\alpha, \ \ \ \Cr{c-20}|\log b|\geq 1.
\label{prob1}
\end{equation}
 Choose $J=J( \e ,b) \in \mathbb{N}$ such that 
\begin{equation}
\e ^{\frac{1}{2^{J+1}}} \in (b,\sqrt{b}].
\label{prob2}
\end{equation}
Restrict $\e$ so that $\e \leq\min\{\Cr{eps-6},\Cr{eps-7}\}$ and $\Cr{c-20} |\log \e |\leq \e ^{-1}\min\{\sqrt{\delta/2},1/2\}$. 
Note that, with this choice of $b$ and $J$, we have by \eqref{prob2} and \eqref{prob1} that
\begin{equation}
\Cr{c-20} \frac{1}{2^J}|\log \e|\geq \Cr{c-20} |\log b|\geq 1.
\label{prob3}
\end{equation}
Fix $t_0 \in (\delta,T)$
and we define
\begin{equation}
 A_j :=\{ x\in \Omega \ : \ 1-\e ^{\frac{1}{2^{j+1}}} \leq |\varphi_{\e} (x,t_0)| \leq 1-\e ^{\frac{1}{2^{j}}} \} \qquad \text{for} \ j=1,\dots , J. 
\label{prob3.5}
\end{equation}
For any point $x_0\in A_j$, we apply Lemma \ref{alpha} with $\lambda=\frac{1}{2^j}$. Note that the 
condition \eqref{rrest1} is satisfied due to \eqref{prob3}. Thus setting $\tilde r:= \Cr{c-20}|\log \e|/2^j$, we obtain
\begin{equation}
\inf_{B_{\e \tilde r}(x_0)\times (t_0 -\e ^2 \tilde r ^2,t_0)}|\varphi_\e|  <\alpha.
\label{prob4}
\end{equation}
With the notation of \eqref{zrtdef}, \eqref{prob4} shows 
\begin{equation}
A_j\subset Z_{\Cr{c-20}\e|\log \e|/2^j,t_0}
\label{prob5}
\end{equation}
and the application of Lemma \ref{alphad} to \eqref{prob5} shows
\begin{equation}
{\mathcal L}^n (A_j)\leq \Cr{c-21}\Cr{c-20} 2^{-j}\e|\log \e|
\label{prob6}
\end{equation}
for all $j=1,\cdots,J$.
On $A_j$, by $|\varphi_\e |\geq 1-\e ^{\frac{1}{2^{j+1}}}$, we have
\begin{equation}
\frac{W(\varphi_\e)}{\e} \leq \big( \max _{ [-1 , 1]}|W''|\big)\cdot \e ^{-1} \frac{(\e ^{\frac{1}{2^{j+1}}})^2}{2}\leq 
c(W) \e ^{2^{-j}-1}. 
\label{prob7}
\end{equation}
Set $Y:=\{x\in\Omega \ :\  1-b \leq |\varphi_{\e}(x,t_0) | \leq 1-\sqrt{\e} \}$. By \eqref{prob3.5} and \eqref{prob2}, 
we have
\begin{equation}
Y \subset \cup _{j=1} ^J A_j.
\label{prob8}
\end{equation}
%By $\e ^{\frac{1}{2^{J+1}}}<\sqrt {b}$, we obtain
Combining \eqref{prob6}-\eqref{prob8} and setting $\Cl[c]{c-25}:=c(W) \Cr{c-21}\Cr{c-20}$, 
\begin{equation}
\begin{split}
\int _Y \frac{W(\varphi_{\e})}{\e} &\leq \sum _{j=1} ^J \int _{A_j}\frac{W(\varphi_{\e})}{\e} \leq \Cr{c-25}
|\log \e | \sum _{j=1} ^J 2^{-j} \e ^{2^{-j}}\leq \Cr{c-25}|\log \e | \int _{1}^{J+1} 2^{-t}\e ^{2^{-t}}\, dt \\
&= \Cr{c-25}\frac{\e ^{\frac{1}{2^{J+1}}}-\sqrt{\e}}{\log 2}\leq \frac{\Cr{c-25}\sqrt{b}}{\log 2}
\end{split}
\label{prob9}
\end{equation}
where we used the fact that $2^{-x}\e^{2^{-x}}$ is monotone increasing for $x\in [1,J+1]$ as long as $\log\sqrt{b}
\leq -1$, and \eqref{prob2}. We restrict $b$ so that the right-hand side of \eqref{prob9} is less than $s/2$. 
The similar estimate shows 
\begin{equation}
\int _{\{1-\sqrt{\e}\leq |\varphi_{\e}|\leq 1-\e^{\frac23}\}} \frac{W(\varphi_{\e} )}{\e}\leq \Cr{c-25} \e |\log \e |. 
\label{prob10}
\end{equation}
Recalling that $|\varphi_{\e}|\leq 1$, we have
\begin{equation}
\int _{ \{ 1-\e ^{\frac{2}{3}} \leq | \varphi_{\e} |\} } \frac{W(\varphi_{\e} )}{\e} \leq c(W) (\e ^{\frac{2}{3}})^2 \cdot \frac{1}{\e} \leq c(W)\e^{\frac{1}{3}}.
\label{prob11}
\end{equation}
By \eqref{prob9}-\eqref{prob11} we restrict $\e$ depending on $s$ so that we have \eqref{s}. 
\end{proof}
%%%%%%%%%%%%%%%%%%%
\subsection{Proof of integrality}
Finally we prove the integrality of $\mu _t$. 
\begin{thm}\label{integralitymu}
For a.e. $t>0$, $\mu_t=\theta{\mathcal H}^{n-1}\lfloor_{M_t}$, where
$M_t$ is countably $(n-1)$-rectifiable and $\theta (x,t) = N(x,t)\sigma$ for some ${\mathcal H}^{n-1}$
measurable integer-valued function,
$\mu _t$ a.e. $x\in \Omega$.
\end{thm}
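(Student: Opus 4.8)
The plan is to follow the blow-up method of \cite{tonegawa2000,tonegawa2003}, combining the vanishing of the discrepancy (Theorem~\ref{xiva}) with Propositions~\ref{a}, \ref{L}, \ref{b} and the almost-monotonicity in part~(A) of Lemma~\ref{stack}. Fix $t>0$ in the full-measure set for which Proposition~\ref{rec1} applies: then $\mu_t=\theta\,\mathcal{H}^{n-1}\lfloor_{M_t}$ is rectifiable with $\|V_t\|=\mu_t$, the convergence \eqref{assvar6} gives $\int_{\Omega}|\xi_{\e_i}(\cdot,t)|\,dx\to0$, \eqref{assvar5} holds, and along a subsequence (not relabeled) along which the $\liminf$ in \eqref{assvar5} is attained one has $V_t^{\e_i}\to V_t$ as varifolds. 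Since $\mu_t$ is rectifiable it suffices to prove that $\theta(x_0,t)\in\sigma\mathbb{N}$ for $\mu_t$-a.e.\ $x_0$. For such $x_0$, rectifiability (together with \eqref{assvar9}) gives the approximate tangent plane $T_{x_0}M_t$, the existence and positivity of the density $\Theta:=\lim_{r\downarrow0}(\omega_{n-1}r^{n-1})^{-1}\mu_t(B_r(x_0))=\theta(x_0,t)$, and the vanishing of the varifold tilt-excess $r^{1-n}\int_{B_r(x_0)}|T_xM_t-T_{x_0}M_t|^2\,d\mu_t\to0$. After a rigid motion we take $x_0=0$ and $T_{x_0}M_t=\{x_n=0\}$.

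The core is a diagonal argument over the pair $(R,\e)$. I would fix a small $R>0$ with $\mu_t(\partial B_{cR}(0))=0$ for the finitely many $c$ that occur, and then a small $\e=\e_i$ depending on $R$, so that, working with $\varphi_{\e}(\cdot,t)$ on $B_R$, the following hold up to errors tending to $0$: (i) $R^{1-n}\int_{B_{cR}}e_{\e}\to\Theta\,\omega_{n-1}c^{n-1}$, from $\mu_t^{\e_i}\to\mu_t$ and the density of $\mu_t$ at $0$; (ii) $R^{1-n}\int_{B_R}(1-(\nu_n)^2)\,\e|\nabla\varphi_{\e}|^2\,dx\to0$, from $V_t^{\e_i}\to V_t$ and the vanishing tilt-excess, since $1-(\nu_n)^2$ is bounded by $|T_xM_t-T_{x_0}M_t|^2$ in the relevant sense; (iii) $R^{1-n}\int_{B_R}|\xi_{\e}|\,dx\to0$ and $R^{1-n}\int_{B_R}\e|\nabla\varphi_{\e}|\,|\Delta\varphi_{\e}-W'(\varphi_{\e})/\e^2|\,dx\to0$, from \eqref{assvar6}, \eqref{assvar5} and Cauchy--Schwarz; and (iv) $\e/R\to0$, $\e^{\beta'-\beta}|\log\e|\to0$. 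A maximal-function/covering argument as in \cite{tonegawa2000} then shows that, outside a subset of the interface $\{|\varphi_\e(\cdot,t)|<1-b\}$ carrying an $o(R^{n-1})$ fraction of the energy, the scale-$\e$ forms of the hypotheses of Lemma~\ref{stack} and of Proposition~\ref{L} are valid: hypothesis~(6) of Lemma~\ref{stack} is the combination of (ii) and (iii) via the pointwise bound \eqref{llog1}; hypothesis~(7) is Corollary~\ref{atode}; hypothesis~(8) is the density ratio bound \eqref{denconcl}; and the hypotheses \eqref{clo1}--\eqref{clo2} of Proposition~\ref{L} hold around every good interface point, using Lemma~\ref{est1} and \eqref{-beta}.

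Next I would count sheets. Fix $s,b\in(0,1)$ and let $L>1$ and $\varrho\in(0,1)$ be furnished by Propositions~\ref{L} and~\ref{b}, and set $a:=\e L$. Since the rescaled measures $R^{1-n}\mu_t^{\e}$ converge to the multiplicity-$\Theta$ plane $\Theta\,\mathcal{H}^{n-1}\lfloor_{\{x_n=0\}}$, the interface crossings on the $x_n$-axis in $B_{R/2}$ cluster (using the lower density bound of Lemma~\ref{lem-low}) to within vertical distance $o(R)$ of $0$, so a maximal $3a$-separated set $Y$ of crossing points satisfies $\diam Y\le\varrho R$ once $\e/R$ is small; moreover by \eqref{clo3} the set $\{|\varphi_\e|<1-b\}\cap B_{R/2}$ is a disjoint union of near-horizontal graphical sheets, one through each point of $Y$. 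A crude bound on $\#Y$, from the lower estimate in \eqref{estL} (each crossing carries at least $(\sigma-s)\omega_{n-1}a^{n-1}$) against the uniform bound $\int_{B_{R/2}}e_\e=O(\Theta R^{n-1})$, bounds $\#Y$ by an integer depending only on $\Theta,\sigma,n$. Then Proposition~\ref{a} applies, and together with the upper estimate in \eqref{estL} and part~(A) of Lemma~\ref{stack} it gives $R^{1-n}\int_{\{|\varphi_\e|<1-b\}\cap B_{R/2}}e_\e=\#Y\cdot\sigma\,\omega_{n-1}(1/2)^{n-1}+o(1)$; meanwhile Proposition~\ref{b} and (iii) give $R^{1-n}\int_{\{|\varphi_\e|\ge1-b\}\cap B_{R/2}}e_\e=o(1)$ (writing $\e|\nabla\varphi_\e|^2/2=\xi_\e+W(\varphi_\e)/\e$). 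Hence $R^{1-n}\mu_t^{\e}(B_{R/2})=N_R\,\sigma\,\omega_{n-1}(1/2)^{n-1}+o(1)$ with $N_R:=\#Y\in\mathbb{N}$ bounded independently of $\e$ and $R$. Letting $\e\to0$ along a subsequence on which $N_R$ is constant, and using (i), gives $\mu_t(B_{R/2}(0))=N_R\,\sigma\,\omega_{n-1}(R/2)^{n-1}$ for a.e.\ small $R$; letting $R\downarrow0$ forces $\Theta/\sigma=\lim_R N_R\in\mathbb{N}$, which is $\ge1$ since $\Theta>0$. Countable $(n-1)$-rectifiability of $M_t$ is already contained in Proposition~\ref{rec1}, and $N=\theta/\sigma$ is $\mathcal{H}^{n-1}$ measurable because $\theta$ is.

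I expect the main obstacle to be the simultaneous extraction: producing, at a fixed small scale $R$, a single $\e$ for which every smallness requirement of Lemma~\ref{stack}/Proposition~\ref{a} and of Proposition~\ref{L} holds at once --- in particular the tilt estimate~(ii), which couples varifold convergence $V_t^{\e_i}\to V_t$ with the Lebesgue-point structure of $\mu_t$ at $x_0$ --- while keeping the sheet count $\#Y$ bounded uniformly and localizing the scale-$\e$ hypotheses off an energy-negligible exceptional set. Once this bookkeeping is arranged, verifying graphicality of the sheets from \eqref{clo3} and tracking the error terms is routine.
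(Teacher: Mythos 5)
Your outline matches the paper's setup (blow-up at a generic Lebesgue point, vanishing discrepancy via Theorem~\ref{xiva}, Propositions~\ref{a}, \ref{L}, \ref{b}), but there is a genuine gap in the step that is supposed to pin down the density as an integer multiple of~$\sigma$.

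You assert that Proposition~\ref{a}, the upper estimate in~\eqref{estL}, and part~(A) of Lemma~\ref{stack} combine to give the \emph{two-sided} estimate $R^{1-n}\int_{\{|\varphi_\e|<1-b\}\cap B_{R/2}}e_\e=\#Y\,\sigma\,\omega_{n-1}(1/2)^{n-1}+o(1)$. In fact all three cited results push in only one direction. Lemma~\ref{stack}(A) is an almost-monotonicity saying the density ratio at a small radius is \emph{dominated} by the density ratio at a larger radius; Proposition~\ref{a} uses it to deduce $\sum_{x\in Y}a^{1-n}\int_{B_a(x)}e_\e\le s+(1+s)R^{1-n}\int_{\{\dist(Y,\cdot)<R\}}e_\e$; and the lower half of~\eqref{estL} gives $a^{1-n}\int_{B_a(x)}e_\e\ge(\sigma-s)\omega_{n-1}$. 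Chaining these yields only the \emph{lower} bound $\#Y\,(\sigma-2s)\omega_{n-1}\lesssim\theta\,\omega_{n-1}$; this is what shows $\#Y$ cannot be too large. The \emph{upper} bound on the energy --- that the $B_{R/2}$-energy does not exceed $\#Y\,\sigma\,\omega_{n-1}(R/2)^{n-1}$ up to error --- is not produced by these estimates. The upper half of~\eqref{estL} is only an $O(\e L)$-scale statement, and \eqref{clo3} only says $\varphi_\e$ is monotone in $x_n$ along a segment of length $O(\e L)$; neither propagates to a claim that $\{|\varphi_\e|<1-b\}\cap B_{R/2}$ is a disjoint union of \emph{globally} near-horizontal graphs covering the full disc of radius $R/2$. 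Without that global graphicality your ``$=\ldots+o(1)$'' is an unproved equality, and the crucial conclusion $\theta\le\#Y\,\sigma+o(1)$ does not follow.

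The paper closes precisely this gap by a different mechanism. It fixes $N$ as the least integer with $N\sigma>\theta$, uses Proposition~\ref{a} (in the contradiction direction) to bound the number of aligned crossings by $N-1$, and then \emph{does not} try to compute the energy sheet by sheet. Instead it considers the push-forward $P_{\#}\hat V_t^{\e_i}$ of the associated varifold onto the tangent plane, which converges to $\theta|P|$, and writes the pushed-forward mass via the co-area formula and the area formula as $\int\sqrt{2W(\tau)}\,\mathcal H^0\bigl(\{\varphi_{\e_i}=\tau\}\cap B_1\cap G_i\cap P^{-1}(x)\bigr)$. The pointwise preimage count is bounded by $N-1$ on the good set $G_i$, so one obtains the \emph{upper} bound $\omega_{n-1}\theta\le 2s+\omega_{n-1}(N-1)\sigma$ without any global structure theorem for the sheets. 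Together with $\theta\ge(N-1)\sigma$ from the definition of $N$ this forces $\theta=(N-1)\sigma$. So the co-area/area step is not optional bookkeeping; it is the device that converts a one-sided covering estimate into a two-sided determination of $\theta$, and it is the piece your argument is missing.

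Two smaller imprecisions: (a) the quantity that must vanish after blow-up is $\e_i^{\beta'-\beta}|\log\e_i|/r_i^{n-1}$, not merely $\e^{\beta'-\beta}|\log\e|$, so the subsequence must be chosen to make this ratio small (cf.\ the use of Corollary~\ref{atode} and \eqref{inqrad1}); and (b) the localization needed to verify hypothesis~(6) of Lemma~\ref{stack} is not a generic ``maximal-function argument'' but the construction of the sets $A_{i,m}$ and the choice of a density point of $A=\cup_m A_m$, which is what makes the mean-curvature integrand small on a fixed neighbourhood after rescaling. Both are fixable, but the upper-bound gap is structural.
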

\begin{proof}
By the argument in the proof of Proposition \ref{rec1}, for a.e$.$ $t\geq 0$, we may choose a 
subsequence $\{V_t^{\e_{i_j}}\}_{j=1}^{\infty}$  such that \eqref{assvar6} and (with the notation of \eqref{stack3})
\begin{equation}
c_h(t):=\sup_{j} \int_{\Omega}\e_{i_j}|h_{\e_{i_j}}\nabla\varphi_{\e_{i_j}}|(x,t)\, dx<\infty
\label{inpr1}
\end{equation}
hold while $V_t^{\e_{i_j}}\rightarrow V_t$. Here $V_t$ is the rectifiable varifold uniquely 
determined by $\mu_t$ and recall that $\mu_t=\|V_t\|$. 
In the following we fix any such $t$ and show the claim of the theorem for $\mu_t$.
All functions are evaluated at the same $t$, and we do not write out the time variable (except for
$\mu_t$ and $V_t$ with or without ${\e_i}$) for simplicity. Moreover, though it is important to note that we are discussing a particular
subsequence (or its further subsequence), we denote $\e_{i_j}$ by $\e_i$ for simplicity. 

For any $m\in {\mathbb N}$, we define
\begin{equation}
A_{i,m}:=\big\{x\in \Omega\ :\ \int_{B_r(x)}\e_i |h_{\e_i}\nabla\varphi_{\e_i}|\, dx\leq m \mu_t^{\e_i}(B_r(x))\ \mbox{ for all 
$0<r<1/2$}\big\}.
\label{inpr2}
\end{equation}
The Besicovitch covering theorem with \eqref{inpr1} and \eqref{inpr2} shows that
\begin{equation}
\mu_t^{\e_i}(\Omega\setminus A_{i,m})\leq \frac{c(n) c_h(t) }{m}.
\label{inpr3}
\end{equation}
We then set
\begin{equation}
A_m:=\{x\in \Omega\ : \ \mbox{there exist $x_i\in A_{i,m}$ for infinitely many $i$ with $x_i\rightarrow x$}\}
\label{inqr4}
\end{equation}
and 
\begin{equation}
A:=\cup_{m=1}^{\infty}A_m.
\label{inqr5}
\end{equation}
We claim
\begin{equation}
\mu_t(\Omega\setminus A)=0.
\label{inqr8}
\end{equation}
Otherwise, we would have a compact set $K\subset \Omega\setminus A$ such that
$\mu_t(K)\geq \frac12 \mu_t(\Omega\setminus A)$. 
For any $m\in {\mathbb N}$ we have $K\subset \Omega\setminus A_m$ by \eqref{inqr5}.
For each point $x\in K$, by \eqref{inqr4}, there exists a neighborhood of $x$ which does not
intersect with $A_{i,m}$ for all sufficiently large $i$. Due to the compactness, thus, there exist
$i_0$ and an open set $O_m$ such that $K\subset O_m$ and $O_m\cap A_{i,m}=\emptyset$ for all
$i\geq i_0$. Let $\phi_m\in C_c(O_m;{\mathbb R}^+)$ such that $0\leq \phi_m\leq 1$ and $\phi_m=1$ on
$K$. Then 
\begin{equation}
\mu_t(K)\leq \int_{\Omega}\phi_m\, d\mu_t=\lim_{i\to\infty} \int_{\Omega} \phi_m \, d\mu_t^{\e_i}
= \lim_{i\to \infty} \int_{\Omega\setminus A_{j,m}}\phi_m\, d\mu_t^{\e_i}\leq \liminf_{i\to\infty}\mu_t^{\e_i}(\Omega
\setminus A_{j,m})
\label{inqr1s}
\end{equation}
for all $j\geq i_0$. Since the last quantity of \eqref{inqr1s} is less than $c(n)c_h(t)/m$ by \eqref{inpr3}, and
since $m$ is arbitrary, we obtain $\mu(K)=0$. This proves the claim \eqref{inqr8}.

Since $\mu_t$ is rectifiable, $\mu_t$ a.e. point 
$x$ has an approximate tangent space. By \eqref{inqr8}, we may also assume that for $\mu_t$ a.e. $x$
there exists some $m\in {\mathbb N}$ such that $x\in A_m$. We fix any such point, and after
a parallel translation, we may assume that $x=0$. Furthermore, after a rotation, we may assume that the
approximate tangent space is $P:=\{x_n=0\}$. Denote 
$\theta:=\lim_{r\downarrow 0}\frac{\|V_t\|(B_r(x))}{\omega_{n-1}r^{n-1}}$. We will be done if we prove 
that $\sigma^{-1}\theta\in {\mathbb N}$.

For any sequence $r_i\downarrow 0$, we have $\lim_{i\rightarrow\infty}(\Phi_{r_i})_{\#} V_t=\theta
|P|$, where $\Phi_{r_i}(x)=\frac{x}{r_i}$ and $(\Phi_{r_i})_{\#}$ is the usual push-forward of varifold. $|P|$ is the unit density varifold naturally derived from $P$. 
Since $0\in A_m$, there exists a subsequence (denoted by the same index) $x_i\in A_{i,m}$ such 
that $\lim_{i\rightarrow\infty}x_i=0$. After choosing a further subsequence, we may assume that
\begin{equation}
\lim_{i\rightarrow\infty}(\Phi_{r_i})_{\#} V_t^{\e_i}=\theta |P|,
\label{inqr8.1}
\end{equation}
\begin{equation}
\lim_{i\rightarrow\infty}\frac{x_i}{r_i}=0
\label{inqr8.2}
\end{equation}
and 
\begin{equation}
\lim_{i\rightarrow\infty}\frac{\e_i^{\beta'-\beta}|\log \e_i|}{r_i^{n-1}}=0.
\label{inqrad1}
\end{equation}
For a such choice, we also have $\lim_{i\rightarrow\infty}
\frac{\e_i}{r_i}=0$.
Rescale the coordinates by $\tilde x:=\frac{x}{r_i}$ and define ${\tilde \e}_i:=\frac{\e_i}{r_i}\rightarrow 0$. 
Define $\tilde{\varphi}_{{\tilde\e}_i}(\tilde x):=\varphi_{\e_i}(r_i \tilde x)$. We also define 
${\tilde\xi}_{{\tilde\e}_i}$ and ${\tilde h}_{{\tilde \e}_i}$ as in \eqref{defti} and \eqref{stack3} 
corresponding to ${\tilde \e}_i$ and ${\tilde\varphi}_{{\tilde\e}_i}$. Due to \eqref{assvar6}, we may
choose a further subsequence so that
\begin{equation}
\lim_{i\rightarrow\infty} \int_{B_3} |{\tilde\xi}_{{\tilde\e}_i}|\, d{\tilde x}=0.
\label{inqr8.3}
\end{equation}
Due to Corollary \ref{atode} and \eqref{inqrad1}, for any $y\in B_2$ and $0<r<2$, we have
\begin{equation}
\int_0^r\frac{d{\tilde \tau}}{{\tilde\tau}^n}\int_{B_{\tilde\tau}(y)}({\tilde\xi}_{{\tilde\e}_i})_+\, d{\tilde x}
=\frac{1}{r_i^{n-1}}\int_0^{r r_i}\frac{d\tau}{\tau^n}\int_{B_{\tau}(r_i y)}(\xi_{\e_i})_+\, dx
\leq \frac{2\Cr{c-8} \e_i^{\beta'-\beta}|\log \e_i|}{r_i^{n-1}} \rightarrow 0
\label{inqrad2}
\end{equation}
as $i\rightarrow \infty$. For ${\tilde h}_{{\tilde\e}_i}$, we have
\begin{equation}
{\tilde\e}_i \int_{B_3}|{\tilde h}_{{\tilde \e}_i}\nabla{\tilde\varphi}_{{\tilde\e}_i}|\, d{\tilde x}=\frac{ \e_i }{r_i^{n-2}}\int_{B_{3r_i}}|h_{\e_i}
\nabla\varphi_{\e_i}|\, dx
\leq \frac{m}{r_i^{n-2}}\mu_t^{\e_i}(B_{4r_i}(x_i)) \leq m 4^{n-1}\omega_{n-1}D_1 r_i\rightarrow 0
\label{inqr9}
\end{equation}
as $i\rightarrow \infty$, where we used \eqref{inqr8.2}, $x_i\in A_{i,m}$, \eqref{inpr2} and \eqref{denconcl}.
If one defines a varifold ${\tilde V}_t^{{\tilde\e}_i}$ corresponding to ${\tilde\varphi}_{{\tilde \e}_i}$ as in \eqref{assvar}, then one
can check that ${\tilde V}_t^{{\tilde\e}_i}=(\Phi_{r_i})_{\#} V_t^{\e_i}$. 
Next we claim 
\begin{equation}
\int_{B_3} (1-(\nu_n)^2){\tilde\e}_i |\nabla {\tilde\varphi}_{{\tilde\e}_i}|^2\, d{\tilde x}\rightarrow 0
\label{inqr10}
\end{equation}
as $i\rightarrow \infty$, where $\nu=(\nu_1,\cdots,\nu_n)=\frac{\nabla{\tilde\varphi}_{{\tilde \e}_i}}{|\nabla  {\tilde\varphi}_{{\tilde \e}_i}|}$. Note first that $G_{n-1}({\mathbb R}^n)\cong {\mathbb S}^{n-1}/\{\pm 1\}$ and a function defined
by $\psi \, : \, \pm \nu\in {\mathbb S}^{n-1}/\{\pm 1\}\longmapsto 1-\nu_n^2$ is continuous. Thus for any 
$\phi\in C_c({\mathbb R}^n)$, we have by \eqref{inqr8.1}
\begin{equation}
{\tilde V}_t^{{\tilde\e}_i}(\phi\psi)=\int \phi({\tilde x})(1-(\nu_n)^2)\, d\|{\tilde V}_t^{{\tilde\e}_i}\|({\tilde x})
\rightarrow \theta|P|(\phi\psi)
\label{inqr11}
\end{equation}
and since $P=\{x_n=0\}$, 
\begin{equation}
\theta|P|(\phi\psi)=\theta\int_{P} \phi({\tilde x}) \psi((0,\cdot,0,\pm 1))\, d{\mathcal H}^{n-1}({\tilde x})=0.
\label{inqr12}
\end{equation}
In particular, \eqref{inqr11} and \eqref{inqr12} prove \eqref{inqr10}. In the following we fix
this subsequence and drop 
the tilde for simplicity. 

Assume that $N$ is the smallest positive integer greater than $\sigma ^{-1} \theta$, that is, 
\begin{equation}
\theta \in [(N-1)\sigma, N\sigma).
\label{intf0}
\end{equation}
Let $s>0$
be arbitrary. By Proposition $\ref{b}$ and \eqref{inqr8.3}, there exists $0<b<1$ such that
\begin{equation}
\int _{B_3 \cap \{ |\varphi_{\e_i}|\geq 1-b \}} \Big( \frac{\e _i |\nabla \varphi_{\e_i}|^2}{2} +\frac{W(\varphi_{\e_i})}{\e _i} \Big)\leq s
\label{intf1}
\end{equation}
for all sufficiently large $i$. Corresponding to $s$ and $b$ as well as $c$ given by Lemma \ref{est1}, 
by Proposition \ref{L}, we choose $\varrho$ and $L$ (with a restriction on $\e_i$).
Then with $R=2$, by Proposition \ref{a}, we restrict $\varrho$ further if necessary. We use Proposition \ref{a} with $a=L\e_i$. 
For all large $i$ we define
\begin{equation}
\begin{split}
G_i :=  B_2 \cap \{ |\varphi_{\e_i}|\leq 1-b \} 
 \cap \Big\{ x \ : \ & \int _{B_r (x)} \e _i |h_{\e_i}\nabla\varphi_{\e_i}| + |\xi_{\e_i}|  + (1-(\nu _n)^2 ) \e _i |\nabla \varphi_{\e_i}|^2  \\
& \qquad \qquad  \leq \varrho\, \mu_{t} ^{\e_i} (B_r (x)) \ \text {if} \ \e _i L\leq r\leq 1 \Big\}.
\end{split}
\label{intf2}
\end{equation}
By the Besicovitch covering theorem, we obtain
\begin{equation}
\mu_{t} ^{\e_i} (B_2 \cap \{ |\varphi_{\e_i}|\leq 1-b \}\setminus G_i )
%+{\mathcal H}^{n-1}(P(B_2 \cap \{ |\varphi_{\e_i}|\leq 1-b \}\setminus G_i)) \\
\leq  c(n) \varrho^{-1} \int _{B_3} \e _i |h_{\e_i}\nabla\varphi_{\e_i}| + |\xi_{\e_i}|  + (1-(\nu _n)^2 ) \e _i |\nabla \varphi_{\e_i}|^2.
\label{intf3}
\end{equation}
The right hand side goes to $0$ as $i\to \infty$ by \eqref{inqr9}, \eqref{inqr8.3}, \eqref{inqr10}. 
Next we claim the following lower bound for all sufficiently large $i$:
\begin{equation}
\mu_t^{\e_i}(B_r(x))\geq (\sigma-2s)\omega_{n-1} r^{n-1}
\label{intf4}
\end{equation}
for all $L\e_i\leq r\leq 1$ and $x\in G_i$. To see this, first note that the assumptions of Proposition \ref{L} are all satisfied
due to Lemma \ref{est1}, \eqref{intf2} and \eqref{-beta}. This proves the inequality \eqref{intf4} with $r=L\e_i$ and with
$2s$ replaced by $s$. Next the identity \eqref{s1} with $\zeta_2\equiv 1$, \eqref{s2}, 
\eqref{inqrad2} and \eqref{intf2} shows
\begin{equation}
\frac{1}{\tau^{n-1}}\mu_t^{\e_i}(B_{\tau}(x))\Big|_{\tau=L\e_i}^r \geq o(1)-\int_{L\e_i}^r
\varrho \frac{\mu_t^{\e_i}(B_{\tau}(x))}{\tau^{n-1}}\, d\tau\geq o(1)-\omega_{n-1}D_1
\varrho
\label{intf5}
\end{equation}
after integrating over $[L\e_i,r]$. 
We may restrict $\varrho$ so that $D_1\varrho<s$. Thus \eqref{intf5} gives
\eqref{intf4} for all sufficiently large $i$. Since $\mu_t^{\e_i}=\|V_t^{\e_i}\|\rightarrow \theta {\mathcal H}^{n-1}\lfloor_P$,
\eqref{intf4} shows that points in $G_i$ converge uniformly to $P$ as $i\rightarrow \infty$. 

For any $x\in P \cap B_1$ and $|l| \leq 1-b$, we next prove 
\begin{equation}
\# (P^{-1} (x) \cap G_i \cap \{ \varphi_{\e_i}=l \})\leq N-1.
\label{intf8}
\end{equation}
If the claim were not true, we choose $N$ elements
and set it to be $Y$, and apply Proposition \ref{a} with $R=1$, $\varphi=\varphi_{\e_i}$ and $a=L\e _i$.  
The property $|y-z|>3L\e_i$ holds due to \eqref{clo3},
${\rm diam}\, Y\leq \varrho$ due to the uniform convergence of $G_i$ to $P$,  (6), (7) are due respectively to
\eqref{intf2} and \eqref{inqrad2}. Thus all the assumptions of Proposition \ref{a} are satisfied and we have
\begin{equation}
\sum_{y\in Y}\frac{1}{(L\e_i)^{n-1}}\mu_t^{\e_i}(B_{L\e_i}(y))\leq s+(1+s)\mu_t^{\e_i}(\{z\,:\, {\rm dist}\,(Y,z)<1\})
\label{intf6}
\end{equation}
for all sufficiently large $i$. 
Since $\lim_{i\rightarrow \infty}\mu_t^{\e_i}(\{z\,:\, {\rm dist}\,(Y,z)<1\})=\theta \omega_{n-1}$, $\#Y=N$ and \eqref{intf4}, 
we obtain
\begin{equation}
N(\sigma-2s)\omega_{n-1}\leq s+(1+s)\theta \omega_{n-1}.
\label{intf7}
\end{equation}
Since $\sigma N>\theta$ by definition, \eqref{intf7} is a contradiction for sufficiently small $s$ depending only on
$\sigma$, $\theta$ and $n$. Thus we proved \eqref{intf8}. 

To conclude the proof, we consider push-forward of ${\hat V}_t^{\e_i}:=V_t^{\e_i}\lfloor_{\{ |x_n|\leq 1\}\times {\bf G}(n,n-1)}$ by $P$, $P_{\#} {\hat V}_t^{\e_i}$. For any $\phi(x,S)\in C_c((P\cap B_2)\times {\bf G}(n,n-1))$, we have (for all sufficiently large $i$)
\begin{equation}
P_{\#} {\hat V}_t^{\e_i}(\phi)=\int_{\{|x_n|\leq 1\}} \phi(P(x), P) |\Lambda_{n-1}P\circ
(I-\nu\otimes\nu)|\, d\mu_t^{\e_i}.
\end{equation}
Here $\Lambda_{n-1} A$ denotes the Jacobian of 
$A\in {\rm Hom}({\mathbb R}^n;{\mathbb R}^n)$ (\cite{allard}). 
One can check that $|\Lambda_{n-1} P\circ (I-\nu\otimes\nu)|=|\nu_n|=\frac{|\partial_{x_n}\varphi_{\e_i}|}{|\nabla\varphi_{\e_i}|}$. 
Due to the varifold convergence \eqref{inqr8.1}, we have $P_{\#}{\hat V}_t^{\e_i}\rightarrow P_{\#}(\theta|P|)=\theta|P|$ as $i\rightarrow\infty$. 
In the following we also use
\begin{equation}
\lim_{i\rightarrow\infty}\int_{B_3}\Big| \frac{\e_i |\nabla \varphi_{\e_i}|^2}{2}+\frac{W(\varphi_{\e_i})}{\e_i}-|\nabla \varphi_{\e_i} | \sqrt{2W(\varphi _{\e_i})}  \Big|\,
dx=0
\label{intf9}
\end{equation}
which follows from \eqref{inqr8.3}. Now we have
\begin{equation}
\begin{split}
\omega_{n-1} \theta &=\|\theta|P| \|(B_1)=\lim_{i\rightarrow\infty}
\|P_{\#} {\hat V}_t^{\e_i}\|(B_1)=\lim _{i\to \infty} \int _{B_1 } |\nu_n|\, d\mu_t^{\e_i} \\
&\leq \liminf _{i\to \infty } \int _{B_1 \cap \{ |\varphi_{\e_i} |\leq 1-b \} \cap G_i} |\nu_n|\, d\mu_t^{\e_i}+2s \\
&\leq \liminf_{i\to \infty} \int_{B_1\cap \{|\varphi_{\e_i} |\leq 1-b\}\cap G_i}|\nu_n| |\nabla\varphi_{\e_i}|\sqrt{2W(\varphi_{\e_i})}\, dx
+2s
\end{split}
\label{intf10}
\end{equation}
due to \eqref{intf1}, \eqref{intf3} and \eqref{intf9}.
By the co-area formula \cite[10.6]{simon}, we obtain 
\begin{equation}
\int_{B_1\cap \{|\varphi_{\e_i} |\leq 1-b\}\cap G_i}|\nu_n||\nabla\varphi_{\e_i}|\sqrt{2W(\varphi_{\e_i})}\, dx
=\int_{-1+b}^{1-b}\, d\tau \int_{ \{\varphi_{\e_i}=\tau\}\cap B_1\cap G_i} |\nu_n| \sqrt{2W(\tau)}\, d{\mathcal H}^{n-1}.
\label{intf11}
\end{equation}
Then by the area formula \cite[12.4]{simon} applied to the map $P\, :\, \{\varphi_{\e_i}=\tau\}\rightarrow \{x_n=0\}$, we have
\begin{equation}
\int_{\{\varphi_{\e_i}=\tau\}\cap B_1\cap G_i} |\nu_n|\, d{\mathcal H}^{n-1}=\int_{\{x_n=0\}} {\mathcal H}^0 
(\{\varphi_{\e_i}=\tau\}\cap B_1\cap G_i\cap P^{-1}(x))\, d{\mathcal H}^{n-1}(x).
\label{intf12}
\end{equation}
Now the integrand of the right-hand side of \eqref{intf12} is $\leq N-1$ due to \eqref{intf8} for $|x|\leq 1$, and 0 otherwise.
Combining \eqref{intf10}-\eqref{intf12}, we finally obtain
\begin{equation}
\omega_{n-1}\theta\leq 2s+ \liminf_{i\to\infty} \omega_{n-1}(N-1)\int_{-1+b}^{1-b}\sqrt{2W(\tau)}\, d\tau\leq 2s+\omega_{n-1}(N-1)\sigma.
\label{intf13}
\end{equation}
Since $s>0$ is arbitrary, \eqref{intf13} shows $\theta\leq (N-1)\sigma$. By \eqref{intf0}, we have $\theta = (N-1)\sigma$.
\end{proof}

%%%%%%%%%%%%%%%%%%%%%%%%%%%%%%%%%%%%%%%%%%%%%%%%%%%%%%%%%%%%%%%%%%%%%%%%%%%%%%%%%%%%%%
\section{Proof of  the Main theorem}
We finally define a family of varifolds which will be a generalized solution of \eqref{mcf}.
To remove the multiple of $\sigma$, we re-define $V_t$ as follows.
\begin{definition}
For a.e$.$ $t\geq 0$ when $\mu_t$ is rectifiable and integral modulo division by $\sigma$, 
let $V_t$ be the uniquely defined integral varifold by $\sigma^{-1}\mu_t$. For any other $t>0$,
define $V_t$ by $V_t(\phi):=\sigma^{-1}\int_{U} \phi(x,P_0)\, d\mu_t(x)$ for $\phi\in C_c(G_{n-1}(U))$, 
where $P_0\in {\bf G}(n,n-1)$ is an arbitrary fixed element. 
\label{varvdef}
\end{definition}
With this definition, we have $\|V_t\|=\sigma^{-1}\mu_t$ for all $t\geq 0$, and $V_t\in
{\bf IV}_{n-1}(\Omega)$ for a.e$.$ $t\geq 0$ by Theorem \ref{integralitymu}. Thus 
(a) of Definition \ref{gsdef} is satisfied. The condition (b) is satisfied due to \eqref{denconcl}. 
Let us consider (c). 
The $L^2$ integrability of $u$, $\int_0^T\int_{\Omega} |u|^2\, d\|V_t\|dt<\infty$, may be proved as in
\eqref{e02} and \eqref{e03} once (b) is established. For $h$, we prove the following. 
\begin{pro}
For a.e. $t\geq 0$, $V_t$ has a generalized mean curvature $h(V_t)$ and we have
\begin{equation}
\int _\Omega \phi |h(V_t)|^2 \, d\|V_t\|  
\leq \sigma^{-1}\liminf _{i\to \infty} \int _\Omega \e _i \phi\big( \Delta \varphi _{\e _i} -\frac{W'(\varphi _{\e _i} )}{\e _i ^2} \big)^2 \, dx<\infty
\label{Hineq}
\end{equation}
for any $\phi\in C_c(\Omega\,;\,{\mathbb R}^+)$.
\label{Hineqpr}
\end{pro}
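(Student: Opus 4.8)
The plan is to read the generalized mean curvature of $V_t$ off the first variation identity \eqref{assvar0} for the approximating varifolds $V_t^{\e_i}$, and then to recover the stated $L^2$-bound by a Cauchy--Schwarz argument in which the cut-off $\phi$ is split evenly between the two factors. Throughout I fix a time $t$ as in the proof of Proposition \ref{rec1}, so that along the chosen subsequence (relabelled $\e_i$) one has $V_t^{\e_i}\to\sigma V_t$ as varifolds (recall $\|V_t\|=\sigma^{-1}\mu_t$ by Definition \ref{varvdef}, while $\|V_t^{\e_i}\|=\mu_t^{\e_i}$), $\int_\Omega|\xi_{\e_i}(\cdot,t)|\,dx\to0$ by Theorem \ref{xiva}, and $\liminf_i\int_\Omega\e_i h_{\e_i}^2\,dx<\infty$ by \eqref{assvar5}, where $h_{\e_i}:=\Delta\varphi_{\e_i}-W'(\varphi_{\e_i})/\e_i^2$ as in \eqref{stack3}. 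By the homogeneity of \eqref{Hineq} in $\phi$ I may also assume $0\le\phi\le1$.

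First I discard the two ``bad'' terms in \eqref{assvar0}. For $g\in C^1_c(\Omega;\mathbb{R}^n)$ the last term of \eqref{assvar0} is bounded by $C\|\nabla g\|_{\infty}\int_{\{|\nabla\varphi_{\e_i}|=0\}}(W(\varphi_{\e_i})/\e_i)\,dx$, and on $\{|\nabla\varphi_{\e_i}|=0\}$ one has $W(\varphi_{\e_i})/\e_i=|\xi_{\e_i}|$ by \eqref{defti2}; together with the (obviously $o(1)$) middle term this gives $\delta V_t^{\e_i}(g)=\int_\Omega(g\cdot\nabla\varphi_{\e_i})\,\e_i h_{\e_i}\,dx+o(1)$, the error tending to $0$ because $|\xi_{\e_i}|\,dx\to0$. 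Now restrict to $g$ with $\spt g\subset\{\phi>0\}$ and $|g|^2\le\phi$, write $\nu_i:=\nabla\varphi_{\e_i}/|\nabla\varphi_{\e_i}|$ on $\{|\nabla\varphi_{\e_i}|\neq0\}$, and apply Cauchy--Schwarz to the splitting $(g\cdot\nabla\varphi_{\e_i})\e_i h_{\e_i}=\bigl(\phi^{-1/2}(g\cdot\nu_i)\e_i^{1/2}|\nabla\varphi_{\e_i}|\bigr)\bigl(\phi^{1/2}\e_i^{1/2}h_{\e_i}\bigr)$. Using $\e_i|\nabla\varphi_{\e_i}|^2=e_{\e_i}+\xi_{\e_i}$, the identity $|g\cdot\nu_i|^2=|g|^2-g\cdot(I-\nu_i\otimes\nu_i)g$, the varifold convergence $V_t^{\e_i}\to\sigma V_t$ (the integrand being continuous and compactly supported in $G_{n-1}(\Omega)$ since $\phi$ is bounded below on $\spt g$) and again $\int|\xi_{\e_i}|\to0$, the first Cauchy--Schwarz factor converges to $\sigma\int_\Omega\phi^{-1}|g\cdot\nu_t|^2\,d\|V_t\|$, where $\nu_t$ is the $\|V_t\|$-a.e.\ defined unit normal of the rectifiable varifold $V_t$ (Proposition \ref{rec1}). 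Passing to a subsequence realizing $\liminf_i\int_\Omega\phi\,\e_i h_{\e_i}^2\,dx$ and using $\delta V_t^{\e_i}(g)\to\delta V_t(g)$ yields
\[
|\delta V_t(g)|^2\le\sigma^{-1}\Bigl(\int_\Omega\phi^{-1}|g\cdot\nu_t|^2\,d\|V_t\|\Bigr)\Bigl(\liminf_{i\to\infty}\int_\Omega\phi\,\e_i h_{\e_i}^2\,dx\Bigr).
\]

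Taking for $\phi$ a fixed cut-off equal to $1$ near $\spt g$ and using $\liminf_i\int_\Omega\e_i h_{\e_i}^2<\infty$, this gives $|\delta V_t(g)|^2\le C\,\|V_t\|(\spt g)$ for all $g$ with $|g|\le1$, so $\|\delta V_t\|$ is a Radon measure absolutely continuous with respect to $\|V_t\|$; by Radon--Nikodym there is $h(V_t)\in L^1_{\mathrm{loc}}(\|V_t\|)$ with $\delta V_t(g)=-\int_\Omega g\cdot h(V_t)\,d\|V_t\|$. A routine truncation/Lusin approximation (legitimate since the test fields below are bounded and $h(V_t)\in L^1_{\mathrm{loc}}$) extends the displayed inequality to every bounded $\|V_t\|$-measurable $g$ with $|g|^2\le\phi$ that vanishes $\|V_t\|$-a.e.\ off $\{\phi>0\}$. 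Plugging in $g=g_k:=k^{-1}\phi\,h(V_t)\mathbf{1}_{\{|h(V_t)|\le k\}}$ (which satisfies $|g_k|\le\phi\le\phi^{1/2}$ because $\phi\le1$), one has $\int g_k\cdot h(V_t)\,d\|V_t\|=k^{-1}\int_{\{|h(V_t)|\le k\}}\phi|h(V_t)|^2\,d\|V_t\|$ while $\int_\Omega\phi^{-1}|g_k\cdot\nu_t|^2\,d\|V_t\|\le k^{-2}\int_{\{|h(V_t)|\le k\}}\phi|h(V_t)|^2\,d\|V_t\|$; cancelling the common factor gives $\int_{\{|h(V_t)|\le k\}}\phi|h(V_t)|^2\,d\|V_t\|\le\sigma^{-1}\liminf_i\int_\Omega\phi\,\e_i h_{\e_i}^2\,dx$, and letting $k\to\infty$ by monotone convergence proves \eqref{Hineq} and, in particular, $h(V_t)\in L^2_{\mathrm{loc}}(\|V_t\|)$ (so that Brakke's perpendicularity \eqref{hperp2} applies, though it is not needed here). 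The step I expect to be the main obstacle is precisely the passage of the direction densities to the limit --- showing $V_t^{\e_i}\to\sigma V_t$ rather than to some non-rectifiable varifold with the same weight --- which is exactly where Theorem \ref{xiva} and the rectifiability of Proposition \ref{rec1} enter; a secondary nuisance is the bookkeeping of nested subsequences so that the $\liminf$ on the right of \eqref{Hineq} is the intended one.
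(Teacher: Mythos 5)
Your proposal is correct and takes essentially the same approach as the paper: read the first variation off \eqref{assvar0}, kill the two discrepancy terms via Theorem \ref{xiva}, apply Cauchy--Schwarz with the cut-off split between the factors, pass to the varifold limit, recover $h(V_t)$ by Radon--Nikodym, and finish with an approximation of the test field; the paper does $\phi\equiv1$ first via density of $C^1_c(\Omega)$ in $L^2(\|V_t\|)$ and then inserts $\phi$ by testing with $\psi_k g$, whereas you keep $\phi$ inside Cauchy--Schwarz throughout and close with the truncated field $g_k=k^{-1}\phi\,h(V_t)\mathbf{1}_{\{|h(V_t)|\le k\}}$. One small remark: the tangential decomposition $|g\cdot\nu_i|^2=|g|^2-g\cdot(I-\nu_i\otimes\nu_i)g$ (and hence full varifold, rather than merely weight-measure, convergence) that you invoke is superfluous, since your final step only uses the coarser bound $|g_k\cdot\nu_t|^2\le|g_k|^2$; replacing $|g\cdot\nu_t|^2$ by $|g|^2$, as the paper does, gives the same conclusion and sidesteps the direction-density convergence concern you flag at the end.
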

\begin{proof} 
Just as in the proof of Proposition \ref{rec1}, for a.e$.$ $t\geq 0$, we may assume \eqref{assvar6} and \eqref{assvar5}
and there exists a subsequence $\{V_t^{\e_{i_j}}\}_{j=1}^{\infty}$ converging to $\sigma V_t$ (note
that we re-defined $V_t$) with \eqref{assvar5.1}. By arguing 
as in the proof of Proposition \ref{rec1}, for any $g\in C_c^1 (\Omega\,;\, {\mathbb R}^n)$, we have
\begin{equation}
|\delta V_t(g)|\leq \sigma^{-1}\big(\int_{\Omega} |g|^2\, d\mu_t\big)^{1/2}\liminf_{j\rightarrow\infty}\big(\int_{\Omega}
\e_{i_j}(\Delta\varphi_{\e_{i_j}}-\frac{W'}{\e_{i_j}^2})^2\, dx\big)^{1/2}.
\label{kte1}
\end{equation}
The inequality and \eqref{assvar5.1} show that the total variation $\|\delta V_t\|$ of $\delta V_t$
is absolutely continuous with respect to $\mu_t=\sigma\|V_t\|$. Thus by the Radon-Nikodym theorem
there exists a $\|V_t\|$ measurable vector field $h(V_t)$ (generalized mean curvature vector) such that
\begin{equation}
\delta V_t(g)=-\int_{\Omega} g\cdot h(V_t)\, d\|V_t\|.
\label{kte2}
\end{equation}
Since $V_t$ is rectifiable, going back to the definition of countably $(n-1)$-rectifiable set, one can
show that $C_c^1(\Omega)$ is dense in $L^2(\|V_t\|)$. Then a standard approximation argument shows 
$h(V_t)\in L^2(\|V_t\|)$ and \eqref{Hineq} with $\phi=1$. Next, given $\phi\in C_c(\Omega\,;\,{\mathbb R}^+)$, 
let $\psi_j\in C_c^1(\Omega\,;\,{\mathbb R}^+)$ be a sequence such that $\lim_{k\rightarrow\infty}\|\phi-
\psi_k\|_{C^0(\Omega)}=0$. Using $\psi_k g$ in the proof of Proposition \ref{rec1} and letting
$k\to\infty$, we obtain
\begin{equation}
\big|\int_{\Omega}\phi g\cdot h(V_t)\, d\mu_t\big|\leq
 \big(\int_{\Omega}\phi |g|^2\, d\mu_t\big)^{1/2}
\liminf_{j\to \infty}\big(\int_{\Omega} \e_{i_j}\phi(\Delta\varphi_{\e_{i_j}}-\frac{W'}{\e_{i_j}^2})^2\, dx\big)^{1/2}.
\label{kte3}
\end{equation}
By approximation, we obtain \eqref{Hineq} from \eqref{kte3}.
\end{proof}
Now Proposition \ref{Hineqpr} combined with Lemma \ref{erlem3} and Fatou's lemma proves (c). 
For the proof of (d), 
one point which we need to be careful about is that we may not have the whole sequence
$\{V_t^{\e_i}\}_{i=1}^{\infty}$ converging to $V_t$ as varifold for a.e$.$ $t\geq 0$ even though $\{\|V_t^{\e_i}\|\}_{i=1}^{\infty}$ converges
to $\sigma \|V_t\|=\mu_t$ for all $t\geq 0$.  
\begin{pro}
The family of varifolds $\{V_t\}_{t\geq 0}$ defined in Definition \ref{varvdef} is a generalized solution of 
\eqref{mcf}.
\label{prmcffin}
\end{pro}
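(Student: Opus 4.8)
Conditions (a), (b), (c) of Definition \ref{gsdef} have been verified above, so the only remaining task is to establish Brakke's inequality \eqref{mcfweak}. The plan is to pass to the limit in the exact identity satisfied by $\frac{d}{dt}\mu_t^{\e_i}(\phi(\cdot,t))$, following the scheme of \cite{ilmanen1993,tonegawa2003}, but \emph{integrated in time from the outset}. This is forced on us twice: the trace estimate \eqref{mz}, and hence any control of the terms involving $u_{\e_i}$, is only available after integrating in $t$ and invoking $q>2$; and, for a.e.\ $t$, only a subsequence of $\{V_t^{\e_i}\}$ (depending on $t$) is known to converge to $\sigma V_t$ as varifolds (Proposition \ref{rec1}). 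To manage the latter I would first fix, once and for all, one subsequence along which the spacetime-integrated quantities behave (in particular $\int_{t_1}^{t_2}\!\int_\Omega|\xi_{\e_i}|\to0$ by Theorem \ref{xiva}, and $\int_0^T\!\int_\Omega\e_i h_{\e_i}^2\le 2E_0$ by Lemma \ref{erlem3}), and pass to a spacetime varifold limit of $\{V_t^{\e_i}\}$, whose time-slices must equal $\sigma V_t$ for a.e.\ $t$ by the uniqueness in Proposition \ref{rec1}.

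Writing $h_{\e_i}:=\Delta\varphi_{\e_i}-W'(\varphi_{\e_i})/\e_i^2$, $\nu_{\e_i}:=\nabla\varphi_{\e_i}/|\nabla\varphi_{\e_i}|$, and $e_{\e_i}$ as in \eqref{defti2}, two integrations by parts (exactly as in the computation leading to \eqref{assvar0}), together with \eqref{beq1} in the form $\partial_t\varphi_{\e_i}=h_{\e_i}-u_{\e_i}\cdot\nabla\varphi_{\e_i}$, give for $\phi\in C^1_c(\Omega\times[0,\infty);{\mathbb R}^+)$
\begin{equation*}
\begin{split}
\frac{d}{dt}\mu_t^{\e_i}(\phi(\cdot,t))=\int_{\Omega}\Big\{ & \partial_t\phi\, e_{\e_i}-\e_i\phi\, h_{\e_i}^2+\phi\, u_{\e_i}\cdot(\e_i h_{\e_i}\nabla\varphi_{\e_i})\\
& -\nabla\phi\cdot(\e_i h_{\e_i}\nabla\varphi_{\e_i})+\e_i(\nabla\phi\cdot\nabla\varphi_{\e_i})(u_{\e_i}\cdot\nabla\varphi_{\e_i})\Big\}\, dx.
\end{split}
\end{equation*}
Integrate over $[t_1,t_2]$. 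By Proposition \ref{seqconv} and the uniform mass bound, the left side and the $\partial_t\phi$-term converge to $\sigma$ times the corresponding quantities for $\|V_t\|$; hence the sum of the other four integrals converges to a limit $Q$, and it suffices to show $Q\le\sigma\int_{t_1}^{t_2}\!\int_\Omega(\nabla\phi-\phi h)\cdot\{h+(u\cdot\nu)\nu\}\,d\|V_t\|\,dt$.

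For the terms carrying $\e_i h_{\e_i}\nabla\varphi_{\e_i}$, note first that by \eqref{assvar5} and the mass bound the vector measures $\e_i h_{\e_i}\nabla\varphi_{\e_i}\,dx$ have uniformly bounded total variation for a.e.\ $t$; using \eqref{assvar0}, the vanishing of the discrepancy (Theorem \ref{xiva}, which also forces $\int_{t_1}^{t_2}\!\int_{\{\nabla\varphi_{\e_i}=0\}}W(\varphi_{\e_i})/\e_i\to0$) and the spacetime varifold convergence, one gets $\int_{t_1}^{t_2}\!\int_\Omega g\cdot(\e_i h_{\e_i}\nabla\varphi_{\e_i})\,dx\,dt\to-\int_{t_1}^{t_2}\!\int_\Omega g\cdot h(V_t)\,d\mu_t\,dt$ for $g\in C^1_c$, hence for $g\in C_c$ by density. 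With $g=\nabla\phi$ this handles $-\nabla\phi\cdot(\e_i h_{\e_i}\nabla\varphi_{\e_i})$, producing $\int_{t_1}^{t_2}\!\int\nabla\phi\cdot h(V_t)\,d\mu_t\,dt$. For $\phi\, u_{\e_i}\cdot(\e_i h_{\e_i}\nabla\varphi_{\e_i})$, split $u_{\e_i}=u^{(k)}+(u-u^{(k)})+(u_{\e_i}-u)$ with $u^{(k)}\in C^\infty_c$: the $C^\infty_c$-part is handled as above, while both error parts are estimated by Cauchy--Schwarz in spacetime using $\int_{t_1}^{t_2}\!\int\e_i h_{\e_i}^2\le2E_0$ and the fact that $\int_{t_1}^{t_2}\!\int_\Omega|u_{\e_i}-u|^2\,d\mu_t^{\e_i}\,dt$ and $\sup_i\int_{t_1}^{t_2}\!\int_\Omega|u-u^{(k)}|^2\,d\mu_t^{\e_i}\,dt$ are small (by \eqref{denconcl}, the Meyers--Ziemer bound \eqref{mz} and H\"older in $t$). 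This gives $-\int_{t_1}^{t_2}\!\int\phi\, u\cdot h(V_t)\,d\mu_t\,dt$, which by Brakke's perpendicularity \eqref{hperp2} (valid since $V_t\in{\bf IV}_{n-1}(\Omega)$ a.e.\ by Theorem \ref{integralitymu} and $h(V_t)\in L^2(\|V_t\|)$ by Proposition \ref{Hineqpr}) equals $-\int_{t_1}^{t_2}\!\int\phi(u\cdot\nu)(h\cdot\nu)\,d\mu_t\,dt$. For the last term, $\e_i(\nabla\phi\cdot\nabla\varphi_{\e_i})(u_{\e_i}\cdot\nabla\varphi_{\e_i})=(\nabla\phi\cdot\nu_{\e_i})(u_{\e_i}\cdot\nu_{\e_i})\,\e_i|\nabla\varphi_{\e_i}|^2$, and since $\e_i|\nabla\varphi_{\e_i}|^2\,dx\,dt\to d\mu$ up to a $\xi_{\e_i}$-error that vanishes against bounded quantities, the spacetime varifold convergence passes $\nu_{\e_i}\otimes\nu_{\e_i}$ to $\nu\otimes\nu$ and, with $u_{\e_i}$ treated by the same approximation/trace argument, yields $\int_{t_1}^{t_2}\!\int(\nabla\phi\cdot\nu)(u\cdot\nu)\,d\mu_t\,dt$. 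The four limits just computed determine $\lim_i\int_{t_1}^{t_2}\!\int\e_i\phi h_{\e_i}^2\,dx\,dt$; and a routine subsequence argument built on Propositions \ref{rec1} and \ref{Hineqpr} gives $\liminf_i\int_\Omega\e_i\phi h_{\e_i}^2\,dx\ge\sigma\int_\Omega\phi|h(V_t)|^2\,d\|V_t\|$ for a.e.\ $t$, so Fatou yields $\lim_i\int_{t_1}^{t_2}\!\int\e_i\phi h_{\e_i}^2\ge\sigma\int_{t_1}^{t_2}\!\int\phi|h(V_t)|^2\,d\|V_t\|\,dt$. Assembling all this and recalling $\mu_t=\sigma\|V_t\|$, the integrand $\nabla\phi\cdot h-\phi|h|^2+(\nabla\phi\cdot\nu)(u\cdot\nu)-\phi(h\cdot\nu)(u\cdot\nu)$ reorganizes into $(\nabla\phi-\phi h)\cdot\{h+(u\cdot\nu)\nu\}$, which is precisely \eqref{mcfweak}.

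The genuinely delicate point — the only one beyond the now-standard machinery of \cite{ilmanen1993,tonegawa2003} — is the coupling of the unbounded transport field $u$ with the diffuse mean curvature $\e_i h_{\e_i}\nabla\varphi_{\e_i}$: neither factor is controlled pointwise in $t$, so their product can only be tamed after integrating in time, using \eqref{denconcl} and \eqref{mz}. The secondary subtlety — that varifold and first-variation convergence hold only along $t$-dependent subsequences, while the quadratic curvature term needs lower semicontinuity — is resolved in the usual way: work with the spacetime varifold limit, whose slices are forced to be $\sigma V_t$ by uniqueness (Proposition \ref{rec1}), and apply Fatou to the non-negative term $\e_i\phi h_{\e_i}^2$ via Proposition \ref{Hineqpr}.
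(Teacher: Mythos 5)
Your overall strategy — integrate the Brakke identity in time from the outset, approximate $u$ by a fixed smooth field with errors controlled via the trace estimate and $\int_0^T\!\int\e_i h_{\e_i}^2\le 2E_0$, and reserve Fatou for the non-negative quadratic term $\e_i\phi h_{\e_i}^2$ — is the right skeleton, and your treatment of the transport errors, the Brakke perpendicularity step, and the final reorganization into $(\nabla\phi-\phi h)\cdot\{h+(u\cdot\nu)\nu\}$ are all fine. However, the pivotal step "pass to a spacetime varifold limit of $\{V_t^{\e_i}\}$, whose time-slices must equal $\sigma V_t$ for a.e.\ $t$ by the uniqueness in Proposition \ref{rec1}" is a genuine gap, and the rest of your linear-term limits depend on it. The time-slices of a weak-$*$ spacetime limit $V=\int\bar V_t\,dt$ of $V_t^{\e_i}\,dt$ are obtained by disintegration; they are the weak-$*$ $L^\infty$-in-$t$ limits of $V_t^{\e_i}(\phi)$, not pointwise-a.e.\ limits, and these can differ drastically (compare $f_i(t)=\sin(2\pi i t)$, whose weak-$*$ limit is $0$ while $\liminf_i f_i(t)=-1$ a.e.). Proposition \ref{rec1} identifies limits of $V_t^{\e_i}$ for \emph{fixed} $t$ along subsequences that satisfy the $t$-dependent $L^2$ bound \eqref{assvar5.1}; for a.e.\ $t$ only the $\liminf$ of $\int\e_i h_{\e_i}^2(\cdot,t)\,dx$ is known to be finite, so there may well be sub-subsequences (in $i$) along which that integral diverges and along which $V_t^{\e_i}$ converges to a non-rectifiable varifold with mass $\mu_t$. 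Nothing in Proposition \ref{rec1} prevents $\bar V_t$ from being one of those. In short, "slices of the limit $=$ limit of the slices" does not hold here, and you would need an independent argument (e.g.\ a slice-wise first variation bound yielding rectifiability of $\bar V_t$ a.e., so that $\|\bar V_t\|=\mu_t$ forces $\bar V_t=\sigma V_t$) before you can use the spacetime varifold convergence to pass $\nu_{\e_i}\otimes\nu_{\e_i}$ to $\nu\otimes\nu$ or compute the limit of $\int\!\int g\cdot\e_ih_{\e_i}\nabla\varphi_{\e_i}$.

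The paper circumvents this exactly because it never asserts convergence of $V_t^{\e_i}$ at a fixed $t$ along a fixed sequence. It quantifies the middle four terms of \eqref{brakke1} by a majorant $\hat b_i(t)\ge\hat a_i(t)$ (the quantity $\hat b_i$ being a pure mass integral against $\mu_t^{\e_i}$, hence convergent for every $t$), applies Fatou to $\hat b_i-\hat a_i\ge0$ to reduce the spacetime $\liminf$ to a pointwise-in-$t$ inequality, and at that stage for each fixed $t$ passes to a \emph{$t$-dependent} subsequence along which \eqref{assvar5.1} holds and $V_t^{\e_{i_j}}\to\sigma V_t$; there is never a need for a single subsequence that works for a.e.\ $t$ simultaneously. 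You should either adopt this Fatou majorant device, or else supply the missing argument that the slices of your spacetime limit are rectifiable (and hence determined by $\mu_t$). Without one of these, the limits you ascribe to the two linear terms $-\nabla\phi\cdot\e_ih_{\e_i}\nabla\varphi_{\e_i}$ and $\e_i(\nabla\phi\cdot\nabla\varphi_{\e_i})(u_{\e_i}\cdot\nabla\varphi_{\e_i})$ are unjustified, and with them the claim that $\lim_i\int_{t_1}^{t_2}\!\int\e_i\phi h_{\e_i}^2$ is "determined."
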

\begin{proof} We prove \eqref{mcfweak} for $\phi\in C_c^2 (\Omega\times[0,\infty)\,;\, {\mathbb R}^+)$. For
$\phi\in C_c^1$, one can approximate $\phi$ by a sequence of $C_c^2$ functions and obtain the same result in
the limit. First by modifying \eqref{derivativemu} we obtain (with the notation \eqref{stack3})
\begin{equation}
\begin{split}
\mu _{t} ^{\e _i }(\phi(\cdot,t))\Big|_{t=t_1}^{t_2}= \int _{t_1} ^{t_2} \int_\Omega &-\e_i \phi h_{\e_i} ^2 -\e_i h_{\e_i}\nabla \phi \cdot \nabla \varphi_{\e_i}  
 +\e_i \phi h_{\e_i}
u_{\e_i}\cdot \nabla \varphi_{\e_i} \\
&+ \e_i (\nabla \varphi_{\e_i} \cdot \nabla \phi ) (u_{\e_i}\cdot \nabla \varphi_{\e_i} ) \, dxdt+\int_{t_1}^{t_2}
\int_{\Omega}\frac{\partial \phi}{\partial t}\, d\mu_t^{\e_i}dt. 
\end{split}
\label{brakke1}
\end{equation}
Modulo division by $\sigma$, 
the left-hand side of \eqref{brakke1} converges to that of \eqref{mcfweak} due to Proposition \ref{seqconv}.
The same is true for the last term of \eqref{brakke1}. Hence we focus on the middle 4 terms. 
First we approximate $u_{\e_i}$ by a fixed smooth ${\tilde u}$ as follows. Given $\epsilon>0$, we choose 
a large $j$ so that $t_2<T_j$ and 
\begin{equation}
\|u-u_{\e_j}\|_{L^q([0,T_j];W^{1,p}(\Omega))}<\epsilon\ \ \mbox{and}\ \ 
\|u_{\e_j}-u_{\e_i}\|_{L^q([0,T_j];W^{1,p}(\Omega))}<\epsilon
\label{brakke1.5}
\end{equation}
for all $i\geq j$. This is possible since $u_{\e_i}$ converges to $u$ in this norm. 
Set ${\tilde u}:=u_{\e_j}$. Then we have
\begin{equation}
\begin{split}
&\Big|\int_{t_1}^{t_2}\int_{\Omega} \e_i\phi h_{\e_i}(u_{\e_i}-{\tilde u})\cdot\nabla\varphi_{\e_i}
+\e_i (\nabla\varphi_{\e_i}\cdot \nabla\phi)((u_{\e_i}-{\tilde u})\cdot \nabla\varphi_{\e_i})\, dxdt\Big| \\
&\leq \Big(\int_{t_1}^{t_2}\int_{\Omega} 2\e_i (\phi^2 h_{\e_i}^2+|\nabla\phi|^2 |\nabla\varphi_{\e_i}|^2)\,dxdt\Big)^{1/2}
\Big(\int_{t_1}^{t_2}\int_{\Omega} |u_{\e_i}-{\tilde u}|^2\, d\mu_t^{\e_i}dt\Big)^{1/2}.
\end{split}
\label{brakke2}
\end{equation}
As in the proof of Lemma \ref{erlem3}, and by \eqref{denconcl} and \eqref{brakke1.5}, we have
\begin{equation}
\int_{t_1}^{t_2}dt\int_{\Omega}|u_{\e_i}-{\tilde u}|^2\, d\mu_t^{\e_i}\leq c(n) D_1 (t_2-t_1)^{1-\frac{2}{q}}\|u_{\e_i}-{\tilde u}\|_{
L^q([t_1,t_2];W^{1,p}(\Omega))}^2<c\epsilon^2.
\label{brakke3}
\end{equation}
By \eqref{brakke2} and \eqref{brakke3}, replacing $u_{\e_i}$ by ${\tilde u}$ in \eqref{brakke1} produces
error of $c\epsilon^2$. Similarly we have
\begin{equation}
\Big|\int_{t_1}^{t_2}\int_{\Omega}( -h\phi+\nabla\phi)\cdot ( (u-{\tilde u})\cdot \nu)\nu\, d\mu_tdt\Big|
\leq c'\epsilon.
\label{brakke3.5}
\end{equation}
Thus we will finish the proof if we prove
\begin{equation}
\begin{split}
\liminf_{i\to\infty}\int_{t_1}^{t_2}\int_{\Omega}& -\e_i \phi h_{\e_i}^2-\e_i h_{\e_i}\nabla\phi\cdot\nabla\varphi_{\e_i}
+\e_i\phi h_{\e_i}{\tilde u}\cdot\nabla\varphi_{\e_i} \\ & +\e_i (\nabla\varphi_{\e_i}\cdot \nabla\phi)({\tilde u}\cdot
\nabla\varphi_{\e_i})\, dxdt 
\leq \int_{t_1}^{t_2}{\mathcal B}(\mu_t,{\tilde u}(\cdot,t),\phi(\cdot,t))\, dt,
\end{split}
\label{brakke4}
\end{equation}
where we denote
\begin{equation*}
{\mathcal B}(\mu_t,{\tilde u}(\cdot,t),\phi(\cdot,t)):=\int_{\Omega}
(\nabla\phi-h\phi)\cdot(h+(\tilde{u}\cdot\nu)\nu)\, d\mu_t.
\end{equation*}
By the Cauchy-Schwarz inequality, we have
\begin{equation}
\begin{split}
{\hat a}_i(t):=&\int_{\Omega} -\e_i \phi h_{\e_i}^2-\e_i h_{\e_i}\nabla\phi\cdot\nabla\varphi_{\e_i}
+\e_i\phi h_{\e_i}{\tilde u}\cdot\nabla\varphi_{\e_i}  +\e_i (\nabla\varphi_{\e_i}\cdot \nabla\phi)({\tilde u}\cdot
\nabla\varphi_{\e_i})\, dx \\
&\leq \int_{\Omega}\frac{\e_i}{2}|\nabla\varphi_{\e_i}|^2( \frac{|\nabla\phi|^2}{\phi}+\phi|{\tilde u}|^2
+2 |{\tilde u}||\nabla\phi|)\, dx \\
&\leq \int_{\Omega}\frac{\e_i}{2}|\nabla\varphi_{\e_i}|^2( \hat{\phi}+\phi|{\tilde u}|^2
+2 |{\tilde u}||\nabla\phi|)\, dx=: {\hat b}_i(t),
\end{split}
\label{brakke5}
\end{equation}
where $\hat\phi\in C_c(\Omega;{\mathbb R}^+)$ is chosen so that $\frac{|\nabla\phi|^2}{\phi}\leq \hat\phi$. 
This in particular shows $\hat{b}_i(t)-\hat{a}_i(t)\geq 0$ for $t_1\leq t\leq t_2$. Using the general fact that $\liminf_{i\to\infty}
(a_i+b_i)\leq \limsup_{i\to\infty}a_i+\liminf_{i\to\infty}b_i$ and Fatou's lemma, we have
\begin{equation}
\begin{split}
\liminf_{i\to\infty}\int_{t_1}^{t_2} \hat{a}_i(t)\, dt & \leq- \liminf_{i\to\infty} \int_{t_1}^{t_2} (\hat{b}_i(t)-\hat{a}_i(t))\, dt
+\liminf_{i\to\infty} \int_{t_1}^{t_2} \hat{b}_i(t)\, dt\\
& \leq - \int_{t_1}^{t_2} \liminf_{i\to\infty} (\hat{b}_i(t)-\hat{a}_i(t))\, dt
+\liminf_{i\to\infty} \int_{t_1}^{t_2} \hat{b}_i(t)\, dt.
\end{split}
\label{brakke6}
\end{equation}
Since ${\hat b}_i(t)$ converges to $\frac12\int_{\Omega}( \hat\phi+\phi|{\tilde u}|^2
+2 |{\tilde u}||\nabla\phi|)\, d\mu_t$ for all $t_1\leq t\leq t_2$ and bounded uniformly, from \eqref{brakke6}
and the dominated convergence theorem we have
\begin{equation}
\liminf_{i\to\infty}\int_{t_1}^{t_2} \hat{a}_i(t)\, dt\leq -\int_{t_1}^{t_2}\liminf_{i\to\infty}(-\hat{a}_i(t))\, dt.
\label{brakke7}
\end{equation}
Thus we may finish the proof of \eqref{brakke4} via \eqref{brakke7} if we prove
\begin{equation}
-\liminf_{i\to\infty} (-\hat{a}_i(t))\leq {\mathcal B}(\mu_t, {\tilde u}(\cdot,t),\phi(\cdot,t))
\label{brakke8}
\end{equation}
for a.e$.$ $t\in [t_1,t_2]$. Fix $t$ such that the claim of Proposition \ref{Hineqpr} holds. Let $\{\e_{i_j}\}_{j=1}^{\infty}$ be a subsequence such that 
\begin{equation}
\liminf_{i\to\infty}(-\hat{a}_i(t))=\lim_{j\to\infty} (-\hat{a}_{i_j}(t)).
\label{brakke8.5}
\end{equation}
We may choose a further subsequence
(denoted by the same index) such that $V_t^{\e_{i_j}}\to \sigma {\tilde V}_t$ as varifold. By the Cauchy-Schwarz inequality,
\begin{equation}
-\hat{a}_i(t)\geq \int_{\Omega}\frac12 \e_i \phi h_{\e_i}^2 -\big(\frac{|\nabla\phi|^2}{\phi}+|\tilde{u}|^2+|\tilde{u}||\nabla\phi|\big)\e_i
|\nabla\varphi_{\e_i}|^2\, dx
\label{brakke9}
\end{equation}
where the last negative term is bounded uniformly. If $\liminf_{j\to\infty}\int_{\Omega}\e_{i_j}\phi h_{\e_{i_j}}^2\, dx=\infty$,
we have \eqref{brakke8} with the left-hand side $=-\infty$. Thus we may assume otherwise. 
At this point, arguing just as
in the proof of Proposition \ref{rec1}, we may prove that ${\tilde V}_t\lfloor_{\{\phi>0\}}$ is rectifiable and ${\tilde V}_t\lfloor_{\{\phi>0\}}
=V_t\lfloor_{\{\phi>0\}}$. Then the argument in the proof of Proposition \ref{Hineqpr} shows \eqref{Hineq}.
For the remaining three terms in $\hat{a}_{i_j}(t)$, since $V_t^{\e_{i_j}}\lfloor_{\{\phi>0\}}\to\sigma V_t\lfloor_{\{\phi>0\}}$ 
as varifold and by \eqref{assvar}, we have for any $\tilde{\phi}\in C^2_c (\{\phi>0\}\,;\, {\mathbb R}^+)$
\begin{equation}
\begin{split}
&\lim_{j\to\infty} \int_{\Omega} \e_{i_j} h_{\e_{i_j}}\nabla\tilde\phi\cdot\nabla\varphi_{\e_{i_j}} - \e_{i_j} \tilde\phi
h_{\e_{i_j}} \tilde{u}\cdot\nabla\varphi_{\e_{i_j}} - \e_{i_j}(\nabla\varphi_{\e_{i_j}}\cdot\nabla\tilde\phi)
(\tilde{u}\cdot \nabla\varphi_{\e_{i_j}})\, dx \\
&=\sigma\delta V_t(\nabla\tilde\phi-\tilde{u}\tilde\phi)-\int_{\Omega}(\nabla\tilde\phi\cdot \nu)(\tilde{u}\cdot\nu)\, d\mu_t 
=\int_{\Omega} -h\cdot(\nabla\tilde\phi-\tilde{u}\tilde\phi)-(\nabla\tilde\phi\cdot \nu)(\tilde{u}\cdot\nu)\, d\mu_t.
\end{split}
\label{brakke10}
\end{equation}
We may construct a sequence of approximation $\{\tilde\phi_{k}\}_{k=1}^{\infty}$ such that
$\lim_{k\to\infty}\|\phi-\tilde\phi_k\|_{C^2}=0$, $\phi\geq \tilde\phi_k$ and ${\rm spt}\,\tilde\phi_k\subset \{\phi>0\}$.
For such approximating sequence, 
\begin{equation}
\begin{split}
\Big|\int_{\Omega} \e_{i_j}h_{\e_{i_j}}\nabla(\phi-\tilde\phi_k)\cdot \nabla\varphi_{\e_{i_j}}\Big| 
&\leq \big(\int_{\Omega}\e_{i_j} h_{\e_{i_j}}^2 \phi\big)^{1/2}\big(\int_{\Omega}
\frac{|\nabla(\phi-\tilde\phi_k)|^2}{\phi-\tilde\phi_k} \e_{i_j}|\nabla\varphi_{\e_{i_j}}|^2\big)^{1/2}\\
&\leq \big(\int_{\Omega}\e_{i_j} h_{\e_{i_j}}^2 \phi\big)^{1/2}\big(2 \|\phi-\tilde\phi_k\|_{C^2}\big)^{1/2}
(2\mu_{t}^{\e_{i_j}}(\Omega))^{1/2}\to 0
\end{split}
\label{brakke11}
\end{equation}
as $k\to \infty$ uniformly in $j$. The error of replacing $\tilde\phi=\tilde\phi_k$ in \eqref{brakke10} by 
$\phi$ can be approximated 
similarly. Thus \eqref{brakke10} holds
also for $\phi$ instead of $\tilde\phi$. Recall that we have taken a subsequence so that \eqref{brakke8.5}
holds. Combined with \eqref{Hineq} and \eqref{brakke10} with $\tilde\phi=\phi$, and recalling
that $h\cdot\tilde{u}=h\cdot(\tilde{u}\cdot\nu)\nu$ for $\mu_t$ a.e$.$ by Brakke's perpendicularity theorem \cite[Ch. 5]{brakke},
we have proved 
\eqref{brakke8}. This concludes the proof.
\end{proof}
We next discuss the proof of Theorem \ref{existence} (2).  
\begin{pro}
There exists a further subsequence (denoted by the same index) $\{ \varphi_{\e_i} \}_{i=1}^{\infty}$ and a function $\varphi\in BV_{loc}(\Omega\times [0,\infty))\cap C^{\frac12}_{loc}([0,\infty);L^1(\Omega))$ such that for all $t\geq 0$, 
\begin{equation}
w_{\e_i}(\cdot,t)\to \varphi(\cdot,t)
\label{w1sup}
\end{equation}
strongly in $L^1_{loc}(\Omega)$ and $\varphi$ satisfies the properties of Theorem \ref{existence} (2). 
Here $w_{\e_i}$ is defined by
\[ w_{\e_i} := \Phi \circ \varphi_{\e_i}\mbox{ with } \Phi (s): = \sigma^{-1}\int _{-1} ^s \sqrt{2W(y)} \, dy. \]
\end{pro}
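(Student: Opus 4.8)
The plan is to use the standard compactness machinery for the rescaled function $w_{\e_i}=\Phi\circ\varphi_{\e_i}$, following Ilmanen and Tonegawa. The key point is that $\nabla w_{\e_i}=\sqrt{2W(\varphi_{\e_i})}\,\sigma^{-1}\nabla\varphi_{\e_i}$, so by Young's inequality $|\nabla w_{\e_i}|\le \sigma^{-1}(\frac{\e_i|\nabla\varphi_{\e_i}|^2}{2}+\frac{W(\varphi_{\e_i})}{\e_i})=\sigma^{-1}e_{\e_i}$, and hence $\|\nabla w_{\e_i}(\cdot,t)\|_{L^1(\Omega)}\le\sigma^{-1}\mu^{\e_i}_t(\Omega)$ which is uniformly bounded on $[0,T]$ by \eqref{e0sup}. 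Combined with the uniform $L^\infty$ bound $|w_{\e_i}|\le 1$, this gives a uniform $BV$ bound on $w_{\e_i}(\cdot,t)$ for each $t$, and in fact a uniform $BV_{loc}(\Omega\times[0,T])$ bound once one also controls the time derivative. For the latter, I would estimate $\partial_t w_{\e_i}=\sqrt{2W(\varphi_{\e_i})}\,\sigma^{-1}\partial_t\varphi_{\e_i}$ using \eqref{beq1}: write $\partial_t\varphi_{\e_i}=h_{\e_i}-u_{\e_i}\cdot\nabla\varphi_{\e_i}$ and use the $L^2(dxdt)$ bound on $\sqrt{\e_i}h_{\e_i}$ from \eqref{e0sup}, the bound on $\sqrt{\e_i}|\nabla\varphi_{\e_i}|$, and the bound on $\int |u_{\e_i}|^2 d\mu^{\e_i}_t dt$ as in Lemma \ref{erlem3}; a Cauchy–Schwarz argument then yields $\int_0^T\int_\Omega |\partial_t w_{\e_i}|\,dxdt\le C$. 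Hence by the $BV$ compactness theorem and a diagonal argument over $T\to\infty$, after passing to a subsequence there is $\varphi\in BV_{loc}(\Omega\times[0,\infty))$ with $w_{\e_i}\to\varphi$ in $L^1_{loc}$.

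Next I would upgrade the space-time convergence to convergence at every fixed time. The standard device is to estimate $\|w_{\e_i}(\cdot,t_2)-w_{\e_i}(\cdot,t_1)\|_{L^1(K)}$ for a compact $K\subset\Omega$ by $\int_{t_1}^{t_2}\int_K|\partial_t w_{\e_i}|\,dxdt$, and then use a further Cauchy–Schwarz / Hölder step with the bound on $\int_\Omega|\partial_t w_{\e_i}|^2/(\e_i|\nabla\varphi_{\e_i}|^2)$-type quantities (or more simply the $L^2$-in-time bound coming from $q>2$) to get a $C^{1/2}$-in-time, $L^1$-in-space modulus of continuity that is uniform in $i$. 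This gives equicontinuity of $t\mapsto w_{\e_i}(\cdot,t)\in L^1_{loc}(\Omega)$, and together with the compactness of $\{w_{\e_i}(\cdot,t)\}_i$ in $L^1_{loc}(\Omega)$ for each $t$ (from the $BV$ bound) and an Arzelà–Ascoli argument on a countable dense set of times, one extracts a subsequence with $w_{\e_i}(\cdot,t)\to\varphi(\cdot,t)$ in $L^1_{loc}(\Omega)$ for all $t\ge0$, and $\varphi\in C^{1/2}_{loc}([0,\infty);L^1(\Omega))$. Property (2c) then follows from the choice of initial data \eqref{coninit}, since $w_{\e_i}(\cdot,0)=\Phi\circ(\varphi_{\e_i})_0\to\chi_{\Omega_0}$.

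For the structural properties (2a), (2b), (2d): to see (2a) that $\varphi(\cdot,t)$ is a.e.\ a characteristic function, I would use the vanishing of the discrepancy, Theorem \ref{xiva}. Indeed $\frac{\e_i|\nabla\varphi_{\e_i}|^2}{2}-\frac{W(\varphi_{\e_i})}{\e_i}=\xi_{\e_i}\to0$ in $L^1_{loc}(dxdt)$, which forces $\e_i|\nabla\varphi_{\e_i}|\sqrt{2W(\varphi_{\e_i})}$ and $e_{\e_i}$ to have the same limit; arguing as in \cite{ilmanen1993,tonegawa2003} one gets that $\varphi_{\e_i}\to\pm1$ a.e., so the a.e.\ limit $\varphi(\cdot,t)$ of $w_{\e_i}(\cdot,t)=\Phi\circ\varphi_{\e_i}(\cdot,t)$ takes values in $\{\Phi(-1),\Phi(1)\}=\{0,1\}$ for a.e.\ $t$, and then for all $t$ by the $C^{1/2}$-in-time continuity. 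For (2b), for $\phi\in C_c(\Omega;\mathbb R^+)$ we have $\|\nabla w_{\e_i}(\cdot,t)\|(\phi)=\int\phi\sqrt{2W(\varphi_{\e_i})}\sigma^{-1}|\nabla\varphi_{\e_i}|\le\sigma^{-1}\int\phi\, e_{\e_i}=\sigma^{-1}\mu^{\e_i}_t(\phi)\to\sigma^{-1}\mu_t(\phi)=\|V_t\|(\phi)$, while by lower semicontinuity of total variation $\|\nabla\varphi(\cdot,t)\|(\phi)\le\liminf\|\nabla w_{\e_i}(\cdot,t)\|(\phi)$; combining gives (2b). Finally (2d): \eqref{parity0} follows because $\|\nabla\varphi(\cdot,t)\|\le\|V_t\|$ and both are $\mathcal H^{n-1}$ restricted to rectifiable sets; and the parity statement \eqref{parity} is obtained from the argument in the proof of Theorem \ref{integralitymu}, where the integer density $N(x,t)$ counts sheets of $\varphi_{\e_i}$ crossing between $-1$ and $1$ in a small cylinder, and $\partial\{\varphi(\cdot,t)=1\}$ picks up exactly the parity of the number of such crossings (odd where the two bulk phases are separated, even where they are not). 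The main obstacle I expect is the uniform-in-$i$ space-time $BV$ estimate and in particular the time-derivative bound together with the time-equicontinuity needed to pass to pointwise-in-$t$ convergence; the transport term $u_{\e_i}\cdot\nabla\varphi_{\e_i}$ is only controlled in $L^2(d\mu^{\e_i}_t dt)$ via the density ratio bound, so one must carefully interpolate (using $q>2$ and \eqref{mz}) rather than hope for anything stronger, and one must also track that all constants are $\e_i$-independent on each $[0,T]$, which is exactly what Lemma \ref{erlem3}, Theorem \ref{density} and Theorem \ref{remono} provide.
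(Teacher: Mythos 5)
Your proposal follows the same route as the paper: the Young-inequality bound $|\nabla w_{\e_i}|\leq\sigma^{-1}e_{\e_i}$ together with the $\e$-independent energy bound from Lemma~\ref{erlem3} gives the space-time $BV_{loc}$ control, a weighted Cauchy--Schwarz in time gives the $C^{1/2}$ modulus, and compactness plus a diagonal argument yields the subsequence. A few remarks on where the two arguments diverge in detail. For (2a) you invoke the vanishing of the discrepancy (Theorem~\ref{xiva}), but this is more than is needed: since $\int_\Omega W(\varphi_{\e_i}(\cdot,t))\,dx\leq \e_i\,\mu^{\e_i}_t(\Omega)\leq \e_i D_1\to 0$, Fatou's lemma already forces $\varphi_{\e_i}\to\pm 1$ a.e., without any reference to $\xi_{\e_i}$; the paper uses this shorter argument. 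For the passage to pointwise-in-$t$ convergence, you propose Arzel\`a--Ascoli for $\{t\mapsto w_{\e_i}(\cdot,t)\}$, whereas the paper gets convergence for a.e.\ $t$ from the $L^1_{loc}$ space-time convergence and then fills in all $t$ by a telescoping argument using the $C^{1/2}$ continuity of the limit $\varphi$; both work, and both rest on the same uniform-in-$i$ estimate $\int_U|w_{\e_i}(\cdot,t_1)-w_{\e_i}(\cdot,t_2)|\,dx\leq c\sqrt{t_2-t_1}$, obtained by splitting $|\partial_t w_{\e_i}|\leq\sigma^{-1}\bigl(\tfrac{\e_i|\partial_t\varphi_{\e_i}|^2}{2}\sqrt{t_2-t}+\tfrac{W(\varphi_{\e_i})}{\e_i\sqrt{t_2-t}}\bigr)$. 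The place where your proposal is genuinely compressed is (2d). For \eqref{parity0}, merely observing that both $\|\nabla\varphi(\cdot,t)\|$ and $\|V_t\|$ are rectifiable does not give $\mathcal H^{n-1}(\tilde M_t\setminus M_t)=0$; one must argue by density, noting that a point of $\tilde M_t\setminus M_t$ would have ${\mathcal H}^{n-1}$-density $1$ for $\tilde M_t$ but $0$ for $M_t$, contradicting (2b). For \eqref{parity}, the crossing-counting idea you describe is the correct one, but implementing it requires revisiting the blow-up construction from the proof of Theorem~\ref{integralitymu} (in particular the rescaled functions $\tilde w_{\e_i}$, the good slices obtained by Egoroff and Fubini, and the ODE structure from Proposition~\ref{L} ensuring each crossing is monotone) to deduce that a point of $\tilde M_t$ must have an odd number of sheets and a point of $M_t\setminus\tilde M_t$ an even number. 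This is the most technically delicate part of the Proposition and is where the bulk of the paper's proof is concentrated; as written, your sketch gestures at it correctly but does not carry it out.
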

\begin{proof}
Note that $\Phi (1)=1$ and $\Phi(-1)=0$. We compute
\[ |\nabla w_{\e_i}|=\sigma^{-1} |\nabla \varphi_{\e_i} | \sqrt{2W(\varphi_{\e_i})} 
\leq \sigma^{-1} \Big( \frac{\e _i |\nabla \varphi _{\e_i} |^2}{2} +\frac{W(\varphi_{\e_i})}{\e _i} \Big). \]
Fix $T>0$. For all sufficiently large $i$, by \eqref{denconcl} we have
\begin{equation}
\int _\Omega |\nabla w_{\e_i}(\cdot ,t )| \, dx
\leq \int _\Omega \sigma^{-1} \Big( \frac{\e _i |\nabla \varphi _{\e_i} |^2}{2} +\frac{W(\varphi_{\e_i} )}{\e _i} \Big) \, dx \leq \sigma^{-1}D_1
\label{w1}
\end{equation}
for any $t\in[0,T]$. By the similar argument we have
\begin{equation}
\begin{split}
&\int _0 ^T \int _\Omega |\partial _t w_{\e_i} | \, dxdt \leq \sigma ^{-1} 
\int _0 ^T \int _\Omega \Big( \frac{\e _i |\partial _t \varphi_{\e_i} |^2}{2} +\frac{W(\varphi_{\e_i} )}{\e _i} \Big) \, dxdt \\
\leq & \sigma ^{-1} \int _0 ^T \int _\Omega \e _i \Big\{
(u_{\e_i}\cdot \nabla \varphi_{\e_i} )^2 +\Big( \Delta \varphi_{\e_i} -\frac{W'(\varphi_{\e_i})}{\e _i} \Big) ^2
 \Big\} \, dxdt
 + \sigma ^{-1} \int _0 ^T \int _\Omega  \frac{W(\varphi_{\e_i} )}{\e _i} \, dxdt,
\end{split}
\label{w2}
\end{equation}
and the last quantity is uniformly bounded due to Lemma \ref{erlem3}. By \eqref{w1} and \eqref{w2} $\{w_{\e_i}\} _{i= 1}^{\infty}$ is bounded in $BV_{loc}(\Omega \times [0,T])$. By the standard compactness theorem
and a diagonal argument, there exists a subsequence (denoted by the same index) $\{w_{\e_i}\} _{i=1}^{\infty}$ and $w \in BV_{loc}(\Omega \times [0,\infty))$ such that 
\begin{equation}
w_{\e_i} \to w \qquad \text{strongly in }L^1_{loc}(\Omega \times [0,\infty)) 
\label{stwa1}
\end{equation}
and a.e. pointwise. 
 We set $\varphi :=(1+\Phi ^{-1} \circ w)/2$. We have
\[ \varphi_{\e_i} \to 2\varphi-1 \qquad \text{a.e. in }\Omega \times[0,\infty)\]
and by this with $|\varphi_{\e_i}|\leq 1$ we obtain
\[ \varphi _{\e_i} \to 2\varphi-1 \qquad \text{in }L^1_{loc}( \Omega \times[0,\infty)). \]
Due to the uniform bound on $\int_{\Omega}\frac{W(\varphi_{\e_i})}{\e_i}\, dx$, one
can prove by Fatou's lemma that $\varphi_{\e_i}\to \pm 1$ for a.e$.$ $(x,t)$
and hence $\varphi =1$ or $=0$ a.e. on $\Omega \times [0,\infty)$. In particular,
since $\varphi=1\iff w=1$ and $\varphi=0\iff w=0$, 
we have $w=\varphi$ on $\Omega\times [0,\infty)$. This in particular proves the $BV_{loc}(\Omega
\times[0,\infty))$
property of $\varphi$. 
For a.e. $0\leq t_1 < t_2 \leq T$ and any open set $U\subset\subset \Omega$, we have
\begin{equation*}
\begin{split}
\int _{U} |\varphi (\cdot ,t_1 )-\varphi(\cdot, t_2 )| \, dx 
&= \lim _{i\to \infty} \int _{U} |w_{\e_i}(\cdot ,t_1 )- w_{\e_i}(\cdot ,t_2)| \, dx
\leq \liminf _{i\to \infty } \int _{U} \int _{t_1} ^{t_2} |\partial _t w_{\e_i}| \, dtdx \\
& \leq  \liminf_{i\to \infty} \sigma ^{-1} \int _\Omega \int _{t_1} ^{t_2} 
\Big( \frac{\e _i |\partial _t \varphi_{\e_i} |^2}{2} \sqrt{t_2-t} +\frac{W(\varphi_{\e_i})}{\e _i \sqrt{t_2-t}} \Big) \, dtdx.
\end{split}
\end{equation*}
Note that the right-hand side does not depend on $U$. 
Thus, by the similar argument to \eqref{w2} we have with $c=c(\Cr{c-1},n,p,q,D_0,T,W)$
\begin{equation}
\int _\Omega |\varphi(\cdot , t_1)-\varphi(\cdot ,t_2)| \, dx \leq c \sqrt{t_2-t_1} .
\label{conphi}
\end{equation}
Since $(1+\varphi_{\e_i}(\cdot,0))/2\to \chi_{\Omega_0}$ by \eqref{coninit},  
we have (2c). We assumed that $\Omega_0$ is a
bounded domain, hence, \eqref{conphi} shows that $\varphi(\cdot,t)\in L^1(\Omega)$
for a.e$.$ $t\geq 0$. 
Moreover, we may define $\varphi(\cdot,t)$ as  a characteristic function for all $t\geq 0$ so 
that $\varphi\in C^{\frac{1}{2}}_{loc} ([0,\infty) ;L^1 (\Omega))$ due to \eqref{conphi}. 
This proves (2a) and $C^{\frac12}_{loc}$ property for $\varphi$. 
From \eqref{stwa1}, for a.e$.$ $t\geq 0$, $w_{\e_i}(\cdot,t)\to \varphi(\cdot,t)$ in $L^1_{loc}(\Omega)$
strongly. Using \eqref{conphi}, one can show by a simple telescopic argument that the 
convergence is true for all $t\geq 0$ instead of a.e$.$ $t$, which proves \eqref{w1sup}. 
By the standard lower
semicontinuity property of $BV$ norm,
for any $\phi \in C_c(\Omega;{\mathbb R}^+)$ and $0\leq t<\infty$, we have
\begin{equation*}
\begin{split}
&\int _\Omega \phi \, d\|\nabla \varphi(\cdot,t)\|
 \leq \liminf _{i\to \infty} \int _\Omega \phi |\nabla w_{\e_i} | \, dx \\
\leq & \lim _{i \to \infty} \sigma^{-1} \int _\Omega 
\Big( \frac{\e _i |\nabla \varphi_{\e_i} |^2}{2}  +\frac{W(\varphi_{\e_i})}{\e _i} \Big) \phi \, dx =
\int _\Omega \phi \, d\|V_t\|.
\end{split}
\end{equation*}
This proves (2b). 

To prove (2d), we consider the a.e$.$ $t\geq 0$ for which we have proved the integrality
of $V_t$. Writing $\|V_t\|=\theta{\mathcal H}^{n-1}\lfloor_{M_t}$, we already know that $\theta$ is 
integer-valued $\|V_t\|$ a.e$.$ and that $M_t$ is countably $(n-1)$-rectifiable. In addition, by \eqref{exden},
we have $1\leq \theta\leq N(t)$, ${\mathcal H}^{n-1}$ a.e$.$ on $M_t$ for some integer $N(t)$. The
latter shows in particular that 
\begin{equation}
{\mathcal H}^{n-1}\lfloor_{M_t}\leq \|V_t\|\leq N(t){\mathcal H}^{n-1}\lfloor_{M_t}.
\label{Vh}
\end{equation}
By (2a) and (2b), 
we know that $\|\nabla\varphi(\cdot,t)\|={\mathcal H}^{n-1}\lfloor_{\tilde M_t}$ for some countably
$(n-1)$-rectifiable set by De Giorgi's theorem (see \cite[4.4]{Giusti}). To prove \eqref{parity0}, assume 
the contrary. 
Then by the standard argument (see \cite[3.5]{simon}), 
there would be a point $x\in \tilde M_t\setminus M_t$ with
$\lim_{r\downarrow 0}{\mathcal H}^{n-1}(B_r(x)\cap \tilde M_t)/\omega_{n-1}r^{n-1}=1$ while 
$\lim_{r\downarrow 0}{\mathcal H}^{n-1}(B_r(x)\cap M_t)/\omega_{n-1}r^{n-1}=0$. Then, using 
also \eqref{Vh}, one would then have a contradiction 
to Theorem \ref{existence} (2b). Thus we have \eqref{parity0}. 

To prove \eqref{parity}, we closely follow the proof of integrality again. We already know that for $\|V_t\|$
a.e$.$ $x$, we have the properties
described in the proof of Theorem \ref{integralitymu}. By the well-known property of set of finite perimeter (\cite[3.8]{Giusti}), 
for ${\mathcal H}^{n-1}$ a.e$.$ $x\in \tilde M_t$, the blow-up limit of $\varphi$ centered at $x$ is supported 
by a half-space. For ${\mathcal H}^{n-1}$ a.e$.$ $x\in \Omega\setminus \tilde M_t$ 
(in particular on $M_t\setminus \tilde M_t$), the blow-up
limit centered at $x$ is a constant function with value either $0$ or $1$. By \eqref{w1sup}, up to ${\mathcal H}^{n-1}$ null set,
we may assume in addition to the properties of $\{V_t^{\e_i}\}_{i=1}^{\infty}$ in the proof of Theorem \ref{integralitymu} that
$\tilde w_{\e_i}(\tilde x):=w_{\e_i}(r_i \tilde x)$ converges strongly in $L^1_{loc}({\mathbb R}^n)$ and pointwise
${\mathcal L}^{n}$ a.e$.$ to $\chi_{\{x_n\geq 0\}}$ (or
$\chi_{\{x_n\leq 0\}}$)
if $x=0$ is in $\tilde M_t$, or to 1 (or 0) if $x=0$ is in $M_t\setminus \tilde M_t$. Since the proof for other cases
is similar, we only discuss the case of $\tilde M_t$ and $\lim_{i\to\infty} \tilde w_{\e_i}=\chi_{\{x_n\geq 0\}}$ in the following. 
In terms of $\varphi_{\e_i}$ (which is the relabeling of $\tilde\varphi_{\e_i}$), note that this means that $\varphi_{\e_i}$ converges a.e$.$ to $\chi_{\{x_n\geq 0\}}-\chi_{\{x_n<0\}}$. 

As one follows the proof of Theorem \ref{integralitymu}, the difference occurs at \eqref{intf0}, where we already
know that $\theta$ is an integer multiple of $\sigma$. So let $N-1:=\sigma^{-1} \theta(\geq 1)$. We want to conclude that
$N$ is an even integer. We follow the proof until \eqref{intf12}, and at this point, define
for $i\in {\mathbb N}$ (and writing $Y(\tau,x):=\{\varphi_{\e_i}=\tau\}\cap B_1\cap G_i\cap P^{-1}(x)$)
\begin{equation}
\begin{split}
& \tilde A_i := \{x\in B_1^{n-1}\,:\, \forall \tau\in (-1+b,1-b)\Rightarrow {\mathcal H}^0 (Y(\tau,x))
\leq N-2\},\\
& A_i:=\{x\in B_1^{n-1}\,:\, \exists \tau\in (-1+b,1-b)\Rightarrow {\mathcal H}^0 (Y(\tau,x))
= N-1\}.
\end{split}
\label{ints0}
\end{equation}
We know from \eqref{intf8} that ${\mathcal H}^0(Y(\tau,x))$ has to be $\leq N-1$, thus, $B_1^{n-1}
=\tilde A_i\cup A_i$ and 
\begin{equation}
{\mathcal H}^{n-1}(\tilde A_i)=\omega_{n-1}-{\mathcal H}^{n-1}(A_i)
\label{ints01}
\end{equation}
for all sufficiently large $i$. In \eqref{intf13}, we have 
\begin{equation}
\begin{split}
\omega_{n-1}\sigma (N-1)  &\leq 2s+\liminf_{i\to\infty} \int_{-1+b}^{1-b} \sqrt{2W(\tau)} \{(N-2) {\mathcal H}^{n-1}
(\tilde A_i)+(N-1){\mathcal H}^{n-1}(A_i)\}\, d\tau \\
& \leq 2s+(N-2)\sigma \omega_{n-1}+\sigma \liminf_{i\to\infty} {\mathcal H}^{n-1}(A_i)
\end{split}
\label{ints1}
\end{equation}
where we used \eqref{ints01}. Thus we have from \eqref{ints1}
\begin{equation}
\omega_{n-1}-2\sigma^{-1} s\leq \liminf_{i\to\infty} {\mathcal H}^{n-1}(A_i).
\label{ints2}
\end{equation}
By \eqref{clo3}, for all sufficiently large $i$ and any point $x\in A_i$, the image 
$\varphi_{\e_i}(B_1\cap P^{-1}(x))$ covers $[-1+b,1-b]$ 
at least $N-1$ times. The each covering is monotone, thus we know that
$\varphi_{\e_i}(y)$ as $y$ moves from $P^{-1}(x)\cap \{x_n=-s\}$  to 
$P^{-1}(x)\cap \{x_n=s\}$ along $P^{-1}(x)$ has to go up and down
between $-1+b$ and $1-b$ at least $N-1$ times.
Next, since $\varphi_{\e_i}$ converges a.e$.$ pointwise to $\chi_{\{x_n\geq 0\}}-
\chi_{\{x_n<0\}}$, by Egoroff's Theorem and then Fubini's Theorem, there
exists $s_1\in [s,2s]$, $s_2\in [-2s,-s]$, $C_1\subset B_1^{n-1}$
and $C_2\subset B_1^{n-1}$ such that $\varphi_{\e_i}$ converges
uniformly to 1 on $C_1\times\{s_1\}$ and to $-1$ on $C_2\times\{s_2\}$ while
\begin{equation}
{\mathcal H}^{n-1}(C_i)\geq \omega_{n-1} -s\ \ \mbox{for $i=1,2$}.
\label{ints3}
\end{equation}
Set $C_3=C_1\cap C_2$ so that, by \eqref{ints3}, 
\begin{equation}
{\mathcal H}^{n-1}(C_3)\geq \omega_{n-1}-2s.
\label{ints4}
\end{equation}
Now, for a contradiction, assume that $N$ is odd.
For $x\in A_i\cap C_3$, consider the image of $\varphi_{\e_i}$ on $\{(x,x_n)\,:\, x_n\in [s_2,s_1]\}$.
By the uniform convergence and $x\in C_3$, for sufficiently large $i$, $\varphi_{\e_i}(x,s_2)<-1+b$
and $\varphi_{\e_i}(x,s_1)>1-b$. Since $\varphi_{\e_i}$ is continuous, image of $\varphi_{\e_i}$
having at least even $N-1$ covering of $[-1+b,1-b]$ implies that there has to be at least another covering
of $[-1+b,1-b]$. Thus, for each $\tau\in [-1+b,1-b]$ and $x\in A_i\cap C_3$, we have 
\begin{equation}
{\mathcal H}^{0}(\{x_n\in [s_2,s_1]\,:\,
\varphi_{\e_i}(x,x_n)=\tau\})\geq N.
\label{ints5}
\end{equation}
Then by the coarea formula and \eqref{ints5}, we have
\begin{equation}
\begin{split}
&\int_{s_2}^{s_1} \sqrt{2W(\varphi_{\e_i}(x,x_n))}|\partial_{x_n}\varphi_{\e_i}(x,x_n)|\, dx_n \\
&=\int_{-1}^1 \sqrt{2W(\tau)} {\mathcal H}^0 (\{x_n\in[s_2,s_1]\,:\,\varphi_{\e_i}(x,x_n)=\tau\})\, d\tau  
\geq N \int_{-1+b}^{1-b} \sqrt{2W(\tau)}\, d\tau.
\end{split}
\label{ints6}
\end{equation}
Note that by \eqref{ints2} and \eqref{ints4}, we have for sufficiently large $i$
\begin{equation}
{\mathcal H}^{n-1}(A_i\cap C_3)\geq \omega_{n-1}-(3+2\sigma^{-1})s.
\label{ints7}
\end{equation}
Integrating \eqref{ints6} over $A_i\cap C_3$ and \eqref{ints7} give
\begin{equation}
\begin{split}
\int_{B_1}\sqrt{2W(\varphi_{\e_i})}|\nabla\varphi_{\e_i}| & \geq 
\int_{(A_i\cap C_3)\times [s_2,s_1]}\sqrt{2W(\varphi_{\e_i})}|\partial_{x_n}\varphi_{\e_i}| \\
&\geq (\omega_{n-1}-(3+2\sigma^{-1})s) N \int_{-1+b}^{1-b}\sqrt{2W(\tau)}\,d\tau.
\end{split}
\label{ints8}
\end{equation}
We may choose $b$ so that $\int_{-1+b}^{1-b}\sqrt{2W(\tau)}\, d\tau\geq 
\sigma-s$. On the other hand, by \eqref{inqr8.1}, we have
\begin{equation}
\int_{B_1}\sqrt{2W(\varphi_{\e_i})}|\nabla\varphi_{\e_i}|\, dx\leq \int_{B_1}
\frac{\e_i |\nabla\varphi_{\e_i}|^2}{2}+\frac{W}{\e_i}\, dx\to \omega_{n-1}(N-1)\sigma.
\label{ints9}
\end{equation}
For sufficiently small $s$ depending only on $n,N$ and $\sigma$, \eqref{ints8}
and \eqref{ints9} lead to a contradiction. This proves $N$ has to be even. 
As we mentioned, other cases of $\varphi$ being constant (either 0 or 1) can be
similarly proved. This concludes the proof of \eqref{parity} and (2d).
\end{proof}
We next verify
\begin{pro}
The function $u$ satisfies the property of Theorem \ref{existence} (3).
\end{pro}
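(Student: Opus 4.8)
The statement to prove is Theorem \ref{existence} (3), which asserts the integrability bound \eqref{intu2} for $u$ with respect to $\|V_t\|$, with three cases depending on whether $p<n$, $p=n$, or $p>n$.

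\textbf{Plan of proof.} The strategy is to transfer the density ratio bound \eqref{denconcl} (already established uniformly in $\e$ and then passed to the limit via Proposition \ref{seqconv}, so that $\|V_t\|$ inherits the upper density ratio bound $D_1$ for all $t$) into the Michael--Simon--type inequality of Theorem \ref{thmmz}, exactly as was done in \eqref{mz}. First I would record that $\|V_t\|$ satisfies $\sup_{B_r(x)\subset\Omega} \|V_t\|(B_r(x))/(\omega_{n-1}r^{n-1})\leq \sigma^{-1}D_1$ for all $t\geq 0$ (this follows from $\|V_t\|=\sigma^{-1}\mu_t$, $\mu_t=\lim_i \mu_t^{\e_i}$, and \eqref{denconcl}, using that weak convergence of Radon measures preserves upper bounds on ball ratios after a standard argument with slightly enlarged radii). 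Then for the case $p<n$, with $s=\frac{p(n-1)}{n-p}$, apply \eqref{mzineq} with $\mu=\|V_t\|$ and a cutoff approximation, combined with a partition of unity $\{\psi_\alpha\}$ with $\mathrm{diam}(\mathrm{spt}\,\psi_\alpha)\leq 1/2$ as in the proof of Lemma \ref{erlem3}, to bound $\int_\Omega |u|^s\, d\|V_t\|$ by $c(n,p)D_1\|u(\cdot,t)\|_{W^{1,p}(\Omega)}^s$ (the exponent $s/p\geq 1$ makes the superadditivity step $\sum_\alpha A_\alpha^{s/p}\leq (\sum_\alpha A_\alpha)^{s/p}$ work). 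Raising to the power $q/s$ and integrating in $t$ over $[0,T]$, then using $u\in L^q_{loc}([0,\infty);(W^{1,p}(\Omega))^n)$, gives \eqref{intu2}.

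For $p=n$, since $W^{1,n}_{loc}\subset W^{1,p'}_{loc}$ for any $p'<n$, I would simply apply the $p'<n$ case with $p'$ chosen so that $\frac{p'(n-1)}{n-p'}\geq s$ for any prescribed $2\leq s<\infty$; this forces the localization to $U\subset\subset\Omega$ because the $W^{1,p'}$ norm on all of $\Omega$ need not be controlled by the $W^{1,n}$ norm globally (the embedding is only local), which is precisely the statement in the theorem. For $p>n$, the Morrey embedding $W^{1,p}(\Omega)\hookrightarrow C^{1-n/p}(\Omega)$ gives $\sup_\Omega |u(\cdot,t)|\leq c\,\|u(\cdot,t)\|_{W^{1,p}(\Omega)}$ (and similarly the H\"older seminorm), so $\int_\Omega |u|^s\, d\|V_t\|\leq \|u(\cdot,t)\|_{C^{1-n/p}(\Omega)}^s \|V_t\|(\Omega)$; since $\|V_t\|(\Omega)$ is uniformly bounded on $[0,T]$ by \eqref{denconcl}, integrating the $q/s$-th power over $[0,T]$ again finishes, and the $C^{1-n/p}$ version of \eqref{intu2} follows directly from Morrey.

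\textbf{Main obstacle.} The only nonroutine point is justifying that $\|V_t\|$ genuinely satisfies the density ratio bound $D_1$ (up to the factor $\sigma^{-1}$) for \emph{all} $t$ — not just the ones where $V_t$ is integral — and that the Michael--Simon inequality \eqref{mzineq} applies to the (not a priori smooth) measure $\|V_t\|$. The first is handled by passing \eqref{denconcl} to the limit: for any ball $B_r(x)$ with closure in $\Omega$ and any $\delta>0$, $\|V_t\|(B_r(x))\leq \liminf_i \mu_t^{\e_i}(\overline{B_r(x)})\leq \liminf_i \mu_t^{\e_i}(B_{r+\delta}(x))\leq \sigma^{-1}D_1\,\omega_{n-1}(r+\delta)^{n-1}$ (wait — more carefully, one uses $\mu_t^{\e_i}\to\mu_t$ so $\mu_t(B_r(x))\leq\liminf\mu_t^{\e_i}(B_{r+\delta})$ and also $\mu_t(\overline{B_r})\leq\liminf\mu_t^{\e_i}(\overline{B_{r+\delta}})$), then let $\delta\downarrow 0$; the choice of which side the ball lies relative to $\partial\Omega$ only matters through $B_{r+\delta}(x)\subset\Omega$, which holds for small $\delta$. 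The second is routine: \eqref{mzineq} is stated for $\phi\in C^1_c$, so one applies it with $\phi$ a smooth truncation approximating $|\psi_\alpha u(\cdot,t)|^{\text{power}}$ and passes to the limit using $u(\cdot,t)\in W^{1,p}$, exactly the density argument already used to define $u$ as an $L^2_{loc}(\|V_t\|\times dt)$ function in Section \ref{pnotation}; there is no circularity since that section's construction is independent of this estimate. Hence the proof is a direct repackaging of \eqref{mz} and \eqref{e02}--\eqref{e03} with $\mu_t^{\e_i}$ replaced by $\|V_t\|$.
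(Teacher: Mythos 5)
Your argument is correct and uses essentially the same ingredients as the paper: the density ratio bound for $\|V_t\|$ inherited from \eqref{denconcl}, the Michael--Simon inequality \eqref{mzineq} via a partition of unity exactly as in \eqref{e02}, and a density argument to pass from smooth approximants to the trace $u$. The paper organizes the density argument slightly differently --- it applies \eqref{mzineq} to the differences $u_{\e_i}-u_{\e_j}$ to show $\{u_{\e_i}\}$ is Cauchy in $L^q([0,T];(L^s(\|V_t\|))^n)$, then identifies the $L^q L^s$-limit (along a pointwise convergent subsequence) with the $L^2_{loc}(\|V_t\|\times dt)$ trace of $u$ defined in Section 2 --- but this is the same approximation scheme you invoke, merely packaged as a Cauchy argument rather than a direct estimate plus limit passage.
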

\begin{proof}
Consider the case $p<n$ and fix $T>0$. Since 
$\lim_{i\to\infty}\|u_{\e_i}-u\|_{L^q ([0,T];(W^{1,p})^n )}=0$, $\{u_{\e_i}\}$ is a Cauchy sequence in this norm. By 
\eqref{mzineq} with $s=\frac{p(n-1)}{n-p}$, we have
\begin{equation}
\int_0^T\, dt\big(\int_{\Omega} |u_{\e_i}-u_{\e_j}|^{s}\, d\|V_t\|\big)^{\frac{q}{s}}
\leq c(n,p,q,D_1) \|u_{\e_i}-u_{\e_j}\|_{L^q([0,T];(W^{1,p}(\Omega))^n)}^q.
\label{uest1}
\end{equation}
By a standard argument, we may subtract a subsequence $\{u_{\e_{i_j}}\}_{j=1}^{\infty}$ which
converges pointwise $\|V_t\|\times dt$ a.e$.$ on $\Omega\times[0,T]$ to an element of 
$L^q([0,T];(L^s (\|V_t\|))^n)$. This limit function is uniquely determined by $u$ independent
of the approximate sequence and
\eqref{intu2} holds. For $p=n$, we apply the same argument locally
for $p'<n$ which gives \eqref{intu2} with any $2\leq s<\infty$. For $p>n$, the standard Sobolev 
inequality and the H\"{o}lder inequality prove the claim immediately.
\end{proof}
To conclude the proof of Theorem \ref{existence} we prove
\begin{pro}
We have $T_1>0$ with the property described in Theorem \ref{existence} (4).
\end{pro}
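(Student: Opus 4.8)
The plan is to prove, for $t$ in a short time interval, that the Gaussian density of $\mu_t$ is strictly below $2\sigma$ at every point, and then to combine this with the integrality modulo $\sigma$ of Theorem \ref{integralitymu} and the density lower bound to force $\mu_t$ to have density exactly $\sigma$, i.e. $V_t$ to have unit density; the identity $\|\nabla\varphi(\cdot,t)\|=\|V_t\|$ will then drop out of Theorem \ref{existence} (2d).

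Fix a small $\epsilon_0\in(0,\sigma)$. First I would establish an initial-time Gaussian bound: there are $r_0\in(0,1/4)$ and $s_1>0$, both independent of $i$, such that
\[ \int_\Omega\tilde\rho_{(y,s)}(x,0)\,d\mu^{\e_i}_0(x)\le\sigma+\frac{\epsilon_0}{2}\qquad\text{for all }y\in\Omega,\ 0<s\le s_1\text{ and }i\text{ large.} \]
When $\dist(y,M_0^i)>r_0/2$ the left side is exponentially small in $s$ because $\mu^{\e_i}_0$ concentrates within $o(1)$ of $M_0^i$; when $\dist(y,M_0^i)\le r_0/2$ one writes $M_0^i\cap B_{r_0}(y)$ as a $C^1$ graph over a hyperplane with gradient as small as desired (using that $M_0$ is $C^1$, $M_0^i\to M_0$ in $C^1$, and shrinking $r_0$), compares $\mu^{\e_i}_0$ with $\sigma\,{\mathcal H}^{n-1}\lfloor_{M_0^i}$ on $B_{r_0}(y)$ via \eqref{coninit} and the exponential approach of $(\varphi_{\e_i})_0$ to $\Psi(\tilde d_i/\e_i)$ at scale $\e_i/r_i\to0$, and evaluates the Gaussian integral over the nearly flat graph. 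Next I would propagate this forward: applying the monotonicity formula \eqref{longeq} with $t_0=0$, $t_1=t$ and pole $(y,s)$, the three error terms $\Cr{c-12}\Cr{c-1}^2t^{\hat p}D_1$, $\Cr{c-11}\e_i^{\beta'-\beta}|\log\e_i|$ and $\Cr{c-2}e^{-1/(128 s)}tD_1$ are each less than $\epsilon_0/4$ once $t$ is small and $i$ is large, so there are $T_1>0$ and $\tau>0$ with $T_1+\tau\le s_1$ for which
\[ \int_\Omega\tilde\rho_{(y,s)}(x,t)\,d\mu^{\e_i}_t(x)\le\sigma+\epsilon_0 \]
for all $t\in[0,T_1)$, $s\in(t,t+\tau]$, $y\in\Omega$ and $i$ large. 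Letting $i\to\infty$ by Proposition \ref{seqconv} and then $s\downarrow t$ gives $\limsup_{s\downarrow t}\int_\Omega\tilde\rho_{(y,s)}(x,t)\,d\mu_t(x)\le\sigma+\epsilon_0$ for every $y\in\Omega$ and every $t\in[0,T_1)$.

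Now fix a $t\in[0,T_1)$ for which $\mu_t$ is rectifiable and $\mu_t=\theta\,{\mathcal H}^{n-1}\lfloor_{M_t}$ with $\theta=N\sigma$, $N$ integer-valued and $\ge1$ (Theorem \ref{integralitymu}); this holds for a.e.\ such $t$. At ${\mathcal H}^{n-1}$-a.e.\ $y\in M_t$ the blow-ups of $\mu_t$ at $y$ converge to $\theta(y)\,{\mathcal H}^{n-1}\lfloor_{{\rm Tan}_y M_t}$, so $\lim_{s\downarrow t}\int_\Omega\tilde\rho_{(y,s)}(x,t)\,d\mu_t(x)=\theta(y)$; combined with the previous step, $\theta(y)\le\sigma+\epsilon_0<2\sigma$ for $\mu_t$-a.e.\ $y$. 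Since $\theta\in\sigma{\mathbb N}$ and $\theta\ge\sigma$ $\mu_t$-a.e., one concludes $\theta=\sigma$ $\mu_t$-a.e., i.e.\ $V_t$ has unit density and $\|V_t\|={\mathcal H}^{n-1}\lfloor_{M_t}$. For such $t$, Theorem \ref{existence} (2d) writes $\|\nabla\varphi(\cdot,t)\|={\mathcal H}^{n-1}\lfloor_{\tilde M_t}$, and by \eqref{parity} the density of $\|V_t\|$ is an even integer $\ge2$ on $M_t\setminus\tilde M_t$; as that density is now $1$, ${\mathcal H}^{n-1}(M_t\setminus\tilde M_t)=0$, so with \eqref{parity0} we get $\|\nabla\varphi(\cdot,t)\|={\mathcal H}^{n-1}\lfloor_{M_t}=\|V_t\|$.

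The \emph{main obstacle} is the initial-time Gaussian bound: one must quantify the $C^1$ flatness of $M_0$, hence of the mollifications $M_0^i$, at a fixed small scale and carry out the diffuse-to-sharp interface comparison for the constructed data $(\varphi_{\e_i})_0$, uniformly in $y\in\Omega$ and in $i$. Everything after that is a routine application of the monotonicity formula of Theorem \ref{remono} together with the structural results of Theorem \ref{integralitymu} and Theorem \ref{existence} (2d).
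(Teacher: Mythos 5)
Your argument is correct and, at its core, is the same as the paper's: both use the $C^1$ regularity of $M_0$ to show the Gaussian density ratio is close to $\sigma$ at $t=0$, propagate this forward on a short interval using the monotonicity formula of Theorem \ref{remono}, combine the resulting bound $<2\sigma$ with the integrality of Theorem \ref{integralitymu} to force unit density, and deduce $\|\nabla\varphi(\cdot,t)\|=\|V_t\|$ from the parity alternative \eqref{parity0}--\eqref{parity}.

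The one inessential difference concerns the order of limits. You establish the initial Gaussian bound at the level of the diffuse measures $\mu^{\e_i}_0$ (by comparing $(\varphi_{\e_i})_0$ with the one-dimensional profile over the mollified surfaces $M_0^i$), and you flag this as the main technical obstacle. The paper sidesteps this entirely: it first lets $\e\to 0$ in \eqref{longeq} to obtain a monotonicity inequality for the limit measures $\mu_t$ themselves, then observes that, because $\|V_0\|={\mathcal H}^{n-1}\lfloor_{M_0}$ has already been established (via \eqref{coninit}) and $M_0$ is $C^1$, the initial Gaussian ratio $\int\tilde\rho_{(x,t)}\,d\|V_0\|\leq 3/2$ holds directly for the limit, uniformly for small $t$. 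This makes the estimate you identify as the hardest step essentially trivial, with no need to track the $\e_i$-dependence of $M_0^i$, $r_i$, and the tails of $\Psi(\tilde d_i/\e_i)$. The paper also phrases the conclusion as a contradiction (assume some Gaussian density $\geq 2$ and violate the propagated bound $<2$) rather than affirmatively, but this is purely a stylistic inversion of your argument.
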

\begin{proof}
By integrality, we already know that $\|V_t\|=\theta{\mathcal H}^{n-1}\lfloor_{M_t}$ for a.e$.$ $t\geq 0$,
where $\theta$ is integer-valued ${\mathcal H}^{n-1}$ a.e$.$ on $M_t$. Thus we should prove that 
$\mathcal{H}^{n-1} (\{\theta(\cdot,t) \geq 2 \})=0$ for a.e. $0< t< T_1$ for some $T_1>0$. We will determine
the lower bound of $T_1$ in the following.
Assume there exist $0<\hat t<T_1$ and $\hat x \in M_{\hat t}$ 
such that $M_{\hat t}$ has the approximate tangent space at $\hat x$ and the density 
$\theta(\hat x,\hat t)\geq 2$.
Then it is not difficult to check that
\begin{equation}
\lim _{r\to 0} \int _\Omega \tilde \rho _{(\hat x,\hat t+r^2)} \, d\|V_{\hat t}\|=\theta(\hat x,\hat t)\geq 2. 
\label{dwro}
\end{equation}
Since $\|V_0\|={\mathcal H}^{n-1}\lfloor_{M_0}$ and $M_0$ is $C^1$, we have 
\begin{equation}
\int _\Omega \tilde \rho _{(x,t)} \,d\|V_0\| \leq 3/2
\label{dwr2}
\end{equation}
for any $(x,t)\in \Omega\times (0,T_1]$, where $T_1$ depends only on 
$M_0$. We then use 
\eqref{longeq} with $\e\to 0$. We then have 
\begin{equation}
\lim_{r\to 0} \int_{\Omega} \tilde\rho_{(\hat x,\hat t+r^2)}\, d\|V_t\|\Big|_{t=0}^{\hat t}\leq 
\Cr{c-12}\Cr{c-1}^2 {\hat t}^{\hat p}D_1+\Cr{c-2} e^{-\frac{1}{128\hat t}}\hat t D_1,
\label{dwr1}
\end{equation}
and the right hand side of \eqref{dwr1} may be made smaller than $1/2$ by restricting $T_1$. 
Then we would have a contradiction since the left-hand side is $\geq 1/2$ due to \eqref{dwro}
and \eqref{dwr2}. This proves the first part of (4). 
We next prove $\|\nabla\varphi(\cdot,t)\| =\|V_t\|$ a.e$.$ $t\in [0,T_1]$. 
With the notation of (2d), for a.e$.$ $t\in [0,T_1]$, we have $\|V_t\|={\mathcal H}^{n-1}\lfloor_{M_t}$
since $\theta=1$ a.e$.$ from the first part. But then, by \eqref{parity}, ${\mathcal H}^{n-1}(M_t\setminus
\tilde M_t)=0$ since $\theta=1$ and odd. Thus combined with \eqref{parity0}, $\tilde M_t=M_t$ modulo null set, and this
shows the claim. We may take $T_1$ to be $\sup\{t>0\,:\, V_t \mbox{ is unit density for a.e$.$ $t\in [0,t]$}\}$.
\end{proof}
As for the proof of Theorem \ref{regreg}, (1) and (3) follow from \cite{kasaitonegawa} and \cite{tonegawa2012}, 
respectively, which give criterion for partial $C^{1,\zeta}$ and $C^{2,\alpha}$ regularity.  For (1), we check that
 \cite[Sec. 3.1 (A1)-(A4)]{kasaitonegawa} are all satisfied. Namely, (A1) asks $V_t$ to be unit density
for a.e$.$ $t$, (A2) is on the uniform density ratio upper
bound which follows from \eqref{exden}, (A3) is on the integrability of $u$ which is given by \eqref{intu2} and
(A4) is the flow equation which is \eqref{mcfweak}. If $p<n$, the exponent of integrability of $u$ in \eqref{intu2}
has to satisfy $\zeta:=1-(n-1)/s-2/q=2-n/p-2/q>0$, and this follows from \eqref{pqncon}. If $p\geq n$, 
we may choose any $s>(n-1)q/(q-2)$ in \eqref{intu2} so that we have $0<\zeta$, and we may
take sufficiently large $s$ so that $0<\zeta<1-2/q$ can be arbitrarily close to $1-2/q$. This proves (1).
The conclusion for $C^{2,\alpha}$ is precisely the claim of \cite{tonegawa2012}. Thus we only 
need to prove (2) and (4).
\begin{pro}
The family of varifolds $\{V_t\}_{t\geq 0}$ satisfies the property of Theorem \ref{regreg} (2) and (4)
\end{pro}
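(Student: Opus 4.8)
The plan is to derive both (2) and (4) from the local regularity theorem of \cite{kasaitonegawa}, whose hypotheses (as recalled above) are (A1) unit density of $V_t$ for a.e$.$ $t$, (A2) the Euclidean upper density ratio bound, (A3) the integrability of $u$, (A4) the flow inequality, together with the smallness of a localized Gaussian density ratio at the base point. For $t<T_1$ the conditions (A1)--(A4) are already at hand: (A1) from Theorem \ref{existence} (4), (A2) from \eqref{exden}, (A3) from \eqref{intu2}, and (A4) is \eqref{mcfweak}. So the only thing to arrange is that the Gaussian density at every point of $\spt\|V_t\|$ is close to $1$ when $t$ is small, which is where the $C^1$ regularity of $M_0$ enters through the monotonicity formula \eqref{longeq}.

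For (2): let $\epsilon_0>0$ be the excess threshold of the local regularity theorem. Since $M_0=\partial\Omega_0$ is a bounded $C^1$ hypersurface, comparing $M_0$ near the point nearest to $x$ with its tangent hyperplane (uniformly, by compactness and $C^1$-ness) refines \eqref{dwr2} to the following: for every $\delta>0$ there is $\tau(\delta)>0$ with $\sup_{x\in\Omega,\,0<t\le\tau(\delta)}\int_\Omega\tilde\rho_{(x,t)}\,d\|V_0\|\le 1+\delta$. Applying \eqref{longeq} with $\e\to0$ on $[0,t]$ with pole $(x,t+r^2)$ then gives $\int_\Omega\tilde\rho_{(x,t+r^2)}(y,t)\,d\|V_t\|(y)\le 1+\delta+c\,t^{\hat p}+c\,t$, where $c$ depends only on $n,p,q$, the norm of $u$ on $[0,T]$, $D_1$ and $T$. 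Hence one may fix $T_2\le T_1$ so small that this right-hand side is $\le 1+\epsilon_0$ for all $0<t<T_2$ and all $r>0$. Then at every $(x,t)$ with $x\in\spt\|V_t\|$ and $0<t<T_2$ all hypotheses of \cite{kasaitonegawa} hold, so $(x,t)$ is a $C^{1,\zeta}$ regular point in the sense of Definition \ref{defregpt}; in particular $G_t=\emptyset$ on $(0,T_2)$.

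For (4): first, in the previous step allow $\delta=\delta(t)\downarrow0$ and $\epsilon_0$ to be replaced by $\epsilon(t)\downarrow0$; the local regularity theorem then produces, for each $x\in\spt\|V_t\|$ and $t$ small, a parabolic neighborhood of a definite size in which $\cup_{s>0}(\spt\|V_s\|\times\{s\})$ is an embedded graph, $C^{1,\zeta}$ in space and $C^{(1+\zeta)/2}$ in time, with $C^1$ norm bounded by a modulus of the Gaussian excess, hence tending to $0$ as $t\downarrow0$. Covering the compact $M_0$ by finitely many cylinders $U_i=x_i+O_i(B_r^{n-1}\times(-r,r))$ over which $M_0$ is a $C^1$ graph, and using the Hausdorff convergence $\spt\|V_t\|\to M_0$ (which follows from the distance estimate below together with $\mu_t\to\mu_0=\sigma\,{\mathcal H}^{n-1}\lfloor_{M_0}$, a consequence of Proposition \ref{propmonotone}, Proposition \ref{seqconv} and \eqref{coninit}), one concludes that for small $t$ each $\spt\|V_t\|\cap U_i$ is a single $C^1$ graph over $x_i+O_i(B_r^{n-1})$ whose $C^1$ distance to the graph of $M_0$ vanishes as $t\downarrow0$. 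For the distance estimate, fix $x\in\spt\|V_t\|$ with $0<t<T_2$; by (2) this is a regular point, so $\Theta^{n-1}(\|V_t\|,x)=1$ and thus $\lim_{r\downarrow0}\int_\Omega\tilde\rho_{(x,t+r^2)}(y,t)\,d\|V_t\|(y)\ge1$, while with $d:=\dist(x,M_0)$ the $C^1$-ness of $M_0$ gives $\int_\Omega\tilde\rho_{(x,t)}\,d\|V_0\|\le(1+\omega(t))e^{-d^2/(4t)}$ with $\omega(t)\downarrow0$. Plugging these into \eqref{longeq} (with $\e\to0$) on $[0,t]$ and letting $r\downarrow0$ yields $1\le(1+\omega(t))e^{-d^2/(4t)}+\rho(t)$ with $\rho(t)\downarrow0$, so $e^{-d^2/(4t)}\ge(1-\rho(t))/(1+\omega(t))$ and $d\le 2\sqrt t\,\big(\log\tfrac{1+\omega(t)}{1-\rho(t)}\big)^{1/2}=o(\sqrt t)$; taking the supremum over $x\in\spt\|V_t\|$ and combining with the reverse inclusion (every point of $M_0$ lies within $o(\sqrt t)$ of $\spt\|V_t\|$, obtained from $\mu_t\to\mu_0$ and the forward density lower bound underlying Lemma \ref{lem-eta}) gives $t^{-1/2}\dist(M_0,\spt\|V_t\|)\to0$. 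The step I expect to be the main obstacle is precisely this upgrade from the easy $d=O(\sqrt t)$ to $d=o(\sqrt t)$: it works only because (2) forces the density to equal $1$ on all of $\spt\|V_t\|$, so that the (almost) monotone Gaussian density is squeezed between $e^{-d^2/(4t)}(1+o(1))$ and $1-o(1)$; making $\omega(t)$ effective needs uniform $C^1$ control on $M_0$, and one must also verify that the neighborhoods in the local regularity theorem have a size independent of the point so that the finite cover in the $C^1$-convergence argument is legitimate.
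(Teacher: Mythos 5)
Your proof substitutes ``Gaussian density close to $1$'' for the actual hypotheses of the cited regularity theorem, and this substitution silently drops the step that is the real substance of the paper's argument. The theorem applied is \cite[Theorem 8.7]{kasaitonegawa}, whose hypotheses include a measure \emph{lower} bound at a strictly later time (the paper's \eqref{indis16}): roughly, that the measure in a cylinder at a time $t_2>t$ is at least half of what a flat disk would give. Small Gaussian density at $(x,t)$ controls the flow only up to time $t$ and says nothing about whether $\|V_s\|$ persists for $s>t$ --- a Brakke flow may vanish instantaneously, and after vanishing the Gaussian density is trivially small, so the flow passes your test without being regular. Since Definition \ref{defregpt} requires $O\cap\cup_{s>0}(\mathrm{spt}\,\|V_s\|\times\{s\})$ to be an embedded manifold, including for $s>t$, this non-vanishing is indispensable. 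The paper obtains it precisely where your proof does not go: from the companion $BV$ function $\varphi$. Using unit density and \eqref{parity}, $\|V_t\|={\mathcal H}^{n-1}\lfloor_{\partial^*\{\varphi(\cdot,t)=1\}}$, and the generalized divergence theorem on $\{\varphi(\cdot,t)=1\}\cap\{x_n\le r/3\}$ together with the localization \eqref{indis14} gives \eqref{indis16}. This is the missing ingredient in your argument for part (2); a Gaussian-density threshold alone cannot deliver it.

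Two smaller remarks. For (4) you assert that the regularity theorem returns a graph with $C^1$ norm bounded by a modulus of the Gaussian excess; this is true of the explicit estimate \cite[(8.89)]{kasaitonegawa} but does not follow from a Gaussian-density-form statement and must be invoked as such. Conversely, your squeeze $1\le (1+\omega(t))e^{-d^2/(4t)}+\rho(t)$ yielding $d=o(\sqrt t)$ is a clean, self-contained alternative to the paper's graph-based distance estimate; note it needs only the density lower bound $\ge 1$ at points of $\mathrm{spt}\,\|V_t\|$ (available from a.e. unit density and upper semi-continuity of density) and does not actually require (2) as a prerequisite, contrary to what you wrote.
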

\begin{proof}
For a.e$.$ $0\leq t<T_1$, we have proved that $V_t$ has unit density property, 
thus we may use results in \cite{kasaitonegawa} for $\{V_t\}_{0\leq t<T_1}$. 
We first claim that there exists $0<T_3\leq T_1$ depending only on $D_1,n,p,q,\|u\|_{L^q([0,T_1];
(W^{1,p}(\Omega))^n)}$ ($D_1$ corresponding to $T_1$) and $\Cl[c]{c-26}=\Cr{c-26}(D_1,n)$ such that
\begin{equation}
{\rm dist}\,({\rm spt}\,\|V_t\|,M_0)\leq\Cr{c-26} \sqrt{t}
\label{indis}
\end{equation}
for a.e$.$ $0\leq t\leq T_3$.  For the proof, we use \cite[Proposition 6.2]{kasaitonegawa}. Citing the result for the convenience
of the reader, we have for $x\in \Omega$ and $0<r<1$
\begin{equation}
\int_{B_r(x)} \hat\rho_{(x,t+\epsilon)}(\cdot,t)\, d\|V_t\|-\int_{B_r(x)}\hat \rho_{(x,t+\epsilon)}(\cdot,0)\, d\|V_0\|
\leq c(n,s,q) \|u\|^{2}_{L^{s,q}} D_1^{1-\frac{2}{s}}t^{\zeta}+c(n)D_1 r^{-2} t,
\label{indis0}
\end{equation}
where $s:=\frac{p(n-1)}{n-p}$ if $p<n$ and any $\frac{(n-1)q}{q-2}< s<\infty$ if 
$p\geq n$, $\zeta=1-(n-1)/s-2/q$ and 
$\|u\|_{L^{s,q}}:=(\int_0^t (\int_{B_r(x)} |u|^{s}\, d\|V_{\lambda}\|)^{q/s}\,d\lambda)^{1/q}$. $\hat\rho_{(x,t+\epsilon)}$
is $\rho_{(x,t+\epsilon)}$ times a radially symmetric cut-off function with support in $B_{14r/15}(x)$ and $=1$ near $x$. 
Note that $\|u\|_{L^{s,q}}$ may be bounded in terms of $D_1$ and $\|u\|_{L^q([0,T_1];(W^{1,p}(\Omega))^n)}$
as was done for the proof of \eqref{intu2}. 
By restricting $T_3$ small, we may conclude from \eqref{indis0} that
\begin{equation}
\int_{B_r(x)}\hat{\rho}_{(x,t+\epsilon)}(\cdot,t)\, d\|V_{t}\|-\int_{B_r(x)} \hat{\rho}_{(x,t+\epsilon)}(\cdot, 0)\, d\|V_0\|
\leq \frac12 +c(n)D_1 r^{-2}t .
\label{indis2}
\end{equation}
Let $\Cr{c-26}$ be a constant to 
be fixed shortly and assume that there exists $x\in {\rm spt}\, \|V_t\|$ such that ${\rm dist}\,(x,M_0)
> \Cr{c-26}\sqrt{t}$ and $0<t\leq T_3$. We may assume that $V_t$ is unit density and has approximate
tangent space with multiplicity 1 at $x$, since such time and point are generic. In particular, one can
check that $
\lim_{\epsilon\to 0+}\int_{B_r(x)}\hat{\rho}_{(x,t+\epsilon)}(\cdot,t)\, d\|V_t\|=1$ and \eqref{indis2} thus shows
\begin{equation}
\frac12-\int_{B_r(x)} \hat{\rho}_{(x,t)}(\cdot, 0)\, d\|V_0\|
\leq c(n)D_1 r^{-2}t .
\label{indis3}
\end{equation}
We now choose $r=\Cr{c-26}\sqrt{t}/2$. Since $B_r(x)\cap M_0=\emptyset$, the integral in \eqref{indis3} is 0.
Hence we obtain $\frac12\leq  4 c (n)D_1 \Cr{c-26}^{-2}$. If we choose a sufficiently large $\Cr{c-26}$ depending
only on $n$ and $D_1$, we obtain a contradiction. This proves \eqref{indis}. 

Next, since ${\rm spt}\,\|\nabla\varphi(\cdot,t)\|\subset{\rm spt}\,\|V_t\|$ by Theorem \ref{existence} (2b), 
\eqref{indis} shows that $\varphi(\cdot,t)$
is a constant function on each connected component of $\Omega\setminus \{x\,:\, {\rm dist}\,(x,M_0)
\leq \Cr{c-26} \sqrt{t}\}$ for a.e$.$ $0\leq t\leq T_3$. Since $\varphi(\cdot,t)$ is a characteristic function 
and is continuous in $L^1$ norm with respect to time, one sees that 
\begin{equation}
\begin{split}
&\varphi(\cdot,t)=1\ \ \mbox{ on } \{x\in \Omega_0\,:\, {\rm dist}\,(x,M_0)>\Cr{c-26}\sqrt{t}\},\\
&\varphi(\cdot,t)=0 \ \ \mbox{ on } \{x\notin \Omega_0\, :\, {\rm dist}\, (x,M_0)>\Cr{c-26}\sqrt{t}\}
\end{split}
\label{indis4}
\end{equation}
for all $0\leq t\leq T_3$. We now estimate the location of ${\rm spt}\,\|V_t\|$
during the short initial time. Since $M_0$ is assumed to be $C^1$, there exists $r_1>0$ 
such that, for each $x\in M_0$ 
(we may assume that $x$ is the origin and
$T_x M_0={\mathbb R}^{n-1}\times\{0\}$ after parallel translation and orthogonal rotation),
$M_0$ is locally represented as a $C^1$ graph $g:B_{r_1}^{n-1}\to {\mathbb R}$ on $B_{r_1}^{n-1}\times
(-r_1,r_1)$. We take the coordinate system so that $\Omega_0$ is located on the upper side, above the graph of $g$. 
We may also restrict $r_1$ (uniformly on $M_0$) so that for all $r\leq r_1$, we have
\begin{equation}
\sup_{x\in B_{r}^{n-1}} |g(x)|\leq \frac{r}{10}.
\label{indis4.5}
\end{equation}
For $t\in [0,(10\Cr{c-26})^{-2} r^2]$, \eqref{indis4} and \eqref{indis4.5} show that
\begin{equation}
\begin{split}
&\varphi(\cdot,t)=1\ \ \mbox{ on } B_{9r/10}^{n-1}\times [r/5,r_1),\\
&\varphi(\cdot,t)=0 \ \ \mbox{ on } B_{9r/10}^{n-1}\times (-r_1,-r/5].
\end{split}
\label{indis4.6}
\end{equation}

Next we use \cite[Theorem 8.7]{kasaitonegawa}. Using the notation there, 
corresponding to $1\leq E_1<\infty$, $0<\nu<1$,
$p,q$ with $1-(n-1)/p-2/q>0$, there exist 4 constants ($\varepsilon_6,\sigma,\Lambda_3,c_{19}$
in \cite{kasaitonegawa}) with the stated properties. 
Here, we use $E_1=D_1$, $\nu=1/2$, $p=s$ above and the same $q$. 
The condition $1-(n-1)/p-2/q>0$ is then satisfied. To avoid confusion in the following, we denote the constants in
\cite{kasaitonegawa} corresponding to these choices by $\varepsilon_{6,KT}, \sigma_{KT},
\Lambda_{3,KT},c_{19,KT}$. 
In the following, let $P\in {\bf G}(n,n-1)$ be the projection ${\mathbb R}^n\to {\mathbb R}^{n-1}\times\{0\}$
and $P^{\perp}$ be its orthogonal complement. 
We then use \cite[Proposition 6.5]{kasaitonegawa} 
with 
\begin{equation}
\Lambda= \Lambda_{3,KT}/18
\label{indis5.1}
\end{equation}
 to obtain $c_{6,KT}$ with the property that
\begin{equation}
\begin{split}
\frac{1}{r^{n+1}} \int_{B_r} & |P^{\perp}(x)|^2\, d\|V_t\|  \leq \exp(1/(4\Lambda)) \frac{1}{r^{n+1}}\int_{B_{Lr}} |P^{\perp}(x)|^2\, d\|V_0\|\\
&+c_{6,KT}\{(r^{2\zeta} \|u\|_{L^{s,q}}^2+r^{\zeta}\|u\|_{L^{s,q}})L^2+L^{n+1}\exp(-(L-1)^2/(8\Lambda))\}
\end{split}
\label{indis5}
\end{equation}
for all $t\in [0,\Lambda r^2]$ provided $2\leq L<\infty$ and $rL\leq 1$. Here $c_{6,KT}$ depends only on
$s,q, D_1, \Lambda_{3,KT}$ but not on $L$. Given $1>\varepsilon>0$, we may choose $L\geq 2$ so that
\begin{equation}
c_{6,KT}L^{n+1}\exp(-(L-1)^2/(8\Lambda))<\varepsilon
\label{indis6}
\end{equation}
and then choose $r_2\leq L^{-1}$ uniformly on $M_0$ so that (using $M_0$ is $C^1$)
\begin{equation}
\begin{split}
&\exp(1/(4\Lambda)) \sup_{0<r\leq r_2} \frac{1}{r^{n+1}}
\int_{B_{Lr}} |P^{\perp}(x)|^2\, d\|V_0\|< \varepsilon, \\  &c_{6,KT}(r_2^{2\zeta}\|u\|_{L^{s,q}}^2+r_2^{\zeta}\|u\|_{L^{s,q}})L^2<\varepsilon.
\end{split}
\label{indis7}
\end{equation}
The inequalities \eqref{indis5}-\eqref{indis7} gives for $r\leq r_2$ and $t\in [0,\Lambda r^2]$
\begin{equation}
\frac{1}{r^{n+1}}\int_{B_r}|P^{\perp}(x)|^2\, d\|V_t\|\leq 3\varepsilon.
\label{indis8}
\end{equation}
We next use \cite[Proposition 6.4]{kasaitonegawa} on $B_r\times [0,\Lambda r^2]$ with a slight
modification. Instead of obtaining result on the time interval $[R^2/5,\Lambda]$ as in \cite{kasaitonegawa},
we modify the proof so that we obtain the similar estimate on the time interval $[(10 \Cr{c-26})^{-2} r^2,
\Lambda r^2]$. This is achieved by a simple replacement of the cut-off function. We have a different
constants which depends also on $\Cr{c-26}$. Citing the result from 
\cite[Proposition 6.4]{kasaitonegawa}, we obtain
\begin{equation}
{\rm spt}\, \|V_t\|\cap B_{4r/5}\subset \{|P^{\perp}(x)|\leq \mu r\}\ \ \mbox{for $t\in [(10\Cr{c-26})^{-2}r^2,\Lambda r^2]$},
\label{indis10}
\end{equation}
where 
\begin{equation}
\mu^2:= \frac{c_{5,KT}}{r^{n+3}}\int_0^{\Lambda r^2}\int_{B_r} |P^{\perp}(x)|^2\, d\|V_t\|dt
+c_{2,KT} \|u\|_{L^{s,q}}^2D_1^{1-\frac{2}{s}} \Lambda^{\zeta} r^{2\zeta} (2+\Lambda)
\label{indis11}
\end{equation}
and where $c_{5,KT}$ and $c_{2,KT}$ depend only on $n,s,q$ and $\Cr{c-26}$. 
If we restrict $r_2$ further so that the second term of \eqref{indis11} is smaller than $\varepsilon$, \eqref{indis8}-\eqref{indis11}
with sufficiently small $\varepsilon$ gives
\begin{equation}
{\rm spt}\,\|V_t\|\cap B_{4r/5}\subset \{|P^{\perp}(x)|\leq r/5\}\ \ \mbox{ for } t\in [(10\Cr{c-26})^{-2}r^2,\Lambda r^2].
\label{indis12}
\end{equation}
Combining \eqref{indis4.6} and \eqref{indis12}, and using the $L^1$ continuity of $\varphi(\cdot,t)$, we obtain
\begin{equation}
\begin{split}
& \varphi(\cdot,t)=1 \ \ \mbox{on } B_{4r/5}\cap \{P^{\perp}(x)\geq r/5\},\\
& \varphi(\cdot,t)=0 \ \ \mbox{on } B_{4r/5}\cap \{P^{\perp}(x)\leq -r/5\}
\end{split}
\label{indis13}
\end{equation}
for $t\in [0,\Lambda r^2]$. Since $B_{r/2}^{n-1}\times [-r/2,r/2]\subset B_{4r/5}$,
\eqref{indis13} shows
\begin{equation}
\begin{split}
& \varphi(\cdot, t)=1 \ \ \mbox{on }B_{r/2}^{n-1}\times [r/5,r/2],\\
& \varphi(\cdot, t)=0 \ \ \mbox{on }B_{r/2}^{n-1}\times [-r/2,-r/5],\\
& {\rm spt}\, \|V_t\|\cap (B_{r/2}^{n-1}\times [-r/2,r/2])\subset B_{r/2}^{n-1}\times [-r/5,r/5]
\end{split}
\label{indis14}
\end{equation}
for $t\in [0,\Lambda r^2]$ and $r\leq r_2$. At this point, because of the third claim of \eqref{indis14}, 
by setting $V_t=0$ on $B_{r/2}^{n-1}\times({\mathbb R}\setminus [-r/2,r/2])$,
we may assume that $\{V_t\}_{0\leq t\leq \Lambda r^2}$ satisfies \eqref{mcfweak} on $(B_{r/2}^{n-1}
\times {\mathbb R})\times [0,\Lambda r^2]$. We next want to apply \cite[Theorem 8.7]{kasaitonegawa}
with $R:=r/6$.
For the application, we need to check the conditions (8.83)-(8.86) of \cite{kasaitonegawa}. The first
condition (8.83), the smallness of space-time $L^2$-height may be achieved due to \eqref{indis8}, \eqref{indis14}
and by restricting $\varepsilon$ depending on $\varepsilon_{6,KT}$ and $\Lambda_{3,KT}$. The 
second condition (8.84), the smallness of $\|u\|$, may be achieved by simply restricting $r_2$. Thus
we need to check the last two conditions, (8.85) and (8.86) of \cite{kasaitonegawa}. 
Let $\phi_{P,R}$ and ${\bf c}$ be defined as in \cite[Definition 5.1]{kasaitonegawa}. We need to 
show that (recall that we have set $\nu=1/2$)
\begin{equation}
\exists t_1\in (3R^2 /2,2R^2 )\ \ : \ \ R^{-(n-1)} \|V_{t_1}\|(\phi_{P,R}^2)<\frac32 {\bf c}
\label{indis15}
\end{equation}
and 
\begin{equation}
\exists t_2\in ((2 \Lambda_{3,KT}-2)R^2 ,  (2\Lambda_{3,KT}-3/2)R^2 ) \ \ :\ \ R^{-(n-1)}\|V_{t_2}\|(\phi_{P,R}^2)>\frac12 {\bf c}.
\label{indis16}
\end{equation}
First we show \eqref{indis15}. Since $M_0$ is $C^1$, we may restrict $r_2$ uniformly in $x$ 
so that for all $R=r/6\leq r_2/6$, we have
\begin{equation}
R^{-(n-1)}\|V_0\|(\phi_{P,R}^2)\leq R^{-(n-1)} \int_{P} \phi_{P,R}^2\, d{\mathcal H}^{n-1}+\frac{1}{10} {\bf c}
=\frac{11}{10}{\bf c}.
\label{indis17}
\end{equation}
By \eqref{mcfweak}, we have for $t_1\in (3R^2/2,2R^2)$
\begin{equation}
\|V_{t}\|(\phi_{P,R}^2)\Big|_{t=0}^{t_1}\leq \int_0^{t_1}\int(-h\phi_{P,R}^2+\nabla\phi_{P,R}^2)\cdot(h+(u\cdot\nu)\nu)\,
d\|V_t\|dt.
\label{indis18}
\end{equation}
By \eqref{hperp1} and \eqref{hperp2}, we may replace $\nabla\phi_{P,R}^2$ by $S^{\perp}(\nabla\phi_{P,R}^2)$
for $\|V_t\|$ a.e$.$, where $S$ is the approximate tangent space at the point. Since $\nabla \phi_{P,R}
=P(\nabla\phi_{P,R})$ (note $\phi_{P,R}(x)=\phi_{P,R}(P(x))$ by definition), we have
\begin{equation}
S^{\perp}(\nabla\phi_{P,R}^2)=(I-S)\circ (P(\nabla\phi_{P,R}^2))=(P-S)\circ(P(\nabla\phi_{P,R}^2)).
\label{indis19}
\end{equation}
Thus, by using the Cauchy-Schwarz inequality to \eqref{indis18} and by \eqref{indis19}, we obtain
\begin{equation}
\|V_{t}\|(\phi_{P,R}^2)\Big|_{t=0}^{t_1}\leq\int_0^{t_1} \int -\frac12 |h|^2\phi_{P,R}^2+2|u|^2\phi_{P,R}^2+8
\|S-P\|^2 |\nabla\phi_{P,R}|^2\, dV_t(\cdot,S)dt.
\label{indis20}
\end{equation}
The first term on the right-hand side of \eqref{indis20} can be dropped. The second term can be estimated 
using the H\"{o}lder inequality as
\begin{equation}
\begin{split}
\int_0^{t_1}\int 2|u|^2\phi_{P,R}^2\, d\|V_t\|dt& \leq \int_0^{t_1}\big(\int |u|^s\, d\|V_t\|\big)^{\frac{2}{s}}\,dt
\cdot \sup_{t\in [0,t_1]}\|V_t\|(\phi_{P,R}^2)^{1-\frac{2}{s}} \\
&\leq \|u\|_{L^{s,q}}^2 t_1^{1-\frac{2}{q}}\cdot \sup_{t\in [0,t_1]}\|V_t\|(\phi_{P,R}^2)^{1-\frac{2}{s}}.
\end{split}
\label{indis21}
\end{equation}
Due to the third claim of \eqref{indis14}, ${\rm spt}\, \|V_t\|\cap {\rm spt}\, \phi_{P,R}\subset B_{3R}$, for
example. Thus we have $\|V_t\|(\phi_{P,R}^2)\leq D_1 \omega_{n-1}(3R)^{n-1}$. Since $t_1\leq 2R^2$, 
we obtain from \eqref{indis21}
\begin{equation}
\int_0^{t_1}\int 2|u|^2\phi_{P,R}^2\, d\|V_t\|dt\leq c(D_1,n,s,q)  \|u\|_{L^{s,q}}^2 R^{n-1+2\zeta}.
\label{indis22}
\end{equation}
For the third term of \eqref{indis20}, we use \cite[Lemma 11.2]{kasaitonegawa} (or \cite[8.13]{allard}), 
namely, for $\phi=\phi_{P,R}$
\begin{equation}
\begin{split}
\int \|S-P\|^2\, |\nabla\phi|^2\, dV_t(\cdot,S)&\leq 4\big(\int |h|^2|\nabla\phi|^2\, d\|V_t\|\big)^{\frac12}\big(\int |P^{\perp}(x)|^2
|\nabla\phi|^2\,d\|V_t\|\big)^{\frac12}\\ & +16 \int |P^{\perp}(x)|^2|\nabla|\nabla\phi||^2\, d\|V_t\|.
\end{split}
\label{indis23}
\end{equation}
By repeating a similar argument leading to \eqref{indis20} with slightly larger test function which is 1 on 
${\rm spt}\, \phi_{P,R}$, one can obtain 
\begin{equation}
\int_0^{t_1}\int |h|^2 |\nabla \phi_{P,R}|^2\, d\|V_t\|\leq c(n) R^{n-3}.
\label{indis24}
\end{equation}
Since we have ${\rm spt}\,\|V_t\|\cap {\rm spt}\,\phi_{P,R}\subset B_{3R}$ and
by \eqref{indis8}, we obtain
\begin{equation}
\int_0^{t_1}\int |P^{\perp}(x)|^2 |\nabla\phi_{P,R}|^2\, d\|V_t\|dt\leq 3\varepsilon (3R)^{n+1} t_1 \sup|\nabla\phi_{P,R}|^2
\leq c(n)\varepsilon R^{n+1}.
\label{indis25}
\end{equation}
Thus, by \eqref{indis24} and \eqref{indis25} and similarly estimating the last term, we obtain from \eqref{indis23} that
\begin{equation}
\int_0^{t_1}\int \|S-P\|^2|\nabla\phi_{P,R}|^2\, dV_t(\cdot,S)dt\leq c(n)(\sqrt{\varepsilon}+\varepsilon)R^{n-1}.
\label{indis26}
\end{equation}
Combining \eqref{indis17}, \eqref{indis20}, \eqref{indis22} and \eqref{indis26}, we obtain
\begin{equation}
R^{-(n-1)}\|V_{t_1}\|(\phi_{P,R}^2)\leq \frac{11}{10}{\bf c}+c(D_1,n,s,q)\|u\|_{L^{s,q}}^2 R^{2\zeta}+c(n)(\sqrt{\varepsilon}
+\varepsilon).
\label{indis27}
\end{equation}
Thus, by restricting $r<r_2$ and $\varepsilon$ in \eqref{indis27}, we can guarantee that \eqref{indis15} holds. 
To see \eqref{indis16} holds, we use the first two claims of \eqref{indis14}. Due to the unit density property, 
recall that for a.e$.$ $t$, we have $\|V_t\|=\|\nabla\{\varphi(\cdot,t)=1\}\|={\mathcal H}^{n-1}\lfloor_{\partial^*\{\varphi(\cdot,t)=1\}}$, where
$\partial^{*}A$ denotes the reduced boundary of $A$ (see \cite{Giusti}). Let $\nu_n$ 
be the $x_n$ component of the inward pointing unit normal vector of 
$\partial^*\{\varphi(\cdot,t)=1\}$. We apply the generalized divergence theorem valid for sets of finite perimeter,
in this case, $\{\varphi(\cdot,t)=1\}\cap \{x_n\leq r/3\}$. 
Then we have for a.e$.$ $t\in [0,\Lambda r^2]$
\begin{equation}
\begin{split}
\int \phi_{P,R}^2\, d\|V_t\|&\geq \int_{\partial^*\{\varphi(\cdot,t)=1\}} \nu_n \phi_{P,R}^2\, d{\mathcal H}^{n-1} \\
&=-\int_{\{\varphi(\cdot,t)=1\}\cap \{x_n\leq r/3\}}\partial_{x_n}\phi_{P,R}^2\, dx+\int_{\{x_n=r/3\}} \phi_{P,R}^2\,
d{\mathcal H}^{n-1}=R^{n-1}{\bf c}
\end{split}
\end{equation}
since $\phi_{P,R}^2$ does not depend on $x_n$ and by the definition of ${\bf c}$. In particular, we have proved
\eqref{indis16}. Now we are ready to apply \cite[Theorem 8.7]{kasaitonegawa}. For all sufficiently small $\varepsilon>0$,
we have seen that we may choose $r_2$ independent of $x\in M_0$ such that all the 
assumptions of \cite[Theorem 8.7]{kasaitonegawa} hold on $(B_{r/2}\times{\mathbb R})\times [0,\Lambda r^2]$ for
all $r\leq r_2$. The conclusion is that in $B_{\sigma_{KT} R}^{n-1}\times{\mathbb R}$ and for $t\in (
(\Lambda_{3,KT}-1/4)R^2,(\Lambda_{3,KT}+1/4)R^2)$, ${\rm spt}\,\|V_t\|$ is represented as a graph $F(\cdot,t)$ of 
$C^{1,\zeta}$ function and it is $C^{(1+\zeta)/2}$ in time, with $|\nabla F|+R^{-1}|F|$ bounded by a constant 
multiple of $\varepsilon$ (see (8.89) of \cite{kasaitonegawa}). The argument up to this point can be carried out
for each point on $x\in M_0$ uniformly and ${\rm spt}\,\|V_t\|$ can be covered by such graphs. This shows that
for all small $t>0$, ${\rm spt}\, \|V_t\|$ is $C^{1,\zeta}$ everywhere. We have the local graph representation
as claimed in (4) and $t^{-1/2}{\rm dist}\, ({\rm spt}\,\|V_t\|, M_0)
\to 0$ as $t\to 0$. 
It is possible that ${\rm spt}\,\|V_t\|$ remains $C^{1,\zeta}$ for some more time, and let $T_2$ be the maximal time
without non-$C^{1,\zeta}$ regular point. In case that $u$ is $\alpha$-H\"{o}lder continuous, 
the regularity criterion are the same
(see \cite[Theorem 3.6]{tonegawa2012}) except that the constant corresponding to $\varepsilon_{6,KT}$ may need to be smaller there.
Thus, in this case, there is a short initial time interval such that ${\rm spt}\,\|V_t\|$ is a $C^{2,\alpha}$ hypersurface. 
This ends the proof of (2) and (4).
\end{proof}
%%%%%%%%%%%%%%%%%%%%%
\section{Final remarks}
\subsection{Non-uniqueness}
The solution may be non-unique without having singularities of $M_t$, as a simple example
demonstrates. 
An example such as $M_0=\{x_2=0\}\subset{\mathbb T}^2$ and $u(x_1,x_2)=(0,\sqrt{|x_2|})\in (W^{1,p}({\mathbb T}^2))^2$  ($p<2$) has an 
obvious ODE-level non-uniqueness. 
Thus, on top of the non-uniqueness issues generally associated with singularity occurrences of the MCF, one has
far richer source of possible non-uniqueness with irregular $u$, even though we have a local
regularity theory. It is interesting to investigate how generic the uniqueness may hold 
for the flow in this paper with respect to the initial data and the transport term. 
We mention that there is a nice generic property for the MCF besides the existence
of unique viscosity solution. If $M_0$ is $C^2$ and $d_0$ is the
signed distance function to $M_0$, then the viscosity solution for the MCF starting from
$\{d_0=s\}$ in the sense of \cite{giga1991,evans-spruck1991} 
is a unit density Brakke MCF for a.e$.$ $s\in (-r_0,r_0)$, 
where $r_0>0$ is some small number depending on $M_0$ \cite{evans-spruck1995}. 
For such level set, a phenomena called fattening does not occur in particular. 
It is interesting to see if there is some generalization of this type to the setting of this paper.
\subsection{Structure of singularities}
There have been intensive effort to understand the nature of singularities for the MCF in
recent years. A particular emphasis has been placed on the mean convex flow 
and we mention
names of Andrews, Huisken, Sinestrari and White who analyzed 
structure of singularities in depth. We mention a recent work by Haslhofer
and Kleiner \cite{haslhofer} for a streamlined treatment of the regularity theory of mean convex
flows as well as up-to-date references. 
Note that many of the techniques used by White such as the dimension
reducing and stratification of singularities \cite{white} may be used for the flow in this paper. 
While there may be some limitation compared to the mean convex flow, 
it is interesting and challenging problem to investigate the singularities in the setting 
of the present paper. 
%%%%%%%%%%%%%%%%%%%


\begin{thebibliography}{99}
\bibitem{allard}
Allard, W.,
\textit{On the first variation of a varifold}.
Ann. of Math. {\bf 95}(2), 417--491 (1972)
\bibitem{allen}
Allen, S. M., Cahn, J. W., 
\textit{A macroscopic theory for antiphase boundary motion and its application to antiphase domain coarsening}.
Acta. Metal. {\bf 27} 1085--1095 (1979)
\bibitem{almgren} Almgren, F. J., Taylor, J. E., Wang, L.-H., \textit{Curvature-driven flows:
a variational approach}. SIAM J. Control Optim. {\bf 31}(2) 387--438 (1993)
\bibitem{Ambrosio1} 
Ambrosio, L.,
\textit{Geometric evolution problems, distance function and viscosity solutions}.
Calculus of variations and partial differential equations (Pisa, 1996), 5--93, Springer, Berlin (2000)
\bibitem{Bellettini}
Bellettini, G.,
\textit{Lecture notes on mean curvature flow: barriers and singular perturbations}.
Edizioni Della Normale, {\bf 12}, Scuola Normale Supriore, Pisa (2014)
\bibitem{brakke}
Brakke, K.,
\textit{The motion of a surface by its mean curvature}. Mathematical notes 20,
Princeton University Press, Princeton (1978)
\bibitem{bron1}
Bronsard, L., Kohn, R. V.,
\textit{Motion by mean curvature as the singular limit of Ginzburg-Landau dynamics}.
J. Diff. Eqns. {\bf 90}(2), 211--237 (1991)
\bibitem{bron2}
Bronsard, L., Stoth, B.,
\textit{On the existence of high multiplicity interfaces}.
Math. Res. Lett. {\bf 3}(1), 41--50 (1996)
\bibitem{chen}
Chen, X.,
\textit{Generation and propagation of interface in reaction-diffusion equations}.
J. Diff. Eqns. {\bf 96}(1), 116--141 (1992)
\bibitem{XYChen}
Chen, X.-Y., 
\textit{Dynamics of interfaces in reaction diffusion systems}.
Hiroshima Math. J. {\bf 21}(1), 47--83 (1991)
\bibitem{giga1991}
Chen, Y.-G., Giga, Y., Goto, S.,
\textit{Uniqueness and existence of viscosity solutions of generalized mean curvature flow equations}.
J. Differe. Geom. {\bf 33}(3), 749--786 (1991)
\bibitem{Colding} 
Colding, T. H., Minicozzi II, W. P.,
\textit{Minimal surfaces and mean curvature flow}.
Surveys in geometric analysis and relativity, Adv. Lect. Math. 
{\bf 20}, 73--143, Int. Press, Somerville, MA, (2011)
\bibitem{mot}
de Mottoni, P., Schatzman, M.,
\textit{Evolution g\'{e}om\'{e}tric d'interfaces}.
C.R. Acad. Paris Sci. S\'{e}r. I Math. {\bf 309}(7), 453--458 (1989)
\bibitem{Ecker} 
Ecker, K.,
\textit{Regularity theory for mean curvature flow}.
Progress in Nonlinear Differential Equations and their Applications 
{\bf 57}, Birkh\"{a}user Boston, Inc., Boston, MA, (2004)
%\bibitem{colding}
%Colding, T., Minicozzi W.,
%\textit{Generic mean curvature flow I; generic singularities}. Ann. of Math.(2) {\bf 175}(2) 755--833 (2012)
\bibitem{ESS}
Evans, L. C., Soner, H. M., Souganidis, P. E.,
\textit{Phase transitions and generalized motion by mean curvature}.
Comm. Pure Appl. Math. {\bf 45}(9), 1097--1123 (1992)
\bibitem{evans-spruck1991}
Evans, L. C., Spruck, J.,
\textit{Motion of level sets by mean curvature I}.
J. Differe. Geom. {\bf 33}(3), 635--681 (1991)
\bibitem{ES2}
Evans, L. E., Spruck, J.,
\textit{Motion of level sets by mean curvature II}.
Trans. Amer. Math. Soc. {\bf 330}(1), 321--332 (1992)
\bibitem{evans-spruck1995}
Evans, L. C., Spruck, J.,
\textit{Motion of level sets by mean curvature. IV}. 
J. Geom. Anal. {\bf 5}(1), 77--114 (1995)
\bibitem{Fife}
Fife, P. C.,
\textit{Dynamics of internal layers and diffusive interfaces}. 
CBMS-NSF Reg. Conf. Ser. Applied Math., {\bf 53}, SIAM, Philadelphia, PA, (1988)
\bibitem{Giga} 
Giga, Y.,
\textit{Surface evolution equations. A level set approach}.
Monographs in Mathematics, {\bf 99}, Birkh\"{a}user Verlag, Basel, (2006)
\bibitem{GG}
Giga, Y., Goto, S.,
{\textit Geometric evolution of phase-boundaries}, On the evolution of phase 
boundaries (Minneapolis, MN, 1990-91), IMA Vol. Math. Appl., {\bf 43}, 51--65 (1992)
\bibitem{GGIS}
Giga, Y., Goto, S., Ishii, H., Sato, M.-H.,
\textit{Comparison principle and convexity preserving properties for singular degenerate parabolic
equations on unbounded domains}. Indiana Univ. Math. J. {\bf 40}(2), 443--470 (1991)
\bibitem{GT}
Gilbarg, D., Trudinger, N. S.,
\textit{Elliptic partial differential equations of second order}. 
2nd ed., Springer (1983)
\bibitem{Giusti}
Giusti, E., 
\textit{Minimal surfaces and functions of bounded variation}. 
Monographs in mathematics 80, Birkh\"{a}user (1984)
\bibitem{haslhofer}
Haslhofer, R., Kleiner, B.,
\textit{Mean curvature flow of mean convex hypersurfaces}.
arXiv:1304.0926
\bibitem{huisken1990}
Huisken, G.,
\textit{Asymptotic behavior for singularities of the mean curvature flow}.
J. Differe. Geom. {\bf 31}(1), 285--299 (1990)
\bibitem{tonegawa2000}
Hutchinson, J. E., Tonegawa, Y.,
\textit{Convergence of phase interfaces in
the van der Waals-Cahn-Hilliard theory}.
Calc. Var. PDE {\bf 10}(1), 49--84 (2000)
\bibitem{ilmanen1993}
Ilmanen, T.,
\textit{Convergence of the Allen-Cahn equation to Brakke's motion by mean curvature}.
J. Differe. Geom. {\bf 38}(2), 417--461(1993)
\bibitem{ilmanen1994}
Ilmanen, T.,
\textit{Elliptic regularization and partial regularity for motion by mean curvature}. 
Mem. Am. Math. Soc. {\bf 108}(520) (1994)
\bibitem{kasaitonegawa}
Kasai, K. and Tonegawa, Y.,
\textit{A general regularity theory for weak mean curvature flow},
Calc. Var. PDE {\bf 50}(1), 1--68 (2014)
\bibitem{kim}
Kim, Y., Lai, M.-C., Peskin, C.,
\textit{Numerical simulations of two-dimensional foam by the immersed boundary method}.
J. Comput. Phys. {\bf 229}(13), 5194--5207 (2010) 
\bibitem{ladyzhenskaja}
Lady{\v{z}}enskaja, O. A., Solonnikov, V. A., Ural'ceva, N. N.,
\textit{Linear and quasi-linear equations of parabolic type}.
Amer. Math. Soc. (Translated from Russian) (1968) 
\bibitem{tonegawa2010}
Liu, C., Sato, N., Tonegawa, Y.,
\textit{On the existence of mean curvature flow with transport term}.
Interfaces Free Bound. {\bf 12}(2), 251--277 (2010)
\bibitem{luckhaus}
Luckhaus, S., Sturzenhecker, T., 
\textit{Implicit time discretization for the mean curvature flow equation}. 
Calc. Var. PDE {\bf 3}(2), 253--271 (1995)
\bibitem{Mante}
Mantegazza, C.,
\textit{Lecture notes on mean curvature flow}.
Progress in Mathematics, {\bf 290}. Birkh\"{a}user/Springer Basel AG, Basel, (2011)
\bibitem{meyers1977}
Meyers, N. G., Ziemmer, W. P.,
\textit{Integral inequalities of Poincar\'{e} and Wirtinger type for BV functions}.
Amer. J. Math. {\bf 99}(6), 1345--1360 (1977)
\bibitem{mugnai}
Mugnai, L., R\"{o}ger, M.,
\textit{Convergence of perturbed Allen-Cahn equations to forced mean curvature
flow}. Indiana Univ. Math. J. {\bf 60}(1), 41--75 (2011)
\bibitem{roger-schazle}
R{\"o}ger, M., Sch{\"a}tzle, R.,
\textit{On a modified conjecture of De Giorgi}. 
Math. Z. {\bf 254}(4), 675--714 (2006)
\bibitem{rub}
Rubinstein, J., Sternberg, P., Keller, J. B.,
\textit{Fast reaction, slow diffusion and curve shortening}.
SIAM J. Appl. Math. {\bf 49}(1), 116--133 (1989)
\bibitem{sato}
Sato, N., 
\textit{A simple proof of convergence of the Allen-Cahn equation to Brakke's
motion by mean curvature}. Indiana Univ. Math. J. {\bf 57}(4), 1743--1751 (2008)
\bibitem{simon}
Simon, L.,
\textit{Lectures on Geometric Measure Theory}.
Proc. Centre Math. Anal. Austral. Nat. Univ. {\bf 3} (1983)
\bibitem{soner1}
Soner, H. M., 
\textit{Convergence of the phase-field equations to the Mullins-Sekerka
problem with kinetic undercooling}. Arch. Rational Mech. Anal. {\bf 131}(2),
139--197 (1995)
\bibitem{soner2}
Soner, H. M.,
\textit{Ginzburg-Landau equation and motion by mean curvature. I. Convergence}.
J. Geom. Anal. {\bf 7}(3), 437--475 (1997)
\bibitem{takasao}
Takasao, K.,
\textit{Gradient estimates and existence of mean curvature flow with transport term}.
Diff. Integral Eq. {\bf 26}(1-2), 141--154 (2013)
\bibitem{tonegawa2003}
Tonegawa, Y.,
\textit{Integrality of varifolds in the singular limit of reaction-diffusion equations}.
Hiroshima Math. J. {\bf 33}(3), 323--341 (2003)
\bibitem{tonegawa2012}
Tonegawa, Y., 
\textit{A second derivative H\"{o}lder estimate for weak mean curvature flow}. 
Adv. Calc. Var. {\bf 7}(1), 91--138 (2014)
\bibitem{white}
White, B.,
\textit{Stratification of minimal surfaces, mean curvature flows, and harmonic maps}.
J. Reine Angew. Math. {\bf 488}, 1--35 (1997)
\bibitem{White4} White, B.,
\textit{Evolution of curves and surfaces by mean curvature}. Proceedings of the ICM, {\bf 1} (Beijing, 2002), 525--538, Higher Ed. Press, Beijing, (2002)
\bibitem{ziemer1989}
Ziemer, W. P.,
\textit{Weakly differentiable functions}. 
Springer-Verlag (1989).
\end{thebibliography}
\end{document}